%

\documentclass[pdflatex,sn-mathphys-num]{sn-jnl}

\usepackage[utf8]{inputenc}
\usepackage{mathtools}
\usepackage{graphicx}%
\usepackage{multirow}%
\usepackage{amsmath,amssymb,amsfonts}%
\usepackage{amsthm}%
\usepackage{parskip}
\usepackage{imakeidx}
\usepackage{hyperref}
\usepackage{mathrsfs}%
\usepackage[title]{appendix}%
\usepackage{xcolor}%
\usepackage{textcomp}%
\usepackage{manyfoot}%
\usepackage{booktabs}%
\usepackage{algorithm}%
\usepackage{algorithmicx}%
\usepackage{algpseudocode}%
\usepackage{listings}%

\DeclareMathOperator{\dist}{dist}
\usepackage{xparse, etoolbox}
\DeclarePairedDelimiterX\innerp[2]{\langle}{\rangle}{#1,#2}

\DeclareMathOperator{\supp}{supp}


\theoremstyle{thmstyleone}%
\newtheorem{theorem}{Theorem}[section]
\newtheorem{proposition}[theorem]{Proposition}%
\newtheorem{corollary}[theorem]{Corollary}
\newtheorem{lemma}[theorem]{Lemma}

\theoremstyle{thmstyletwo}%
\newtheorem{remark}[theorem]{Remark}%

\theoremstyle{thmstylethree}%
\newtheorem{definition}{Definition}[section]%
\newtheorem*{notation}{Notation}

\raggedbottom

\begin{document}

\title[Article Title]{On blow-up trees for the harmonic map heat flow from $B^2$ to $S^2$}

\author*{\fnm{Dylan} \sur{Samuelian}}\email{dylan.samuelian@epfl.ch}

\affil*{\orgdiv{SB MATH PDE}, \orgname{EPFL}, \orgaddress{\street{Station 8}, \city{Lausanne}, \postcode{1015}, \country{Switzerland}}}


\abstract{We consider finite-time and $k$-equivariant solutions to the harmonic map heat flow from $B^2$ to $S^2$ under general time-dependent boundary data and prove that the bubble tree decomposition contains only one bubble. The method relies on the Maximum and Comparison Principle. We also exhibit solutions blowing up in infinite time for any $k \geq 1$.}

\keywords{harmonic map heat flow, bubble tree, blow-up, parabolic, nonlinear}

\maketitle

\tableofcontents

\section{Introduction}
We are interested in the behaviour of finite-time and finite-energy solutions to the harmonic map heat flow (\ref{heat map flow}) from the 2D-ball to the 2-sphere
\begin{align}
    v_t &= \Delta v + |\nabla v|^2 v, \quad (x,t) \in B^2 \times  (0,+\infty) \label{heat map flow} \\
    v(x,t) &= v_0(x,t), \quad (x,t) \in \partial B^2 \times [0,+\infty) \cup B^2 \times \{0\}, \notag
\end{align}
where $v: B^2 \rightarrow S^2 \subset \mathbb R^3$, $\nabla v = (Jv)^T$ is the transposed Jacobian matrix, the energy is defined as
\begin{equation}\label{energy of v}
    E(v(\cdot,t)) = E(v(t)) = \frac{1}{2}\int_{B^2} |\nabla v(\cdot,t)|^2 dx 
\end{equation}
and the boundary data is the restriction of a function $v_0 \in C^{\infty}(\overline{B^2} \times [0,+\infty); S^2)$ which is smooth and $k$-equivariant, meaning it has the form:
\begin{equation}
    v_0(re^{i\theta},t) = \left( e^{ik\theta} \sin h_0(r,t), \cos h_0(r,t) \right) \in \mathbb C \times \mathbb R \simeq \mathbb R^{3},  \label{k-equivariance}
\end{equation}
with $h_0(r,t) \in C^{0}([0,1] \times [0,+\infty),\mathbb R)$ being the \textit{inclination coordinate} of $v_0$ (which can be chosen, without loss generality, smooth and such that $h(r,t) = o(r^k)$ for fixed time, see Lemma \ref{k-equivariance, smoothness of h}). It follows that $v(x,t)$ is also $k$-equivariant (Proposition \ref{prop:equivalent fomrulation heat map flow}) with an inclination coordinate $h(r,t)$ solving the following one-dimensional nonlinear heat equation \begin{align}
h_t &= h_{rr} + \frac{h_r}{r} - k^2 \frac{\sin(2h)}{2r^2}, \quad (r,t) \in (0,1) \times (0,+\infty), \label{heat map formulation for h(r,t)} \\
h(r,t) &= h_0(r,t), \quad (r,t) \in \{0,1\} \times [0,+\infty) \cup (0,1) \times \{0\}. \notag
\end{align}

The study of the Harmonic Map Heat Flow goes back to the foundational work of by Eells and Sampson (\cite{Eells1964}). The equation was first introduced as a mean of finding harmonic maps which are homotopic to some given smooth map $v_0 \in C^{\infty}(M,N)$ inbetween two compact manifolds without boundary, $N$ having non-positive curvature (the analog work with boundaries is due to Hamilton, \cite{Hamilton1975}). It turns out that finding such harmonic maps is related to the limiting behaviour of solutions to the Harmonic Map Heat Flow with initial condition $v_0$ near global time. Struwe (\cite{struwe2008variational}) was able to remove the non-negative curvature assumption on $N$ by assuming that $N$ is a 2D-surface instead (with partial results in higher dimensions, \cite{Struwe1988OnTE}). Moreover, he described the formation of both finite-time and global-time singularities in terms of concentration of energy (Lemma \ref{lemma:no concentration of energy away origin, general case} and Theorem \ref{thm:smoothness around all but finitely many points}) and he was able to show that one can extract an harmonic map from $S^2$ to $N$ near such singularities (the analog work with boundaries and a boundary data $v_0(x,t) = v_0(x)$ is due to Chang, \cite{ChangStruweAnalog}). The description of the blow-up event was further improved by Qing (\cite{Qing1995OnSO}), who showed that near a singularity, the solution decomposes asymptotically as a sum of rescaled harmonic maps and a radiation term (Theorem \ref{thm: full bubble decomposition}). Such a rescaled harmonic map is called a \textit{bubble} of energy because the sum of the energy of all the bubbles accounts for all the energy loss which occurs at the blow-up event (see (\ref{pythagorean decomposition of energy})). Further exposition on the topic of harmonic maps can be found in the textbook \cite{analysisofharmonicmaps}. 

Finite time blow-up was shown to exist in the $1$-equivariant setting by Chang, Ding and Ye (\cite{Chang1992FinitetimeBO}) and infinite-time blow-up was exhibited by Gustafson, Nakanishi and Tsai for $k  = 2$ (\cite{stability}) and Chang, Ding for $k = 1$ (\cite{ChangDing2011}). However, the precise form of the decomposition (How many bubbles ? Is the decomposition unique along all possible time sequences ? Is the decomposition continuous in time ?) remained unknown for a long time. In the $1$-equivariant setting on $B^2$, Van der Hout (\cite{VANDERHOUT2003}) was able to prove that only single-bubbling can occur. In the $\mathbb R^2$ case instead of $B^2$, it has been recently proved by Jendrej and Lawrie (\cite{jendrej}) that the bubbling happens continuously in time (in the $k$-equivariant setting) using dispersive PDEs techniques. In the general non-symmetrical case, one only has a partial result, due to Jendrej, Lawrie and Schlag (\cite{Jendrej_Lawrie_Schlag_2025}): along any sequence of times, one can find a subsequence along which bubbling happens. However, they only treat the case $v_0(x,t) = v_0(x)$ and their methods rely on the monotonicity of the energy, which is not available in our setting. Even more recently, it has been shown by Kim and Merle (\cite{kim2025classificationglobaldynamicsenergycritical}) that there is no blow-up at all when $k \geq 3$, still in the $\mathbb R^2$ case with $v_0(x,t) = v_0(x)$. This was already hinted by a previous stability result from Gustafson, Nakanishi and Tsai (\cite{stability}) for $k \geq 3$. It is possible that no blow-up occurs for the ball when $k \geq 3$, but in any case, our rigidity result covers the unknown $k = 2$ case.

The aim of this paper is to provide a self-contained proof that if $v(x,t)$ blows up in finite time, then there can be only one bubble in the multi-bubble decomposition (\ref{multi bubble decomposition along a sequence of times}), following the intersection-comparison argument from Van der Hout (\cite{VANDERHOUT2003}) and Matano (\cite{matano}). This rigidity result could be used as a starting point to show nonexistence of finite time blow-up for $k \geq 2$. 

Our main contribution to the theory is the extension of the single bubble result from \cite{VANDERHOUT2003} for $k$-equivariant boundary data $v_0(x,t)$ from $k = 1$ to $k \geq 1$. The novelty relies on the proof of Lemma \ref{lemma:geometry of h_1 < h < h_2}, which is stated in \cite{VANDERHOUT2003} without proof and is actually non-trivial. This lemma studies the topology of level sets $h_1 < h < h_2$ for solutions $h, h_i$ of (\ref{heat map formulation for h(r,t)}) and shows that these level sets can be connected to the parabolic boundary of the square. The main difficulty follows from the fact that the boundary of a connected component $\Gamma$ of such a level set is not necessarily connected and both $h = h_1$ or $h = h_2$ could happen. To circumvent this problem, we rely on the notion of accessible boundary points from \cite{newman1964elements}, which are dense in the boundary. Even though the boundary is not connected, most boundary points can be reached from inside the domain. Hence, they can be connected together through some path inside the domain and this is the key element used to prove Lemma \ref{lemma:geometry of h_1 < h < h_2}.

We also extend the global existence result from Chang and Ding (\cite{ChangDing2011}) from $k = 1$ with a time-independent boundary data $v_0(x,t) = v_0(x)$ to $k \geq 1$ with a more general time-dependent boundary data $v_0(x,t)$ (Proposition \ref{prop:criterion for global solution}). We also show the existence of a solution which blows-up at infinity for any $k \geq 1$ (Proposition \ref{prop:blow up infinity global solution}), as a generalization of the result of \cite{ChangDing2011} from $k = 1$ to $k \geq 1$. The rigidity theorem from Van der Hout for $k = 1$ (\cite{VANDERHOUT2003}) relies on these results from Chang and Ding (\cite{ChangDing2011}), but they were not shown to be true in the new context of a time-dependent boundary data. Hence, we fill the gap and show that they remain true in our setting.

In addition, we strive to give a complete and modern presentation of classical $L^p$, parabolic Sobolev and Schauder estimates for which the main reference is the book from Ladyzhenskaia, Solonnikov, and Ural’tseva (\cite{ladyzhenskaia1968linear}) where some proofs (mainly Theorem \ref{thm:parabolic sobolev embedding}) are omitted. The standard references (e.g. \cite{krylov} for Schauder estimates) on that matter also assume some a priori regularity (e.g. a priori $C^{2+\alpha}$ regularity for Schauder estimates) of the solution, which we weaken so that, as expected, we obtain the higher regularity for ``free'' when dealing with parabolic equations. We also prove that, under the finite energy assumption, the concentration of energy argument from Struwe (\cite{struwe2008variational}), which is essentially local but was done on a manifold without boundary, works in the interior of the ball (which has a boundary) without imposing any boundary condition and with the absence of monotonicity for the energy. This is the content of Theorem \ref{thm:smoothness around all but finitely many points} which is to contrast with the analog work from Chang (\cite{ChangStruweAnalog}), where he assumes a time-independent boundary data $v_0(x,t) = v_0(x)$. We then prove that under the $k$-equivariant boundary data assumptions (which implies finiteness of the energy), there is no singularity at the boundary and interior singularities can only occur at the origin (Theorem \ref{thm:smoothness around all but finitely many points}, Proposition \ref{prop:smoothness at the boundary}). The work from Qing (\cite{Qing1995OnSO}), which is also local, finally allows getting a decomposition in terms of multi-bubbles along appropriate sequences (Theorem \ref{thm: full bubble decomposition}). No reference for the bubble-tree decomposition in the context of a time-dependent boundary data $v_0(x,t)$ could be found, so we have decided to include it in this work as it requires slightly adapting Struwe's work.

The generalization of the arguments from Van der Hout (\cite{VANDERHOUT2003}) and Chang, Ding (\cite{ChangDing2011}) holds because the Comparison Principle (Theorem \ref{comparison principle}), the intersection-number argument (Theorem \ref{lemma:intersection argument with chains mod 4}) and the different characterizations of blow-up (e.g. Proposition \ref{prop:criterion for global solution}) all generalize from $k = 1$ to $k \geq 1$. This is mostly due to the fact that the $k^2$ factor in the equation (\ref{heat map formulation for h(r,t)}) for $h(r,t)$ does not change the sign of the nonlinearity, gives a better decay $h(r,t) = r^k \tilde{h}(r,t)$ near $r = 0$ and that the family of stationary solutions given by the arctangent function still exists when $k \geq 1$.

The main theorems are the following:

\begin{theorem}[Nonexistence of multi-bubbles]\label{thm: main theorem, nonexistence bubble trees}
   Let $v(x,t)$ solve (\ref{heat map flow}) with smooth and $k$-equivariant boundary data, $k \geq 1$. Assume that $v$ blows-up at time $T < +\infty$. Consider a sequence $T_n \to T^-$, as well as finitely many smooth, non-constant, $k$-equivariant harmonic maps $\{\omega_1, ..., \omega_p\}$ from $\mathbb R^2 \cup \{\infty\} \simeq S^2$ to $S^2$ and positive sequences $(R_n^{(i)})_{n \geq 0}$, $i \in \{1,...,p\}$, all converging to zero, satisfying for all $i \neq j$,
    \begin{align}
        \frac{R_n^{(i)}}{R_n^{(j)}} + \frac{R_n^{(j)}}{R_n^{(i)}}  \to +\infty, \quad n \to +\infty,
    \end{align}
    and
 \begin{align}
     v(R_n^{(i)}x,T_n)  \to \omega_i(x), \quad n \to +\infty,
 \end{align}
    pointwisely almost everywhere on $\mathbb R^2$. Then $p = 1$.
    
    In particular, there is only one bubble (i.e., harmonic map) in the multi-bubble decomposition from Theorem \ref{thm: full bubble decomposition} and Corollary \ref{rmk:recovering the bubbles}.
\end{theorem}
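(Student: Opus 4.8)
The plan is to pass to the scalar reduction of (\ref{heat map flow}): by Lemma~\ref{k-equivariance, smoothness of h} the solution is determined by its angle function $h(r,t)$ solving (\ref{heat map formulation for h(r,t)}) on $[0,1]\times[0,T)$, and then to run an intersection--comparison argument in the style of Matano (\cite{matano}) and Van der Hout (\cite{VANDERHOUT2003}). Two structural facts are recorded first. Since $v(0,t)$ is a single point, $\sin h(0,t)=0$, so $h(0,t)\in\pi\mathbb Z$; continuity in $t$ together with the symmetry $h\mapsto h+\pi$ of (\ref{heat map formulation for h(r,t)}) lets us normalise $h(0,t)\equiv 0$. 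By Theorem~\ref{thm:smoothness around all but finitely many points} and Proposition~\ref{prop:smoothness at the boundary} the only possible singular point is the origin; hence each bubble $\omega_i$ forms at the origin (so the separation hypothesis reduces to $R_n^{(i)}/R_n^{(j)}+R_n^{(j)}/R_n^{(i)}\to\infty$) and, being an equivariant harmonic map $S^2\to S^2$, $\omega_i$ has an angle profile of the form $c_i\pi\pm 2\arctan((\cdot/\lambda_i)^k)$; moreover, away from the origin $h(\cdot,t)$ converges as $t\to T^-$ to a fixed smooth profile $h(\cdot,T)$ with the fixed smooth boundary value $h_0(1,\cdot)$. After relabelling, $R_n^{(1)}\ll\cdots\ll R_n^{(p)}\to 0$.

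I would next translate ``$p$ bubbles'' into a statement about the profile $h(\cdot,T_n)$. An $\varepsilon$-regularity / no-concentration estimate (Lemma~\ref{lemma:no concentration of energy away origin, general case}) shows that in each neck annulus $R_n^{(i)}\ll r\ll R_n^{(i+1)}$ the oscillation of $h(\cdot,T_n)$ tends to $0$; together with the bubble limits this says that, up to errors tending to $0$, $h(\cdot,T_n)$ is a $p$-step staircase joining the plateau values $0=a_0,a_1,\dots,a_p$ with $a_i-a_{i-1}=\varepsilon_i\pi$, $\varepsilon_i\in\{\pm 1\}$, each step being a rescaled monotone harmonic profile, and with $a_p=\lim_{r\to 0^+}h(r,T)$ fixed.

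Now suppose $p\ge 2$, aiming at a contradiction. The engine is the intersection--comparison principle: for any stationary solution $\phi$ of (\ref{heat map formulation for h(r,t)}) the zero number $Z\big(h(\cdot,t)-\phi\big)$ is finite for $t>0$, non-increasing in $t$, and strictly decreases whenever $h-\phi$ has a degenerate zero (this is Theorem~\ref{lemma:intersection argument with chains mod 4}, whose proof rests on the level-set Lemma~\ref{lemma:geometry of h_1 < h < h_2}; the Comparison Principle Theorem~\ref{comparison principle} is used in parallel to trap $h$ between stationary solutions). As comparison functions I would take the constant solutions $m\pi$ and the singular harmonic profiles $c\pi\pm 2\arctan((r/\lambda)^k)$ --- all available for every $k\ge 1$ precisely because the $k^2$ factor does not alter the sign of the nonlinearity and the arctangent family persists. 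Placing the scale $\lambda=\lambda_n$ of such a profile inside a neck between two consecutive steps of the staircase and following the plateau values of $h(\cdot,T_n)-\phi$ across the transitions, one sees that a second step contributes sign changes of $h-\phi$ that are incompatible with the value of the zero number at a time $t_0<T$ bounded away from blow-up, where $h(\cdot,t_0)$ is smooth and --- together with the fixed smooth boundary datum and after the limit $\lambda_n\to 0$ (so $\phi\to m\pi$) --- has an intersection count bounded independently of $n$. The bookkeeping ``mod $4$'' in Theorem~\ref{lemma:intersection argument with chains mod 4} is what makes this robust to the a priori unknown orientations $\varepsilon_i$ of the steps and to the degenerate behaviour $h=r^k\tilde h$ at $r=0$.

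The step I expect to be the real obstacle is precisely the topological input behind the monotonicity of the intersection count in the presence of necks, namely Lemma~\ref{lemma:geometry of h_1 < h < h_2}: one must prove that every connected component of the level set $\{(r,t):h_1(r,t)<h(r,t)<h_2(r,t)\}$ in the parabolic square can be joined, by a path inside it, to the parabolic boundary. As the paper stresses, the difficulty is that the boundary of such a component need not be connected and may simultaneously touch $\{h=h_1\}$ and $\{h=h_2\}$, so the naive connectedness argument fails; the way around is to use the set of accessible boundary points in the sense of Newman (\cite{newman1964elements}), which is dense in the boundary, and to link these by interior paths. The remaining ingredients --- the $\varepsilon$-regularity in the necks, the decay $h=r^k\tilde h$ near the origin, and checking that the chosen constants and arctangent profiles are genuine stationary sub/supersolutions --- are routine once this machinery is in place.
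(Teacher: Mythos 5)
Your high-level strategy --- reduce to the scalar angle $h$, normalise $h(0,t)\equiv 0$, and run a Matano-style intersection-comparison with the arctangent profiles and the constants $m\pi$ as comparison functions, hinging on Lemma~\ref{lemma:geometry of h_1 < h < h_2} --- does match the paper's approach. You also correctly identify Lemma~\ref{lemma:geometry of h_1 < h < h_2} (accessible boundary points à la Newman) as the nontrivial topological input, and you correctly observe that in the equivariant setting all bubbles form at the origin so the separation condition reduces to $R_n^{(i)}/R_n^{(j)}+R_n^{(j)}/R_n^{(i)}\to\infty$.

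However, there are two genuine gaps that would stop the argument from going through as written.

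First, the ``staircase'' picture. You claim that Lemma~\ref{lemma:no concentration of energy away origin, general case} gives $\mathrm{osc}\,h(\cdot,T_n)\to 0$ in each neck annulus $R_n^{(i)}\ll r\ll R_n^{(i+1)}$, so that $h(\cdot,T_n)$ is approximately a $p$-step staircase with steps $\pm\pi$. That lemma is a statement about fixed balls $B_R(x_0)$ as $t\to T$; it does not control oscillation on annuli whose inner and outer radii both shrink to zero with $n$. The no-neck oscillation bound would have to be extracted from the energy quantization (\ref{pythagorean decomposition of energy}), and even then the naive Cauchy--Schwarz bound only gives $|h(b,T_n)-h(a,T_n)|\lesssim \sqrt{\delta_n\log(b/a)}$ with $\delta_n\to 0$ but $\log(b/a)\to\infty$, so it is not free. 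The paper's proof deliberately sidesteps this: it never establishes a staircase profile. Instead it only uses that each bubble limit $h_\infty^{(i)}$ sweeps through all of $(\varepsilon,\pi-\varepsilon)$, so for large $n$ the function $h(\cdot,T_n)$ must hit every value in $(\varepsilon,\pi-\varepsilon)$ once inside a disjoint interval $[r_0R_n^{(i)},r_1R_n^{(i)}]$ for each $i$; this requires nothing about the necks.

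Second, the key structural step $\lim_{r\to 0^+}h(r,T)=\pi$ (Theorem~\ref{thm:limit r to 0 at time T is pi}) is not addressed. You write that $a_p=\lim_{r\to 0^+}h(r,T)$ is fixed, which is true, but a priori Proposition~\ref{value of v and h at origin} and Corollary~\ref{cor:comparison of h with itself and exact value of limsup} only give $a_p\in\{0,\pi\}$ after normalisation. Ruling out the value $0$ is the hardest single step in the paper, occupying most of a page of intricate chain bookkeeping with the barriers $\chi_\alpha=\pi-2\arctan(\alpha^k r^k)$ and the quantities $R_1,R_2,z(\alpha,t)$. Only after that is in hand can one deduce $\liminf h = 0$ and set up the mod-$2$ chains $Q(t,h,\pi/2,\pi,M)$, introduce $r^\pm(t)$ and $V(t)$, and prove $V(t)=\emptyset$ for late $t$ (Lemma~\ref{lemma:v empty set}, Corollary~\ref{cor:only one bubble in the left of r^+}), which is what actually excludes a second bubble. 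Your final ``one sees that a second step contributes sign changes incompatible with the zero number at a fixed $t_0$'' gestures at the right mechanism but would need to be rebuilt from scratch along these lines; the comparison function you place ``inside a neck'' must be combined with the barrier $\theta_\beta$, the stopping time $t_\beta^*$ of Section~7, and the ordering of endpoints $\xi_i$ from the Jordan curve argument in Lemma~\ref{lemma:intersection argument with chains mod 4}, none of which is automatic from the staircase heuristic.
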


\begin{proof}
    The full multi-bubble decomposition from Theorem \ref{thm: full bubble decomposition} implies the weaker formulation used in the above statement in the $k$-equivariant setting (see Corollary \ref{rmk:recovering the bubbles}).  The proof of Theorem \ref{thm: main theorem, nonexistence bubble trees} can be found in Corollary \ref{cor:end of proof}.
\end{proof}

\begin{theorem}[Global existence criterion]
    Let $v(x,t)$ solve (\ref{heat map flow}) on $[0,T)$ with smooth, $k$-equivariant boundary data, $k \geq 1$. Assume that the inclination coordinate $h_0$ of $v_0$ satisfies $h_0(0,t) = 0$, $|h_0(r,t)| \leq \pi$ on $\{0,1\} \times [0,T) \cup [0,1] \times \{0\}$. 
    
    If $T < +\infty$, then $v(x,t)$ and $h(r,t)$ cannot blow-up at time $T$. In particular, if $|h_0(r,t)| \leq \pi$ on $\{0,1\} \times [0,+\infty) \cup [0,1] \times \{0\}$, then $v(x,t)$ and $h(r,t)$ are global. 
\end{theorem}

\begin{proof}
    See Proposition \ref{prop:criterion for global solution}.
\end{proof}

\begin{remark}
    Chang, Ding and Ye (\cite{Chang1992FinitetimeBO}) have shown in the 1-equivariant setting that if $h_0(r,t) = h_0(r)$, $h_0(0) = 0$ and $|h_0(1)| > \pi$, then the solution $h(r,t)$ blows-up in finite-time.
\end{remark}

\begin{theorem}[Existence of $k$-equivariant solutions blowing-up at infinity]
    Let $k \geq 1$. Assume that $\psi = r^k\tilde{\psi}(r)$, $\tilde{\psi} \in C^{\infty}([0,1])$ with $\partial^{(2n+1)}_r \tilde{\psi}(0) = 0$ for all $n \in \mathbb N_{\geq 0}$, $0 \leq \psi \leq \pi$, $\tau(\psi)(1) = 0$ and $\tau(\psi)(r) \geq 0$ for all $r \in [0,1]$, where
    $$
\tau(\psi) := \psi_{rr} + \frac{1}{r}\psi_r -k^2 \frac{\sin(2\psi)}{2r^2}.
    $$
   Further assume there exists $r^* \in (0,1]$ for which $\psi(r^*) = \pi$. For example, one can choose
$$
\psi(r) = 4 \arctan \left( r^k \right).
$$
   Then the solution $h(r,t) \in C^{\infty}([0,1] \times (0,T))$ of (\ref{heat map formulation for h(r,t)}) with boundary data $h_0(r,t) = \psi(r)$ on $[0,1] \times \{0\} \cup \{0,1\} \times [0,+\infty)$ is global, has finite energy (\ref{Energy of v in terms of h}), satisfies $0 < h < \pi$ and $h_t > 0$ on $(0,1) \times (0,T)$, as well as $h_r > 0$ on $(0,1] \times (0,T)$. Moreover, $h(r,t)$ and the corresponding $k$-equivariant solution
   $$
   v(re^{i\theta},t) = \left( e^{ik \theta} \sin h(r,t), \cos h(r,t) \right)
   $$
   of (\ref{heat map flow})  blow-up at infinity in the sense that
   $$
\limsup \limits_{t \to +\infty}||h_r(\cdot,t)||_{C^1([0,r_0])} = \limsup \limits_{t \to +\infty}||\nabla v(\cdot,t)||_{C^1(B_{r_0}(0))} = +\infty \quad \forall r_0 \in (0,1].
$$
\end{theorem}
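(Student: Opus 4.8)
The plan is to establish the qualitative properties of $h$ in the order they are needed, using the Maximum and Comparison Principle as the central tool, and then deduce blow-up at infinity from a contradiction with the absence of stationary solutions at that energy level. First I would verify that the prescribed boundary data is admissible: the hypotheses $\psi = r^k \tilde\psi(r)$ with vanishing odd derivatives at $0$, together with $\psi(0) = 0$ and $\psi(1) = \psi_0$, guarantee via Lemma \ref{k-equivariance, smoothness of h} that the associated $v_0$ is smooth and $k$-equivariant, so the local theory applies and produces a solution $h \in C^\infty([0,1]\times(0,T))$ on a maximal interval $(0,T)$, $T \le +\infty$. The compatibility $\psi(0)=0$, $\psi(1)=\psi_0$ is preserved by the flow since the boundary values are time-independent.

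Next I would prove the monotonicity properties. For $0 < h < \pi$: the constants $0$ and $\pi$ are stationary solutions of (\ref{heat map formulation for h(r,t)}), the initial data satisfies $0 \le \psi \le \pi$ with $\psi \not\equiv 0, \pi$ on the interior, and the boundary values $h(0,t)=0$, $h(1,t)=\psi_0 \in [0,\pi]$ lie in $[0,\pi]$; the Comparison Principle (Theorem \ref{comparison principle}) then gives $0 \le h \le \pi$, and the strong maximum principle upgrades this to strict inequality on $(0,1)\times(0,T)$ (using $r^* $ so that $\psi \not\equiv 0$, and noting $h$ cannot touch $\pi$ interiorly unless identically $\pi$). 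For $h_t > 0$: differentiating (\ref{heat map formulation for h(r,t)}) in $t$, the function $w = h_t$ solves a linear parabolic equation $w_t = w_{rr} + \frac1r w_r - k^2 \frac{\cos(2h)}{r^2} w$ with $w = 0$ on the lateral boundary $\{0,1\}\times(0,T)$ and $w|_{t=0} = \tau(\psi) \ge 0$, $\not\equiv 0$; the maximum principle yields $w > 0$ on $(0,1)\times(0,T)$. The key subtlety here, which I would need to handle carefully, is the singular coefficient $-k^2\cos(2h)/r^2$ near $r = 0$: one should work with the substitution $h = r^k \tilde h$ (and similarly $w$), which regularizes the equation and is exactly where the $k \ge 1$ decay mentioned in the introduction is used. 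For $h_r > 0$ on $(0,1]\times(0,T)$: I would consider $z = h_r$ (or better, an angular-derivative argument / the intersection-comparison with the arctangent family), use that $h_r(0,t) = 0$ (from $h = r^k\tilde h$, $k\ge 1$ — actually $h_r(0,t)=0$ iff $k\ge 2$, for $k=1$ one has $h_r(0,t)=\tilde h(0,t)>0$, so a small separate argument is needed at $r=0$), $h_r > 0$ on $\{r=1\}$ would follow from $h$ attaining its boundary maximum... — more robustly, the monotonicity $h_r>0$ can be obtained by comparison with the stationary arctangent solutions, exploiting that $h$ started monotone (which follows from $\psi$ monotone; note $\psi(r)=4\arctan(r^k)$ is increasing) and that monotonicity in $r$ is preserved.

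Then I would establish global existence, $T = +\infty$, by invoking Proposition \ref{prop:criterion for global solution}: since $0 < h < \pi$ strictly, the solution does not develop the energy concentration that would be required for finite-time blow-up — concretely, finite-time blow-up at the origin would force $h(R_n x, T_n) \to$ a nonconstant harmonic map, whose trace would need $h$ to pass through $\pi$ (a degree-one bubble covers the sphere once), contradicting $h < \pi$. The finiteness of the energy (\ref{Energy of v in terms of h}) follows from the smoothness and boundedness of $\nabla v$ on $\overline{B^2}$ for each $t$, combined with energy estimates controlling it uniformly; alternatively it is part of the output of the $k$-equivariant local theory.

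Finally, for blow-up at infinity: since $h_t > 0$ and $0 < h < \pi$, the limit $h_\infty(r) := \lim_{t\to\infty} h(r,t)$ exists pointwise and, by parabolic regularity plus the monotone convergence, $h_\infty$ is a (weak, hence smooth away from $0$) stationary solution of $\tau(h_\infty) = 0$ on $(0,1)$ with $h_\infty(0) = 0$, $h_\infty(1) = \psi_0$, and $h_\infty \ge \psi \ge $ the value forcing $h_\infty(r^*) \ge \pi$... — here is the crux. I would argue as follows: the only smooth solutions of $\tau(h)=0$ on $[0,1]$ with $h(0)=0$ and $0 \le h \le \pi$ are, up to the scaling $r \mapsto \lambda r$, the functions $2\arctan((r/\lambda)^k)$, none of which attains the value $\pi$ on $[0,1]$ (they reach $\pi$ only as $r\to\infty$). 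Since $h_\infty \ge \psi$ on $[0,1]$ (by $h_t\ge 0$ we get $h \ge \psi$, wait — we need $h_\infty \ge \psi$; actually $h(r,t)\ge h(r,0)=\psi(r)$ for all $t$ since $h_t\ge 0$, hence $h_\infty \ge \psi$) and $\psi(r^*) = \pi$, we would get $h_\infty(r^*) \ge \pi$, forcing $h_\infty(r^*) = \pi$; but then $h_\infty$ would be a stationary solution touching $\pi$ at an interior point, impossible unless the scaling parameter degenerates. The resolution is that no such finite-energy stationary $h_\infty$ exists, so convergence to a stationary limit in $C^1_{loc}((0,1])$ fails near $r = 0$: the energy must escape to the origin as $t\to\infty$, i.e. there is a scale $R(t)\to 0$ along which $h(R(t) x, t) \to 2\arctan(|x|^k)$ locally, which is precisely infinite-time blow-up. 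Making this last step rigorous — passing from "no finite-energy stationary limit" to "a genuine bubble forms at $r=0$ as $t\to\infty$" — is the main obstacle, and I expect to handle it by combining the monotone-limit argument with the $\epsilon$-regularity/concentration machinery of Theorem \ref{thm:smoothness around all but finitely many points} and the bubble-extraction of Theorem \ref{thm: full bubble decomposition} applied at $t\to\infty$, together with a lower bound on $E(h(t))$ that stays $\ge E_{\mathrm{bubble}} = 4\pi k$ (the energy of one $k$-bubble) forcing the defect; one also checks the energy cannot all be carried by $h_\infty$ since $h_\infty$ being non-stationary-admissible means the "regular part" has strictly smaller energy.
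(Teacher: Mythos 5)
Your overall architecture matches the paper's proof (comparison principle for $0<h<\pi$, parabolic equation for $h_t$ to get $h_t>0$, Proposition~\ref{prop:criterion for global solution} for globality), but two steps have genuine gaps that the paper closes with specific devices you do not supply.

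\textbf{The step $h_r>0$.} Your sketch — "monotonicity in $r$ is preserved" — is not a proof; monotonicity preservation is not automatic for a parabolic equation with the singular lower-order coefficient $-k^2\cos(2h)/r^2$, and the equation satisfied by $z=h_r$ has a nontrivial zeroth-order term plus a singular first-order term that changes the sign structure. The paper actually establishes $h_r(0,t)>0$ via Hopf's Lemma for the 2D radial equation (not via the expansion of $h = r^k\tilde h$: even for $k\ge 2$, $h_r(0,t)=0$ and one needs the boundary-point lemma, not the interior value), and then uses the pointwise integral identity (\ref{eq:sacks-uhlenbeck identity})
\begin{equation*}
2\int_0^r s^2 h_r h_t\,ds = r^2 h_r^2 - k^2\sin^2(h)
\end{equation*}
to rule out a first zero $r^*>0$ of $h_r(\cdot,t)$: at such a point the left side is strictly positive (because $h_t>0$) while the right side is nonpositive. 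You would need something playing this role; without it the monotonicity claim is unsupported.

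\textbf{Blow-up at infinity.} You correctly identify the right-hand pieces (stationary solutions on $(0,1]$, $h_\infty\ge\psi$, no finite-energy stationary profile compatible with $h_\infty(r^*)\ge\pi$), and you honestly flag the bridge from "no stationary limit" to "a bubble forms" as the main obstacle. The paper closes this using the energy dissipation identity $\frac{d}{dt}E(h(t)) = -\int_0^1 |h_t|^2 r\,dr$ (valid because the boundary data is time-independent), which both establishes the \emph{uniform-in-time} finite energy bound and produces a sequence $T_n\to\infty$ with $\|h_t(\cdot,T_n)\|_{L^2(r\,dr)}\to 0$; along this sequence, if $h_r$ stayed bounded, a compactness argument would give a smooth harmonic map $\overline v_\infty$ on $\overline{B^2}$ with $\overline v_\infty(\partial B_{r^*}(0))=\{(0,0,-1)\}$ and $\overline v_\infty(0)=(0,0,1)$, which is impossible by Lemaire's theorem. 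Your plan of using Theorem~\ref{thm:smoothness around all but finitely many points} and Theorem~\ref{thm: full bubble decomposition} at $t\to\infty$ is in the right spirit but skips the energy-inequality mechanism that actually produces the good time sequence $T_n$. A further local issue: your "touching $\pi$ at an interior point" contradiction does not apply to the paper's own example $\psi=4\arctan(r^k)$, where $r^*=1$ is a boundary point; the correct resolution (as the paper derives at the end) is that $h_\infty\equiv\pi$ on $(0,1]$, i.e.\ the monotone limit is the constant $\pi$ with a jump at $r=0$, which is where the bubble escapes. Finally, your claim that finite energy is "part of the output of the local theory" only gives finiteness on bounded time intervals; the uniform bound as $t\to\infty$ requires the monotonicity of $E(h(\cdot,t))$ obtained from the time-independence of the boundary data, which you do not invoke.
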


\begin{proof}
    See Theorem \ref{thm:existence of global solution} and Proposition \ref{prop:blow up infinity global solution}.
\end{proof}

\section{Outline of the proof}

This paper aims to be self-contained. That is why the third section goes back to the problem of existence and regularity for nonlinear parabolic problems. The aim of this section is, through a fixed point argument and a Schauder iteration, to deduce the regularity that one has for 'free' on a solution to the harmonic map heat flow. It is proved in Corollary \ref{cor: existence of solution for heat map flow} that a solution of (\ref{heat map flow}) has regularity
$$
    v(t,x) \in C^0([0,T_{\max}),C^1(\overline{B^2})) \cap C^{\infty}((0,T_{\max}) \times \overline{B^2}) 
$$
and that $\partial_t v, \partial_t \nabla_x v, \Delta v$ have additional Hölder-regularity at $t = 0$ (Proposition \ref{prop:holder continuity partial_t v}) even though we do not assume any kind of $m$-th order compatibility conditions (such as \cite[Chapter 7.1.4, Theorem 7]{evans10}). This is not trivial for nonlinear equations and those results only rely on the smoothness of the boundary data but not on its symmetry. 

In the fourth section, we add the assumptions of finite energy and symmetry of the boundary data. Under those assumptions, one obtains additional smoothness on $\overline{B^2} \setminus \{0\} \times \{T\}$ (Proposition \ref{prop:smoothness at the boundary}), thanks to the concentration of energy argument from Struwe (\cite{struwe2008variational}), as well as existence and rigidity for the limits $\lim \limits_{x \to 0} v(x,T), \lim \limits_{r \to 0^+} h(r,T)$ (Proposition \ref{value of v and h at origin}). Using the so-called ``barrier argument'', we also refine the description of blow-up. If $h(0,t) = 0$ for all $t \in [0,T)$, we prove that it cannot be that the boundary data $h_0$ is bounded by $\pi$ (Proposition \ref{prop:criterion for global solution}, which generalizes the result from \cite{ChangDing2011} to $k \geq 2$ and time-dependent boundary data) and one has 
$$
    \limsup_{\substack{r \to 0^+ \\ t \to T^-}}|h(r,t)| = \pi.
$$
As a bonus, we also generalize the other result from \cite{ChangDing2011} and show that there exists global solutions which blow-up at infinity for any $k \geq 1$.

In the fifth section, we recall the maximum principle and a comparison principle for the harmonic map heat flow problem. 

In the sixth section, following the lap-number argument from \cite{VANDERHOUT2003} and \cite{matano}, we are concerned on the level sets $h_1 < h_2$, as well as $h_1 < h < h_2$ for solutions $h, h_i$ of (\ref{heat map formulation for h(r,t)}). We show that those levels sets are connected to the parabolic boundary (Lemma \ref{geometry of h_1 < h_2} and Lemma \ref{lemma:geometry of h_1 < h < h_2}). Moreover, at a fixed time, there cannot be too many successive intersections between $h$ and $h_i$ on $[0,1]$ and this intersection number cannot increase over time (Lemma \ref{lemma:intersection argument with chains mod 4}). These topological results play a key role in the single bubble result, as one can obtain barriers from them restricting the development of bubbles.

Armed with this a priori knowledge on a solution which blows-up, we are prepared to prove the single bubble result in the seventh section. This knowledge, together with the comparison principle, imply (up to replacing $h$ by $-h$) that 
    $$
    \limsup_{\substack{r \to 0^+ \\ t \to T^-}} h(r,t) = \pi, \quad \lim \limits_{r \to 0^+} h(r,T) \in \{0,\pi\}.
    $$
    The difficult part is proving that $\lim \limits_{r \to 0^+} h(r,T) = \pi$, which follows by studying chains satisfying the property $P(t,h,h_1 = \pi - 2\arctan(\alpha^kr^k),h_2= \pi,M)$ from Lemma \ref{lemma:intersection argument with chains mod 4} when $t$ and $\alpha$ are large enough. A similar argument shows that the scale $(R_n)_{n \in \mathbb N}$ of any bubble lies on the left of the quantity 
  $$
 r^+(t) := \sup\{r \in [0,1]: h(s,t) \leq \pi \quad \forall s \in [0,r]\}
  $$
  for all $t$ large enough. After proving $\lim \limits_{r \to 0^+} h(r,T) = \pi$, one can assume without loss of generality that $h(r,T) > \pi/2$ on $(0,1]$ and $h(1,t) > \pi/2$ for $t \in [t_0,T]$. Another use of these chains and the Maximum Principle allows to exclude multi-bubbling.

Finally, the appendices build on top of each other and provide the aforementioned modern presentation of classical $L^p$, parabolic Sobolev and Schauder estimates, as well as the concentration of energy argument from Struwe.

\section{Local Existence and regularity} 
We first consider general nonlinear heat equations and prove a local existence and uniqueness result, before deducing existence of solutions for (\ref{heat map flow}). The goal of this section is to obtain a maximum of regularity for free on the solutions of the harmonic map heat flow, which is not trivial because the equation is nonlinear and it is not expected that the boundary data and the nonlinear term satisfy the usual compatibility conditions (e.g. \cite[Chapter 7.1.4, Theorem 7]{evans10}) required to get infinite differentiability.

We start with a classical fixed point argument to deduce the existence of a solution to nonlinear heat equations (Theorem \ref{thm:taylor local existence}). Combined with a Schauder iteration (Theorem \ref{thm:resgularty solution nonlinear pb}), one will deduce smoothness of the solution away from time zero. The fixed point argument is standard but we refine it to give a global existence result and a short-time estimate on the norm of the solution. This short-time estimate is used to prove that if $v$ is the solution to the harmonic map heat flow, then  $\partial_t v$, $\nabla_x \partial_t v$ and $\Delta v$ have some additional Hölder-regularity at time $t = 0$ (Proposition \ref{prop:holder continuity partial_t v}). This regularity is actually not needed for the bubble-tree result but only to show existence of a solution which blows-up at infinity (Proposition \ref{prop:blow up infinity global solution}).

Then, we turn to the particular structure of solutions under the $k$-equivariant symmetry assumption and show that the harmonic map heat flow problem reformulates as a one-dimensional nonlinear heat equation (Proposition \ref{prop:equivalent fomrulation heat map flow}).

\begin{notation}
    By $|D_x^kD_t^l u|$, we consider the Euclidean norm of the vector $(\partial_{x_1}^{k_1} ... \partial_{x_n}^{k_n}\partial_t^l u)_{k_1,...,k_n}$ for any indices $k_1 + ... + k_n = k$.

    For $x,t \in \mathbb R^n \times \mathbb R$ and $r > 0$, $C(x,t;r)$ denotes the open cylinder
    $$
    C(x,t;r) := \{(y,s) \in \mathbb R^n \times \mathbb R: |x-y| < r, t-r^2 < s < t\}.
    $$

    For $\alpha \in (0,1)$ and $u(x,t)$, $[u]_{\alpha,\alpha/2;U}$ denotes the Hölder semi-norm
\begin{align}
        \sup_{(x,t) \neq (y,s) \in U} \frac{|u(x,t)-u(y,s)|}{\left(|x-y| + |s-t|^{1/2}\right)^{\alpha}} &\sim \sup_{(x,t) \neq (y,t) \in U} \frac{|u(x,t)-u(y,t)|}{|x-y|^{\alpha}} \notag \\
    &+ \sup_{(y,t) \neq (y,s) \in U} \frac{|u(y,t)-u(y,s)|}{|s-t|^{\alpha/2}} \label{equivalence Holder norm}
\end{align}
and for $k,l \in \mathbb N_{\geq 0}$, $||u||_{k+\alpha,l+\alpha/2;U}$ denotes the norm
\begin{align}
    ||u||_{k+\alpha,l+\alpha/2;U} &= ||u||_{L^{\infty}_{x,t}(U)}+\sum_{i=1}^k ||D^i_x u||_{L^{\infty}_{x,t}(U)} + \sum_{j=1}^l  ||\partial^j_t u||_{L^{\infty}_{x,t}(U)} \notag \\
&+ [D^k_x u]_{\alpha,\alpha/2;U} + [\partial_t^l u]_{\alpha,\alpha/2;U}. \label{eq:holder norm}
\end{align}
Let $U \subset \mathbb R^n$ be any set (not necessarily open). When we write $u \in C^{k+\alpha,l+\alpha/2}(U)$ for $\alpha \in (0,1)$, we assume the norm (\ref{eq:holder norm}) to be finite. When we write $u \in C^{k,l}(U)$, we do not assume boundedness of $u$ and its derivatives, only continuity on $U$ (up to points $U \cap \partial U$). When we write $C_{loc}^{k+\alpha,l+\alpha/2}(U)$, it is to precise that we only assume (Hölder-)continuity on compact subsets $K \subset U$. Finally, $C^{k+\alpha,l+\alpha/2}_0(U)$ is the space of  $C^{k+\alpha,l+\alpha/2}(U)$ functions which vanishes at the boundary $\partial U$.
\end{notation}

\begin{theorem}[Local Existence of Solutions for nonlinear heat equation]\label{thm:taylor local existence}
    Consider the following general nonlinear heat equation 
\begin{align}
     V_t(x,t) &= \Delta V(x,t) + F(x,t,V(x,t),\nabla_x V(x,t)) , \quad (x,t) \in \Omega \times  (0,+\infty), \label{general nonlinear heat equation} \\
    V(x,t) &= 0, \quad (x,t) \in \partial \Omega \times [0,+\infty), \notag \\
    V(x,0) &= V_0(x), \quad x \in \Omega, \notag
\end{align}
where $\Omega \subset \mathbb R^n$ is a bounded, open, connected set with smooth boundary, $V: \overline{\Omega} \rightarrow \mathbb R^m$, the nonlinearity $F \in C^{0}(\overline{\Omega} \times \mathbb R_{\geq 0} \times \mathbb R^m \times \mathbb R^{n \times m};\mathbb R^m)$ is continuous, locally Lipschitz with respect to $(P,Q)$, i.e., 
    $$
    ||F(x,t,P_1,Q_1) - F(x,t,P_2,Q_2) ||_{L^{\infty}(\Omega)} \leq ||F||_{Lip,K} \cdot \left(|P_1-P_2| + |Q_1-Q_2|\right) 
    $$
    on any compact sets $K \subset \overline{\Omega} \times \mathbb R \times \mathbb R^m \times \mathbb R^{n \times m}$,
    and the initial data is $$
V_0 \in C^1_0(\overline{\Omega}) = \{f \in C^1(\overline{\Omega};\mathbb R^m): f_{|\partial \Omega} = 0\}.
$$
Then there exists a unique local solution
\begin{equation*}
    V(t,x) \in C^0([0,T_{\max}),C^1_0(\overline{\Omega}))
\end{equation*}
solving the fixed point problem
\begin{align}
    V(t) = e^{t\Delta} V_0 + \int_0^t e^{(t-s)\Delta} F(\cdot,s,V(\cdot,s),\nabla_x V(\cdot,s)) ds. \label{integral problem}
\end{align}
In fact, for all $0 < \alpha < 1$, the solution always has additional regularity
$$
V(x,t) \in C^{1+\alpha,\alpha/2}_{loc}(\Omega \times [0,T_{\max}) )
$$
(meaning it is Hölder-continuous on $K \times [0,T']$ for $T' < T$ and compact subsets $K \subset \Omega$) and if $V_0 = 0$, then one has regularity up to the boundary
     $$
     V(x,t) \in C^{1+\alpha,\alpha/2}_{loc}(\overline{\Omega} \times [0,T_{\max}) ).
     $$
      Moreover, if $T_{max} < +\infty$, then  
     $$
     \limsup \limits_{t \to T_{\max}}||V(\cdot,t)||_{C^1(\overline{\Omega})} = +\infty.
     $$
    Finally, if $F$ is globally bounded and $F(x,t,P,Q)$ is globally Lipschitz with respect to $(P,Q)$, i.e., 
    $$
    ||F(x,t,P_1,Q_1) - F(x,t,P_2,Q_2) ||_{L^{\infty}(\Omega)} \leq ||F||_{Lip} \cdot \left(|P_1-P_2| + |Q_1-Q_2|\right) 
    $$
    for some $||F||_{Lip}  < +\infty$, then the solution is global. Moreover, one has the short time estimate 
    $$
     \sup_{0 < t < T_0}||V(\cdot,t)||_{C^1(\overline{\Omega})} \leq 2 \sup \limits_{0 < t < 1}||e^{t \Delta}V_0(\cdot)-V_0(\cdot)||_{C^1(\overline{\Omega})} + 4C_{\frac{1}{2}}||V_0(\cdot)||_{C^1(\overline{\Omega})} + 1,
     $$
    where $2C_{\frac{1}{2}}T_0^{\frac{1}{2}}\max\{||F||_{\infty} + 1 , ||F||_{Lip} + 1\} = 1/2$ and $C_{\frac{1}{2}} = C_{\frac{1}{2}}(\Omega)$ is a constant depending on $\Omega$.
\end{theorem}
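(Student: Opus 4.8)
The plan is to solve the integral equation (\ref{integral problem}) by a contraction argument, then bootstrap the regularity and continue the solution. Write $\Phi(V)(t):=e^{t\Delta}V_0+\int_0^t e^{(t-s)\Delta}F\bigl(\cdot,s,V(\cdot,s),\nabla_x V(\cdot,s)\bigr)\,ds$. The analytic inputs are the smoothing estimates for the Dirichlet heat semigroup on $\Omega$: $\|e^{t\Delta}g\|_{C^1(\overline\Omega)}\le C_{1/2}\,t^{-1/2}\|g\|_{L^\infty(\Omega)}$ for $0<t\le1$, $\|e^{t\Delta}g\|_{C^1(\overline\Omega)}\le C_{1/2}\|g\|_{C^1(\overline\Omega)}$ for $g\in C^1_0(\overline\Omega)$, and the strong continuity of $t\mapsto e^{t\Delta}g$ in $C^1_0(\overline\Omega)$ with $e^{t\Delta}g\to g$ in $C^1$ as $t\to0^+$; these follow from the Dirichlet heat-kernel bounds and a density argument. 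Since each $e^{(t-s)\Delta}$ produces functions vanishing on $\partial\Omega$, $\Phi$ maps $X_T:=C^0([0,T];C^1_0(\overline\Omega))$ into itself. Fixing the compact set $K:=\overline\Omega\times[0,1]\times\overline B_{\|V_0\|_{C^1}+1}\times\overline B_{\|V_0\|_{C^1}+1}$, on which $F$ is bounded with Lipschitz constant $L:=\|F\|_{Lip,K}$, and taking $T$ so small that $\sup_{t\le T}\|e^{t\Delta}V_0-V_0\|_{C^1}\le\tfrac14$ and $2C_{1/2}T^{1/2}(\|F\|_{L^\infty(K)}+L)\le\tfrac14$, the estimates above make $\Phi$ a self-map and a contraction of the closed ball of radius $\tfrac12$ about the constant path $V\equiv V_0$ in $X_T$; the Banach fixed point theorem then yields a unique $V\in X_T$ solving (\ref{integral problem}) and remaining inside $K$. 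Running the same argument from an arbitrary $C^1_0$ datum gives uniqueness on a short interval (any solution lies in the relevant ball for short times by continuity), and the standard continuation argument (the set of times on which two solutions agree being both open and closed) upgrades it to uniqueness on any common interval of existence; letting $T_{\max}$ be the supremum of existence times produces the maximal solution $V\in C^0([0,T_{\max});C^1_0(\overline\Omega))$.

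For the additional regularity, set $g(x,t):=F(x,t,V(x,t),\nabla_x V(x,t))$, which is continuous, hence bounded, on $\overline\Omega\times[0,T']$ for each $T'<T_{\max}$; thus $V$ solves the linear Dirichlet problem $V_t=\Delta V+g$ with datum $V_0$ and bounded right-hand side. The interior parabolic $L^p$ estimates give $V\in W^{2,1}_{p,\mathrm{loc}}(\Omega\times[0,T'))$ for every finite $p$, and the parabolic Sobolev embedding (Theorem~\ref{thm:parabolic sobolev embedding}) then yields $V\in C^{1+\alpha,\alpha/2}_{\mathrm{loc}}(\Omega\times[0,T_{\max}))$ for every $\alpha\in(0,1)$. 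When $V_0=0$ the zeroth-order compatibility condition holds, so the boundary parabolic estimates apply as well and one gets $V\in C^{1+\alpha,\alpha/2}_{\mathrm{loc}}(\overline\Omega\times[0,T_{\max}))$.

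For the blow-up alternative, suppose $T_{\max}<+\infty$ and $\limsup_{t\to T_{\max}^-}\|V(t)\|_{C^1(\overline\Omega)}=M<+\infty$; then $\|V(t)\|_{C^1}\le M+1$ on some interval $(T_{\max}-\delta,T_{\max})$. Restricting $F$ to a compact set determined by $M$, $\Omega$ and $T_{\max}$, and centering the contraction ball at the heat evolution of $V(t_0)$, the argument above produces a uniform time step $\tau>0$ depending only on $M$, $\Omega$ and $F$ (not on the particular datum with $C^1$-norm $\le M+1$), over which the corresponding solution exists. Restarting from $V(t_0)\in C^1_0(\overline\Omega)$ at some $t_0\in(T_{\max}-\min(\delta,\tau/2),T_{\max})$ and invoking uniqueness extends $V$ beyond $T_{\max}$, contradicting maximality; hence $\limsup_{t\to T_{\max}^-}\|V(t)\|_{C^1(\overline\Omega)}=+\infty$.

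Finally, if $F$ is globally bounded and globally Lipschitz in $(P,Q)$, no compact-set restriction is needed: centering the contraction ball at the heat evolution of the current datum, the Duhamel term is bounded, independently of the datum, by a constant multiple of $\|F\|_\infty\,t^{1/2}$ (for $t\le1$) and the contraction constant by a multiple of $\|F\|_{Lip}\,t^{1/2}$, so a single time step $\tau>0$ depending only on $\|F\|_{Lip}$ and $\Omega$ works at every restart; iterating over $[0,\tau],[\tau,2\tau],\dots$ with the well-defined $C^1_0$ endpoint values as new data forces $T_{\max}=+\infty$. The short-time estimate comes from bounding $\|V(t)\|_{C^1}$ directly in (\ref{integral problem}): one has $\|e^{t\Delta}V_0\|_{C^1}\le\|e^{t\Delta}V_0-V_0\|_{C^1}+\|V_0\|_{C^1}$, while $\|F(\cdot,s,V,\nabla V)\|_{L^\infty}\le\|F\|_\infty+\|F\|_{Lip}\|V(s)\|_{C^1}$ inserted into the heat estimate produces a term $2C_{1/2}\|F\|_{Lip}T_0^{1/2}\sup_{s<T_0}\|V(s)\|_{C^1}\le\tfrac12\sup_{s<T_0}\|V(s)\|_{C^1}$ once $2C_{1/2}T_0^{1/2}\max(\|F\|_\infty+1,\|F\|_{Lip}+1)=\tfrac12$, which is absorbed on the left-hand side to give the stated inequality. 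The genuine work lies in the Dirichlet heat-semigroup smoothing and $C^1$-continuity estimates on a domain with boundary, and in the parabolic regularity theory of the appendix (in particular, that it delivers the claimed Hölder regularity up to $t=0$ and, when $V_0=0$, up to $\partial\Omega$); the fixed point, uniqueness, continuation and iteration steps are then routine.
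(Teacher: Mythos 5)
Your proposal is correct and follows essentially the same route as the paper: a Banach fixed point argument using the Dirichlet heat semigroup smoothing estimate $\|e^{t\Delta}g\|_{C^1}\lesssim t^{-1/2}\|g\|_{L^\infty}$, followed by the parabolic Sobolev embedding (Corollary~\ref{cor: local parabolic sobolev embedding} interior, Theorem~\ref{thm:boundary parabolic sobolev embedding} boundary when $V_0=0$) for the $C^{1+\alpha,\alpha/2}$ regularity, a restart argument for the blow-up alternative, and iteration with a uniform time step for global existence. The one genuine simplification you make is in the blow-up alternative: by centering the restart ball on the free evolution $e^{t\Delta}V(t_0)$ rather than on the constant path $V(\cdot,T^*)$, you obtain a step size depending only on the $C^1$ bound $M$ directly, and you avoid the paper's intermediate computation of $\|e^{t\Delta}V(\cdot,T^*)-V(\cdot,T^*)\|_{C^1}\le 2\beta$ via the Duhamel representation of $V(T^*)$; both routes work, yours is a touch cleaner.
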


\begin{proof}
One can show (\cite[Chapter 8.1]{taylor2023partialvolI} and \cite[Chapter 13.7, (7.52), Proposition 7.4]{taylor2023partial}) that $e^{t\Delta}$ maps $L^{\infty}(\Omega)$ to $C^1_0(\overline{\Omega})$ for $t > 0$, is strongly continuous from $C^0_0(\overline{\Omega})$ to itself and from $C^1_0(\overline{\Omega})$ to itself. Moreover,
\begin{align*}
||e^{t \Delta} f||_{L^{\infty}(\Omega)} &\leq C_{0} ||f||_{L^{\infty}(\Omega)} \quad \forall t > 0, \\
||e^{t \Delta} f||_{C^1(\overline{\Omega})} &\leq C_{\frac{1}{2}} t^{-\frac{1}{2}} ||f||_{L^{\infty}(\Omega)} \quad \forall t > 0.
\end{align*}

Given any $\alpha > 0$, one can find $T_0 > 0$ small enough for which
$$
||e^{t\Delta} V_0(\cdot) - V_0(\cdot) ||_{C^1(\overline{\Omega})} \leq \alpha/2 \quad \forall t \in [0,T_0] 
$$
by continuity of the semigroup and then $K_1, K_2 > 0$ such that
\begin{align*}
    ||F(\cdot,t,V,\nabla V)||_{L^{\infty}(\Omega)} &\leq K_1 \quad \forall t \in [0,T_0], \\
    ||F(\cdot,t,V_1,\nabla V_1) - F(\cdot,t,V_2,\nabla V_2) ||_{L^{\infty}(\Omega)} &\leq K_2 ||V_1(\cdot,t) - V_2(\cdot,t)||_{C^1(\overline{\Omega})} \quad \forall t \in [0,T_0],
\end{align*}
for all
$$
V, V_1, V_2 \in Z(\alpha,T_0) = \{V \in C^0([0,T_0];C^1_0(\overline{\Omega})): V(x,0) = V_0(x), ||V(\cdot,t) - V_0(\cdot)||_{C^1(\overline{\Omega})} \leq \alpha\}
$$
by boundedness of $F$ and the local Lipschitz property around the closure of $\Omega \times [0,T_0] \times B_{\alpha + ||V_0||_{C^1(\overline{\Omega})}}(0) \times B_{\alpha + ||V_0||_{C^1(\overline{\Omega})}}(0)$.

Let $0 < T_1 \leq T_0$ be taken so that $2C_{\frac{1}{2}}T_1^{\frac{1}{2}}K_1 \leq \alpha/2$ and $2C_{\frac{1}{2}}T_1^{\frac{1}{2}}K_2 < 1$. Then the integral operator
$$
V \mapsto e^{t\Delta} V_0 + \int_0^t e^{(t-s)\Delta} F(\cdot,s,V(\cdot,s),\nabla_x V(\cdot,s)) ds
$$
is a contraction from $Z(\alpha,T_1)$ into itself. The Banach Fixed Point Theorem shows existence of a unique local solution $V \in Z(\alpha,T_1)$ to the corresponding integral problem (\ref{integral problem}). 

Let $T_{\max}$ be maximal with $V \in C^0([0,T_{\max}),C^1_0(\overline{\Omega}))$. We observe that $V$ has additional regularity
\begin{equation*}
    V(x,t) \in C^{1+\alpha,\alpha/2}_{loc}(\Omega \times [0,T_{\max}))
\end{equation*}
for all $0 < \alpha < 1$.

Indeed, fix any compact set $K \subset \Omega$ and any $0 < T' < T_{\max}$. We have that $V(s,y)$, $\nabla_x V(s,y)$ are bounded on $[0,T'] \times \overline{\Omega}$ thanks to our initial regularity given by the fixed point theorem. 

By continuity of $F$, $\partial_t V - \Delta V = F(y,s,V(y,s),\nabla_x V(y,s))$ is bounded as well on $[0,T'] \times \overline{\Omega}$. The parabolic Sobolev embedding (Corollary \ref{cor: local parabolic sobolev embedding}) implies that $V \in C^{1+\alpha,\alpha/2}(K \times [0,T'])$ for all $0 < \alpha < 1$. If $V_0 = 0$, one can apply Theorem \ref{thm:boundary parabolic sobolev embedding} instead to get the estimate up to the boundary.

If $T_{max} < +\infty$ is the maximal time of existence of the solution, then one must have 
$$
\limsup \limits_{t \to T_{\max}}||V(\cdot,t)||_{C^1(\overline{\Omega})} = +\infty.
$$
Otherwise, let $\alpha = \sup \limits_{0 < t < T_{\max}}||V(\cdot,t)||_{C^1(\overline{\Omega})} + 2 < +\infty$ and
$$
\tilde{K}_0 = ||F||_{L^{\infty}(\overline{\Omega} \times [0,2T_{\max}]  \times \overline{B_{\alpha}(0)} \times  \overline{B_{\alpha}(0)})}.
$$
Let $T_{\max}/2 < T^* < T_{\max}$. Observe that
\begin{align*}
||e^{t\Delta} V(\cdot,T^*)||_{C^1(\overline{\Omega})} &= ||e^{(T^*+t)\Delta} V_0(\cdot)
 + \int_0^{T^*} e^{(T^*+t-s)\Delta}F(\cdot,s,V(\cdot,s),\nabla_x V(\cdot,s))ds||_{C^1(\overline{\Omega})}  \\
 &\leq C_{\frac{1}{2}}\left(T^* + t\right)^{-\frac{1}{2}} \alpha  + \int_0^{T^*}C_{\frac{1}{2}}(T^* +t-s)^{-\frac{1}{2}} \tilde{K}_0 ds \\
 &\leq C_{\frac{1}{2}}\left(T^*\right)^{-\frac{1}{2}}\alpha + 2 \tilde{K}_0 C_{\frac{1}{2}} \left[\left(T^*+t\right)^{\frac{1}{2}} - t^{\frac{1}{2}} \right] \\
 &\leq C_{\frac{1}{2}}\left(T^*\right)^{-\frac{1}{2}}\alpha + 2 \tilde{K}_0 C_{\frac{1}{2}} \left(T^*\right)^{\frac{1}{2}} \\
 &\leq 2C_{\frac{1}{2}}\left(T_{\max}\right)^{-\frac{1}{2}}\alpha + 2 \tilde{K}_0 C_{\frac{1}{2}} \left(T_{\max}\right)^{\frac{1}{2}}, \quad t \geq 0, \\
 ||e^{t\Delta} V(\cdot,T^*) - V(\cdot,T^*) ||_{C^1(\overline{\Omega})} &\leq 2C_{\frac{1}{2}}\left(T_{\max}\right)^{-\frac{1}{2}}\alpha + 2 \tilde{K}_0 C_{\frac{1}{2}} \left(T_{\max}\right)^{\frac{1}{2}} + \alpha =: 2\beta, \quad t \geq 0.
\end{align*}
Then let 
    \begin{align*}
\tilde{K}_1 &= ||F||_{L^{\infty}(\overline{\Omega} \times [0,2T_{\max}]  \times \overline{B_{\alpha+\beta}(0)} \times  \overline{B_{\alpha+\beta}(0)})}, \\
\tilde{K}_2 &= ||F||_{Lip,\overline{\Omega} \times [0,2T_{\max}]  \times \overline{B_{\alpha+\beta}(0)} \times  \overline{B_{\alpha+\beta}(0)}},
    \end{align*}
    which are independent of $T^*$ and observe that for all $T_{\max}/2 < T^* < T_{\max}$ and $0 < t < T_{\max}$,
\begin{align*}
    ||F(\cdot,t+T^*,W,\nabla W)||_{L^{\infty}(\Omega)} &\leq \tilde{K}_1, \\
    ||F(\cdot,t+T^*,W_1,\nabla W_1) -F(\cdot,t+T^*,W_2,\nabla W_2) ||_{L^{\infty}(\Omega)} &\leq \tilde{K}_2 ||W_1(\cdot,t) - W_2(\cdot,t)||_{C^1(\overline{\Omega})},
\end{align*}
    on
\begin{align*}
    \tilde{Z}&(\beta,T^*,T_{\max}) \\
   & = \{W \in C^0([0,T_{\max}];C^1_0(\overline{\Omega})): W(x,0) = V(x,T^*), ||W(\cdot,t) - V(\cdot,T^*)||_{C^1(\overline{\Omega})} \leq \beta \}.
\end{align*}
Fix $0 < T_2 \leq T_{\max}$ so that $2C_{\frac{1}{2}}T_2^{\frac{1}{2}}\max\{\tilde{K}_1, \tilde{K}_2\} = 1/2$ and pick $T^*$ close enough to $T_{\max}$ so that $T^* + T_2 > T_{\max}$. Then solving the heat equation 
\begin{align*}
     W_t(x,t) &= \Delta W(x,t) + F(x,t+T^*,W(x,t),\nabla_x W(x,t)) , \quad (x,t) \in \Omega \times  (0,+\infty), \\
    W(x,t) &= 0, \quad (x,t) \in \partial \Omega \times [0,+\infty), \notag \\
    W(x,0) &= W(x,T^*), \quad x \in \Omega, \notag
\end{align*}
on $\tilde{Z}(\beta,T^*,T_2)$ allows extending $V$ past time $T_{\max}$, which contradicts its maximality.

Finally, if $F$ is globally bounded and globally Lipschitz, one can set
    \begin{align*}
        K_1 = ||F||_{L^{\infty}}+1, \quad K_2 = ||F||_{Lip}+1.
    \end{align*}
    Then for $2C_{\frac{1}{2}}T_0^{\frac{1}{2}}\max\{K_1, K_2\} = 1/2$ and any initial data $V_0 \in C^1_0$, set 
    $\alpha = 2\sup \limits_{0 < t < T_0}||e^{t \Delta}V_0(\cdot)-V_0(\cdot)||_{C^1(\overline{\Omega})} + 1$. The Banach Fixed Point then applies on $Z(\alpha, T_0)$. In particular, we have a minimal time of existence $T_0$ which is independent of the initial data. Hence, we can use $V(\cdot,T_0)$ as our new initial data to obtain a solution up to $2T_0$ and iterate this procedure. 
\end{proof}

\begin{remark}
    We remark that blow-up of the $C_x^1$-norm at infinity can occur if the solution is global (e.g. the global solution $\overline{h}$ constructed in Proposition \ref{prop:blow up infinity global solution}). 
\end{remark}

\begin{theorem}[Improved Regularity]\label{thm:resgularty solution nonlinear pb}
        Consider the solution $V(t,x)$ of (\ref{general nonlinear heat equation}). If the nonlinearity $F$ is smooth, then $V$ has regularity 
     $$
     V(t,x) \in C^{\infty}((0,T_{\max}) \times \Omega).
     $$
    Moreover, if $V_0 = 0$, then $V \in C^{\infty}((0,T_{\max}) \times \overline{\Omega})$ as well.
\end{theorem}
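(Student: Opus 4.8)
The plan is to run a parabolic bootstrap: we read (\ref{general nonlinear heat equation}) as the \emph{linear} problem $\partial_t V - \Delta V = f$ with inhomogeneity $f := F(\cdot,\cdot,V,\nabla_x V)$ regarded as a given function, plug in the regularity already available, and gain two spatial and one temporal derivative at each application of the Schauder estimates. Since the asserted regularity is interior in time, it suffices to prove $V \in C^{\infty}([\varepsilon,T']\times\overline{\Omega})$ for every $0<\varepsilon<T'<T_{\max}$. Multiplying $V$ by a temporal cutoff $\eta$ with $\eta\equiv0$ near $t=\varepsilon/2$ and $\eta\equiv1$ for $t\geq\varepsilon$ — equivalently, applying every estimate on a parabolic sub-cylinder whose bottom lies at a positive time — makes the initial data of all these linear problems vanish, so no $m$-th order compatibility conditions are ever required; and the lateral condition $V=0$ on $\partial\Omega$ is smooth and homogeneous, which is all that the boundary estimates need.

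\textbf{Base regularity.} The interior parabolic Sobolev embedding (Corollary~\ref{cor: local parabolic sobolev embedding}), already invoked in the proof of Theorem~\ref{thm:taylor local existence}, gives $V\in C^{1+\alpha,\alpha/2}_{loc}(\Omega\times[0,T_{\max}))$ for every $\alpha\in(0,1)$; applying its boundary counterpart (Theorem~\ref{thm:boundary parabolic sobolev embedding}) to the cutoff $W:=\eta V$, which solves a linear parabolic equation with bounded right-hand side $\eta'V+\eta f$ and vanishing initial and lateral data, upgrades this to $V\in C^{1+\alpha,\alpha/2}([\varepsilon,T']\times\overline{\Omega})$. In particular $(x,t,V,\nabla_x V)$ is of parabolic Hölder class $C^{\alpha,\alpha/2}$, and since $F$ is smooth, $f=F(\cdot,\cdot,V,\nabla_x V)$ lies in $C^{\alpha,\alpha/2}$ as well. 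Feeding this into the $C^{2+\alpha}$ parabolic Schauder estimate of the appendix — crucially, in the strengthened form that does not presuppose $V\in C^{2+\alpha,1+\alpha/2}$ but only the regularity $C^{1+\alpha,\alpha/2}$ already in hand (cf.\ \cite{ladyzhenskaia1968linear,krylov}), again after a time-cutoff to avoid compatibility — we obtain $V\in C^{2+\alpha,1+\alpha/2}$ on a slightly smaller time-cylinder, both in the interior and up to $\partial\Omega$.

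\textbf{Bootstrap.} Suppose now, inductively, that $V$ is of parabolic Hölder class of order $m+\alpha$ (i.e.\ all $D_x^i\partial_t^j V$ with $i+2j\leq m$ exist and are Hölder continuous of the complementary order in the parabolic distance $|x-y|+|t-s|^{1/2}$) on $[\tau,T']\times\overline{\Omega}$, for some integer $m\geq1$, some $\tau\in[\varepsilon,T')$ and every $\alpha\in(0,1)$. Then $\nabla_x V$ is of parabolic order $m-1+\alpha$, hence so is $(x,t,V,\nabla_x V)$, and since $F$ is smooth and the parabolic Hölder spaces are algebras closed under composition with smooth maps, $f=F(\cdot,\cdot,V,\nabla_x V)$ is again of parabolic order $m-1+\alpha$. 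The parabolic Schauder estimate of order $m+1+\alpha$ (interior, and up to the homogeneous Dirichlet boundary, after the usual time-cutoff) then gives that $V$ is of parabolic order $m+1+\alpha$ on a slightly smaller cylinder. Since $m$ and $\alpha$ were arbitrary, this shows $V\in C^{\infty}([\varepsilon,T']\times\overline{\Omega})$; letting $\varepsilon\downarrow0$ and $T'\uparrow T_{\max}$ gives $V\in C^{\infty}((0,T_{\max})\times\overline{\Omega})$.

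\textbf{Main obstacle.} The conceptual steps are routine; the work is the parabolic Hölder bookkeeping, concentrated in two structural points. First, that the nonlinear substitution $f=F(\cdot,\cdot,V,\nabla_x V)$ loses no parabolic regularity: this rests on the parabolic Hölder spaces being Banach algebras, stable under composition with smooth maps, together with the full family of interpolation inequalities relating the intermediate seminorms (not merely the economical norm (\ref{eq:holder norm})); rigorously, the higher spatial regularity is propagated by tangential difference quotients near $\partial\Omega$ — which preserve the homogeneous Dirichlet condition — with the normal derivatives recovered from the equation $\partial_t V=\Delta V+f$ after flattening the boundary. Second, each boundary Schauder step must be applied with only the weak a priori regularity produced at the previous stage, which is exactly the form established in the appendix; this is what makes the first step (where $V$ is merely known to be $C^{1+\alpha,\alpha/2}$) go through without assuming the classical $C^{2+\alpha}$ a priori bound.
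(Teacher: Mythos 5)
Your proposal is correct and takes essentially the same route as the paper: both read the equation as $\partial_t V - \Delta V = f$ with $f := F(\cdot,\cdot,V,\nabla_x V)$, use the base regularity $V \in C^{1+\alpha,\alpha/2}_{loc}$ from Theorem~\ref{thm:taylor local existence} to get $f \in C^{\alpha,\alpha/2}$, invoke the fixed--coefficient Schauder estimates (which, as you correctly emphasize, the appendix formulates so as to require only $C^{1+\alpha,\alpha/2}$ a priori regularity), and then bootstrap, working on interior sub-cylinders in time so that no initial compatibility conditions ever enter. The only stylistic difference is that the paper's iteration step differentiates the equation in $x_k$ and $t$ and re-applies the fixed second-order Schauder estimate to $\partial_{x_k}V$ and $\partial_t V$, whereas you invoke ``Schauder of order $m+1+\alpha$'' directly -- but your remark that this is obtained by (tangential) differentiation of the equation, with normal derivatives recovered from $\partial_t V = \Delta V + f$, shows you mean the same thing, so the two presentations are genuinely equivalent.
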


\begin{proof}
  Fix any cylinder $C(x,t;r) \subset \overline{C(x,t;r)} \subset (0,T_{\max}) \times \Omega$. As $V$ and $\nabla_x V$ are in $C^{\alpha,\alpha/2}_{x,t}(C(x,t;r))$ and $F$ is smooth, $F(y,s,V(y,s),\nabla_x V(y,s))$ is $C^{\alpha,\alpha/2}$-Hölder continuous as well on $C(x,t;r)$. The Schauder estimates (Theorem \ref{interior schauder estimates}) imply that $V(x,t) \in C^{2+\alpha,1+\alpha/2}(C(x,t;r/2))$. As $C(x,t;r)$ was arbitrary, $V(x,t) \in C^{2+\alpha,1+\alpha/2}_{loc}(\Omega \times (0,T_{\max}))$.
 In case $V_0 = 0$, we know that $V$ and $\nabla_x V$ are actually in $C^{\alpha,\alpha/2}_{loc}(\overline{\Omega} \times  [0,T_{\max}))$. The boundary Schauder estimates (Theorem \ref{boundary schauder smooth boundary}) show that $V(x,t) \in C^{2+\alpha,1+\alpha/2}_{x,t}(\Omega_{\varepsilon} \times   [T_1,T_2])$ for any $0 < T_1 < T_2 < T_{\max}$ and some $\varepsilon$-neighborhood $\Omega_{\varepsilon, T_1, T_2}$ of the boundary, hence  $V(x,t) \in C^{2+\alpha,1+\alpha/2}_{loc}(\overline{\Omega} \times  (0,T_{\max}))$.

    Finally, we do the so-called Schauder iteration to deduce higher regularity. Observe that $w = \partial_{x_k} V \in  C^{1+\alpha,\alpha/2}_{x,t}$ solves
    $$
    \partial_t w = \Delta w + \partial_{x_k} \left[ F(x,t,V,\nabla_x V) \right],
    $$
    where the forcing term is Hölder-continuous (either on a cylinder $C(x,t;r)$ or on $\overline{\Omega} \times [T_1,T_2]$ if $V_0 = 0$) by smoothness of $F$ and Hölder-continuity of $V, \nabla_x V$. Schauder estimates imply that $w, \nabla_x V \in  C^{2+\alpha,1+\alpha/2}_{loc}(\Omega \times (0,T_{\max}))$ and $w, \nabla_x V \in  C^{2+\alpha,1+\alpha/2}_{loc}(\overline{\Omega} \times (0,T_{\max}))$ if $V_0 = 0$.
    
    Next, $\tilde{w} = \partial_t V \in  C^{1+\alpha,\alpha/2}$ solves
    $$
    \partial_t \tilde{w} = \Delta \tilde{w} + \partial_{t} \left[ F(x,t,V,\nabla_x V) \right],
    $$
    and we can apply Schauder estimates again to deduce that $\partial_t V \in  C^{2+\alpha,1+\alpha/2}$. We carry on this procedure to obtain smoothness of $V$.
\end{proof}

\begin{corollary}[Existence of solutions for the Harmonic Map Heat flow]\label{cor: existence of solution for heat map flow}
For the harmonic map heat flow with smooth (not necessarily $k$-equivariant) boundary data, one can rewrite (\ref{heat map flow}) as a Dirichlet problem via $v(x,t) = v_0(x,t) + \tilde{v}(x,t)$, where $\tilde{v}(x,t)$ must solve
\begin{align}
     \tilde{v}_t &= \Delta \tilde{v} + F(x,t,\tilde{v},\nabla \tilde{v}) , \quad (x,t) \in B^2 \times  (0,+\infty), \label{heat map flow, dirichlet bound condition} \\
    \tilde{v} &= 0, \quad (x,t) \in \partial B^2 \times [0,+\infty) \cup B^2 \times \{0\}, \notag
\end{align}
and $F$ is a smooth nonlinearity
\begin{equation}
    F(x,t,\tilde{v},\nabla \tilde{v}) = - \partial_t v_0 + \Delta v_0 + |\nabla v_0|^2 (v_0 + \tilde{v}) + |\nabla \tilde{v} |^2 (v_0 + \tilde{v}) + 2 \nabla \tilde{v} \cdot \nabla v_0 (v_0 + \tilde{v}), \label{nonlinearity heat map flow}
\end{equation}
where $\nabla \tilde{v} \cdot \nabla v_0$ designates the dot product of the two (transposed) Jacobian matrices. We get a unique local solution of (\ref{heat map flow}) of regularity 
\begin{equation}
    v(t,x) \in C^0([0,T_{\max}),C^1(\overline{B^2})) \cap C^{\infty}((0,T_{\max}) \times \overline{B^2}), \label{Taylor existence of solution}
\end{equation}
 and if $T_{max} < +\infty$ is the maximal time of the solution, then one must have 
 $$
 \lim \limits_{t \to T_{\max}}||v(\cdot,t)||_{C^1(\overline{B^2})} = +\infty.
 $$
 One also has $v(x,t) \in C^{1+\alpha,\alpha/2}_{loc}(\overline{B^2} \times [0,T_{\max}))$ for any $0 < \alpha < 1$.
\end{corollary}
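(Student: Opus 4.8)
The plan is to carry out the substitution $v = v_0 + \tilde v$ explicitly, identify the resulting equation for $\tilde v$ with (\ref{heat map flow, dirichlet bound condition})--(\ref{nonlinearity heat map flow}), and then quote Theorem~\ref{thm:taylor local existence} and Theorem~\ref{thm:resgularty solution nonlinear pb}. Inserting $v = v_0 + \tilde v$ into $v_t = \Delta v + |\nabla v|^2 v$ and expanding $|\nabla(v_0+\tilde v)|^2 = |\nabla v_0|^2 + 2\,\nabla v_0\cdot\nabla\tilde v + |\nabla\tilde v|^2$, one solves for $\tilde v_t - \Delta\tilde v$ and reads off precisely the $F$ of (\ref{nonlinearity heat map flow}); on the parabolic boundary $v = v_0$ forces $\tilde v = 0$, so $\tilde v$ solves a homogeneous Dirichlet problem with $\tilde v(\cdot,0) = 0$. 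Since $v_0$ is smooth on $\overline{B^2}\times[0,\infty)$, the coefficients $\partial_t v_0$, $\Delta v_0$, $\nabla v_0$, $v_0$ occurring in $F$ are smooth and bounded on $\overline{B^2}\times[0,T]$ for every $T$, and $F(x,t,P,Q)$ is a polynomial in $(P,Q)$ of total degree at most three with such coefficients; hence $F$ is jointly smooth and, on every compact subset of $\overline{B^2}\times\mathbb R\times\mathbb R^3\times\mathbb R^{2\times 3}$, Lipschitz in $(P,Q)$. As $B^2$ is bounded, open, connected with smooth boundary and $\tilde v(\cdot,0)=0\in C^1_0(\overline{B^2})$, all hypotheses of Theorem~\ref{thm:taylor local existence} hold.

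Theorem~\ref{thm:taylor local existence} then yields a unique $\tilde v \in C^0([0,T_{\max}),C^1_0(\overline{B^2}))$ solving the Duhamel equation (\ref{integral problem}) with nonlinearity $F$; because here the initial datum is $V_0 = \tilde v(\cdot,0) = 0$, the theorem's up-to-the-boundary branch applies and gives $\tilde v \in C^{1+\alpha,\alpha/2}_{loc}(\overline{B^2}\times[0,T_{\max}))$ for every $0<\alpha<1$, and if $T_{\max}<\infty$ then $\limsup_{t\to T_{\max}}\|\tilde v(\cdot,t)\|_{C^1(\overline{B^2})}=+\infty$. Since $F$ is smooth, Theorem~\ref{thm:resgularty solution nonlinear pb} upgrades this to $\tilde v \in C^\infty((0,T_{\max})\times\overline{B^2})$. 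Setting $v := v_0 + \tilde v$ and using smoothness of $v_0$, we obtain (\ref{Taylor existence of solution}) and the claimed $C^{1+\alpha,\alpha/2}_{loc}(\overline{B^2}\times[0,T_{\max}))$ regularity, and the blow-up alternative transfers to $v$ because $t\mapsto\|v_0(\cdot,t)\|_{C^1(\overline{B^2})}$ is bounded on every $[0,T]$, so a $C^1$-blow-up for $\tilde v$ is equivalent to one for $v$ (and, exactly as in the proof of Theorem~\ref{thm:taylor local existence}, the $\limsup$ improves to a full limit since the minimal existence time is controlled by the $C^1$-norm of the current state). Uniqueness of $v$ reduces to uniqueness of the fixed point: any solution of (\ref{heat map flow}) of regularity (\ref{Taylor existence of solution}) produces, through the same substitution, a solution of (\ref{integral problem}) for $\tilde v$, which is unique by the Banach fixed point theorem.

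Finally, to see that the $v$ produced genuinely takes values in $S^2$ — which is what makes it a solution of (\ref{heat map flow}) — put $\rho := |v|^2 - 1$; from $v_t = \Delta v + |\nabla v|^2 v$ and $\Delta|v|^2 = 2\,v\cdot\Delta v + 2|\nabla v|^2$ one computes $\rho_t - \Delta\rho = 2|\nabla v|^2\,\rho$, a linear parabolic equation for $\rho$ with $\rho \equiv 0$ on $\partial B^2\times[0,\infty)\cup B^2\times\{0\}$ (as $v_0$ is $S^2$-valued) and with bounded coefficient $2|\nabla v|^2$ on $\overline{B^2}\times[0,T]$ for each $T<T_{\max}$; standard uniqueness for linear parabolic equations forces $\rho\equiv 0$.

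The argument is essentially assembly of Theorems~\ref{thm:taylor local existence}--\ref{thm:resgularty solution nonlinear pb}; the two points that need genuine care, and which I would treat as the main obstacle, are (i) verifying that the polynomial $F$ is only \emph{locally} (not globally) Lipschitz in $(P,Q)$ — the obstruction being the term $|\nabla\tilde v|^2(v_0+\tilde v)$, which is why one obtains only a local solution together with a $C^1$-blow-up alternative rather than global existence — and (ii) matching each regularity assertion to the correct branch of Theorem~\ref{thm:taylor local existence}, in particular recognising that the regularity \emph{up to the boundary} is exactly the $V_0=0$ case, which is the reason the boundary condition is homogenised via $v_0$ in the first place.
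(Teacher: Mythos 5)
Your proof is correct and follows the route the paper implicitly intends: substitute $v = v_0 + \tilde v$, read off the smooth, locally-but-not-globally Lipschitz nonlinearity $F$ of (\ref{nonlinearity heat map flow}), and quote Theorem~\ref{thm:taylor local existence} (for local existence, the $V_0 = 0$ up-to-the-boundary Hölder branch, and the blow-up alternative) together with Theorem~\ref{thm:resgularty solution nonlinear pb} (for interior-in-time $C^\infty$ regularity). Since the paper gives no proof for this corollary, there is nothing to diverge from; your assembly is essentially the expected one.

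Two remarks. First, your closing observation that $\rho = |v|^2 - 1$ satisfies $\rho_t - \Delta \rho = 2|\nabla v|^2\rho$ with zero parabolic boundary data, hence $\rho \equiv 0$, is a genuinely necessary step that the paper leaves tacit: the fixed-point construction for $\tilde v$ does not by itself produce an $S^2$-valued map, and without this one only has a solution of the PDE, not of (\ref{heat map flow}) as stated. Good catch. Second, your parenthetical claim that the $\limsup$ of Theorem~\ref{thm:taylor local existence} upgrades to a full $\lim$ "since the minimal existence time is controlled by the $C^1$-norm of the current state" is not actually supported by the proof of Theorem~\ref{thm:taylor local existence}: there, the existence time starting from a datum $V_0$ is governed by $\sup_{0<t<T_0}\|e^{t\Delta}V_0 - V_0\|_{C^1}$, which is a modulus-of-continuity quantity on $V_0$, not controllable by $\|V_0\|_{C^1}$ alone (bounded sets in $C^1_0$ are not equicontinuous under the semigroup). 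So a bounded $C^1$-norm along a subsequence $t_n \to T_{\max}$ does not yield a uniform lower bound on the extension time. This $\lim$-versus-$\limsup$ discrepancy is already present in the paper's own statement of the corollary, and nothing downstream in the paper uses more than $\limsup$, so it is a cosmetic issue rather than a structural one — but the one-line justification you offered for it does not hold up.
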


 \begin{proposition}[Hölder-Continuity of $\partial_t v$, $\Delta v$ at time zero]\label{prop:holder continuity partial_t v}
    Let
$$
     v(t,x) \in C^0([0,T_{\max}),C^1(\overline{B^2})) \cap C^{\infty}((0,T_{\max}) \times \overline{B^2}) \cap C_{loc}^{\alpha/2,1+\alpha}([0,T_{\max}) \times \overline{B^2})
$$
solve (\ref{heat map flow}) with smooth boundary data. Then $\partial_t v(x,t) \in C_{loc}^{1+\alpha,\alpha/2}(\overline{B^2} \times [0,T_{\max}))$, as well as $\Delta v(x,t) \in C_{loc}^{\alpha,\alpha/2}(\overline{B^2} \times [0,T_{\max}))$.
\end{proposition}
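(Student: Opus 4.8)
The plan is to reduce to a Dirichlet problem with vanishing initial data and then run a short Schauder bootstrap; the only genuine difficulty lies in the behaviour at $t=0$, where no compatibility condition is available. First I would use Corollary \ref{cor: existence of solution for heat map flow} to write $v=v_0+\tilde v$, where $v_0\in C^\infty(\overline{B^2}\times[0,+\infty))$ and $\tilde v$ solves $\tilde v_t-\Delta\tilde v=F(x,t,\tilde v,\nabla_x\tilde v)$ on $B^2\times(0,T_{\max})$ with $\tilde v=0$ on the parabolic boundary, $F$ being the smooth nonlinearity (\ref{nonlinearity heat map flow}). Since $v_0$ is smooth, it is enough to establish the regularity for $\tilde v$. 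From the hypothesis together with Corollary \ref{cor: existence of solution for heat map flow} one has $\tilde v\in C^{1+\alpha,\alpha/2}_{loc}(\overline{B^2}\times[0,T_{\max}))$, so that on each slab $\overline{B^2}\times[0,T']$, $T'<T_{\max}$, the pair $(\tilde v,\nabla_x\tilde v)$ is bounded and $C^{\alpha,\alpha/2}$-Hölder continuous.

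Because $F$ is smooth and $(\tilde v,\nabla_x\tilde v)$ ranges in a fixed compact set while being $C^{\alpha,\alpha/2}$-Hölder, the composed forcing term $g(x,t):=F(x,t,\tilde v(x,t),\nabla_x\tilde v(x,t))$ is itself in $C^{\alpha,\alpha/2}_{loc}(\overline{B^2}\times[0,T_{\max}))$. I would then regard $\tilde v$ as the solution of the \emph{linear} equation $\tilde v_t-\Delta\tilde v=g$ with vanishing parabolic boundary data: the initial datum $\tilde v(\cdot,0)\equiv 0$ is smooth and, being identically zero, is compatible with the zero Dirichlet datum, which is the only compatibility available. Applying the interior and boundary parabolic Schauder estimates (Theorem \ref{interior schauder estimates} and Theorem \ref{boundary schauder smooth boundary}) in the sharpened form proved in the appendices, where one assumes neither a priori $C^{2+\alpha}$ regularity of the solution beyond that furnished by the fixed-point argument nor any higher-order compatibility, gives $\tilde v\in C^{2+\alpha,1+\alpha/2}_{loc}(\overline{B^2}\times[0,T_{\max}))$. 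In particular $\partial_t\tilde v,\Delta\tilde v\in C^{\alpha,\alpha/2}_{loc}$, whence $\Delta v=\Delta v_0+\Delta\tilde v\in C^{\alpha,\alpha/2}_{loc}(\overline{B^2}\times[0,T_{\max}))$; equivalently this can be read off the equation $\partial_t v=\Delta v+|\nabla v|^2v$ once $\partial_t v$ is under control.

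To treat $\partial_t v$ I would bootstrap once more. Now $\nabla_x\tilde v$ has one more Hölder spatial derivative, so smoothness of $F$ upgrades $g$ accordingly and $\partial_{x_j}g\in C^{\alpha,\alpha/2}_{loc}$ for $j=1,2$. In the interior, $w_j:=\partial_{x_j}\tilde v$ solves $\partial_t w_j-\Delta w_j=\partial_{x_j}g$ with $w_j(\cdot,0)\equiv 0$, and interior Schauder gives $w_j\in C^{2+\alpha,1+\alpha/2}_{loc}(B^2\times[0,T_{\max}))$, hence $\nabla_x\partial_t\tilde v\in C^{\alpha,\alpha/2}_{loc}(B^2\times[0,T_{\max}))$; combined with $\partial_t\tilde v\in C^{\alpha,\alpha/2}_{loc}$ this yields $\partial_t\tilde v\in C^{1+\alpha,\alpha/2}_{loc}(B^2\times[0,T_{\max}))$. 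Up to $\partial B^2$ one argues the same way with the tangential derivatives of $\tilde v$, which still vanish on $\partial B^2$ and solve equations of the same type, and recovers the normal direction from the identity $\Delta\tilde v=\partial_t\tilde v-g$; adding back the smooth $v_0$ gives $\partial_t v\in C^{1+\alpha,\alpha/2}_{loc}(\overline{B^2}\times[0,T_{\max}))$.

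The main obstacle is the second step: producing $C^{2+\alpha,1+\alpha/2}$ regularity up to the initial time (and, near $\partial B^2$, up to the parabolic corner) for the linear problem $\tilde v_t-\Delta\tilde v=g$ when $g$ is merely Hölder. This is precisely the refinement of the classical Schauder theory advertised in the introduction — obtaining the higher regularity ``for free'' without assuming a priori regularity of the solution — and it is made workable by first peeling off $v_0$, so that the remaining initial datum $\tilde v(\cdot,0)$ vanishes identically. Once this step is in place, the remainder is routine differentiation and bookkeeping.
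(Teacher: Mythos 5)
There is a genuine gap at the central step of your argument. You write that, because $\tilde v(\cdot,0)\equiv 0$, the zero Dirichlet datum is ``the only compatibility available,'' and then apply the boundary Schauder estimate (Theorem \ref{boundary schauder smooth boundary}) to conclude $\tilde v\in C^{2+\alpha,1+\alpha/2}_{loc}(\overline{B^2}\times[0,T_{\max}))$ up to $t=0$. This is not what that theorem gives: it is stated only for intervals $I=(\delta,T)$ with $\delta>0$, and the remark following it explicitly warns that reaching $t=0$ requires an additional compatibility condition between the initial data, boundary data and forcing term. The ``sharpening'' proved in the appendices concerns the a priori regularity assumed on $u$, not the compatibility condition. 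For the linear problem $\tilde v_t-\Delta\tilde v=g$, $\tilde v=0$ on $\partial B^2\times(0,T)\cup B^2\times\{0\}$, the first-order compatibility needed for $C^{2+\alpha,1+\alpha/2}$ regularity at the corner is $\Delta\tilde v(\cdot,0)+g(\cdot,0)=0$ on $\partial B^2$, i.e.\ $g(\cdot,0)=0$ on $\partial B^2$; here $g(\cdot,0)=F(x,0,0,0)=-\partial_t v_0(\cdot,0)+\Delta v_0(\cdot,0)+|\nabla v_0(\cdot,0)|^2 v_0(\cdot,0)$, which has no reason to vanish on $\partial B^2$ for general smooth boundary data. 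Indeed, on $\partial B^2\times(0,T)$ one has $\partial_t\tilde v\equiv 0$ and hence $\Delta\tilde v=-g$, whereas $\Delta\tilde v(\cdot,0)\equiv 0$ on $B^2$; these two limits at the corner disagree unless $g(\cdot,0)$ vanishes there, so the $C^{2+\alpha,1+\alpha/2}$ regularity you claim is simply false in general, and the subsequent bootstrap cannot be started.

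The paper's proof takes a route that avoids precisely this corner obstruction. Instead of upgrading $\tilde v$ to $C^{2+\alpha,1+\alpha/2}$, it forms the difference quotients $\tilde v^h(x,t)=h^{-1}[\tilde v(x,t+h)-\tilde v(x,t)]$, observes that they solve a \emph{linear} parabolic system with zero lateral boundary data and with coefficients and data controlled uniformly in $h$, uses the short-time estimate from Theorem \ref{thm:taylor local existence} to obtain an $h$-uniform bound on $\|\tilde v^h(\cdot,t)\|_{C^1}$, and then applies the parabolic Sobolev embedding (Theorem \ref{thm:boundary parabolic sobolev embedding}), \emph{not} Schauder. The parabolic Sobolev embedding gives only $C^{1+2\gamma,\gamma}$ control, which is one derivative below Schauder and therefore does not probe the incompatible second spatial derivatives at the corner; this bound holds on $(0,T_0)$ all the way to $t=0$. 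Passing to the limit $h\to 0$ via Arzel\`a--Ascoli then yields $\partial_t\tilde v$ and $\nabla_x\partial_t\tilde v$ in $C^{2\gamma,\gamma}$ up to $t=0$, without ever obtaining (or needing) $C^{2+\alpha,1+\alpha/2}$ regularity of $\tilde v$ itself. If you want to keep your structure, the fix is to replace the direct Schauder step by the difference-quotient argument with the parabolic Sobolev embedding; as written, the decisive second step of your proposal does not go through.
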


\begin{proof}
We take inspiration from \cite[Chapter V, Theorem 5.1]{ladyzhenskaia1968linear} by studying difference quotients. 

Write $T_{\max} = T$, $v = v_0 + \tilde{v}$ as in (\ref{heat map flow, dirichlet bound condition}). The nonlinearity can be rewritten as
$$
F(x,t,\tilde{v},\nabla \tilde{v}) = A(x,t) + B(x,t) \tilde{v} + \sum_{i,j} C_{i,j}(x,t) \partial_{x_i} \tilde{v}_j(x,t) \tilde{v} + \sum_{i,j}D_{i,j}(x,t) \left( \partial_{x_i} \tilde{v}_j(x,t) \right)^2 \tilde{v},
$$
where the coefficients are $C^{\infty}(\overline{B^2} \times [0,+\infty))$. Let $0 < h < T/4$ be fixed and $0 < t < T/2$. Consider the difference quotient $\tilde{v}^h(x,t) = h^{-1}[\tilde{v}(x,t+h)-\tilde{v}(x,t)]$. The goal is to bound the norm of this difference uniformly in $h$ in some parabolic Hölder space.

Note that 
$$A^h(x,t) = h^{-1} \left( A(x,t+h) - A(x,t) \right) = \int_0^1 (\partial_s A)(x,t+sh)ds \in C^{0}(\overline{B^2}\times [0,T/2])$$ with norm independent of $0 < h < T/4$. Similarly, one observes that
\begin{align*}
    &h^{-1} \left[ D_{i,j}(x,t+h) \left( \partial_{x_i} \tilde{v}_j(x,t+h) \right)^2 \tilde{v}(x,t+h) -  D_{i,j}(x,t) \left( \partial_{x_i} \tilde{v}_j(x,t) \right)^2 \tilde{v}(x,t)  \right] \\
    &=  h^{-1} \left[ D_{i,j}(x,t+h) \left( \partial_{x_i} \tilde{v}_j(x,t+h) \right)^2 \tilde{v}(x,t+h) - D_{i,j}(x,t+h) \left( \partial_{x_i} \tilde{v}_j(x,t+h) \right)^2 \tilde{v}(x,t) \right. \\
    &\phantom{=}  \left. +  D_{i,j}(x,t+h) \left( \partial_{x_i} \tilde{v}_j(x,t+h) \right)^2 \tilde{v}(x,t)  - D_{i,j}(x,t) \left( \partial_{x_i} \tilde{v}_j(x,t+h) \right)^2 \tilde{v}(x,t) \right. \\
    &\phantom{=} \left. + D_{i,j}(x,t)\left( \partial_{x_i} \tilde{v}_j(x,t+h) \right)^2 \tilde{v}(x,t) - D_{i,j}(x,t) \left( \partial_{x_i} \tilde{v}_j(x,t) \right)^2 \tilde{v}(x,t)  \right] \\
    &= D_{i,j}(x,t+h) \left( \partial_{x_i} \tilde{v}_j(x,t+h) \right)^2 \tilde{v}^h(x,t) + D_{i,j}^h(x,t) \left( \partial_{x_i} \tilde{v}_j(x,t+h) \right)^2 \tilde{v}(x,t) \\
    &\phantom{=}+ D_{i,j}(x,t)\partial_{x_i} \tilde{v}_j^h(x,t+h)  \left( \partial_{x_i} \tilde{v}_j(x,t+h) +\partial_{x_i} \tilde{v}_j(x,t) \right) \tilde{v}(x,t) \\
    &= D_{i,j,0}(x,t) + D_{i,j,1}(x,t) \tilde{v}^h(x,t) + D_{i,j,2}(x,t)  \partial_{x_i} \tilde{v}_j^h(x,t),
\end{align*}
where the coefficients are $C^0(\overline{B^2} \times [0,T/2])$ with norm independent of $0 < h < T/4$ given the regularity $v \in C^0([0,T), C^1(\overline{B^2}))$. We proceed similarly for the difference involving the coefficients $B$ and $C$. In other words, $\tilde{v}^h(x,t)$ is the unique local solution to a linear parabolic equation
\begin{align*}
    \partial_t \tilde{v}^h &=   \Delta  \tilde{v}^h + G_{h,0}(x,t) \tilde{v}^h(x,t) + \sum_{i,j} G_{h,i,j}(x,t) \partial_{x_i} \tilde{v}^h_j(x,t), \quad x \in B^2, 0 < t < T/2, \\
    \tilde{v}^h(x,t) &= 0, \quad (x,t) \in B^2 \times \{0\} \cup \partial B^2 \times [0,T/2],
\end{align*}
where $G_{h,0}(t,x), G_{h,i,j}(t,x) \in C^0([0,T/2] \times \overline{B^2})$ are coefficients (which we can extend as constant for $t \geq T/2$) with norm independent of $0 < h < T/4$.

It follows from Theorem \ref{thm:taylor local existence} that there is a small time $0 <T_0 < T/2$, independent of $h$, for which $v^h \in C^0([0,T_0];C^1(\overline{B^2}))$ with
$$
||\tilde{v}^h(\cdot,t)||_{C^1(\overline{B^2})} \leq 1, \quad \forall 0 < t < T_0,
$$
and the parabolic Sobolev embedding (Theorem \ref{thm:boundary parabolic sobolev embedding}) then implies
$$
||\tilde{v}^h||_{C^{1+2\gamma,\gamma}(\overline{B^2} \times [0,T_0])} \leq C(v_0,\gamma)
$$
for any $0 < \gamma < 1/2$, for some constant $C(v_0,\gamma)$ which is independent of $h$. Take any sequence $h_k \to 0^+$. One has $\lim \limits_{k \to +\infty}\tilde{v}^{h_k}(x,t) \to \partial_t \tilde{v}(x,t)$ on $\overline{B^2} \times (0,T_0]$ using our a priori smoothness away from $t = 0$. Moreover, by Arzela-Ascoli, the convergence is uniform on $\overline{B^2} \times [0,T_0]$, meaning that the limit $\partial_t \tilde{v}(x,t)$ extends continuously to $t = 0$. Similarly, $\partial_t \nabla_x \tilde{v}$ extends continuously at $t = 0$. Moreover, the limits $\partial_t v$ and $\partial_t \nabla_x v$ must be $C^{2\gamma, \gamma}$ Hölder-continuous as well. 

As $v \in C_{loc}^{1+\alpha,\alpha/2}([0,T_{\max}) \times \overline{B^2})$, it follows from the equation that $\Delta v \in  C_{loc}^{\alpha,\alpha/2}([0,T_{\max}) \times \overline{B^2})$.
\end{proof}

It turns out that if $v_0(x,t)$ is $k$-equivariant, then so is $v(x,t)$. First, we need a lemma about the structure of $k$-equivariant functions.

\begin{lemma}[Smoothness of inclination coordinate] \label{k-equivariance, smoothness of h}
     Let $I$ be any kind of interval and $R > 0$. If $v_0 \in C^{\infty}(\overline{B_R(0)} \times I, S^2)$ is smooth and $k$-equivariant in the sense that
     \begin{align}
         v_0(re^{i\theta},t) = \left( e^{ik\theta} \sin h_0(r,t), \cos h_0(r,t) \right) \in S^2 \label{k equivariant form of v_0}
     \end{align}
     for some function $h_0: [0,R] \times I \rightarrow \mathbb R$. Then one can always find some $h_0$ for which (\ref{k equivariant form of v_0}) holds and $h_0(r,t) = r^k \tilde{h}_0(r,t)$, where $\tilde{h}_0 \in C^{\infty}([0,R] \times I,\mathbb R)$ and $\partial_r^{(2n+1)}\tilde{h}_0(0,t) = 0$ for all $n \in \mathbb N_{\geq 0}$.

     Conversely, if $h_0$ satisfies the above properties, then $v_0$ defined as in (\ref{k equivariant form of v_0}) is smooth, $v_0 \in C^{\infty}(\overline{B_R(0)} \times I, S^2)$.
\end{lemma}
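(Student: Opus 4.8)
The plan is to pass to polar coordinates and systematically translate the smoothness of $v_0$ on $\overline{B_R(0)}$ into parity and vanishing conditions on the inclination coordinate $h_0$. The key observation is that the two components of $v_0$, namely $e^{ik\theta}\sin h_0(r,t) \in \mathbb C \simeq \mathbb R^2$ and $\cos h_0(r,t) \in \mathbb R$, must each be smooth functions of $(x_1,x_2,t) = (r\cos\theta, r\sin\theta, t)$. A standard fact about smooth functions on $\mathbb R^2$ that transform under rotations like $e^{ik\theta}$ (i.e., like the $k$-th spherical harmonic, or like $(x_1+ix_2)^k$) is that they have the form $(x_1+ix_2)^k g(r^2,t)$ with $g$ smooth; equivalently, writing $f(r,t)$ for the radial profile, $f(r,t) = r^k \tilde f(r,t)$ with $\tilde f$ smooth and \emph{even} in $r$, which is exactly the condition $\partial_r^{(2n+1)}\tilde f(0,t) = 0$. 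I would first isolate $\cos h_0$: since it is rotationally invariant and smooth on the ball, $\cos h_0(r,t) = G(r^2,t)$ for some smooth $G$, and at $r=0$ we get $\cos h_0(0,t) = G(0,t) = \pm 1$ (the sign is locally constant in $t$; one normalizes so it is $+1$, possibly after the substitution $h_0 \mapsto h_0 + \text{const}\cdot 2\pi$ or $h_0 \mapsto -h_0$), whence $h_0(0,t) \in 2\pi\mathbb Z$, and WLOG $h_0(0,t)=0$.

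Next I would use the first ($\mathbb C$-valued) component. Away from the axis $r > 0$ one can divide by $e^{ik\theta}$ and recover $\sin h_0(r,t)$; the issue is extracting smoothness of $h_0$ itself up to $r = 0$. Near $r=0$ we have $\cos h_0$ close to $1$, so $h_0$ is small and $\sin$ is a local diffeomorphism there; thus $h_0(r,t) = \arcsin(\text{smooth radial profile of the first component's magnitude with sign})$ locally, and it suffices to show that $\sin h_0(r,t)$, as a radial profile, is of the form $r^k \tilde s(r,t)$ with $\tilde s$ smooth and even in $r$. For this, note $w(x,t) := e^{ik\theta}\sin h_0(r,t) = (x_1+ix_2)^k \cdot r^{-k}\sin h_0(r,t)$ is smooth on the ball. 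The claim is that a smooth $\mathbb C$-valued function of the form $(x_1+ix_2)^k \varphi(x,t)$ with $\varphi$ a priori only continuous, but which turns out (since $w$ is smooth and $(x_1+ix_2)^k$ is smooth and nonvanishing off the axis) to be radial, forces $\varphi(x,t) = \psi(r^2,t)$ with $\psi$ smooth. One clean way: restrict $w$ to the ray $\theta = 0$, i.e. $x_2 = 0$, $x_1 = r \geq 0$; then $w(r,0,t) = r^k \cdot r^{-k}\sin h_0(r,t) = \sin h_0(r,t)$, but also $w$ restricted to the whole $x_1$-axis (allowing $x_1 < 0$) is smooth, and on $x_1 < 0$ equals $(x_1)^k r^{-k}\sin h_0(|x_1|,t) = (-1)^k \sin h_0(|x_1|,t)$. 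Hence the function $x_1 \mapsto \sin h_0(|x_1|,t)$ agrees with a smooth function times $\mathrm{sgn}(x_1)^{?}$ appropriately; matching $C^\infty$ across $x_1 = 0$ gives that $r \mapsto \sin h_0(r,t)$ extends to a smooth function of $r \in \mathbb R$ with the correct parity, and that all odd-order $r$-derivatives of $r^{-k}\sin h_0$ vanish at $0$. Combining with the local invertibility of $\sin$ near $0$ yields $h_0(r,t) = r^k\tilde h_0(r,t)$ with $\tilde h_0 \in C^\infty$ and $\partial_r^{(2n+1)}\tilde h_0(0,t) = 0$. Joint smoothness in $(r,t)$ comes along for free since $t$ is just a smooth parameter throughout.

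For the converse, given $h_0$ with the stated structure, I would show $v_0$ as in (\ref{k equivariant form of v_0}) is smooth on $\overline{B_R(0)}\times I$. Away from $r = 0$ this is immediate since $(r,\theta)\mapsto(x_1,x_2)$ is a diffeomorphism there and $\sin,\cos$ are smooth. At $r = 0$, write $\sin h_0(r,t) = h_0(r,t)\cdot \frac{\sin h_0(r,t)}{h_0(r,t)} = r^k\tilde h_0(r,t)\cdot \sigma(h_0(r,t))$ where $\sigma(u) = \sin u / u$ extends smoothly (even) through $u=0$; then $e^{ik\theta}\sin h_0(r,t) = (x_1+ix_2)^k \cdot \tilde h_0(r,t)\sigma(h_0(r,t))$. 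The factor $(x_1+ix_2)^k$ is a polynomial, hence smooth; and $\tilde h_0(r,t)\sigma(h_0(r,t))$ is a smooth function of $r$ that is even in $r$ (all odd $r$-derivatives vanish at $0$ because $\tilde h_0$ has that property and $\sigma\circ h_0$ does too, being an even smooth function composed with $r^k\tilde h_0$), hence is a smooth function of $(r^2,t)$, hence of $(x_1,x_2,t)$. So the first component of $v_0$ is smooth; similarly $\cos h_0(r,t) = 1 - 2\sin^2(h_0/2)$ and $\sin(h_0/2) = (r^k/2)\tilde h_0 \sigma(h_0/2)$ is treated the same way, giving smoothness of the third component.

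The main obstacle is the middle paragraph: rigorously extracting the parity/divisibility structure of $h_0$ from the smoothness of the $\mathbb C$-valued first component, because $h_0$ is recovered only implicitly (through $\sin$ and $\cos$) and the division by $(x_1+ix_2)^k$ is singular on the axis. The clean route is to combine (i) the elementary lemma that a smooth function on $\mathbb R^2$ (near $0$) that is radial equals $G(x_1^2+x_2^2)$ with $G$ smooth (Whitney/Glaeser-type), applied to $\cos h_0$ and to $\sin^2 h_0 = |w|^2$, with (ii) local invertibility of the map $u \mapsto (\sin u, \cos u)$ near $u = 0$ to solve for $h_0$ itself near $r = 0$, and then (iii) the analogous lemma for equivariant (not just invariant) smooth functions — that a smooth function transforming like $e^{ik\theta}$ is $(x_1+ix_2)^k$ times a radial smooth function — applied to $w$ to pin down the factor $r^k$ and the vanishing of odd derivatives of $\tilde h_0$.
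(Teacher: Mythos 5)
Your overall strategy is sound and closely parallels the paper's reduction: show that $F(r,t)=\sin h_0(r,t)$, as a radial profile, has the form $r^k\cdot(\text{smooth, even in }r)$, then recover $h_0$ via $\arcsin$ near $r=0$ and by lifting elsewhere. However, your route and the paper's diverge on how this divisibility is established, and your concrete justification has a genuine gap.

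You invoke, as a ``standard fact,'' that a smooth $e^{ik\theta}$-equivariant function on a neighborhood of $0$ in $\mathbb R^2$ factors as $(x_1+ix_2)^k$ times a smooth radial function. That is indeed a known result (provable via the $SO(2)$-isotypic decomposition of Taylor polynomials together with a Whitney/Borel argument), and relying on it is legitimate mathematics. But the paper deliberately proves this from scratch by iterating the identity $\Delta^j\phi = e^{ik\theta}\left(\partial_{rr}+\tfrac{1}{r}\partial_r - \tfrac{k^2}{r^2}\right)^j F$ on one-sided Taylor expansions; the $-k^2r^{-2}$ term is exactly what captures the angular winding of order $k$. Invoking the factorization externally is a different (and shorter) route, but it undercuts the paper's stated aim of being self-contained.

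More importantly, your ``one clean way'' of proving the factorization --- restricting $w$ to the $x_1$-axis --- does \emph{not} suffice once $k\geq 3$. Restriction to a single line only detects the parity of $F(r)$ under $r\mapsto -r$, together with $F(0)=0$; by Hadamard's lemma this yields $F=r\cdot(\text{smooth even})$ when $k$ is odd and $F=r^2\cdot(\text{smooth even})$ when $k$ is even, which coincides with the required $r^k$ factor only for $k\in\{1,2\}$. A concrete obstruction for $k=3$: take $F(r)=r$, so that
\[
w(x_1,x_2)=\frac{(x_1+ix_2)^3}{x_1^2+x_2^2}.
\]
Its real part $(x_1^3-3x_1x_2^2)/(x_1^2+x_2^2)$ is homogeneous of degree $1$ but not linear, hence $w$ is not even $C^1$ at the origin; yet $w(x_1,0)=x_1$ is perfectly smooth on the $x_1$-axis. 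So the axis restriction cannot see the missing factor of $r^2$. Closing this gap requires genuinely two-dimensional input, i.e. your item (iii), which is exactly the heart of the lemma and what the paper's Laplacian iteration supplies. Your converse direction is correct and essentially the same as the paper's (the $\sin(u)/u$ factorization and even reflection of $\tilde h_0$).
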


\begin{remark}
    In the direction where $v_0$ is smooth and $k$-equivariant, no smoothness is a priori assumed from $h_0(r,t)$. The lemma shows that $h_0$ can be replaced by a smooth representative.
    
    The representative is not unique. Letting $\theta = 0$, one observes that two continuous inclination coordinates for $v_0$ must differ by a multiple of $\pi$. Hence, we will always choose the one with $h_0(0,0) = 0$.
\end{remark}

\begin{proof}

First, assume that $v_0$ is smooth and $k$-equivariant.

Using a rescaling of the space variable, one can assume that $B_R(0) = B_1(0) = B^2$. Write $v_0 = (v_{0,1}, v_{0,2}, v_{0,3}) \in S^2$.

    It suffices to verify that $F(r,t) = \sin h_0(r,t)$ is smooth, $F(r,t) = r^k \tilde{F}(r,t)$ with $\tilde{F}$ smooth, as well as $\partial_r^{(2n+1)}\tilde{F}_0(0,t) = 0$ for all $n \in \mathbb N$.  

    If this is proved, then it is a classical result from topology that one can lift the smooth map $\tilde{v}_0(r,t) = (\sin h_0(r,t), \cos h_0(r,t)) = (v_{0,1}(re^{i0},t), v_{0,3}(re^{i0},t)) \in S^1$ to a continuous map $h_0(r,t): [0,R] \times I \rightarrow \mathbb R$ satisfying $\tilde{v}_0 = \pi \circ h_0$, where $\pi(\theta) = (\sin \theta, \cos \theta)$. Then $h_0$ is smooth around any points where $F(r,t) = \sin h_0(r,t) \neq \pm 1$ by the inverse function theorem, and the expansion of $h_0$ at $(0,t)$ follows from the ones of $F$ and $\arcsin(\cdot)$. At points where $F(r,t) = \pm 1$, one has $\cos h_0(r,t) = 0$ and the smoothness of $h_0(r,t)$ is obtained from the smoothness of $G(r,t) = \cos h_0(r,t) = v_{0,3}$.

    Since 
    $$
\underbrace{(v_{0,1}(x,y,t),v_{0,2}(x,y,t))}_{=: \phi(x,y,t)} = F(r,t)e^{ik\theta} = F\left(\sqrt{x^2+y^2},t\right) \frac{(x,y)}{\sqrt{x^2+y^2}}
    $$
    is smooth on $\overline{B^2} \times I$, setting $x = r$, $y = 0$ proves the smoothness of $F(r,t)$ on $[0,R] \times I$. Moreover, considering $\phi(1/n,0,t)$ and $\phi(0,1/n,t)$ with $n \to +\infty$ shows that $F(0,t) = 0$. In the following, we fix $t \in I$ and omit it from the notation $\phi(x,y,t)$ and $F(r,t)$. 
    
    Now, we prove that $F(r) = \mathcal{O}(r^k)$ near $r = 0$. Indeed,
    $$
    \Delta_{(x,y)}^j \phi = \left( \partial_{rr} + \frac{1}{r}\partial_r - \frac{1}{r^2}\partial_{\theta\theta} \right)^j \phi = e^{ik\theta} \left( \partial_{rr} + \frac{1}{r}\partial_r - \frac{k^2}{r^2}\cdot  \right)^j F 
    $$
    is smooth on $\mathbb R^2$ for all $j \in \mathbb N$. In particular, the radial part 
    $$
    \left( \partial_{rr} + \frac{1}{r}\partial_r - \frac{k^2}{r^2}\cdot  \right)^j F(r)
    $$
    is smooth on $[0,1]$ and zero at $r = 0$.
    
    On a sum $\sum_{n=n_0}^{N}a_nr^{n-i}$, the operator $\left( \partial_{rr} + \frac{1}{r}\partial_r - \frac{k^2}{r^2}\cdot  \right)^j$  acts as
    $$
\left( \partial_{rr} + \frac{1}{r}\partial_r - \frac{k^2}{r^2}\cdot  \right)^j \sum_{n=n_0}^{N}a_nr^{n-i} = \sum_{n=n_0}^{N}\prod_{l=0}^{j-1}\left[(n-i-2l)^2 - k^2\right] a_n r^{n-i-2j}.
    $$    
    Write $F(r)$ with a one-sided Taylor expansion at $r = 0$, $F(r) = \sum_{n=1}^Na_nr^n + \mathcal{O}(r^{N+1})$ where $N > 2k$ is fixed and the remainder is given by the integral formula (from which it is seen that $\partial_{r}^i \mathcal{O}(r^{N+1}) = \mathcal{O}(r^{N+1-i})$ for $i \leq N$). 

    Assume $a_0, ..., a_{2j} = 0$ for $j \geq 0$ and $2j < k-1$. We prove that $a_{2j+1} = 0$ and $a_{2j+2} = 0$ as well if $2j+2 < k$. In other words, we prove by induction that $a_0 = a_1 = ... = a_{k-1} = 0$. Since
\begin{align*}
    \left( \partial_{rr} + \frac{1}{r}\partial_r - \frac{k^2}{r^2}\cdot  \right)^{j+1} F(r) &= \sum_{n=1}^{N} \prod_{l=0}^{j}\left[(n-2l)^2 - k^2\right] a_n r^{n-2(j+1)} + \mathcal{O}(r^{k+1}) \\
    &= \sum_{n=2j+1}^{N} \prod_{l=0}^{j}\left[(n-2l)^2 - k^2\right] a_n r^{n-2(j+1)} + \mathcal{O}(r^{k+1})
\end{align*}
is smooth on $[0,1]$, the first term 
$$
 \prod_{l=0}^{j}\left[(2j+1-2l)^2 - k^2\right] a_{2j+1} r^{-1}
$$
is smooth as well on $[0,1]$. Since $1 \leq 2j+1-2l < k$, the only possibility is that $a_{2j+1} = 0$. Moreover, if $2j+2 < k$, then 
\begin{align*}
    \left( \partial_{rr} + \frac{1}{r}\partial_r - \frac{k^2}{r^2}\cdot  \right)^{j+1} F(r) 
\end{align*}
is zero at $r = 0$. Hence,
$$
 \prod_{l=0}^{j}\left[(2j+2-2l)^2 - k^2\right] a_{2j+2}r^0 = 0,
$$
meaning that $a_{2j+2} = 0$ as well.

We have proved that $F(r) = r^k \tilde{F}(r)$, where $\tilde{F}(r)$ is smooth. We show that $\tilde{F}(r)$ must be even. Fix $J > 2k$ and $N > 2J$. Write $F(r) = r^k \sum_{n=0}^N a_nr^n + \mathcal{O}(r^{N+1})$. We prove that $a_n = 0$ for any odd $n \leq J$. Indeed,
\begin{align*}
    \left( \partial_{rr} + \frac{1}{r}\partial_r - \frac{k^2}{r^2}\cdot  \right)^{J+1} F(r) &= \sum_{n=0}^{N} \prod_{l=0}^{J}\left[(n+k-2l)^2 - k^2\right] a_n r^{n+k-2(J+1)} + \mathcal{O}(r^{J+1})
\end{align*}
is smooth on $[0,1]$. If $n \leq J$ is odd, then $k+n-2(J+1) \leq -1$,
$$
\prod_{l=0}^{J}\left[(n+k-2l)^2 - k^2\right] a_n r^{n+k-2(J+1)}
$$
is smooth on $[0,1]$ and $\prod_{l=0}^{J}\left[(n+k-2l)^2 - k^2\right] \neq 0$, meaning that $a_n = 0$.

Conversely, assume that $h_0 = r^k \tilde{h}_0$ is smooth with $\partial_{r}^{(2n+1)}\tilde{h}_0(0,t) = 0$ for $n \geq 0$. Let 
$$
v_0(z,t) = \left( \frac{z^k}{|z|^k} \sin h_0(|z|,t), \cos h_0(|z|,t) \right), \quad z = (x,y) \in \overline{B_R(0)} \subset \mathbb C,
$$
which is clearly smooth away from $z = 0$. It suffices to verify the $\mathbb R^2$-smoothness of the radial functions
$$
z \mapsto \frac{\sin h_0(|z|,t)}{|z|^k}, \quad z \mapsto \cos h_0(|z|,t)
$$
near $z = 0$, which is the case if and only if the odd (one-sided) $r$-derivatives of
$$
\frac{\sin h_0(r,t)}{r^k} = \frac{\sin \left( r^k\tilde{h}_0(r,t) \right)}{r^k} = \frac{\sin \left( r^k\tilde{h}_0(r,t) \right)}{r^k \tilde{h}_0(r,t)}  \tilde{h}_0(r,t) , \quad \cos h_0(r,t) = \cos \left( r^k\tilde{h}_0(r,t) \right)
$$
vanish at zero. Extend $\tilde{h}_0$ for $r < 0$ by even reflection $\tilde{h}_0(-r,t) = \tilde{h}(r,t)$ for $r < 0$. Then $\tilde{h}_0 \in C^{\infty}([-R,R] \times I,\mathbb R)$ is even with respect to $r$ and it also follows that 
$$
\frac{\sin \left( r^k\tilde{h}_0(r,t) \right)}{r^k \tilde{h}_0(r,t)}  \tilde{h}_0(r,t) , \quad  \cos \left( r^k\tilde{h}_0(r,t) \right)
$$
extend as $C^{\infty}([-R,R] \times I,\mathbb R)$ and even functions, which concludes the proof.
\end{proof}

\begin{proposition}[Solutions of harmonic map heat flow with $k$-equivariant boundary data] \label{prop:equivalent fomrulation heat map flow}
Let $h_0(r,t) = r^k \tilde{h}_0(r,t)$, where $\tilde{h}_0 \in C^{\infty}([0,1] \times [0,+\infty),\mathbb R)$ and $\partial_r^{(2n+1)}\tilde{h}_0(0,t) = 0$ for all $n \in \mathbb N_{\geq 0}$. There exists a unique solution
$$
h(r,t) = r^k \tilde{h} + h_0, \quad \tilde{h}(t,r) \in C^0([0,T),C^1([0,1])) \cap C^{\infty}((0,T) \times [0,1]),
$$
which solves 
\begin{align}
h_t &= h_{rr} + \frac{h_r}{r} - k^2 \frac{\sin(2h)}{2r^2}, \quad (r,t) \in (0,1) \times (0,+\infty), \tag{\ref{heat map formulation for h(r,t)}} \\
h(r,t) &= h_0(r,t), \quad (r,t) \in \{0,1\} \times [0,+\infty) \cup (0,1) \times \{0\}. \notag
\end{align}
Moreover, 
$$
     v(t,x) \in C^0([0,T),C^1(\overline{B^2})) \cap C^{\infty}((0,T) \times \overline{B^2}) 
    $$
defined as 
\begin{equation} \label{k-equivariant form of v(x,t)}
    v(re^{i\theta},t) = \left( e^{ik\theta} \sin h(r,t), \cos h(r,t) \right)
\end{equation}
solves (\ref{heat map flow}) with smooth and $k$-equivariant data
\begin{equation*}
    v_0(re^{i\theta},t) = \left( e^{ik\theta} \sin h_0(r,t), \cos h_0(r,t) \right).
\end{equation*}
    Conversely, solutions 
    $$
     v(t,x) \in C^0([0,T),C^1(\overline{B^2})) \cap C^{\infty}((0,T) \times \overline{B^2}) 
    $$
    to equation (\ref{heat map flow}) with smooth and $k$-equivariant boundary data $v_0$ are exactly of the form:
\begin{equation*} 
    v(re^{i\theta},t) = \left( e^{ik\theta} \sin h(r,t), \cos h(r,t) \right),
\end{equation*}
where 
$$
h(t,r) \in C^0([0,T),C^1([0,1])) \cap C^{\infty}((0,T) \times [0,1])
$$
solves (\ref{heat map formulation for h(r,t)}) with boundary data $h_0$, where $h_0$ is the inclination coordinate of the boundary data $v_0$, chosen of the form (see Lemma \ref{k-equivariance, smoothness of h}) $h_0(r,t) = r^k \tilde{h}_0(r,t)$, where $\tilde{h}_0 \in C^{\infty}([0,1] \times [0,+\infty),\mathbb R)$ and $\partial_r^{(2n+1)}\tilde{h}_0(0,t) = 0$ for all $n \in \mathbb N_{\geq 0}$.

In particular, $v$ and $h$ have the same maximal time of existence $T_{\max} \leq +\infty$.

Finally, if $T \leq +\infty$ and $m \in \mathbb N$ is such that $||h_0||_{L^{\infty}_{r,t}(\Sigma_T)} \leq m \pi$ on the parabolic boundary $\Sigma_T = \{0,1\} \times [0,T] \cup [0,1] \times \{0\}$, then $|h(r,t)| \leq m\pi$ on $[0,1] \times (0,T)$ as well.
\end{proposition}

\begin{remark}\label{remark:uniqueness of inclination coordinate}
    It is possible to solve equation (\ref{prop:equivalent fomrulation heat map flow}) even if the boundary data $h_0(r,t)$ is not of the form $h_0 = r^k \tilde{h}_0$.

     Indeed, if $h$ solves (\ref{prop:equivalent fomrulation heat map flow}) (without caring about the boundary data), then so does $h + m \pi, m \in \mathbb Z$. Hence, the equation can be solved with boundary data $h_0 = r^k \tilde{h}_0 + m\pi$ as well. This is expected as continuous inclination coordinates are unique up to a multiple of $\pi$. Hence, the multiple of $\pi$ does not change $v$ or $v_0$.
     
     However, the constant solutions $\pi/2 + m \pi$, $m \in \mathbb Z$, also solve (\ref{prop:equivalent fomrulation heat map flow}) with a boundary data which is not of the previous form $h_0 = r^k \tilde{h}_0 + m\pi$. This is consistent as $\pi/2$ corresponds to a solution
     $$
    v(x,t) = (e^{ik\theta},0) = \left( \frac{x}{|x|}, 0 \right)
     $$
     of the harmonic map heat flow for which $v(x,0) = v_0(x,0)$ is not smooth at the origin.
\end{remark}

\begin{proof}
Looking for a solution of the form $h = r^k \tilde{h} + h_0$, $\tilde{h}(r,t)$ must solve
\begin{align*}
    \tilde{h}_t &= \tilde{h}_{rr} + \frac{2k+1}{r}\tilde{h}_r - G(r^2,\tilde{h}) + H_0, \quad (r,t) \in (0,1) \times [0,+\infty), \\
    \tilde{h}(r,t) &= 0, \quad (r,t) \in \{0,1\} \times [0,+\infty) \cup (0,1) \times \{0\},
\end{align*}
where, writing $h_0 = r^k \tilde{h}_0$ as in Lemma \ref{k-equivariance, smoothness of h},
\begin{align*}
    G(r^2,\tilde{h}) &= r^{-k} \left( \frac{\sin(2h) - 2h - \sin(2h_0) + 2h_0}{2r^2} \right) = r^{-k-2} \sum_{n=1}^{\infty} \frac{(2r^k\tilde{h} + 2h_0)^{2n+1}-(2h_0)^{2n+1}}{2(2n+1)!} \\
    &= r^{-k-2} \sum_{n=1}^{\infty} \frac{2r^k\tilde{h} \sum_{j=0}^{2n} (2r^k \tilde{h})^j (2h_0)^{2n-j}}{2(2n+1)!} = \sum_{n=1}^{\infty} \frac{4^n \tilde{h} (r^2)^{kn -1} \sum_{j=0}^{2n}\tilde{h}^j \tilde{h}_0^{2n-j}}{(2n+1)!}
\end{align*}
is a smooth nonlinearity and 
$$
H_0(r,t) =\left[  -h_{0,t} + h_{0,rr} + \frac{h_{0,r}}{r} - k^2 \frac{\sin(2h_0)}{2r^2} + \frac{h_0}{r^2}\right] r^{-k} 
$$
is smooth as well given the regularity of $h_0$ obtained in Lemma \ref{k-equivariance, smoothness of h}. The PDE for $\tilde{h}(r,t)$ is a radial heat equation in dimension $2k+2$ with a smooth nonlinearity. It follows from Theorem \ref{thm:taylor local existence} that there exists a unique smooth solution
$$
\tilde{h}(t,x) \in C^0([0,T),C^1(\overline{B^2})) \cap C^{\infty}((0,T) \times \overline{B^2})
$$ for (\ref{heat map formulation for h(r,t)}). Moreover, if $T < +\infty$, then 
$$\lim \limits_{t \to T}||\tilde{h}(\cdot,t)||_{C^1(\overline{B^2})} = +\infty.$$

Such a solution must be radial. This is the case because $h$ is obtained as the limit of a Banach Fixed Point Iteration for the problem (\ref{integral problem}). Initializing the iteration with the radial initial data, one must check that at each iteration, the approximate solution is still radial. This is the case since the nonlinearity is radial and the heat semi-group maps radial functions to radial functions (one can solve the Dirichlet problem for the heat equation on $B^2$ in radial coordinates using separation of variables and Sturm-Liouville theory). Hence,
$$
\tilde{h}(t,r) \in C^0([0,T),C^1([0,1])) \cap C^{\infty}((0,T) \times [0,1])
$$
is radial and we get the same regularity and blow-up criterion for $h$.

If $|h_0| \leq m\pi$ on the parabolic boundary of $(0,1) \times (0,T)$ by smoothness of the boundary data, then the Comparison Principle (Theorem \ref{comparison principle}) shows that $|h(r,t)| \leq m \pi$ is bounded on $[0,1] \times (0,T')$ for any $0 < T' < T$ since $m\pi$ is always a finite energy solution.

If $h$ solves (\ref{heat map formulation for h(r,t)}) on $[0,T)$, letting
$$
v(re^{i\theta},t) = \left( e^{ik\theta} \sin h(r,t), \cos h(r,t) \right),
$$
one verifies directly that $v$ is smooth (Proposition \ref{k-equivariance, smoothness of h}) and solves the harmonic map heat flow problem (\ref{heat map flow}) on $[0,T)$. 

In the converse direction, one starts with a solution $v$ of (\ref{heat map flow}) on $[0,T)$. Solving (\ref{heat map formulation for h(r,t)}) with the inclination coordinate $h_0$ of the data $v_0$ yields an inclination coordinate $h$ on $[0,1] \times [0,T_{h,\max})$ which we use to form another solution $\tilde{v}$ of (\ref{heat map flow}) defined up to time $T_{h,\max}$. If $T_{h,\max} < T \leq +\infty$, then $\tilde{v}$ must blow-up at $T_{h,\max}$. Indeed, as seen above, the Comparison Principle (Theorem \ref{comparison principle}) shows that $h(r,t)$ is bounded on $[0,1] \times (0,T_{h,\max})$ by a large multiple $m \pi$. Hence, only the derivative $h_r$ blows-up at time $T_{h,\max}$. Yet,
\begin{equation} 
    |\nabla \tilde{v}|^2 = h_r^2 + \frac{k^2}{r^2} \sin(h)^2, \label{nabla v in terms of h}
\end{equation}
meaning that $\nabla \tilde{v}$ must also blow-up at time $T_{h,\max}$ as well. By uniqueness of the problem (\ref{heat map flow}), $\tilde{v} = v$ up to $T_{h,\max} < T$, meaning that $v$ blows-up before time $T$, a contradiction. Hence, $T \leq T_{h,\max}$, $h$ exists up to time $T$ and $v = \tilde{v}$ on $[0,T)$ by uniqueness.
\end{proof}

\begin{proposition}[Continuity of $h_t$ and $h_{rr}$]\label{prop:continuity of h_t}
Let
$$
     v(t,x) \in C^0([0,T),C^1(\overline{B^2})) \cap C^{\infty}((0,T) \times \overline{B^2}) 
$$
solve (\ref{heat map flow}) with smooth and $k$-equivariant boundary data. Let 
$$
h(t,r) \in C^0([0,T),C^1([0,1])) \cap C^{\infty}((0,T) \times [0,1])
$$
be the corresponding inclination coordinate (\ref{heat map formulation for h(r,t)}). Then $h_t(t,r) \in C^0([0,T) \times [0,1])$ and $h_{rr} \in C^0([0,T) \times (0,1])$.
\end{proposition}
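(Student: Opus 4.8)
The plan is to obtain the continuity of $h_t$ up to time zero as a corollary of the corresponding statement for $\partial_t v$, and then read off $h_{rr}$ directly from the equation. First I would check that $v$ meets the hypotheses of Proposition~\ref{prop:holder continuity partial_t v}: by Corollary~\ref{cor: existence of solution for heat map flow} we have $v\in C^0([0,T),C^1(\overline{B^2}))\cap C^{\infty}((0,T)\times\overline{B^2})$ together with the local parabolic Hölder regularity $v\in C^{1+\alpha,\alpha/2}_{loc}(\overline{B^2}\times[0,T))$ for every $0<\alpha<1$, which is precisely what that proposition requires. It then yields $\partial_t v\in C^{1+\alpha,\alpha/2}_{loc}(\overline{B^2}\times[0,T))$; in particular $\partial_t v$ is jointly continuous on $\overline{B^2}\times[0,T)$, and by the mean value theorem $v(x,\cdot)$ is (right-)differentiable at $t=0$ with $\partial_t v$ there equal to this continuous extension. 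Independently, Proposition~\ref{prop:equivalent fomrulation heat map flow} gives $h\in C^0([0,T),C^1([0,1]))$, hence $h$ and $h_r$ are jointly continuous on $[0,1]\times[0,T)$.

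Second, I would transfer this to $h_t$. Writing $v=(v_1,v_2,v_3)$ in the $k$-equivariant form~(\ref{k-equivariant form of v(x,t)}) and restricting to the ray $\theta=0$, one has $v_1((r,0),t)=\sin h(r,t)$, while $v_3(x,t)=\cos h(|x|,t)$ for every $x$; differentiating in $t$ on $(0,T)$ gives $\partial_t v_1((r,0),t)=\cos(h(r,t))\,h_t(r,t)$ and $\partial_t v_3((r,0),t)=-\sin(h(r,t))\,h_t(r,t)$. Multiplying these by $\cos h$ and by $-\sin h$ respectively and adding, the identity $\cos^2h+\sin^2h=1$ yields
$$
h_t(r,t)=\cos\!\big(h(r,t)\big)\,\partial_t v_1\big((r,0),t\big)-\sin\!\big(h(r,t)\big)\,\partial_t v_3\big((r,0),t\big),\qquad t>0.
$$
Since $(r,t)\mapsto((r,0),t)$ is a continuous map into $\overline{B^2}\times[0,T)$, the two terms $\partial_t v_1((r,0),t)$ and $\partial_t v_3((r,0),t)$ are continuous on $[0,1]\times[0,T)$, and so is $h$; hence the right-hand side is continuous on $[0,1]\times[0,T)$. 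Consequently $\lim_{t\to 0^+}h_t(r,t)$ exists locally uniformly, equals the right-derivative of $h(r,\cdot)$ at $0$ (again by the mean value theorem), and defines the continuous extension $h_t\in C^0([0,T)\times[0,1])$; for $t>0$ this of course agrees with the smooth $h_t$ of Proposition~\ref{prop:equivalent fomrulation heat map flow}.

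Third, for $h_{rr}$ I would simply solve equation~(\ref{heat map formulation for h(r,t)}) for the second derivative: on $(0,T)\times(0,1)$, where $h$ is smooth,
$$
h_{rr}=h_t-\frac{h_r}{r}+k^2\,\frac{\sin(2h)}{2r^2}.
$$
On $[0,T)\times(0,1)$ the functions $h$ and $h_r$ are continuous and $r$ stays bounded away from $0$ on compact subsets, so the right-hand side is continuous there and gives the desired continuous extension of $h_{rr}$ to $[0,T)\times(0,1)$.

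I do not expect a genuine obstacle in this proof; the only point that needs care is recovering $h_t$ at points where $\sin h=0$, where the relation coming from $v_3$ alone degenerates — this is exactly why the displayed identity combines $v_3$ with the planar component $v_1$ taken along $\theta=0$, so that $\cos^2h+\sin^2h=1$ closes the computation. The other thing to be careful about is that Proposition~\ref{prop:holder continuity partial_t v} supplies honest continuity of $\partial_t v$ up to $t=0$ rather than merely almost-everywhere differentiability, which it does.
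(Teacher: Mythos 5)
Your argument for $h_t$ is essentially the paper's: the paper likewise cites Proposition~\ref{prop:holder continuity partial_t v} to get $\partial_t v \in C^0$ up to $t=0$, restricts $v_t$ to the ray $\theta=0$ where $v|_{\theta=0}=(\sin h,0,\cos h)$, and solves for $h_t$ from the first and third components together with the continuity of $\cos h,\sin h$; your explicit combination $h_t=\cos h\,\partial_t v_1-\sin h\,\partial_t v_3$ just spells out the step the paper leaves implicit. For $h_{rr}$ you take a slightly different and somewhat more economical route: instead of reading $h_{rr}$ off the components of $\Delta v$ (which is what the paper does, using the $\Delta v\in C^{\alpha,\alpha/2}_{loc}$ conclusion of Proposition~\ref{prop:holder continuity partial_t v}), you solve equation~(\ref{heat map formulation for h(r,t)}) for $h_{rr}$ and observe that the right-hand side $h_t-h_r/r+k^2\sin(2h)/(2r^2)$ is already known to be continuous on $[0,T)\times(0,1]$. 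This avoids the $\Delta v$ computation entirely. The only point worth making explicit in either version is the identification at $t=0$: for $t>0$ the displayed identity gives $h_{rr}=g$ with $g$ the continuous right-hand side, and one integrates $h_{rr}$ in $r$ and passes to the limit $t\to 0^+$ (using continuity of $h_r$ and $g$) to see that $\partial_r^2 h(\cdot,0)=g(\cdot,0)$, so the extension is indeed $h_{rr}$ and not merely a continuous function agreeing with $h_{rr}$ for $t>0$; this is a one-line remark and does not affect the correctness of your approach.
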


\begin{proof}
    Continuity of $h$ and $h_r$ on $[0,T) \times [0,1]$ is already known. Observe that $v_t(t,re^{i 0}) \in C^0([0,T) \times [0,1])$ by Proposition \ref{prop:holder continuity partial_t v} and
    $$
    v_t(t,re^{i 0}) = \left( \cos(h) h_t,\cos(h) h_t, - \sin(h) h_t \right).
    $$
    As $\cos(h), \sin(h) \in C^0([0,T) \times [0,1])$, continuity of $h_t$ follows. Similarly, one has the regularity $\Delta v(t,re^{i 0}) \in C^0([0,T) \times [0,1])$ and
    $$
   \Delta v(t,re^{i 0}) = \begin{pmatrix}
       \cos(h) h_{rr} - \sin(h) h_r^2 + \cos(h) h_r r^{-1} - k^2 \sin(h)r^{-2} \\
       \cos(h) h_{rr} - \sin(h) h_r^2 + \cos(h) h_r r^{-1} - k^2 \sin(h)r^{-2} \\
       - \sin(h) h_{rr} - \cos(h) h_r^2 - \sin(h) h_r r^{-1}  
   \end{pmatrix},
    $$
    from which continuity away from $r = 0$ follows.
\end{proof}

We finish with an estimate of the blow-up rate of $h$.

\begin{proposition}[Blow-Up of $k$-equivariant solutions] \label{blow up point is at zero k equivariant}
Let $T < +\infty$ and assume
$$
     v(t,x) \in C^0([0,T),C^1(\overline{B^2})) \cap C^{\infty}((0,T) \times \overline{B^2}) 
$$
solve (\ref{heat map flow}) with smooth and $k$-equivariant boundary. Let 
$$
h(t,r) \in C^0([0,T),C^1([0,1])) \cap C^{\infty}((0,T) \times [0,1])
$$
be the corresponding inclination coordinate (\ref{heat map formulation for h(r,t)}). For all $n \in \{0,1,2\}$, there exists $C = C(k^2,n,h) > 0$ for which
\begin{align*}
    |\partial_r^{(n)} h(\lambda,t)| \leq ||h_{r}||_{L^{\infty}_{r,t}([\lambda,1] \times [T/2,T])} &\leq \frac{\tilde{C}(k^2,n,h)}{\lambda^n} \quad \forall \lambda \in (0,1], t \in [T/2,T].
\end{align*}
In particular, if $v$ blows-up at time $T$, then the blow-up of $h_r$ happens exactly at $r = 0$, i.e., 
$$
\limsup_{\substack{r \to 0^+ \\ t \to T^-}} |h_r(r,t)| = +\infty.
$$
\end{proposition}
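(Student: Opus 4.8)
The plan is to exploit the exact invariance of equation~(\ref{heat map formulation for h(r,t)}) under the parabolic rescaling $(r,t)\mapsto(\lambda r,\lambda^2 t)$, which turns the three estimates into a single local parabolic regularity statement whose constant is insensitive to $\lambda$. The case $n=0$ is free: by Proposition~\ref{prop:equivalent fomrulation heat map flow} there is an integer $m$ (depending only on $\sup|h_0|$, hence on $h$) with $|h|\le m\pi$ on $[0,1]\times(0,T)$, so $|h(\lambda,t)|\le m\pi=m\pi\lambda^{0}$; and once the pointwise bound $|h_r(\mu,t)|\le\tilde C/\mu$ is known for all $\mu\in(0,1]$, the asserted $\|h_r\|_{L^\infty([\lambda,1]\times[T/2,T))}\le\tilde C/\lambda$ follows because $\mu^{-1}\le\lambda^{-1}$ on $[\lambda,1]$. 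For $n\in\{1,2\}$ I would fix $\lambda\in(0,1]$ and $t_0\in[T/2,T)$ and set $w(\rho,\sigma):=h(\lambda\rho,\lambda^2\sigma)$ on $(0,1/\lambda)\times(0,T/\lambda^2)$. By scale-invariance $w$ again solves~(\ref{heat map formulation for h(r,t)}) with $|w|\le m\pi$ and data $w(0,\sigma)=0$, $w(1/\lambda,\sigma)=h_0(1,\lambda^2\sigma)$, $w(\rho,0)=h_0(\lambda\rho,0)$, all of whose $\rho$- and $\sigma$-derivatives are controlled by those of $h_0$ (since $\lambda\le1$), hence by constants depending only on $h$. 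The target is $\partial_\rho^{(n)}w$ at $(1,\sigma_0)$ with $\sigma_0:=t_0/\lambda^2$; note $\sigma_0\ge T/2$. The decisive observation is that on $\{\rho\ge1/2\}$ the nonlinearity is harmless, $|k^2\sin(2w)/(2\rho^2)|\le 2k^2$, so the radial extension $\widetilde w(y,\sigma):=w(|y|,\sigma)$ to the planar annulus $\{1/2<|y|<1/\lambda\}$ satisfies $\partial_\sigma\widetilde w-\Delta_y\widetilde w=g$ with $\|g\|_{L^\infty}\le2k^2$, uniformly in $(\lambda,t_0)$.

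I would then apply interior and boundary parabolic estimates on a space-time cylinder around $(1,\sigma_0)$ whose dimensions are bounded above and below independently of $\lambda$ and $t_0$ and which keeps a definite distance from the origin $|y|=0$ (where $\widetilde w$ may genuinely be singular): an interior cylinder when $\lambda\le2/3$ (so $|y|=1$ lies at distance $\ge1/2$ from both $|y|=1/2$ and the outer boundary $|y|=1/\lambda$), a boundary cylinder of fixed radius attached to $|y|=1/\lambda$ when $\lambda>2/3$, and in time the interval $(\sigma_0-T/4,\sigma_0)$, which is admissible precisely because $\sigma_0\ge T/2$. The interior, resp.\ boundary, parabolic Sobolev embedding (Corollary~\ref{cor: local parabolic sobolev embedding}, resp.\ Theorem~\ref{thm:boundary parabolic sobolev embedding}) gives $\|\widetilde w\|_{C^{1+\alpha,(1+\alpha)/2}}\le C_1(k^2,h)$ on a smaller cylinder, hence $|h_r(\lambda,t_0)|=\lambda^{-1}|\partial_\rho w(1,\sigma_0)|\le C_1\lambda^{-1}$. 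With this Hölder control in hand, $g=-k^2\sin(2w)/(2\rho^2)$ is of class $C^{\alpha,\alpha/2}$ with uniformly bounded norm, so the interior, resp.\ boundary, Schauder estimates (Theorem~\ref{interior schauder estimates}, resp.\ Theorem~\ref{boundary schauder smooth boundary}) upgrade this to $\|\widetilde w\|_{C^{2+\alpha,1+\alpha/2}}\le C_2(k^2,h)$ near $(1,\sigma_0)$, whence $|h_{rr}(\lambda,t_0)|=\lambda^{-2}|\partial_\rho^2 w(1,\sigma_0)|\le C_2\lambda^{-2}$. Undoing the rescaling and taking suprema over $[\lambda,1]\times[T/2,T)$ yields the stated inequalities; the uniform $C^{2+\alpha}$ control on $[\varepsilon,1]\times[T/2,T)$ for each $\varepsilon>0$ also lets one pass to the limit $t\to T^-$, so the bounds hold up to $t=T$ as written.

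Finally, for the ``in particular'' assertion I would argue as follows. By~(\ref{nabla v in terms of h}), $|\nabla v|^2=h_r^2+k^2\sin^2(h)/r^2$, and since $h(0,t)=0$ one has $|\sin h(r,t)|/r\le|h(r,t)|/r\le\sup_{[0,r]}|h_r(\cdot,t)|$; hence, by Corollary~\ref{cor: existence of solution for heat map flow}, blow-up of $v$ at $T$ implies $\limsup_{t\to T^-}\|h_r(\cdot,t)\|_{L^\infty([0,1])}=+\infty$. If the blow-up of $h_r$ avoided $r=0$, there would exist $\varepsilon\in(0,1)$ and $t_1<T$ with $|h_r|\le C$ on $[0,\varepsilon]\times(t_1,T)$; combined with the just-proved $|h_r|\le\tilde C/\varepsilon$ on $[\varepsilon,1]\times[T/2,T)$, this would keep $\|h_r(\cdot,t)\|_{L^\infty([0,1])}$ bounded as $t\to T^-$, a contradiction. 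Therefore $\limsup_{r\to0^+,\,t\to T^-}|h_r(r,t)|=+\infty$. The only genuinely delicate point is the bookkeeping in the previous paragraph: one must be sure to run every local parabolic estimate on cylinders of a fixed size that stay away from $\rho=0$, since it is exactly this that makes all the constants independent of the scale $\lambda$.
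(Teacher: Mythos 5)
Your proof is correct and essentially the same as the paper's: both exploit the invariance of (\ref{heat map formulation for h(r,t)}) under $(r,t)\mapsto(\lambda r,\lambda^2 t)$, the uniform bound $|h|\le m\pi$ from the Comparison Principle, the observation that the nonlinearity is bounded on $\{\rho\ge1/2\}$, and then parabolic Sobolev embedding followed by Schauder estimates on fixed-size cylinders whose constants do not see $\lambda$. The only packaging difference is at $r$ near $1$: the paper first treats the fixed strip $[1/2,1]\times[T/2,T]$ with boundary Schauder and then rescales only for $\lambda$ small, whereas you invoke boundary cylinders directly in the rescaled picture; the two are interchangeable.
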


\begin{proof}
The inclination coordinates solves a 2D radial heat equation with smooth and bounded forcing term on $[1/2,1] \times [T/2,T]$. The estimate follows from the boundary Schauder estimates (Theorem \ref{boundary schauder smooth boundary}) on that region. Hence, it suffices to prove the statement for all $\lambda$ small enough. Let $\lambda \in (0,1)$ and 
$$
h_{\lambda}(r,t) = h( \lambda r,\lambda^2 t) \in C^{\infty}( (0,\lambda^{-1}) \times (0,\lambda^{-2}T) ).
$$
It follows from the Comparison Principle (Proposition \ref{prop:equivalent fomrulation heat map flow}) that $h$, hence $h_{\lambda}$, is bounded by a large multiple $m\pi$.

Moreover, $h_{\lambda}$ solves the radial 2D nonlinear heat equation
$$
h_t = h_{rr} + \frac{h_r}{r} - k^2 \frac{\sin(2h)}{2r^2}, \quad (r,t) \in  (1/2,\lambda^{-1}) \times (0,\lambda^{-2}T)
$$
with a bounded forcing term. For any open cylinder 
$$
C(x,t;1)  = B_1(x) \times (t-1,t) \subset \{(x,t) \in \mathbb R^2 \times \mathbb R_{\geq 0}:   (|x|,t) \in (3/2,\lambda^{-1}-1) \times (0,\lambda^{-2}T)\},
$$ 
the Schauder estimates (Theorem \ref{interior schauder estimates}, Remark \ref{weakening of interior Schauder estimates}) show the existence of $\gamma \in (0,1)$ and a constant $C = C(n)$ for which 
\begin{align*}
    ||h_{\lambda}||_{C^{2+2\gamma,1+\gamma}_{x,t}(C(x,t;1/8) )} \leq C \left( ||h_{\lambda}||_{L^p(C(x,t;1))} + \bigg| \bigg|  k^2 \frac{\sin(2h_{\lambda})}{2r^2} \bigg| \bigg|_{L^p(C(x,t;1))} \right) \leq C(n,m\pi,k^2)
\end{align*}
on the cylinder $C(x,t;1/8) = B_{1/8}(x) \times (t-1/64,t)$. We deduce that
 \begin{align*}
          &||(|\partial_t h_{\lambda}| + |\partial_r h_{\lambda}| + |\partial_{rr} h_{\lambda}| )||_{L^{\infty}_{r,t}((5/2,\lambda^{-1}-2) \times (1,\lambda^{-2}T))} \\
          &\lesssim ||(|\partial_t h_{\lambda}| + |\nabla_x h_{\lambda}| + |D^2_xh_{\lambda}| )||_{L^{\infty}_{x,t}(\{5/2 \leq |x| \leq \lambda^{-1}-2\} \times (1,\lambda^{-2}T))} \\
          &\leq C(n, m\pi, k^2),
     \end{align*}
which proves the desired bounds on $h_r$, $h_{rr}$ on $(r,t) \in (5\lambda/2,1-2\lambda) \times (\lambda, T)$ for all $\lambda$ small enough, which is enough to conclude the proof.
\end{proof}

\section{Finite energy solutions and further blow-up characterizations}

In this section, we restrict to solutions of (\ref{heat map flow}) on $[0,T)$ with finite energy and $k$-equivariant symmetry.

Under these assumptions, one obtains additional regularity of solutions at $t = T$ (Proposition \ref{prop:smoothness at the boundary}) as well as existence and rigidity for the value of the limit $\lim \limits_{r \to 0^+} h(r,T)$ (Proposition \ref{value of v and h at origin}). As seen as a corollary of the fixed point argument in Theorem \ref{thm:taylor local existence}, the usual blow-up criterion is the blow-up of the $C^1$-norm of the solution. In this section, we refine the description of blow-up using the so-called ``barrier argument''. Indeed, a blow-up event implies that the limit 
$$
\lim \limits_{\substack{r \to 0^+ \\ t \to T^{-}}} h(r,t)
$$
does not exist, as one can extract at least one non-constant harmonic map along sequences $r_n \to 0^+, t_n \to T^{-}$, as shown in the blow-up description from Struwe (Theorem \ref{thm:smoothness around all but finitely many points}). The barrier argument consists of finding an upper bound $|h(r,t)| \leq g(r,t)$ which is continuous at $(r,t) = (0,T)$ with $g(0,T) = 0$, which leads to a contradiction. This majorant is usually a stationary solution $2\arctan(\alpha^kr^k)$ and it suffices to show the upper bound on the parabolic boundary of $[0,1] \times [0,T]$ thanks to the Comparison Principle (Theorem \ref{comparison principle}).

Under the aforementioned assumptions, if $h(0,t) = 0$ for all $t \in [0,T)$, we prove that it cannot be that the boundary data $h_0$ is bounded by $\pi$ (Proposition \ref{prop:criterion for global solution}, which generalizes the result from \cite{ChangDing2011} to $k \geq 2$ and time-dependent boundary data) and that 
$$
    \limsup_{\substack{r \to 0^+ \\ t \to T^-}}|h(r,t)| = \pi.
$$
Moreover, we also generalize the other result from \cite{ChangDing2011} and show that there are global solutions which blow-up at infinity for any $k \geq 2$.
 
 \begin{definition}[Finite energy solutions]
One says that $v$ has finite energy on $(0,T)$ if 
\begin{align*}
   \sup_{0 < t < T} E(v(\cdot,t)) < + \infty.
\end{align*}
From (\ref{nabla v in terms of h}), it follows that
\begin{align}
    E(v(\cdot,t)) :=  E(v(t)) := \frac{1}{2}\int_{B^2} |\nabla v|^2 dx = \pi \int_0^1 \left( h_r^2 + \frac{k^2}{r^2} \sin(h)^2 \right) r dr = E(h(\cdot,t)). \label{Energy of v in terms of h}
\end{align}
\end{definition}

\begin{remark}[Equivariant solutions have finite energy]\label{k-equivariance implies finite energy}
    If $v_0(x,t) = v_0(x)$ is a time-independent boundary data and $v_0$ has finite energy, then the energy of the solution $v$ is non-increasing in time. Hence, $v$ has finite energy.

    In the time-dependent case, we lose monotonicity. However, if $v_0(x,t)$ is $k$-equivariant, then $v(x,t)$ has finite energy as well even though it might not be monotone in time. Indeed, if $v(x,t)$ is defined on $[0,T)$, then
\begin{align*}
    \frac{d}{dt}E(h(\cdot,t)) &= \pi \int_0^1 \left( 2 h_r h_{rt} + 2 \frac{k^2}{r^2} \sin(h) \cos(h) h_t \right) rdr  \\
     &= 2\pi [h_rh_t r]_{r=0}^{r=1} - 2\pi \int_0^1 h_t \left(- h_{rr}r -  h_r  +  \frac{k^2}{r} \sin(h) \cos(h) \right) dr  \\
     &= 2\pi h_r(1,t)h_t(1,t) - 2\pi \int_0^1 h_t(r,t)^2 rdr \\
     &= 2\pi h_r(1,t)\partial_t h_0(1,t) - 2\pi \int_0^1 h_t(r,t)^2 rdr \\
     &\leq 2\pi h_r(1,t)\partial_t h_0(1,t), \quad t \in [0,T),
\end{align*}
 and we know from Schauder estimates (see Proposition \ref{blow up point is at zero k equivariant}) that $h_r(1,t)$ is bounded on $[0,T)$ under the $k$-equivariance symmetry. Hence,
 $$
\sup_{t \in [0,T)} E(h(\cdot,t)) \leq E(h_0) + C(k,h,T)
 $$
 after integrating with respect to time.
\end{remark}

 First, observe that the symmetry assumption implies some further smoothness at $t = T$. In particular, from now on, it makes sense to consider and write $v(T,x)$ and $h(T,r)$ away from zero.

 {
\begin{proposition}[Smoothness at $t = T$] \label{prop:smoothness at the boundary, earlier statement}
Let
$$
     v(t,x) \in C^0([0,T),C^1(\overline{B^2})) \cap C^{\infty}((0,T) \times \overline{B^2}) 
$$
solve (\ref{heat map flow}) with smooth and $k$-equivariant boundary data. Let 
$$
h(t,r) \in C^0([0,T),C^1([0,1])) \cap C^{\infty}((0,T) \times [0,1])
$$
be the corresponding inclination coordinate (\ref{heat map formulation for h(r,t)}). Assume that $v$ blows-up at time $T < +\infty$.

Then 
$$
v(t,x) \in C^{\infty}((0,T] \times \overline{B^2} \setminus \{0\} ), \quad h(t,r) \in  C^{\infty}((0,T] \times (0,1] ).
$$
\end{proposition}

\begin{proof}
    See Proposition \ref{prop:smoothness at the boundary} in Appendix D. This is a consequence of the energy concentration argument from Struwe (\cite{struwe2008variational}).
\end{proof}

Even though we have an estimate of the blow-up rate, it turns out that the limit $\lim \limits_{r \to 0^+} h(r,T)$ exists and is a multiple of $\pi$. 

\begin{proposition}[Value at the origin]\label{value of v and h at origin}
    Let $v(x,t)$ solve (\ref{heat map flow}) with smooth and $k$-equivariant boundary data. Assume that $v$ blows-up at time $T < +\infty$. Then for all $t \in [0,T]$, one has
    $$
    \lim_{r \to 0^+} h(r,t) = k(t) \pi, \quad k(t) \in \mathbb Z.
    $$
    In particular, $k(t) = c$ is constant on $[0,T)$.
\end{proposition}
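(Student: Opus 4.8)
The plan is to handle $t \in [0,T)$ and the endpoint $t = T$ by quite different means. For $t \in [0,T)$ the claim is immediate from the structure of the inclination coordinate: by Proposition~\ref{prop:equivalent fomrulation heat map flow} we have $h(r,t) = r^k\tilde h(r,t) + h_0(r,t)$ with $h_0(r,t) = r^k\tilde h_0(r,t)$ and $\tilde h(\cdot,t),\,\tilde h_0(\cdot,t) \in C^1([0,1])$, so both summands are $O(r^k)$ as $r \to 0^+$ and $\lim_{r\to 0^+} h(r,t) = 0$. Hence $k(t) = 0$ for every $t \in [0,T)$, which in particular is constant and settles the final assertion.

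So the content lies at $t = T$. By Proposition~\ref{prop:smoothness at the boundary}, $h \in C^\infty((0,T]\times(0,1])$, so $f(r) := h(r,T)$ is a smooth function of $r \in (0,1]$. The first step is to establish that $f$ has finite energy, $\int_0^1 \big(f_r^2 + \tfrac{k^2}{r^2}\sin^2 f\big)\,r\,dr < \infty$, by lower semicontinuity. By Remark~\ref{k-equivariance implies finite energy} there is $E_0 := \sup_{t<T} E(h(\cdot,t)) < \infty$; for fixed $\delta \in (0,1)$ the truncated energy $t \mapsto \pi\int_\delta^1 (h_r^2 + \tfrac{k^2}{r^2}\sin^2 h)\,r\,dr$ is continuous up to $t = T$ by the smoothness of $h$ away from the origin, hence is $\le E_0$ at $t = T$, and letting $\delta \to 0^+$ (monotone convergence, nonnegative integrand) yields $\int_0^1 (f_r^2 + \tfrac{k^2}{r^2}\sin^2 f)\,r\,dr \le E_0 < \infty$.

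The crux is then a one-variable statement: if $f \in C^1((0,1])$ satisfies $\int_0^1 (f_r^2 + \tfrac{k^2}{r^2}\sin^2 f)\,r\,dr < \infty$, then $\lim_{r\to 0^+} f(r)$ exists and lies in $\pi\mathbb Z$. I would prove it through the logarithmic substitution $s = -\log r$, $g(s) = f(e^{-s})$, under which the condition becomes $\int_0^\infty (g'(s)^2 + \sin^2 g(s))\,ds < \infty$. Introducing $\Psi(u) := \int_0^u |\sin\sigma|\,d\sigma$ --- a continuous, strictly increasing bijection of $\mathbb R$, since $\Psi(n\pi) = 2n$ for $n \in \mathbb Z$ so that $\Psi(u) \to \pm\infty$ as $u \to \pm\infty$ --- one has $\tfrac{d}{ds}\Psi(g(s)) = |\sin g(s)|\,g'(s)$, whose modulus is bounded by $\tfrac12(\sin^2 g(s) + g'(s)^2) \in L^1([0,\infty))$. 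Therefore $\Psi(g(s))$ converges as $s \to \infty$ (being $\Psi(g(0))$ plus the integral of an $L^1$ function), and by continuity of $\Psi^{-1}$ so does $g(s)$, say $g(s) \to g_\infty$. Since $\sin^2 g(s) \to \sin^2 g_\infty$ while $\int_0^\infty \sin^2 g < \infty$, necessarily $\sin g_\infty = 0$, i.e.\ $g_\infty \in \pi\mathbb Z$. Translating back, $\lim_{r\to 0^+} h(r,T) = k(T)\pi$ for some $k(T) \in \mathbb Z$, completing the proof.

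I expect the only genuinely delicate point to be the existence of the limit in this lemma: finite energy by itself permits oscillation near the puncture, and the monotone surrogate $\Psi(g(s))$ is precisely what rules it out. Verifying that $\Psi$ is a homeomorphism of $\mathbb R$ and that $s \mapsto \Psi(g(s))$ has integrable derivative is the heart of the argument; the reduction for $t < T$ and the lower semicontinuity of the energy at $t = T$ are routine.
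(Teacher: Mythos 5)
Your proof is correct, and the reduction to the endpoint $t = T$ is well organized: the $t < T$ case follows from the structural form $h = r^k\tilde h + h_0$ in Proposition~\ref{prop:equivalent fomrulation heat map flow}, exactly as the paper notes in the remark following the proposition. You are also careful to justify that $h(\cdot,T)$ has finite energy via lower semicontinuity (continuity of the truncated energy in $t$ plus monotone convergence in $\delta$), a point the paper's proof takes for granted when it writes that $h(\cdot,t) \in L^2([0,1],rdr)$ ``since the energy is finite'' for every $t \in [0,T]$.

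For the one-variable lemma itself — finite energy near the puncture forces $f(r) \to k\pi$ — you take a genuinely different route from the paper. The paper (Lemmas~\ref{value at r = 0 is a multiple of pi} and~\ref{value at r = 0 is a multiple of pi, ver.2}) works in Sobolev spaces on $[0,R]$: it shows $\cos\theta \in W^{1,1}([0,R]) \hookrightarrow C^0([0,R])$, deduces $\sin\theta(r) \to 0$, identifies the integer $k$ from the intermediate value property, and then repeats the $W^{1,1}$ embedding for $(\theta - k\pi)^2$ to show continuity at $r = 0$. You instead pass to the conformal coordinate $s = -\log r$, turning the finite energy condition into $\int_0^\infty (g'^2 + \sin^2 g)\,ds < \infty$, and use the monotone surrogate $\Psi(u) = \int_0^u|\sin\sigma|\,d\sigma$: the derivative $\tfrac{d}{ds}\Psi(g(s)) = |\sin g|\,g'$ is dominated by $\tfrac12(\sin^2 g + g'^2) \in L^1$, so $\Psi(g)$ converges, and as $\Psi$ is a homeomorphism of $\mathbb R$ so does $g$; integrability of $\sin^2 g$ then forces the limit into $\pi\mathbb Z$. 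Both arguments hinge on the same Young/Cauchy--Schwarz estimate $|\sin\theta|\,|\theta'| \in L^1$; the paper's version converts it into a $W^{1,1}$ regularity statement, while yours integrates it directly after the logarithmic change of variables. Your variant is self-contained (no Sobolev embedding), and the auxiliary function $\Psi$ makes the mechanism — why finite energy forbids oscillation near the puncture — explicit rather than implicit.
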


\begin{remark}
    It was already known from Proposition \ref{prop:equivalent fomrulation heat map flow} and the uniqueness of a continuous inclination coordinate (up to a multiple of $\pi$) that $h(0,t) = m \pi$ is constant on $[0,T)$. The interesting part is that the limit $\lim \limits_{r \to 0^+} h(r,T)$ also exists and is a multiple of $\pi$. Moreover, there might be a jump at time $T$, i.e.,  $k(T) \neq c$.
\end{remark}

\begin{proof}
    We note that $h(r,t)$ is continuous on $[0,1] \times [0,T) \cap (0,1] \times [0,T]$ (Proposition \ref{prop:equivalent fomrulation heat map flow} and Proposition \ref{prop:smoothness at the boundary, earlier statement} for the additional regularity at $t = T$) and, for fixed $t \in [0,T]$, belongs to $L^2([0,1],rdr)$ since the energy is finite. The existence of the limit and its value follow from the following two lemmas. The fact that $k(t)$ is constant on $[0,T)$ follows from the continuity of $h$ on $[0,1] \times [0,T)$.
\end{proof}

\begin{lemma}\label{value at r = 0 is a multiple of pi}
Assume that $\theta(r) \in L^2([0,R],rdr)$ with a weak derivative $\theta'(r) \in L^2([0,R],rdr)$. Moreover, assume there exists $k \in \mathbb Z$ for which 
$$
\frac{\theta-k\pi}{r} \in L^2([0,R],rdr)
$$
as well. Then $\theta \in C^0([0,R])$ and $\lim \limits_{r \to 0}\theta(r) = k \pi$.
\end{lemma}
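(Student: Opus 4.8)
The plan is to reduce to the case $k=0$ and then prove the Hardy-type statement: if $\theta\in L^2([0,R],r\,dr)$ has a weak derivative $\theta'\in L^2([0,R],r\,dr)$ and $\theta/r\in L^2([0,R],r\,dr)$, then $\theta$ admits a representative in $C^0([0,R])$ with $\theta(0)=0$. Indeed, replacing $\theta$ by $\theta-k\pi$ changes neither $\theta'$ nor the weighted integrability of $\theta$ (the constant $k\pi$ lies in $L^2([0,R],r\,dr)$), and it turns the hypothesis $(\theta-k\pi)/r\in L^2$ into $\theta/r\in L^2$; so it suffices to treat $k=0$.

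First I would record the behaviour away from the origin. On any interval $[a,R]$ with $a>0$ the weight $r$ is bounded above and below, so $\theta,\theta'\in L^2([a,R],dr)$, hence $\theta$ has an absolutely continuous representative on $[a,R]$; patching these over $a\downarrow 0$ gives a well-defined representative continuous on $(0,R]$, which from now on I denote by $\theta$. It then remains only to show $\theta(r)\to 0$ as $r\to 0^+$, after which the extension by $\theta(0):=0$ is continuous on $[0,R]$.

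The heart of the argument is a dyadic estimate. I would set $I_n=[2^{-n-1}R,2^{-n}R]$, so that $(0,R]=\bigcup_{n\ge 0}I_n$, and put $\epsilon_n:=\int_{I_n}|\theta'|^2 r\,dr$ and $\eta_n:=\int_{I_n}\theta^2 r^{-1}\,dr$. The hypotheses give $\sum_n\epsilon_n=\|\theta'\|_{L^2(r\,dr)}^2<\infty$ and $\sum_n\eta_n=\|\theta/r\|_{L^2(r\,dr)}^2<\infty$, hence $\epsilon_n,\eta_n\to 0$. On $I_n$ one has $r^{-1}\le 2^{n+1}/R$ and $|I_n|=2^{-n-1}R$. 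On one hand,
$$\frac{1}{|I_n|}\int_{I_n}\theta^2\,dr\le \frac{1}{|I_n|}\cdot 2^{-n}R\int_{I_n}\theta^2 r^{-1}\,dr=2\eta_n,$$
so there is a point $r_n^*\in I_n$ with $\theta(r_n^*)^2\le 2\eta_n$. On the other hand, for every $r\in I_n$ the fundamental theorem of calculus (valid for the absolutely continuous representative) together with Cauchy--Schwarz gives
$$|\theta(r)-\theta(r_n^*)|\le \int_{I_n}|\theta'|\,dr\le |I_n|^{1/2}\left(\int_{I_n}|\theta'|^2\,dr\right)^{1/2}\le \left(|I_n|\cdot\frac{2^{n+1}}{R}\,\epsilon_n\right)^{1/2}=\epsilon_n^{1/2}.$$
Combining the two, $\sup_{I_n}|\theta|\le (2\eta_n)^{1/2}+\epsilon_n^{1/2}\to 0$, and since $(0,2^{-N}R]=\bigcup_{n\ge N}I_n$ we conclude $\lim_{r\to 0^+}\theta(r)=0$.

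The only point requiring care is the bookkeeping of representatives: one must first fix the (essentially unique) representative that is absolutely continuous on compact subsets of $(0,R]$ before invoking the fundamental theorem of calculus on each annulus $I_n$ and before speaking of pointwise values such as $\theta(r_n^*)$. Everything else is elementary, the two ingredients being the standard one-variable embedding $W^{1,2}\hookrightarrow C^0$ away from the origin and the dyadic summability extracted from the weighted $L^2$ norms of $\theta'$ and $\theta/r$.
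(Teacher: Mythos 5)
Your proof is correct, and it takes a genuinely different route than the paper. Where you use a dyadic decomposition of $(0,R]$, an averaging step to locate a point $r_n^*$ in each shell with small $\theta$-value, and a Cauchy--Schwarz oscillation bound to propagate smallness across the shell, the paper instead passes to the auxiliary function $(\theta-k\pi)^2$: its weak derivative is $2(\theta-k\pi)\theta'=2\cdot\frac{\theta-k\pi}{r}r^{1/2}\cdot\theta'r^{1/2}$, a product of two $L^2([0,R],dr)$ factors and hence $L^1([0,R])$, so $(\theta-k\pi)^2\in W^{1,1}([0,R])\subset C^0([0,R])$, and the limit at $0$ must vanish because $\int_0^R (\theta-k\pi)^2/r\,dr<\infty$ forbids a positive limit. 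Both routes are elementary and use exactly the same two hypotheses; the paper's argument is a bit more compressed (one product identity plus the $W^{1,1}\hookrightarrow C^0$ embedding), while yours is more transparent in exhibiting \emph{where} in each dyadic annulus the weighted $L^2$ bounds force a small value and how it spreads. One small remark: once you know $\theta(r)\to 0$, the continuity of the extension at $r=0$ is automatic, so your final sentence about ``the extension by $\theta(0):=0$'' can be stated without any further care.
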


\begin{proof}
    For any compact interval $I \subset (0,R]$, one has $\theta, \theta' \in L^2(I)$. Hence $\theta \in W^{1,2}(I) \subset C^{0}(I)$.

    Then $(\theta-k\pi)^2 \in W^{1,2}(I)$ as well and $\partial_r \left[ (\theta-k\pi)^2 \right] = 2(\theta-k\pi) \theta'$ using the product rule.

    Since
    $$
2(\theta-k\pi) \theta' = \frac{\theta-k\pi}{r}r^{1/2} \cdot \theta' r^{1/2} \in L^1([0,R]),
    $$
    one actually has $(\theta-k\pi)^2 \in W^{1,1}([0,R]) \subset C^0([0,R])$ and $(\theta-k\pi)^2$ is continuous at $r = 0$.

    Finally, since
    $$
    \int_0^R \frac{(\theta(r)-k\pi)^2}{r} dr < +\infty, 
    $$
    one must have $\lim \limits_{r \to 0} (\theta - k\pi)^2 = 0$, which concludes the proof.
\end{proof}

\begin{lemma}\label{value at r = 0 is a multiple of pi, ver.2}
    Assume that $\theta(r) \in L^2([0,R],rdr)$ with a weak derivative $\theta'(r) \in L^2([0,R],rdr)$. Then 
    $$
    \exists k \in \mathbb Z: \frac{\theta-k\pi}{r} \in L^2([0,R],rdr) \iff \frac{\sin \theta}{r} \in  L^2([0,R],rdr).
    $$
\end{lemma}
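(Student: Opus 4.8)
The plan is to treat the two implications separately; the forward one is elementary and the reverse one carries the weight of the argument.

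For the forward implication, if $\frac{\theta-k\pi}{r}\in L^2([0,R],rdr)$ for some $k\in\mathbb Z$, then since $\sin$ is $1$-Lipschitz and $\sin(k\pi)=0$ we have the pointwise bound $|\sin\theta|=|\sin(\theta-k\pi)|\le|\theta-k\pi|$, whence $\left|\frac{\sin\theta}{r}\right|\le\left|\frac{\theta-k\pi}{r}\right|$ and $\frac{\sin\theta}{r}\in L^2([0,R],rdr)$.

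For the reverse implication, assume $\int_0^R\frac{\sin^2\theta}{r}\,dr<\infty$. As in the previous lemma, $\theta\in W^{1,2}_{loc}((0,R])\subset C^0((0,R])$. First I would establish that $\cos\theta$ has a limit at $r=0$: by Cauchy--Schwarz, $|(\cos\theta)'|=|\sin\theta|\,|\theta'|=\frac{|\sin\theta|}{\sqrt r}\cdot|\theta'|\sqrt r\in L^1([0,R])$ since both $\frac{\sin\theta}{\sqrt r}$ and $\theta'\sqrt r$ lie in $L^2([0,R],dr)$, so $\cos\theta\in W^{1,1}([0,R])\subset C^0([0,R])$ and $c_0:=\lim_{r\to0}\cos\theta(r)$ exists. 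Next I would note that $\dist(\theta(r),\pi\mathbb Z)$ cannot stay bounded away from $0$ as $r\to0$, for otherwise $|\sin\theta|\ge c>0$ near $r=0$ would force $\int_0\frac{\sin^2\theta}{r}\,dr=\infty$; hence there are $r_n\to0$ with $|\cos\theta(r_n)|\to1$, so $c_0=\pm1$. Since $\cos\theta(r)\to c_0\in\{\pm1\}$, for $r_1$ small we get $\dist(\theta(r),c_0\text{-type set})<\tfrac\pi2$ on $(0,r_1)$, where the $c_0$-type set is $2\pi\mathbb Z$ if $c_0=1$ and $\pi+2\pi\mathbb Z$ if $c_0=-1$; the corresponding intervals $(m\pi-\tfrac\pi2,m\pi+\tfrac\pi2)$ are pairwise disjoint, so by continuity of $\theta$ on the connected interval $(0,r_1)$ the image $\theta((0,r_1))$ lies in exactly one of them, say $(k\pi-\tfrac\pi2,k\pi+\tfrac\pi2)$, and then $\theta(r)\to k\pi$ with $|\theta(r)-k\pi|<\tfrac\pi2$ on $(0,r_1)$.

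To finish, I would apply the elementary inequality $|\sin x|\ge\frac2\pi|x|$ for $|x|\le\tfrac\pi2$ with $x=\theta-k\pi$, obtaining $(\theta-k\pi)^2\le\frac{\pi^2}{4}\sin^2\theta$ on $(0,r_1)$, so $\int_0^{r_1}\frac{(\theta-k\pi)^2}{r}\,dr\le\frac{\pi^2}{4}\int_0^{r_1}\frac{\sin^2\theta}{r}\,dr<\infty$; on $[r_1,R]$ the integrand $\frac{(\theta-k\pi)^2}{r}$ is bounded since $\theta$ is continuous there, so altogether $\frac{\theta-k\pi}{r}\in L^2([0,R],rdr)$. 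The step I expect to be the main obstacle is the middle one: showing that $\theta$ genuinely converges to a single multiple of $\pi$ as $r\to0$ rather than oscillating among several — this is exactly where the hypothesis $\frac{\sin\theta}{r}\in L^2$ (beyond merely $\theta'\in L^2(rdr)$) is essential, entering through the $W^{1,1}$-control of $\cos\theta$ and the connectedness argument; everything else is routine one-dimensional Sobolev embedding together with Cauchy--Schwarz.
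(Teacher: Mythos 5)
Your proof is correct and follows essentially the same route as the paper's: the forward direction by the $1$-Lipschitz bound on $\sin$, and the reverse direction by showing $\cos\theta\in W^{1,1}([0,R])$ via Cauchy--Schwarz (so that $\cos\theta$ extends continuously to $r=0$), deducing from the integrability hypothesis that the boundary limit is $\pm1$, then using continuity of $\theta$ on $(0,R]$ together with a connectedness argument to confine $\theta$ to a single interval around some $k\pi$, where the local lower bound $|\sin(\theta-k\pi)|\gtrsim|\theta-k\pi|$ closes the estimate. You spell out the intermediate-value/connectedness step that the paper leaves to the reader, but the key estimate and the overall structure coincide with the paper's argument.
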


\begin{proof}
    The direction $\Rightarrow$ follows from the inequality $|\sin(\theta)| = |\sin(\theta-k\pi)| \leq |\theta-k\pi|$.
    
    We prove $\Leftarrow$. For any compact interval $I \subset (0,R]$, $\theta \in W^{1,2}(I) \subset C^0(I)$. Applying the chain rule, $\cos \theta \in W^{1,2}(I)$ with 
    $$
    \partial_r \cos(\theta) = -\sin(\theta) \theta' = -\frac{\sin \theta}{r}r^{1/2} \cdot \theta' r^{1/2} \in L^1([0,R]),
    $$
    meaning that $\cos \theta \in W^{1,1}([0,R]) \subset C^0([0,R])$. Hence, $\sin(\theta)^2 = 1 - \cos(\theta)^2 \in C^0([0,R])$ and $\lim \limits_{r \to 0} \sin(\theta(r))^2 = 0$ because
    $$
    \int_0^R \frac{\sin(\theta)^2}{r} dr < +\infty. 
    $$
    Note that $\sin(\theta) \in W^{1,2}(I) \subset C^0(I)$ as well. Hence, we have proved that $\sin(\theta)$ is continuous at zero with $\lim \limits_{r \to 0} \sin(\theta(r)) = 0$. One checks, using an argument by contradiction and the Intermediate Value property of $\theta$ away from $r = 0$, that this implies $\lim \limits_{r \to 0} \theta(r) = k \pi$ for some $k \in \mathbb Z$.

    Finally, $|\sin(\theta(r))| = |\sin(\theta(r)-k\pi)| \gtrsim |\theta(r) - k\pi|$ near $r = 0$ which implies the desired integrability.
\end{proof}

Next, we present a theorem which, in a more general setting, states that solutions to nonlinear parabolic problems with analytic nonlinearity are real-analytic with respect to the space variable. This result is attributed to Friedman (\cite[Theorem 2]{friedman}), but looking at the paper, the proof is omitted. In any case, we only need a weaker statement for our purpose, taking inspiration from Friedman's techniques for the elliptic case.

\begin{theorem}[Space-analyticity]\label{thm:space analyticity}
Let $v(t,x)$ solve (\ref{heat map flow}) on $(0,T)$, $T  < +\infty$, with smooth, $k$-equivariant boundary data. Let
$$
h(t,r) \in C^0([0,T),C^1([0,1])) \cap C^{\infty}((0,T] \times (0,1]) 
$$
be the corresponding inclination coordinate (\ref{heat map formulation for h(r,t)}) chosen with $h(t,0) = h_0(t,0) = 0$.

Then $r \mapsto h(r,t)$ is real-analytic at $r_0 \in (0,1]$ for any fixed $t \in (0,T]$.
\end{theorem}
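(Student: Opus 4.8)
Fix $r_0\in(0,1)$ and $t_0\in(0,T]$; the endpoint $r_0=1$ is handled by the same scheme, with the interior estimates below replaced by boundary Schauder estimates (Theorem~\ref{boundary schauder smooth boundary}) on one-sided cylinders abutting $\{r=1\}$, so I describe only the interior case. The plan is to establish, on the backward parabolic rectangles $Q_\rho:=\{(r,s): |r-r_0|<\rho,\ t_0-\rho^2<s<t_0\}$, a weighted Cauchy estimate
$$\sup_{Q_\rho}\bigl|\partial_r^{\,n}h\bigr|\;\le\;\frac{M\,n!\,A^{\,n}}{(\rho_0-\rho)^{\,n}}\qquad\text{for all }n\ge 0\text{ and all }0<\rho<\rho_0,$$
where $\rho_0>0$ is a small fixed radius with $\overline{Q_{\rho_0}}\subset(0,1)\times(0,T]$ and $M,A>0$ depend only on $r_0,t_0,h$; evaluated at any fixed $\rho\in(0,\rho_0)$ this shows that $r\mapsto h(r,t_0)$ is real-analytic at $r_0$ with radius of convergence at least $(\rho_0-\rho)/A$. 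By hypothesis $h\in C^{\infty}(\overline{Q_{\rho_0}})$ — the assumed smoothness of $h$ on $(0,T]\times(0,1]$ reaches $t=T$, so $t_0=T$ is admissible — and in particular $|h|\le M_0$ there. Two ingredients drive the argument: on $[r_0-\rho_0,r_0+\rho_0]$, which stays at distance $\sigma:=r_0-\rho_0>0$ from the origin, the coefficients $r^{-1}$ and $r^{-2}$ of (\ref{heat map formulation for h(r,t)}) are real-analytic with Cauchy bounds $|\partial_r^{\,j}(r^{-1})|\le j!\,\sigma^{-j-1}$ and $|\partial_r^{\,j}(r^{-2})|\le (j{+}1)!\,\sigma^{-j-2}$; and $\sin(2\,\cdot\,)$ is entire, with $|\partial_h^{\,j}\sin(2h)|\le 2^{\,j}$ for all $h\in\mathbb{R}$.

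Set $w_n:=\partial_r^{\,n}h$. Differentiating (\ref{heat map formulation for h(r,t)}) $n$ times in $r$, using the Leibniz rule on the products $r^{-1}h_r$ and $r^{-2}\sin(2h)$ and the Fa\`{a} di Bruno formula on $\partial_r^{\,m}[\sin(2h)]$, one finds that each $w_n$ solves, on $Q_{\rho_0}$, a uniformly parabolic equation
$$\partial_t w_n=\partial_r^{\,2}w_n+\tfrac1r\,\partial_r w_n+F_n,\qquad F_n=\sum_{j=1}^{n}\binom{n}{j}\partial_r^{\,j}(r^{-1})\,w_{n+1-j}-\tfrac{k^2}{2}\sum_{i=0}^{n}\binom{n}{i}\partial_r^{\,i}(r^{-2})\,\partial_r^{\,n-i}[\sin(2h)],$$
where $\partial_r^{\,m}[\sin(2h)]$ is the universal Fa\`{a} di Bruno polynomial in $w_1,\dots,w_m$ with bounded coefficients (derivatives of $\sin$ evaluated at $2h$). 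The structural point is that $F_n$ involves only $w_0,\dots,w_n$ — never $w_{n+1}$ — and that its dependence on the top derivative $w_n$ is affine, with coefficients depending on $n$ and $\sigma$ but otherwise harmless.

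The Cauchy estimate is proved by induction on $n$, one derivative at a time; the cases $n=0,1$ hold after enlarging $M$ and $A$, using $h\in C^{\infty}(\overline{Q_{\rho_0}})$. For the step from $n$ to $n+1$, fix $0<\rho'<\rho_0$, put $\rho:=\rho'+(\rho_0-\rho')/n$ — so that $(\rho_0-\rho)^{-n}\le 3(\rho_0-\rho')^{-n}$ for $n\ge 2$ — and apply to the equation for $w_n$ the interior parabolic gradient estimate
$$\sup_{Q_{\rho'}}\bigl|\partial_r w_n\bigr|\;\le\;\frac{c}{\rho-\rho'}\sup_{Q_\rho}|w_n|\;+\;c\,(\rho-\rho')\sup_{Q_\rho}|F_n|,\qquad c=c(\sigma,\rho_0),$$
valid for the operator $\partial_t-\partial_r^2-\tfrac1r\partial_r$ (whose drift $\tfrac1r$ is bounded on $[r_0-\rho_0,r_0+\rho_0]$); it follows by a standard localization from Duhamel's formula and the heat-kernel gradient bound $\|\nabla e^{\tau\Delta}g\|_{\infty}\lesssim\tau^{-1/2}\|g\|_{\infty}$, equivalently from Theorem~\ref{interior schauder estimates} and Remark~\ref{weakening of interior Schauder estimates} after rescaling the cylinder to unit size to make the dependence of the constant on its radius explicit. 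Since $\partial_r w_n=w_{n+1}$, one now feeds the inductive bounds for $w_0,\dots,w_n$ into $F_n$ via the combinatorial identities above, the Cauchy bounds on $\partial_r^{\,j}(r^{-1}),\partial_r^{\,j}(r^{-2})$, and — for the nonlinear block $\partial_r^{\,m}[\sin(2h)]$ — the estimate for the composition of the entire function $\sin$ with the bounded real-analytic function $h$. The induction closes because the factor $1/(\rho-\rho')=n/(\rho_0-\rho')$ lost in the gradient estimate, together with the single extra power of $n$ coming from the affine $w_n$-term in $F_n$, amounts to exactly one factor of $n$ — precisely what upgrades $n!$ to $(n+1)!$ — so that, with $A$ chosen large (depending on $c,\sigma,\rho_0,M_0,k$), every resulting term is dominated by $M\,(n+1)!\,A^{n+1}(\rho_0-\rho')^{-(n+1)}$.

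The main obstacle is exactly this last quantitative bookkeeping: one has to check that the compound factor produced by (i) the binomial and Bell-type combinatorics in $F_n$, (ii) the $j!$-Cauchy bounds on the singular-but-analytic coefficients $r^{-1},r^{-2}$, and (iii) the $n$-dependence of the parabolic interior constant on shrinking cylinders, never outgrows one power of $n$ per differentiation — the standard but delicate balancing of the Friedman--Morrey analyticity argument (see \cite{friedman}), for which the choice $\rho-\rho'\sim(\rho_0-\rho')/n$ and of the weighted inductive quantity (carrying the powers of $(\rho_0-\rho)$ alongside $w_n$) is essential. Granting the Cauchy estimate, real-analyticity of $r\mapsto h(r,t_0)$ at $r_0$ follows.
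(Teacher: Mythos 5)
Your approach is genuinely different from the paper's, and for interior points $r_0\in(0,1)$ it is a correct instance of the classical Friedman--Morrey scheme: differentiate the equation, estimate $w_{n+1}=\partial_r w_n$ by an interior parabolic gradient estimate on a family of nested cylinders whose gap shrinks like $1/n$, and close the factorial induction. The paper instead rescales so that $r_0$ lands at $r=1$, subtracts the boundary value (setting $H=h-h(t,1)$ so that $H$ has zero Dirichlet data), and then inducts the bound $\|D^q_xH\|_{L^\infty(G)}\le H_0H_1^qq!$ on a \emph{fixed} annular neighbourhood $G$ of $\partial B^2$, using the boundary parabolic Sobolev embedding (Theorem~\ref{thm:boundary parabolic sobolev embedding}) to gain a derivative each step. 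What the paper's route buys is that the domain never shrinks, so there is no $\rho-\rho'$ bookkeeping; the price is that all the delicacy is pushed into the Fa\`{a} di Bruno estimate for $\partial_{x_1}^nX$ and into the geometric factors $C(G)^{n-i}$ coming from differentiating $r^{-2}$. Your route buys locality and a very transparent induction step, at the price of the nested-domain bookkeeping you correctly identify as the crux.

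Two soft points you should firm up. First, your description of the ``balance'' is off: the factor of $n$ needed to promote $n!$ to $(n+1)!$ comes entirely from $1/(\rho-\rho')\sim n$ acting on the $\sup|w_n|$ term; the $c(\rho-\rho')\sup|F_n|$ term contributes at strictly lower order (it is damped by $(\rho-\rho')\sim 1/n$), so the job of the combinatorics is only to keep that second term \emph{bounded} by the target, not to supply another power of $n$. Second, the endpoint $r_0=1$ is not a routine variant. Applying boundary Schauder to $w_n=\partial_r^nh$ requires its Dirichlet trace on $\{r=1\}$, and $\partial_r$ is the normal direction there: $\partial_r^n h|_{r=1}$ is not read off from $h_0$, and controlling it forces you through the compatibility-condition cascade, where $\partial_r^{2k}h(1,t)$ pulls in $\partial_t^k h_0(1,t)$. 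In fact, since only smoothness (not Gevrey-2 regularity) is assumed of $h_0(1,\cdot)$, one-sided analyticity at $r_0=1$ is not a formal consequence of the interior scheme at all, and the paper's own use of Theorem~\ref{thm:space analyticity} (in Lemma~\ref{lemma:continuity of r^+}) only ever needs the interior statement. You should either restrict to $r_0\in(0,1)$, or explain precisely what structural input at $r=1$ replaces your interior gradient estimate.
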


\begin{proof}
After parabolic rescaling, one can assume that $r_0 = 1$.

    Let $H = h - h(t,1)$. Then that $H(t,x) \in C^0([0,T);C^1(\overline{B^2}))$ (the regularity on $\overline{B^2}$ can be found in Proposition \ref{prop:equivalent fomrulation heat map flow} and its proof) satisfies a 2D nonlinear heat equation
    \begin{align*}
        H_t &= \Delta H  - k^2\frac{\sin(2H + 2h(t,1))}{2|x|^2} - h_t(t,1), \quad x \in \overline{B^2} \setminus \{0\} \times (0,T), \\
        H &= 0, \quad (x,t) \in \overline{B^2} \times \{0\} \cup \partial{B^2} \times (0,T).
    \end{align*}
   Fix a $\varepsilon$-neighborhood $\Omega_{\varepsilon}$ of the boundary $\partial B^2$ on which Theorem \ref{thm:boundary parabolic sobolev embedding} applies and let $G = \overline{\Omega_{\varepsilon}}$. 

    Suppose by induction that one has
    \begin{align*}
        ||H||_{L^{\infty}(G \times (0,T))} + ||\nabla H||_{L^{\infty}(D \times (0,T))} \leq H_0, \\
        ||D^q_x H||_{L^{\infty}(G \times (0,T))} \leq H_0 H_1^{q}q!, \quad 2 \leq q \leq n,
    \end{align*}
    for $n \geq 2$. Letting 
    $$
X(t,x,h) = - k^2\frac{\sin(2H + 2h(t,1))}{2r^2} - h_t(t,1),
    $$
    one has
  \begin{align*}
      |\partial^n_{x_1} X| &\leq k^2 \sum_{i=0}^n \binom{n}{i} (n-i)!C(G)^{n-i} \sum_{\sum_{j=1}^i jm_j = i} \frac{i!}{m_1! \cdot ... \cdot m_i!} \prod_{j=1}^i \left( \frac{2 |\partial^{j}_{x_1} H|}{j!} \right)^{m_j}  \\
     &\leq n!  k^2  \sum_{i=0}^n  C(G)^{n-i}\sum_{\sum_{j=1}^i jm_j = i} \frac{1}{m_1! \cdot ... \cdot m_i!} \prod_{j=1}^i \left( \frac{2 H_0 H_1^{j} j!}{j!} \right)^{m_j}  \\ 
     &\leq n!    k^2 H_1^n  \sum_{i=0}^n  C(G)^{n-i} \sum_{\sum_{j=1}^i jm_j = i} \prod_{j=1}^i \frac{(2H_0)^{m_j}}{m_j!}
  \end{align*}
 on $G \times (0,T)$ for $n \geq 2$ using the product rule and Faa di bruno's formula. The constant $C(G)$ essentially comes from the $r^{-2-n+i}$ factor when differentiating $X$. When $i = 0$, we use the convention that
 $$
\sum_{\sum_{j=1}^i jm_j = i} \frac{i!}{m_1! \cdot ... \cdot m_i!} \prod_{j=1}^i \left( \frac{2 |\partial^{j}_{x_1} H|}{j!} \right)^{m_j}  = 1.
 $$
 
 There exists $B_0 = B_0(H_0) > 0$  for which
 $$
\frac{(2H_0)^{j}}{j!} \leq B_0 \quad \forall j \in \mathbb N_{\geq 0}.
 $$
 Hence,
   \begin{align*}
      |\partial^n_{x_1} X| &\leq n!  H_1^n k^2 \sum_{i=0}^n C(G)^{n-i} B_0^i \sum_{\sum_{j=1}^i jm_j = i}1  \leq n!  k^2 H_1^n \sum_{i=0}^n   C(G)^{n-i}B_0^i \binom{2i-1}{i-1} \\
      &\leq n!   k^2 H_1^n  B_1 \sum_{i=0}^n   C(G)^{n-i} B_0^i 4^{i} \leq  n! k^2 H_1^n  B_1 C(G)^n \frac{ C(G)^{-(n+1)} B_0^{n+1} 4^{n+1} - 1}{4C(G)^{-1} B_0-1},
   \end{align*}
  where the binomial coefficient is treated as $1$ when $i = 0$ and $B_1$ is a universal constant obtained, for example, via Stirling's formula. Taking $B_0 = B_0(H_0,G)$ larger if needed, one can assume that $4C(G)^{-1}B_0 \gg 1$ and
  $$
|\partial^n_{x_1} X| \leq 2k^2 B_1 4^n H_1^n B_0^{n} n!
  $$
  on $G \times (0,T)$ for $n \geq 2$.
  
Then Theorem \ref{thm:boundary parabolic sobolev embedding} yields that
    $$
    ||\partial^n_{x_1} h||_{L^{\infty}(G \times (0,T))} \leq 2C(G)k^2 B_1 4^n H_1^n B_0^{n} n!.
    $$
    Setting
    $$
    H_0 =||h||_{L^{\infty}(G \times (0,T))} + ||\nabla h||_{L^{\infty}(G \times (0,T))}  +  ||D^2_x h||_{L^{\infty}(G \times (0,T))} + 1,
    $$
    as well as $H_1 =8 k^2C(G) B_1 B_0(H_0,G)+ 1$ as constants initializing our induction, one deduces
    $$
    ||\partial^n_{x_1} h||_{L^{\infty}(K \times (0,T))} \leq H_0H_1^{n+1}(n+1)!
    $$
    for all $n \geq 2$ and similarly for $\partial^n_{x_2}h$. As $h$ is smooth on $G \times (0,T]$, we obtain that $h(x,t)$ is real-analytic with respect to $x \in G$ for any fixed $t \in (0,T]$. In particular, $h(r,t)$ is real-analytic at $r = 1$ for any fixed $t \in (0,T]$.
\end{proof}

We come now to the so-called barrier argument. First, we show that one can construct a barrier in some subset of the parabolic boundary. 

\begin{lemma}\label{lemma:semi-barrier with global solution}
     Let $v(x,t)$ solve (\ref{heat map flow}) on $[0,T)$, $0 < T \leq +\infty$, with smooth, $k$-equivariant boundary data. Assume that the inclination coordinate $h$ is chosen so that $h(0,t) = 0$. Then for all $t_0 \in (0,T)$, there exists $\alpha_0 > 0$ and $r_0 \in (0,1)$ for which
      \begin{equation}
       \theta_{\alpha}(r) \geq |h(r,t_0)|, \quad \alpha \geq \alpha_0, \quad r \in [0,r_0], \label{eq:comparison between theta_alpha and any solution}
   \end{equation}
   where $\theta_{\alpha}(r) = 2\arctan\left( (\alpha r)^k \right)$.
\end{lemma}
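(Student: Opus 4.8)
The strategy is a direct pointwise comparison that uses only the behaviour of the inclination coordinate near the origin. By Proposition \ref{prop:equivalent fomrulation heat map flow} together with Lemma \ref{k-equivariance, smoothness of h}, one can write $h(r,t_0) = r^k g(r)$ with $g \in C^0([0,1])$; in particular $C := \|g\|_{L^\infty([0,1])} < +\infty$ and $|h(r,t_0)| \le C r^k$ on $[0,1]$. On the other side, $\theta_\alpha(r) = 2\arctan((\alpha r)^k)$ behaves to leading order like $2\alpha^k r^k$ near $r = 0$ and satisfies $\theta_\alpha(r) \ge \pi/2$ as soon as $(\alpha r)^k \ge 1$. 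Since $h(0,t_0) = 0$ and $h(\cdot,t_0)$ is continuous, the plan is to first fix $r_0 \in (0,1)$ so small that $|h(r,t_0)| \le \pi/2$ for all $r \in [0,r_0]$, and then set $\alpha_0 := \max\{1,(2C/\pi)^{1/k}\} > 0$.

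Then, for $\alpha \ge \alpha_0$ and $r \in [0,r_0]$, I would distinguish two cases. If $(\alpha r)^k \ge 1$, monotonicity of $\arctan$ gives $\theta_\alpha(r) \ge 2\arctan(1) = \pi/2 \ge |h(r,t_0)|$ by the choice of $r_0$. If $(\alpha r)^k \le 1$, use the elementary inequality $\arctan(x) \ge \tfrac{\pi}{4}x$ for $x \in [0,1]$ (valid since $\arctan$ is concave on $[0,\infty)$ and $\arctan 1 = \pi/4$, so the chord through the origin lies below the graph on $[0,1]$) to obtain
\[
\theta_\alpha(r) \ge \tfrac{\pi}{2}(\alpha r)^k = \tfrac{\pi}{2}\alpha^k r^k \ge \tfrac{\pi}{2}\alpha_0^k r^k \ge C r^k \ge |h(r,t_0)|.
\]
In both cases $\theta_\alpha(r) \ge |h(r,t_0)|$, which is (\ref{eq:comparison between theta_alpha and any solution}).

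There is essentially no genuine difficulty here; the one point worth emphasising is that the argument really needs the decay rate $h(r,t_0) = O(r^k)$ coming from the $k$-equivariant structure (Lemma \ref{k-equivariance, smoothness of h}), not merely continuity of $h(\cdot,t_0)$ at $r = 0$: it is precisely this rate that matches the leading order $2\alpha^k r^k$ of the barrier near the origin, which is what lets a single dilation $\alpha$ dominate $h$ there. Uniformity in $\alpha \ge \alpha_0$ is automatic since $\theta_\alpha$ is pointwise nondecreasing in $\alpha$, and $r_0$, $C$, $\alpha_0$ depend only on $t_0$ and on the spatial $C^1$-norm of $h$ at time $t_0$.
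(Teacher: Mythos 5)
Your proof is correct. It takes a genuinely different and arguably cleaner route than the paper's.

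The paper's argument is local at the origin: it records that $h(0,t_0) = \partial_r h(0,t_0) = \cdots = \partial_r^{(k-1)}h(0,t_0) = 0$ while $\partial_r^{(k)}\theta_\alpha(0) = 2k!\,\alpha^k$, chooses $\alpha_0$ so that $\partial_r^{(k)}\theta_{\alpha_0}(0) > |\partial_r^{(k)}h(0,t_0)|$, propagates this strict inequality to $[0,r_0]$ by continuity of the $k$-th $r$-derivative (available since $t_0 > 0$), and then recovers $\theta_{\alpha_0} \geq \pm h(\cdot,t_0)$ by integrating $k$ times. Your argument instead uses the global decomposition $h(r,t_0) = r^k g(r)$ with $g \in C^0([0,1])$ from Lemma \ref{k-equivariance, smoothness of h} and Proposition \ref{prop:equivalent fomrulation heat map flow} to get the uniform bound $|h(r,t_0)| \leq C r^k$ on $[0,1]$, pairs it with the elementary concavity bound $\arctan x \geq \tfrac{\pi}{4}x$ on $[0,1]$, and handles the regime $(\alpha r)^k \geq 1$ separately via the crude bound $\theta_\alpha \geq \pi/2$. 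Both proofs exploit exactly the $r^k$-vanishing that $k$-equivariance forces, but yours requires less regularity (only continuity of $g$, not of $\partial_r^{(k)}h$), avoids iterated integration entirely, and produces explicit values of $\alpha_0$ and $r_0$. Your closing remark that the matching of the decay rate $O(r^k)$ to the barrier's leading order $2\alpha^k r^k$ is what makes the lemma work is precisely the right diagnosis of why mere continuity of $h(\cdot,t_0)$ at $0$ would not suffice.
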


\begin{proof}
    Fix an arbitrary $t_0 \in (0,T)$. As $v(x,t)$ is smooth and $k$-equivariant on $\overline{B^2} \times \{t_0\}$, one has
    $$
    h(0,t_0) = \partial_r h(0,t_0) = ... = \partial_r^{(k-1)} h(0,t_0) = 0
    $$
    by Lemma \ref{k-equivariance, smoothness of h}. Similarly, one computes
    $$
    \theta_{\alpha} (0) = \partial_r \theta_{\alpha} (0) = ... = \partial_r^{(k-1)} \theta_{\alpha} (0) = 0,
    $$
    while $\partial_r^{(k)} \theta_{\alpha}(0) = 2k!\alpha^k$. Fix $\alpha_0 \gg 1$ for which $\partial_r^{(k)} \theta_{\alpha_0}(0) > |\partial_r^{(k)} h(0,t_0)|$. By continuity, this inequality remains valid on $[0,r_0]$ for some $r_0 > 0$. In particular, for $r \in [0,r_0]$,
\begin{align*}
        \theta_{\alpha_0}(r) &= \int_0^r \int_0^{s_{k-1}} ... \int_0^{s_1} \partial_r^{(k)} \theta_{\alpha_0}(s_0) ds_0 ds_1 ... ds_{k-1} \\
        &\geq \int_0^r \int_0^{s_{k-1}} ... \int_0^{s_1} \partial_r^{(k)}h(s_0,t_0) ds_0 ds_1 ... ds_{k-1} = h(r,t_0),
\end{align*}
    and similarly, $\theta_{\alpha_0}(r) \geq -h(r,t_0)$.
    
    Moreover, if $\alpha \geq \alpha_0$, then
    $$
\theta_{\alpha}(r) \geq  \theta_{\alpha_0}(r) \geq |h(r,t_0)|, \quad r \in [0,r_0],
    $$
    by monotonicity of the $\arctan$ function. 
\end{proof}

\begin{proposition}[Further Property of Blow-up]\label{prop:further properties of blow-up of h}
    Let $v(x,t)$ solve (\ref{heat map flow}) with smooth and $k$-equivariant boundary data. Assume that $v$ blows-up at time $T < +\infty$. Then one has
    $$
    \limsup_{\substack{r \to 0^+ \\ t \to T^-}}|h(r,t)-h(0,t)| \geq \pi.
    $$
\end{proposition}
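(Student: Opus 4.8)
The plan is to argue by contradiction via the ``barrier argument''. Suppose the $\limsup$ in the statement equals some $L<\pi$. First I would normalize: since $h_0=r^k\tilde h_0$ (Lemma~\ref{k-equivariance, smoothness of h}) we have $h(0,t)=h_0(0,t)=0$ on $[0,T)$, and in any case $h(0,t)$ is constant there, so after subtracting that constant — which changes neither the quantity $h(r,t)-h(0,t)$ nor equation~(\ref{heat map formulation for h(r,t)}) — it suffices to contradict $\limsup_{r\to0^+,\,t\to T^-}|h(r,t)|=L<\pi$. Putting $\varepsilon=\tfrac12(\pi-L)>0$, there are $\rho\in(0,1)$ and $t_*\in(0,T)$ with $|h(r,t)|\le\pi-\varepsilon$ on $[0,\rho]\times[t_*,T)$.

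Next I would build the barrier out of the stationary solutions $\theta_\alpha(r)=2\arctan((\alpha r)^k)$, which are finite-energy (time-independent) solutions of~(\ref{heat map formulation for h(r,t)}) on $[0,1]$ since $\theta_\alpha=\mathcal O(r^k)$ near $r=0$; then $-\theta_\alpha$ is one too. Lemma~\ref{lemma:semi-barrier with global solution} at $t_0=t_*$ gives $\alpha_0>0$ and $r_0\in(0,\rho)$ with $\theta_\alpha\ge|h(\cdot,t_*)|$ on $[0,r_0]$ for $\alpha\ge\alpha_0$; since $\theta_\alpha(r_0)\to\pi$ as $\alpha\to\infty$, enlarging $\alpha$ further I may also assume $\theta_\alpha(r_0)\ge\pi-\varepsilon$, so that by monotonicity $\theta_\alpha(r)\ge\pi-\varepsilon\ge|h(r,t)|$ for $r\in[r_0,\rho]$, $t\in[t_*,T)$. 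With $\alpha$ fixed this way, on the parabolic boundary of each rectangle $Q_{T'}=(0,\rho)\times(t_*,T')$, $T'<T$, one has $-\theta_\alpha\le h\le\theta_\alpha$: both bounds vanish at $r=0$; on $[0,\rho]\times\{t_*\}$ they hold by the two previous estimates (for $r\le r_0$ and $r_0\le r\le\rho$); and on $\{\rho\}\times[t_*,T']$ they follow from $|h(\rho,t)|\le\pi-\varepsilon\le\theta_\alpha(\rho)$. Applying the Comparison Principle (Theorem~\ref{comparison principle}) — read on the equivalent $2$D radial equation, so that $r=0$ is an interior point and $h,\pm\theta_\alpha$ are honest finite-energy solutions of~(\ref{heat map flow}) — I would conclude $|h(r,t)|\le 2\arctan((\alpha r)^k)$ on $Q_{T'}$, hence, letting $T'\uparrow T$, on all of $(0,\rho)\times(t_*,T)$.

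Finally I would derive the contradiction. Since $2\arctan((\alpha r)^k)\to0$ when $r\to0^+$, uniformly in $t$, the bound just obtained forces $h(r_n,t_n)\to0$ for every sequence with $r_n\to0^+$, $t_n\to T^-$, i.e.\ $\lim_{r\to0^+,\,t\to T^-}h(r,t)=0$. On the other hand $v$ blows up at $T$, and by Struwe's energy-concentration analysis (Theorem~\ref{thm:smoothness around all but finitely many points}) the blow-up yields a non-constant bubble along some $r_n\to0^+$, $t_n\to T^-$, so this limit cannot exist — a contradiction. Hence $L\ge\pi$, which is the claim.

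I expect the main obstacle to be the clean application of the Comparison Principle: it must cover comparison functions that only dominate $h$ on the parabolic boundary (not match it), and the segment $\{r=0\}$, where~(\ref{heat map formulation for h(r,t)}) has a $1/r^2$ singularity, must be handled — both are circumvented by passing to the $2$D radial formulation, in which $r=0$ becomes an ordinary interior point. A minor additional point is checking that $\theta_\alpha|_{[0,1]}$ has finite energy (needed for Theorem~\ref{comparison principle}), which is immediate from $\theta_\alpha=\mathcal O(r^k)$ near the origin.
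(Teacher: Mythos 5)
Your proof is correct and follows essentially the same route as the paper's: normalize so $h(0,t)=0$, suppose the $\limsup$ equals $L<\pi$, build a barrier from the stationary solutions $\theta_\alpha$ via Lemma~\ref{lemma:semi-barrier with global solution}, apply the Comparison Principle on a rectangle $[0,\rho]\times[t_*,T']$ to conclude $h(r,t)\to 0$ as $(r,t)\to(0,T)$ uniformly, and contradict this using the non-constant bubble produced by Theorem~\ref{thm:smoothness around all but finitely many points}. Your handling of the right lateral boundary is arguably slightly cleaner (you fix $\varepsilon=\tfrac12(\pi-L)$ and require $\theta_\alpha(r_0)\ge\pi-\varepsilon$ so the bound holds on all of $[r_0,\rho]$, rather than the paper's bound exactly at $r_0$), but this is a presentational variation, not a different argument; also note that Theorem~\ref{comparison principle} is already stated on the $1$D domain $(0,r_0)$ with $\{r=0\}$ in its parabolic boundary, so the detour to the $2$D radial formulation you mention is unnecessary — you can apply it directly since $h(0,t)=0=\theta_\alpha(0)$.
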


\begin{proof}
     Replacing $h$ and $h_0$ by $h - m\pi$ and $h_0 - m\pi$, where $h(0,t) = m\pi$ for $t \in [0,T)$ (Proposition \ref{value of v and h at origin}), one can assume that $h(0,t) = 0$ for $t \in [0,T)$ and one needs to prove 
     $$
    \limsup_{\substack{r \to 0^+ \\ t \to T^-}}|h(r,t)| \geq \pi.
    $$
    By the Comparison Principle (Proposition \ref{prop:equivalent fomrulation heat map flow}, Theorem \ref{comparison principle}), $h$ is bounded by a large multiple of $\pi$. Hence, the limsup exists. 

    Assume for a contradiction that 
    $$
 \limsup_{\substack{r \to 0^+ \\ t \to T^-}}|h(r,t)| = M < \pi.
    $$
    We use a barrier argument using a stationary solution to obtain a contradiction. There exists $0 < \delta < \pi - M$ and a neighborhood $(0,r_0) \times (t_0,T)$ with $r_0 < 1$, $t_0 > 0$, on which $|h(r,t)| < M + \delta$. Consider the stationary solution $\theta_{\alpha}(r) = 2 \arctan(\alpha^k r^k)$ of (\ref{heat map formulation for h(r,t)}). Up to choosing a smaller $r_0$, there is some $\alpha_0 > 0$ for which
    $$
\theta_{\alpha}(r) \geq  \theta_{\alpha_0}(r) \geq |h(r,t_0)|, \quad \alpha \geq \alpha_0, r \in [0,r_0],
    $$
    by Lemma \ref{lemma:semi-barrier with global solution}.
    
    Fix $\alpha \geq \alpha_0$ so that $\theta_{\alpha}(r_0) > M$. One has found a solution $\theta_{\alpha}$ satisfying $h(0,t) = 0 = \theta_{\alpha}(0)$ on $\{0\} \times (t_0,T)$, $|h(r_0,t)| \leq M < \theta_{\alpha}(r_0)$ on $\{r_0\} \times (t_0,T)$ and $|h(r,t_0)| \leq \theta_{\alpha}(r)$ on $[0,r_0] \times \{t_0\}$.
    
    The Comparison Principle (Theorem \ref{comparison principle}) applied with $h, \theta_{\alpha}$ and $-h, \theta_{\alpha}$ on $[0,r_0] \times [t_0,T']$ for any $T' < T$ shows that $|h(r,t)| \leq \theta_{\alpha}(r)$ on $[0,r_0] \times [t_0,T)$. In particular, $\lim \limits_{r \to 0^+} |h(r,t)| = 0$ uniformly with respect to $t \in [t_0,T)$. 
     As
    $$
      v(re^{i\theta},t) = \left( e^{ik\theta} \sin h(r,t), \cos h(r,t) \right),
    $$
    one deduces that $\lim \limits_{x \to 0} v(x,t) = (0,0,1)$ uniformly with respect to $t \in [t_0,T)$ as well.
    
    If $v(x,t)$ blows-up at time $T$, blows-up happens at $(0,T)$ (Proposition \ref{prop:smoothness at the boundary, earlier statement}). Then $v_m(x) = v(x_m + R_mx, T_m + R_m^2s)$ converges strongly in $H^{1,2}_{loc}(\mathbb R^2, S^2)$ to a non-constant harmonic map along some appropriate sequences $x_m \to 0, (x_m) \subset B^2, R_m \to 0^+, T_m \to T^-$ and some fixed $s \leq 0$ (Theorem \ref{thm:smoothness around all but finitely many points}). Up to taking a subsequence, the convergence holds pointwise almost everywhere for $x \in \mathbb R^2$. But for fixed $x \in \mathbb R^2$, $x_m + R_mx \to 0$ and since $T_m + R_m^2s \to T^-$, the uniform convergence $\lim \limits_{x \to 0} v(x,t) = (0,0,1)$ implies that $\lim \limits_{m \to +\infty} v(x_m + R_mx, T_m + R_m^2s) = (0,0,1)$ is a constant function of $x$, which is a contradiction.
\end{proof}

\begin{proposition}[Criterion for existence of global solution]\label{prop:criterion for global solution}
    Let $v(x,t)$ solve (\ref{heat map flow}) on $[0,T)$ with smooth, $k$-equivariant boundary data.  Assume that the initial inclination coordinate satisfies $h_0(0,t) = 0$, $|h_0(r,t)| \leq \pi$ on $\{0,1\} \times [0,T) \cup [0,1] \times \{0\}$. If $T < +\infty$, then $v(x,t)$ and $h(r,t)$ cannot blow-up at time $T$. In particular, if  $|h_0(r,t)| \leq \pi$ on $\{0,1\} \times [0,+\infty) \cup [0,1] \times \{0\}$, then $v(x,t)$ and $h(r,t)$ are global. 
\end{proposition}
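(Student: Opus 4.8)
The plan is to argue by contradiction, using a stationary barrier. Since $v_0$ is smooth and $k$-equivariant, its inclination coordinate has the form $h_0=r^k\tilde h_0$, so $h_0(0,t)=0$ automatically and $h(0,t)=h_0(0,t)=0$ for $t\in[0,T)$; moreover, because $|h_0|\le\pi$ on the parabolic boundary, the Comparison Principle (Proposition \ref{prop:equivalent fomrulation heat map flow}, Theorem \ref{comparison principle}) gives $|h(r,t)|\le\pi$ on $[0,1]\times(0,T)$. Suppose now, for contradiction, that $h$ (equivalently $v$) blows up at $T<+\infty$. By Proposition \ref{prop:smoothness at the boundary} the blow-up occurs only at $(0,T)$, and $h$ is smooth, in particular continuous up to $t=T$, on $(0,T]\times(0,1]$, so $|h|\le\pi$ there as well. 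By Proposition \ref{prop:further properties of blow-up of h}, together with $h(0,t)=0$ and the bound $|h|\le\pi$, we obtain $\limsup_{(r,t)\to(0,T)}|h(r,t)|=\pi$; pick $(r_n,t_n)\to(0,T)$ with $|h(r_n,t_n)|\to\pi$ and, passing to a subsequence and replacing $(h,h_0)$ by $(-h,-h_0)$ if necessary (this pair still solves (\ref{heat map formulation for h(r,t)}) with boundary data satisfying the hypotheses), assume $h(r_n,t_n)\to\pi$.

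The idea is to produce a stationary upper barrier $\theta_\alpha(r)=2\arctan((\alpha r)^k)$ on a small rectangle $[0,\rho]\times[t_0,T)$; since $\theta_\alpha(r)\to0$ as $r\to0^+$, this forces $h(r_n,t_n)\le\theta_\alpha(r_n)\to0$ for $n$ large, contradicting $h(r_n,t_n)\to\pi$. Fix $t_0:=T/2$ and let $\alpha_0,r_0$ be as in Lemma \ref{lemma:semi-barrier with global solution}, so $\theta_\alpha(r)\ge|h(r,t_0)|$ for $\alpha\ge\alpha_0$ and $r\in[0,r_0]$. The crux is to choose $\rho\in(0,r_0)$ with
$$M_\rho:=\sup_{t\in[t_0,T)}h(\rho,t)<\pi.$$
Since $h(\rho,\cdot)$ is continuous on the compact interval $[t_0,T]$ and $h\le\pi$, this amounts to $h(\rho,t)<\pi$ for all $t\in[t_0,T]$. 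Strict inequality for $t\in[t_0,T)$ follows from the strong maximum principle applied to $u:=\pi-h\ge0$, which solves a linear parabolic equation $u_t=u_{rr}+r^{-1}u_r+c(r,t)u$ with $c$ bounded on compact subsets of $(0,1]\times(0,T]$: if $h(\rho,t^*)=\pi$ at an interior point, then $u$ vanishes at an interior minimum, hence is identically $0$ on a backward neighborhood, i.e. $h\equiv\pi$ there, contradicting $h(0,\cdot)=0$. Finally, $h(\rho,T)<\pi$ holds for arbitrarily small $\rho$, for otherwise $h(\cdot,T)\equiv\pi$ on some interval $(0,\delta)$, and the strong maximum principle propagating backward from the slice $\{t=T\}$ gives $h\equiv\pi$ on $(0,\delta)\times(T/2,T]$, again contradicting $h(0,t)=0$ and the continuity of $h$ on $[0,1]\times[0,T)$.

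With such a $\rho$ fixed and $M_\rho<\pi$, choose $\alpha\ge\alpha_0$ with $2\arctan((\alpha\rho)^k)>M_\rho$. On the parabolic boundary of $[0,\rho]\times[t_0,T']$, for any $T'\in(t_0,T)$, one has $\theta_\alpha\ge h$: at $r=0$ both sides vanish, at $t=t_0$ this is Lemma \ref{lemma:semi-barrier with global solution}, and at $r=\rho$ one has $\theta_\alpha(\rho)>M_\rho\ge h(\rho,t)$. As $\theta_\alpha$ is a finite-energy stationary solution of (\ref{heat map formulation for h(r,t)}), the Comparison Principle (Theorem \ref{comparison principle}) yields $h\le\theta_\alpha$ on $[0,\rho]\times[t_0,T']$, hence on $[0,\rho]\times[t_0,T)$ as $T'\uparrow T$. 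For $n$ large, $r_n<\rho$ and $t_n>t_0$, so $h(r_n,t_n)\le 2\arctan((\alpha r_n)^k)\to0$, contradicting $h(r_n,t_n)\to\pi$. Thus $h$ cannot blow up at any finite $T$, and when $|h_0|\le\pi$ on $\{0,1\}\times[0,+\infty)\cup[0,1]\times\{0\}$ the solution is global.

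The step I expect to be the main obstacle is the selection of $\rho$ with $M_\rho<\pi$: one must exclude that $h$ already equals $\pm\pi$ on a neighborhood of the origin at (or just before) time $T$, since only then can a stationary barrier $\theta_\alpha<\pi$ be placed above $h$. This is precisely where the strong maximum principle enters, and some care is needed because the linearized equation has a coefficient singular at $r=0$; the argument must therefore be run on compact subsets of $(0,1]$ and then combined with the continuity of $h$ up to $t=T$ away from the origin (Proposition \ref{prop:smoothness at the boundary}).
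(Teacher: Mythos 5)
Your proof is correct and follows essentially the same barrier argument as the paper: both establish $|h|<\pi$ strictly at interior points via the strong Maximum Principle, place a stationary barrier $\theta_\alpha$ above $h$ on a small rectangle $[0,\rho]\times[t_0,T)$ using Lemma \ref{lemma:semi-barrier with global solution} and the Comparison Principle, and conclude $h\to 0$ near $(0,T)$. The one structural difference is the final contradiction: the paper instructs to ``conclude as in the proof of Proposition \ref{prop:further properties of blow-up of h}'', i.e.\ re-run the bubble-extraction step (uniform convergence of $v$ to a constant near the origin contradicts the non-constancy of the extracted bubble), whereas you invoke the already-proven conclusion of Proposition \ref{prop:further properties of blow-up of h} ($\limsup_{(r,t)\to(0,T)}|h|=\pi$ once $|h|\le\pi$ is known) and contradict it directly with $h\le\theta_\alpha\to 0$. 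This is a mild streamlining, not a genuinely different route, and it is logically sound since Proposition \ref{prop:further properties of blow-up of h} is established independently of the present proposition.
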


\begin{proof}
This is a generalization of the global solution constructed in \cite{ChangDing2011}'s paper for $k > 1$. The proof can be made shorter thanks to the Comparison Principle (Theorem \ref{comparison principle}).

    Assume for a contradiction that $T < +\infty$ and the solution blows-up at time $T$. We use a barrier argument, as in Proposition \ref{prop:further properties of blow-up of h}.
    
    It follows from the Comparison Principle (Theorem \ref{comparison principle}) that $|h(r,t)| \leq \pi$ on $[0,1] \times [0,T)$. Write
    $$
    (h-\pi)_t = (h-\pi)_{rr} +\frac{(h-\pi)_r}{r} -k^2 (h-\pi) \frac{\sin(2h)-\sin(2\pi)}{2r^2(h-\pi)}
    $$
    Assume for a contradiction that $h(r_0,t_0) = \pi$ at some point $(r_0,t_0) \in (0,1) \times (0,T]$ ($t_0 = T$ is allowed as $h$ is smooth on $(0,1] \times (0,T]$ by Proposition \ref{prop:smoothness at the boundary, earlier statement}). On any fixed region $E_{r_1,t_1} = (r_1,1) \times (t_1,t_0]$, $0 < r_1 < r_0$, $0 < t_1 < t_0 \leq T$, the coefficient in front of $h-\pi$ in the above PDE is bounded (in particular, from above). The Maximum Principle (Theorem \ref{maximum principle}) applied on $H = h-\pi$ implies that $h(r,t) = \pi$ is constant on $[0,1] \times [0,t_0]$, which is a contradiction to $h(0,t) = 0$. Similarly, if $h(r_0,t_0) = -\pi$, applying the Maximum Principle shows that $h = -\pi$ is constant on $[0,1] \times [0,t_0]$.  Hence, $|h(r,t)| < \pi$ on $(0,1) \times (0,T]$.

Fix an arbitrary $t_0 \in (0,T)$. It follows from Lemma \ref{lemma:semi-barrier with global solution} that
    $$
\theta_{\alpha}(r) \geq  \theta_{\alpha_0}(r) \geq |h(r,t_0)|, \quad \alpha \geq \alpha_0, r \in [0,r_0]
    $$
    for some $\alpha_0 > 0, r_0 \in (0,1)$. Moreover,
    $$
    \sup_{t \in [t_0,T]} |h(r_0,t)| = M < \pi
    $$
    as a consequence of the Comparison Principle. Fixing $\alpha \geq \alpha_0$ so that $\theta_{\alpha}(r_0) > M$, one can conclude as in the proof of Proposition \ref{prop:further properties of blow-up of h}.
\end{proof}

\begin{corollary}[An even more precise blow-up description, \cite{vanderhout_scale}]\label{cor:comparison of h with itself and exact value of limsup}
 Let $v(x,t)$ solve (\ref{heat map flow}) with smooth and $k$-equivariant boundary data on $[0,T)$ with $T < +\infty$ (which may or may not be the maximal existence time). Assume that $h$, $h_0$ are chosen so that $h(0,t) = 0$ on $[0,T)$. There exists $\tau_0 = \tau_0(v,T)$ such that for all $0 < \tau \leq \tau_0$, $t \in [\tau,T)$, one has
    $$
    h(r,t-\tau) - \pi \leq h(r,t) \leq h(r,t-\tau) + \pi, \quad r \in [0,1].
    $$
In particular, if $T = T_{\max} < +\infty$, for any sequence $(r_n,t_n) \to (0^+,T^-)$, one has
$$
-\pi \leq \liminf_{n \to +\infty} h(r_n,t_n) \leq \limsup_{n\to +\infty} h(r_n,t_n) \leq \pi,
$$
hence
$$
    \limsup_{\substack{r \to 0^+ \\ t \to T^-}}|h(r,t)| = \pi.
$$
\end{corollary}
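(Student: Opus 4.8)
The plan is to deduce the displayed two-sided inequality from a double application of the Comparison Principle (Theorem~\ref{comparison principle}), comparing $h$ with the time-shifted solutions $g_{\pm}(r,t):=h(r,t-\tau)\pm\pi$, and then to read off the statement on the $\limsup$ by combining this inequality with Proposition~\ref{prop:further properties of blow-up of h}. Observe first that equation~(\ref{heat map formulation for h(r,t)}) is autonomous in $t$ and invariant under $h\mapsto h+\pi$, since $\sin(2(h+\pi))=\sin(2h)$ and the linear terms are unaffected; moreover the energy density depends on $h$ only through $h_r$ and $\sin^2 h$, so $g_{\pm}$ is again a finite-energy solution of~(\ref{heat map formulation for h(r,t)}) on $[0,1]\times[\tau,T)$ with the same regularity as $h$. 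It therefore suffices, for each fixed $T'<T$, to verify the ordering $g_{-}\le h\le g_{+}$ on the parabolic boundary of $[0,1]\times[\tau,T']$, apply Theorem~\ref{comparison principle} to the pair $(h,g_{+})$ and to the pair $(g_{-},h)$, and then let $T'\uparrow T$.

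On $\{0\}\times[\tau,T']$ the ordering is immediate, since $h(0,\cdot)=0$ while $g_{\pm}(0,\cdot)=0\pm\pi$. On $\{1\}\times[\tau,T']$ the required inequality amounts to $|h_{0}(1,t)-h_{0}(1,t-\tau)|\le\pi$; since $v_{0}$ is smooth, $t\mapsto h_{0}(1,t)$ is Lipschitz on $[0,T]$ with a constant $L=L(v,T)$, so this holds as soon as $L\tau\le\pi$. On $[0,1]\times\{\tau\}$ the inequality reads $|h(r,\tau)-h(r,0)|\le\pi$ for every $r$; since $h_{t}\in C^{0}([0,T)\times[0,1])$ by Proposition~\ref{prop:continuity of h_t}, it is bounded by some $M=M(v,T)$ on $[0,1]\times[0,T/2]$, and integrating in $t$ gives $|h(r,\tau)-h(r,0)|\le M\tau$, which is $\le\pi$ once $\tau\le T/2$ and $M\tau\le\pi$. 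Choosing $\tau_{0}=\tau_{0}(v,T)$ smaller than $T/2$, $\pi/L$ and $\pi/M$ makes all three checks simultaneously valid for every $0<\tau\le\tau_{0}$, which establishes the first assertion.

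For the second assertion, assume $T=T_{\max}<+\infty$ and fix $\tau=\tau_{0}$. Given any sequence $(r_{n},t_{n})\to(0^{+},T^{-})$, we have $t_{n}\ge\tau_{0}$ for $n$ large, hence $h(r_{n},t_{n}-\tau_{0})-\pi\le h(r_{n},t_{n})\le h(r_{n},t_{n}-\tau_{0})+\pi$. Now $(r_{n},t_{n}-\tau_{0})\to(0,T-\tau_{0})$ with $T-\tau_{0}\in(0,T)$, and $h$ is continuous on $[0,1]\times[0,T)$ (from $h\in C^{0}([0,T),C^{1}([0,1]))$) with $h(0,T-\tau_{0})=0$ by the normalization; therefore $h(r_{n},t_{n}-\tau_{0})\to0$ and $-\pi\le\liminf_{n}h(r_{n},t_{n})\le\limsup_{n}h(r_{n},t_{n})\le\pi$. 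As the sequence was arbitrary, $\limsup_{r\to0^{+},\,t\to T^{-}}|h(r,t)|\le\pi$; combined with Proposition~\ref{prop:further properties of blow-up of h}, which under $h(0,t)=0$ gives the reverse bound $\ge\pi$, this forces equality.

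The only point requiring genuine care is keeping $\tau_{0}$ dependent on $(v,T)$ alone: this is precisely where one uses the smoothness (hence Lipschitz-in-time bound) of the Dirichlet datum at $r=1$ and the continuity of $h_{t}$ up to $t=0$ to control how far $h$ can move over a time span of length $\tau$ on the two portions of the parabolic boundary where the shift breaks the trivial ordering. Everything else is a routine invocation of the Comparison Principle.
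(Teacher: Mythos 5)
Your proof is correct and follows essentially the same route as the paper's: establish the ordering $g_{-}\le h\le g_{+}$ (with $g_{\pm}=h(\cdot,\cdot-\tau)\pm\pi$) on the parabolic boundary of $[0,1]\times[\tau,T']$ for $\tau\le\tau_{0}$, apply the Comparison Principle, let $T'\uparrow T$, and then combine the resulting two-sided bound with the lower bound from Proposition~\ref{prop:further properties of blow-up of h}. The only cosmetic difference is that the paper obtains $\tau_{0}$ directly from uniform continuity of $h$ on $[0,1]\times[0,T']$ and of $h_{0}(1,\cdot)$ on $[0,T]$, whereas you invoke a Lipschitz-in-time bound at $r=1$ and boundedness of $h_{t}$ via Proposition~\ref{prop:continuity of h_t}; both deliver the same $\tau_{0}=\tau_{0}(v,T)$.
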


\begin{proof}
    As $h(r,t) \in C^{0}([0,1] \times [0,T'])$ is uniformly continuous for any $0 < T' < T$, there is $\tau_0 > 0$ small enough for which $h(r,\tau) \leq h(r,0) + \pi$ for all $r \in [0,1]$, $0 < \tau \leq \tau_0$. Similarly, $h(1,t) = h_0(1,t) \leq h_0(1,t-\tau) + \pi = h(1,t-\tau) + \pi$ for all $\tau \leq \tau_0 \leq t \leq T$ by uniform continuity of the boundary data. Finally, $0 = h(0,t) \leq h(0,t-\tau) + \pi = \pi$ for all $\tau \leq \tau_0 \leq t \leq T$. For $0 < \tau \leq \tau_0$ fixed, the Comparison Principle (Theorem \ref{comparison principle}) implies that 
    $$
    h(r,t-\tau) - \pi \leq h(r,t) \leq h(r,t-\tau) + \pi, \quad r \in [0,1], t \in [\tau,T),
    $$
    where we used a similar argument with $-\pi$ instead of $\pi$. The limsup value follows from Proposition \ref{prop:further properties of blow-up of h}.
\end{proof}

\begin{theorem}[Existence of a global solution, \cite{ChangDing2011}]\label{thm:existence of global solution}
    Assume that $\psi = r^k\tilde{\psi}$, $\tilde{\psi} \in C^{\infty}([0,1])$ with $\partial^{(2n+1)}_r \tilde{\psi}(0) = 0$ for all $n \in \mathbb N_{\geq 0}$, $0 \leq \psi \leq \pi$, and
    $$
\tau(\psi) := \psi_{rr} + \frac{1}{r}\psi_r -k^2 \frac{\sin(2\psi)}{2r^2} \geq 0, \quad \tau(\psi)(1) = 0.
    $$
   Then the solution $h(r,t) \in C^{\infty}([0,1] \times (0,T))$ of (\ref{heat map formulation for h(r,t)}) with boundary data $h_0(r,t) = \psi(r)$ on $[0,1] \times \{0\} \cup \{0,1\} \times [0,+\infty)$ is global, i.e.,  $T = +\infty$,  has finite energy (\ref{Energy of v in terms of h}), satisfies $0 < h < \pi$ and $h_t > 0$ on $(0,1) \times (0,T)$, as well as $h_r > 0$ on $(0,1] \times (0,T)$.
\end{theorem}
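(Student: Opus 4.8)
The plan is to verify the assertions in turn — global existence, finite energy, $0<h<\pi$, $h_t>0$, and $h_r>0$ — the last being where the real work lies. By Proposition \ref{prop:equivalent fomrulation heat map flow}, the hypotheses on $\psi$ (it is $r^k\tilde\psi$ with all odd $r$-derivatives of $\tilde\psi$ vanishing at $0$) make $v_0(re^{i\theta})=(e^{ik\theta}\sin\psi,\cos\psi)$ a smooth $k$-equivariant datum, so there is a unique solution $h$ on a maximal interval $[0,T)$ with $h\in C^0([0,T),C^1([0,1]))\cap C^\infty((0,T)\times[0,1])$. Since the parabolic boundary data is $h_0\equiv\psi$ with $\psi(0)=0$ and $0\le\psi\le\pi$, Proposition \ref{prop:criterion for global solution} applies verbatim and gives $T=+\infty$. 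For finite energy I would invoke Remark \ref{k-equivariance implies finite energy}; here the data is time-independent, so $\partial_t h_0(1,t)=0$ and the computation there yields $\frac{d}{dt}E(h(\cdot,t))\le 0$, hence $E(h(\cdot,t))\le E(\psi)<+\infty$. For the bounds, the constants $0$ and $\pi$ are finite-energy stationary solutions of (\ref{heat map formulation for h(r,t)}) whose parabolic boundary values bound those of $h$, so the Comparison Principle (Theorem \ref{comparison principle}) gives $0\le h\le\pi$; if $h$ attained $0$ or $\pi$ at an interior point, the strong Maximum Principle (Theorem \ref{maximum principle}) on a backward rectangle $(r_1,1)\times(t_1,t_0]$ would force $h\equiv0$ or $h\equiv\pi$, contradicting $h(0,t)=0$, respectively $\psi\not\equiv0$ (in the intended application $\psi(r^*)=\pi$). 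Hence $0<h<\pi$ on $(0,1)\times(0,T)$.

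For $h_t>0$: differentiating (\ref{heat map formulation for h(r,t)}) in $t$, $w:=h_t$ solves the linear parabolic equation $w_t=w_{rr}+r^{-1}w_r-k^2r^{-2}\cos(2h)\,w$. On $\{r=0\}\cup\{r=1\}$ one has $w\equiv0$ because the data is time-independent and $h(0,t)=0$, while at $t=0^+$ Proposition \ref{prop:continuity of h_t} gives continuity of $h_t$ up to $t=0$ with $h_t(r,0)=\tau(\psi)(r)\ge0$ by hypothesis. The potential $-k^2r^{-2}\cos(2h)$ is singular at $r=0$ but bounded \emph{above} there (since $\cos2h\to1$) and bounded on compact subsets of $r>0$; absorbing its positive part into $e^{-\Lambda t}w$, and using the $r^k$-decay of $h$ (equivalently, the regular $(2k+2)$-dimensional radial picture of Proposition \ref{prop:equivalent fomrulation heat map flow}), the Maximum Principle gives $w\ge0$. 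The strong Maximum Principle then yields either $w\equiv0$ — i.e.\ $h\equiv\psi$, $\tau(\psi)\equiv0$, a degenerate case (every stationary solution with value $0$ at $r=0$ equals some $2\arctan((\alpha r)^k)<\pi$ on $[0,1]$, so this is excluded when $\psi(r^*)=\pi$; in general one assumes $\tau(\psi)\not\equiv0$) — or $h_t>0$ on $(0,1)\times(0,T)$.

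The step I expect to be the main obstacle is $h_r>0$. Differentiating (\ref{heat map formulation for h(r,t)}) in $r$, $w:=h_r$ solves
\[
w_t=w_{rr}+\frac{w_r}{r}-\frac{w}{r^2}-\frac{k^2\cos(2h)}{r^2}\,w+\frac{k^2\sin(2h)}{r^3}.
\]
Near $r=0$, $h\sim r^k\tilde h(0,t)$ gives $w\sim k\,r^{k-1}\tilde h(0,t)$, so $\tilde h(0,t)\ge0$ automatically and one needs $\tilde h(0,t)>0$; near $r=1$, $h(1,t)=\psi(1)$ is fixed but $h_r(1,t)$ is not. The genuine difficulty is the source $k^2r^{-3}\sin(2h)$: it changes sign — positive where $h<\pi/2$, negative where $h>\pi/2$ — and $h$ can exceed $\pi/2$ (as the stationary profiles $2\arctan((\alpha r)^k)$, $\alpha>1$, already do). Replacing $\tfrac12 k^2r^{-2}\sin(2h)=w_r+r^{-1}w-h_t$ via the equation turns it into $w_t=w_{rr}+3r^{-1}w_r+r^{-2}(1-k^2\cos2h)w-2r^{-1}h_t$, which, as $h_t\ge0$, only makes $w$ a \emph{sub}solution — the wrong direction for the lower bound we want. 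I therefore expect the resolution to combine a maximum principle on the favorable region $\{h<\pi/2\}$, where $h_r$ is a supersolution and so is controlled from below by its values on $\{t=0\}$, $\{r=0\}$ and the free boundary $\{h=\pi/2\}$, with a lap-number / intersection argument of the type developed in Section~6 — comparing $h(\cdot,t)$ with the family $\theta_\alpha(r)=2\arctan((\alpha r)^k)$, whose intersection number with $h(\cdot,t)$ is non-increasing in $t$ (Lemma \ref{lemma:intersection argument with chains mod 4}) — to exclude an interior critical point of $h(\cdot,t)$ for $t>0$ and to handle the free boundary. Reconciling the sign-indefinite source with the desired strict monotonicity is the main obstacle; once $h_r>0$ is established, the asserted smoothness on $[0,1]\times(0,\infty)$ is already part of Proposition \ref{prop:equivalent fomrulation heat map flow}.
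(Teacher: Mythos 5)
Your treatment of global existence (via Proposition \ref{prop:criterion for global solution}), finite energy (via the energy identity with $\partial_t h_0(1,t)=0$), the bounds $0<h<\pi$, and $h_t>0$ all match the paper's argument in substance, and your observation that the theorem as literally stated does not exclude the degenerate case $\tau(\psi)\equiv 0$ (where $\psi=2\arctan((\alpha r)^k)$, $h\equiv\psi$, and $h_t\equiv0$) is a fair criticism of the paper, which silently assumes $\tau(\psi)\not\equiv0$.

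However, the step you flag as the main obstacle — $h_r>0$ on $(0,1]\times(0,T)$ — is exactly where your proof is incomplete, and your proposed strategy is not the one that works. You correctly note that differentiating the PDE in $r$ produces the sign-indefinite source $k^2r^{-3}\sin(2h)$, and that the substitution $h_t\ge0$ pushes the inequality the wrong way; you then propose a hybrid free-boundary maximum principle on $\{h<\pi/2\}$ plus lap-number control, but this is left at the level of a plan, and it is not obvious it can be made to close (the free boundary $\{h=\pi/2\}$ is itself moving and would need a quantitative lower bound on $h_r$ there). The paper's actual argument is much cleaner and does not differentiate the equation in $r$ at all. Instead it multiplies (\ref{heat map formulation for h(r,t)}) by $r^2h_r$ and integrates in $r$, using
$$
r^2h_rh_t \;=\; \tfrac12\,\partial_r\!\left[r^2h_r^2+\tfrac{k^2}{2}\cos(2h)\right],
$$
which upon integrating from $0$ to $r$ gives the Pohozaev-type identity
$$
2\int_0^r s^2 h_r(s,t)h_t(s,t)\,ds \;=\; r^2h_r^2 - k^2\sin^2 h.
$$
Since $h>0$ near $r=0$ and $h=r^k\tilde h$ with $\tilde h$ smooth, one has $h_r>0$ on some $(0,\delta_t)$. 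If $h_r$ first vanished at some $r^*\in(0,1]$ with $h_r>0$ on $(0,r^*)$, the left side above at $r=r^*$ would be strictly positive (using $h_t>0$ and $h_r>0$ on $(0,r^*)$), while the right side equals $-k^2\sin^2 h(r^*,t)\le0$ — a contradiction. This one-line integral identity replaces the entire sign analysis you attempted, and it is the idea your proposal is missing.
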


\begin{remark}
    One can choose $\psi(r) = 4 \arctan \left( (\alpha r)^k \right)$, $\alpha \in (0,1]$, which satisfies
    $$
\tau(\psi) = -k^2 \frac{\sin(2\psi)}{2r^2}  + 2 k^2 \frac{\sin(\psi)}{2r^2} = k^2r^{-2} \sin(\psi) (1-\cos(\psi)) \geq 0,
    $$
    and all the other required properties. If one wants $\psi(1) = \pi$ (as in Proposition \ref{prop:blow up infinity global solution}) to have a blow-up at infinity, then one needs to take $\alpha = 1$.
\end{remark}

\begin{proof}
    This is a generalization of \cite{ChangDing2011}'s result for $k > 1$, but the proof is essentially the same because the sign of the nonlinearity does not change and we still have access to a family of stationary solutions given by the arctan function. Let $\psi$ be as in the theorem.
    
    Let $h(r,t) \in C^{\infty}([0,1] \times (0,T))$ solve (\ref{heat map formulation for h(r,t)}) with boundary data $h_0(r,t) = \psi(r)$ on $[0,1] \times \{0\} \cup \{0,1\} \times [0,+\infty)$ with maximal time of existence $T =  +\infty$ by Proposition \ref{prop:criterion for global solution}. 
    
    Then $h, h_r, h_t$ are well-defined and continuous on $[0,1] \times [0,T)$, $h_{rr}$ is well-defined and continuous on $(0,1] \times [0,T)$ (Proposition \ref{prop:continuity of h_t}). Differentiating $h(0,t) = \psi(0)$, $h(1,t) = \psi(1)$ with respect to $t$ yields $h_t(0,t) = h_t(1,t) = 0$ on $[0,T)$ and (\ref{heat map formulation for h(r,t)}) at $t = 0$, $r > 0$ yields $h_t(r,0) = \tau(\psi)(r) \geq 0$ on $(0,1]$, $h_t(1,0) = \tau(\psi)(1) = 0$.
    
    Moreover $u = h_t$ solves a 2D radial heat equation
    $$
    u_t = \Delta u - k^2 \frac{2\cos(2h)}{2|x|^2} u, \quad (r,t) \in  (0,1) \times (0,T).
    $$
    Near $r = 0$, $\cos(2h) \approx 1$ and away from $r = 0$, the coefficient of $u$ is bounded. Hence, it is bounded from above. The Maximum Principle (Theorem \ref{maximum principle}) applied on $E_{t_1} = B^2 \times (0,t_1]$, $t_1 < T$, with $v = ue^{-ct}$ for $c$ sufficiently large shows $v$ cannot achieve a minimum $m \leq 0$ inside $E_{t_1}$. Hence, either the minimum is achieved at the parabolic boundary, on which case it must be $m = 0$, and $h_t > 0$ on $E_{t_1}$, either there is a positive minimum $m > 0$ achieved inside $E_{t_1}$. The latter is not possible as points $h_t = 0$ at the parabolic boundary would be a smaller minimum. In other words, $h_t > 0$ on $(0,1) \times (0,T)$.

    Writing
    $$
    -k^2 \frac{\sin(2h)}{2r^2} =  -k^2 \frac{\sin(2h)}{2hr^2} h,
    $$
    one has $\sin(2h)/2h \approx 1$ near $r = 0$ and away from zero, the coefficient of $h$ is bounded. As for $h_t$, the Maximum Principle implies $0 < h < M \leq \pi$ on on $(0,1) \times (0,T)$, where $0$ and $0 < M \leq \pi$ are respectively the minimum and maximum of $h$ achieved on the parabolic boundary. Hopf's Lemma (\cite[Chapter 3, Theorem 6 and Theorem 7]{protter2012maximum}) also implies that $h_r(0,t) > 0$ on $(0,T)$ and then $h_r(r,t) > 0$ on $[0,\delta_t]$ for some $\delta_t > 0$.


     Actually, $h_r(r,t) > 0$ on $(0,1] \times (0,T)$. Otherwise, there is $t \in (0,T)$ and $r^* \in (0,1]$ for which $h_r(r,t) > 0$ on $(0,r^*)$, $h_r(r^*,t) = 0$. Multiplying (\ref{heat map formulation for h(r,t)}) by $r^2h_r$, one finds
     \begin{align}
         r^2h_rh_t = \frac{1}{2}\partial_r \left[r^2 h_r^2 + k^2 \frac{\cos(2h)}{2} \right], \quad (r,t) \in (0,1) \times (0,T), \notag \\
         2\int_0^r s^2 h_r(s,t)h_t(s,t)ds = r^2 h_r^2 + k^2 \frac{\cos(2h)-1}{2} = r^2 h_r^2 - k^2 \sin(h)^2, \quad (0,1) \times (0,T). \label{eq:sacks-uhlenbeck identity}
     \end{align}
     At $(r^*,t)$, (\ref{eq:sacks-uhlenbeck identity}) has strictly positive left-hand side with a non-positive right-hand side, which is a contradiction.

As $h$ does not depend on time at the boundary $r = 1$, one checks that
    $$
\frac{d}{dt} E(h(\cdot,t)) = -\int_0^1 |h_t|^2 r dr, \quad t > 0,
    $$
    which means that the energy is decreasing over time. Hence, $h$ is a finite-energy solution on $[0,1] \times [0,T)$.
    

\end{proof}

\begin{proposition}[Blow-Up at infinity of global solution, \cite{ChangDing2011}]\label{prop:blow up infinity global solution}
Let $\overline{h}$ be the global solution constructed in Theorem \ref{thm:existence of global solution} for the $k$-equivariant problem, $k \geq 1$. Assume there is $r^* \in (0,1]$ for which $\psi(r^*) = \pi$. Then $\overline{h}$ blows-up at $T = +\infty$ in the sense that for any $m \in \mathbb N$, there exists sequences $T_n \to +\infty$, $R_n \to 0^+$, $l = l(m)$, $\alpha > 0$ for which 
    $$
\lim \limits_{n \to +\infty } ||\overline{h}_r(\cdot,T_n)||_{C^0([0,r_0])} = +\infty \quad \forall r_0 \in (0,1],
$$
and $\overline{h}_n(r) = \overline{h}(R_nr, R_n^2t + T_n): [0,R_n^{-1}2^{-l}] \times [-1,0] \to [0,\pi]$ converges in $C^{m}_{loc}(\mathbb R \times [-1,0])$ to the stationary map
$$
\overline{H}_{\infty}(r) = 2 \arctan \left( (\alpha r)^k \right).
$$
Moreover, for any $r \in (0,1]$,
$$
\lim \limits_{t \to +\infty} \overline{h}(r,t) = \pi.
$$
\end{proposition}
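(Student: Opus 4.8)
By Theorem~\ref{thm:existence of global solution} the solution $\overline{h}$ is global, has finite and non‑increasing energy, and satisfies $0<\overline{h}<\pi$, $\overline{h}_t>0$ on $(0,1)\times(0,\infty)$, and $\overline{h}_r>0$ on $(0,1]\times(0,\infty)$. First I would observe that the hypothesis $\psi(r^*)=\pi$ forces $r^*=1$: if $r^*\in(0,1)$ then $\overline{h}(r^*,t)<\pi$ for $t>0$, while $\overline{h}(r^*,0)=\psi(r^*)=\pi$ together with $\overline{h}_t>0$ gives $\overline{h}(r^*,t)>\pi$, a contradiction; hence $\overline{h}(1,t)=\psi(1)=\pi$ for all $t\ge0$. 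Since $t\mapsto\overline{h}(r,t)$ is increasing and bounded by $\pi$, the limit $\overline{h}_\infty(r):=\lim_{t\to\infty}\overline{h}(r,t)$ exists for every $r\in[0,1]$; it is non‑decreasing, $0\le\overline{h}_\infty\le\pi$, with $\overline{h}_\infty(0)=0$ and $\overline{h}_\infty(1)=\pi$. From $\frac{d}{dt}E(\overline{h}(\cdot,t))=-\int_0^1\overline{h}_t^2\,r\,dr$ we get $\int_1^\infty\!\!\int_0^1\overline{h}_t^2\,r\,dr\,dt<\infty$, so there is $t_n\uparrow\infty$ with $\overline{h}_t(\cdot,t_n)\to0$ in $L^2$. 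On $[\delta,1]\times[1,\infty)$ the coefficient $k^2/r^2$ in \eqref{heat map formulation for h(r,t)} is bounded and $\overline{h}$ is bounded, so the $L^p$/Schauder estimates (Appendices) are uniform in time; along $t_n$ this yields $\overline{h}(\cdot,t_n)\to\overline{h}_\infty$ in $C^2_{loc}((0,1))$ and up to $r=1$, and $\overline{h}_\infty$ is a non‑decreasing, finite‑energy $C^2$-solution of $\tau(\overline{h}_\infty)=0$ on $(0,1)$ with $\overline{h}_\infty(1)=\pi$.

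Next I would identify $\overline{h}_\infty$ via the stationary analogue of the Sacks--Uhlenbeck identity \eqref{eq:sacks-uhlenbeck identity}: multiplying $\tau(\overline{h}_\infty)=0$ by $r^2\overline{h}_{\infty,r}$ and integrating gives $r^2\overline{h}_{\infty,r}^2-k^2\sin^2\overline{h}_\infty\equiv C'$ on $(0,1)$, and finiteness of the energy forces $\liminf_{r\to0}r^2\overline{h}_{\infty,r}^2=\liminf_{r\to0}\sin^2\overline{h}_\infty=0$, whence by constancy $C'=0$. Thus $r\overline{h}_{\infty,r}=k\sin\overline{h}_\infty$, the sign being fixed by $\overline{h}_{\infty,r}\ge0$; the solutions of this first‑order equation are $\overline{h}_\infty\equiv0$, $\overline{h}_\infty\equiv\pi$, and $\overline{h}_\infty=2\arctan((\beta r)^k)$ with $\beta>0$. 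The first contradicts $\overline{h}_\infty(1)=\pi$ and the third gives $\overline{h}_\infty(1)=2\arctan(\beta^k)<\pi$; hence $\overline{h}_\infty\equiv\pi$ on $(0,1]$, which is the last assertion $\lim_{t\to\infty}\overline{h}(r,t)=\pi$. Since $\overline{h}(0,t)=0$, this already shows that all the energy concentrates at the origin as $t\to\infty$.

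For the blow‑up statement I would rescale at the level set $\{\overline{h}=\pi/2\}$. By $\overline{h}_r>0$ and the intermediate value theorem there is for each $t>0$ a unique $\rho(t)\in(0,1)$ with $\overline{h}(\rho(t),t)=\pi/2$, and $\rho(t)\to0$ since $\overline{h}(r,t)\to\pi$ for fixed $r>0$. Choosing $T_n$ in the sequence above (so also $\int_{T_n-R_n^2}^{T_n}\!\!\int_0^1\overline{h}_t^2\,r\,dr\,dt\to0$) and $R_n:=\rho(T_n)\to0$, set $\overline{h}_n(r,s):=\overline{h}(R_nr,R_n^2s+T_n)$ on $(0,R_n^{-1})\times(-1,0)$ (defined for $n$ large, as $T=\infty$). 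Then $\overline{h}_n$ solves the PDE in \eqref{heat map formulation for h(r,t)}, $0\le\overline{h}_n\le\pi$, $\overline{h}_n$ is non‑decreasing in $r$ and $s$, $\overline{h}_n(0,s)=0$, $\overline{h}_n(1,0)=\pi/2$, $E(\overline{h}_n(\cdot,s))\le E(\psi)$, and $\int_{-1}^0\!\!\int\partial_s\overline{h}_n^2\,r\,dr\,ds=\int_{T_n-R_n^2}^{T_n}\!\!\int_0^1\overline{h}_t^2\,r\,dr\,dt\to0$. By the uniform parabolic estimates on $[\delta,M]\times[-1,0]$ and the Schauder bootstrap of Theorem~\ref{thm:resgularty solution nonlinear pb}, a subsequence converges $\overline{h}_n\to\overline{H}$ in $C^\infty_{loc}((0,\infty)\times[-1,0])$ with $\partial_s\overline{H}\equiv0$; so $\overline{H}=\overline{H}(r)$ is a non‑decreasing, finite‑energy solution of $\tau(\overline{H})=0$ on $(0,\infty)$ with $\overline{H}(1)=\pi/2$. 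Running the Sacks--Uhlenbeck argument again (now near $r=0$ and $r=\infty$, both forcing $C'=0$) gives $r\overline{H}_r=k\sin\overline{H}$, hence $\overline{H}=2\arctan((\alpha r)^k)=\overline{H}_\infty$ for a suitable $\alpha>0$. In particular $\partial_r\overline{h}_n(1,0)\to\partial_r\overline{H}_\infty(1)=:c>0$, i.e. $\overline{h}_r(R_n,T_n)=R_n^{-1}\partial_r\overline{h}_n(1,0)\to\infty$; since $R_n\to0$ this yields $\|\overline{h}_r(\cdot,T_n)\|_{C^0([0,r_0])}\to\infty$ for every $r_0\in(0,1]$.

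It remains to upgrade the convergence to $C^m_{loc}(\mathbb R\times[-1,0])$, i.e. up to $r=0$, on the domain $[0,R_n^{-1}2^{-l}]$ with $l=l(m)$. For this one must show that no energy is lost at the origin, equivalently that the bubble tree forming at $(r,t)=(0,\infty)$ has a single bubble, so that $E_\infty:=\lim_{t\to\infty}E(\overline{h}(\cdot,t))=E(\overline{H}_\infty)=4\pi k$. Here monotonicity is decisive: every non‑constant $k$‑equivariant finite‑energy harmonic map $\mathbb R^2\to S^2$ has inclination profile $2\arctan((\alpha r)^k)$ up to a shift by a multiple of $\pi$ and a reflection, hence it changes the inclination by exactly $\pm\pi$ and has energy $4\pi k$; since $0\le\overline{h}\le\pi$, $\overline{h}_r>0$, and $\overline{h}(\cdot,T_n)$ increases by only $\pi$ from $r=0$ to a fixed $r=\epsilon$, at most one such profile (necessarily of increasing type) can sit at the origin — the one captured by $\overline{H}_\infty$ — and no energy can concentrate at scales $\ll R_n$; the exclusion of residual "neck" energy is the content of the Struwe--Qing bubbling analysis (Theorems~\ref{thm:smoothness around all but finitely many points} and \ref{thm: full bubble decomposition}, whose proofs are local in time and carry over to $t=\infty$). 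Once $E_\infty=4\pi k$ and $\overline{h}_n\to\overline{H}_\infty$ on compacts of $(0,\infty)\times[-1,0]$, lower semicontinuity of the energy forces $E(\overline{h}_n(\cdot,0);[0,\delta])\to E(\overline{H}_\infty;[0,\delta])\to0$ as $\delta\to0$, so Struwe's $\varepsilon$-regularity estimate applied to $v_n(x,s)=v(R_nx,R_n^2s+T_n)$ on a fixed ball $B_{2^{-l}}(0)$, with $l=l(m)$ chosen so that the local energy lies below the threshold, yields uniform $C^m$ bounds near $r=0$ and hence the claimed $C^m_{loc}$ convergence. The step I expect to be the main obstacle is precisely this last one — making the single‑bubble/no‑neck‑energy heuristic rigorous and propagating the estimates up to the concentration point $r=0$; the monotonicity $\overline{h}_r>0,\ \overline{h}_t>0$ is what makes it tractable.
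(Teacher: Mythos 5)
Your choice of concentration scale differs from the paper's in a way that leaves a genuine gap in precisely the step you flag as the main obstacle. You rescale at the level set $\overline{h}=\pi/2$, setting $R_n=\rho(T_n)$ where $\overline{h}(\rho(t),t)=\pi/2$. This pins $\overline{h}_n(1,0)=\pi/2$ but yields no a priori control on $\partial_r\overline{h}_n$ near $r=0$: nothing in $0\le\overline{h}\le\pi$, $\overline{h}_r>0$, $\overline{h}_t>0$ rules out the inclination climbing from $0$ to near $\pi/2$ over a radius $\ll R_n$ at time $T_n$, which would make $\sup_{r\in(0,\delta)}|\partial_r\overline{h}_n(r,0)|$ unbounded and destroy the claimed $C^m_{loc}(\mathbb R\times[-1,0])$ convergence up to $r=0$. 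Monotonicity bounds the \emph{number} of bubbles, not the local energy density at smaller scales, so the single-bubble/no-neck claim is exactly what must be proved; invoking lower semicontinuity and ``the content of the Struwe--Qing analysis'' does not close it.

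The paper avoids this entirely with a different normalization: set $\theta_T=\max_{t\in[0,T]}\|\partial_r\overline{h}(\cdot,t)\|_{C^0([0,1])}$, which is continuous, nondecreasing and unbounded by the first part of the argument, and pick $T_n\to\infty$, $r_n\in[0,1]$ at which the running maximum is attained, $R_n^{-1}:=\theta_{T_n}=|\partial_r\overline{h}(r_n,T_n)|$. Since $R_n^2t+T_n\le T_n$ for $t\le0$ and $\theta$ is a running maximum, $|\partial_r\overline{h}_n(r,t)|=R_n|\partial_r\overline{h}(R_nr,R_n^2t+T_n)|\le R_n\theta_{T_n}=1$ uniformly, including at $r=0$; via (\ref{nabla v in terms of h}) this gives the uniform $C^1$ bound (\ref{eq: uniform bounded C1 norm form v_n}) for $\overline{V}_n$ on $B_{R_n^{-1}}(0)\times[-1,0]$, and then Parabolic Sobolev embedding plus the Schauder bootstrap produce uniform $C^m$ bounds on $B_{R_n^{-1}2^{-l}}(0)\times[-1,0]$ with $l=l(m)$, whence $C^m_{loc}$ convergence follows by Arzela--Ascoli. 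Non-constancy of the limit then uses $|\nabla_x\overline{V}_n(R_n^{-1}r_n)|\ge1$ together with the radial bound $R_n^{-1}=|\partial_r\overline{h}(r_n,T_n)|\lesssim r_n^{-1}$ from Proposition~\ref{blow up point is at zero k equivariant}, which keeps $r_nR_n^{-1}$ bounded. Your other ingredients are sound and some are nice alternatives: the observation that $\psi(r^*)=\pi$ forces $r^*=1$ is correct and fills in a detail the paper leaves implicit, and your identification of $\overline{h}_\infty\equiv\pi$ via the stationary Sacks--Uhlenbeck identity is a valid replacement for the paper's squeeze argument (which instead uses the already-established bubble and monotonicity). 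The fix for the gap is not conceptual: replace the level-set scale $\rho(T_n)$ by the gradient-maximizing scale $\theta_{T_n}^{-1}$ and the uniform Lipschitz bound near the origin becomes automatic, making a separate no-neck argument unnecessary.
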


\begin{proof}
     As $\overline{h}$ does not depend on time at the boundary $r = 1$, one checks that
    $$
\frac{d}{dt} E(\overline{h}(\cdot,t)) = -\int_0^1 |\overline{h}_t|^2 r dr, \quad t > 0.
    $$
    If $\overline{v}$ is the $k$-equivariant solution of the heat-map flow corresponding to $\overline{h}$ (Proposition \ref{prop:equivalent fomrulation heat map flow}), one deduces the energy inequality
  \begin{align}
        E(\overline{h}(\cdot,t)) = E(\overline{v}(\cdot,t)) &\leq E(\overline{v}(\cdot,0)) < +\infty, \label{eq: energy inequality, bounded energy} \\
        \int_0^{+\infty} \int_{0}^1 |\partial_t \overline{h}|^2r dr dt = \int_0^{+\infty} \int_{B^2} |\partial_t \overline{v}|^2 dx dt &\leq E(\overline{v}(\cdot,0)) < +\infty, \label{eq: energy inequality, bounded partial_t v L^2 norm}
  \end{align}
    (see also \cite[Chapter 3, Lemma 5.8]{struwe2008variational}). The energy inequality implies
    \begin{equation}
        \lim \limits_{n \to +\infty} \int_{0}^1 |\partial_t \overline{h}(\cdot,T_n)|^2 rdr = \lim \limits_{n \to +\infty} \int_{B^2} |\partial_t \overline{v}(\cdot,T_n)|^2 dx = 0  \label{eq: decay of partial_t overline v along T_n}
    \end{equation}
    along some sequence $T_n \to +\infty$. It follows that along any such sequence, there is a blow-up of $\overline{h}_r$ at $r = 0$ in the sense that
    $$
\lim \limits_{n \to +\infty } ||\overline{h}_r(\cdot,T_n)||_{C^0([0,r_0])} = +\infty \quad \forall r_0 \in (0,1].
$$
    Otherwise, $\overline{h}_r$ is bounded on $[0,r_0]$ along the sequence $T_n \to +\infty$. Along a subsequence, $\overline{v}(x,T_n)$ must then converge weakly in $H^{1,2}(\overline{B}^2, S^2)$ to a harmonic map $\overline{v}_{\infty}$ which must be smooth on $B^2$ (\cite[Theorem 3.6]{sacks-uhlenbeck}). In fact, the convergence is uniform on $\overline{B^2}$ up to taking another subsequence. Away from zero, this follows from the fact that $h(r,T_n)$ converges weakly in $H^{1,2}([r_0,1])\hookrightarrow C^{0,1/4}([r_0,1]) \subset \subset C^0([r_0,1])$, hence uniformly, to a $H^{1,2}([r_0,1])$-map. Moreover, on $[0,r_0]$, $h(r,T_n)$ converges uniformly along a subsequence to a $C^0([0,r_0])$-map by Arzela-Ascoli as $h_r$ is bounded. Finally, the harmonic map satisfies $\overline{v}_{\infty}(\partial B_{r^*}(0)) = \{(0,0,-1)\}$, meaning that $\overline{v}_{\infty}$ must be constant (\cite[Theorem 3.2]{lemaire}), which contradicts $\overline{v}_{\infty}(0) = (0,0,1)$. 

    Let
    $$
    \theta_T = \max_{t \in [0,T]} ||\partial_r \overline{h}(\cdot,t)||_{C^0([0,1])}\to  +\infty, \quad T \to +\infty.
    $$
    One can find $T_n \to +\infty$, $r_n \in [0,1]$, such that
    $$
     R_n^{-1} := \theta_{T_n} = ||\partial_r \overline{h}(\cdot,T_n)||_{C^0([0,1])} = |\partial_r \overline{h}(r_n,T_n)| \to +\infty.
    $$
    Consider the sequence $\overline{V}_n(x,t) = \overline{v}(R_nx, R_n^2t + T_n)$, $t \in [-1,0]$, where we take $n$ large enough so that $T_n - R_n^2 > T_{n-1}$, $T_0 - R_0^2 > 0$. It follows from (\ref{eq: energy inequality, bounded partial_t v L^2 norm}) that
    \begin{equation}
        \lim \limits_{n \to +\infty} \int_{-1}^{0} \int_{B_{R_n^{-1}}(0)} |\partial_t \overline{V}_n|^2 dxdt = \lim \limits_{n \to +\infty} \int_{T_n-R_n^2}^{T_n} \int_{B^2} |\partial_t \overline{v}|^2 dxdt = 0. \label{eq: convergence of partial_t overline v^2 to zero in L^2}
    \end{equation}
   Recall now from (\ref{nabla v in terms of h}) that
\begin{equation*} 
    |\partial_r \overline{h}|^2 \leq |\nabla_x \overline{v}|^2 \leq  |\partial_r \overline{h}|^2 + \frac{k^2}{r^2}\sin(\overline{h})^2, \quad (r = |x|,t) \in (0,1] \times [0,+\infty),
\end{equation*}
where 
$$
\sin(\overline{h}(r,t)) = r \cos(\overline{h}(c_{r,t},t)) \partial_r \overline{h}(c_{r,t},t) \leq r||\partial_r \overline{h}(\cdot,t)||_{C^0([0,1])}.
$$
 Hence,
\begin{align}
       ||\nabla_x V_n||_{C^0\left(B_{R_n^{-1}}(0) \times [-1,0] \right)} &\lesssim_k 1, \label{eq: uniform bounded C1 norm form v_n} \\
    ||(\partial_t - \Delta) \overline{V}_n ||_{C^0\left(B_{R_n^{-1}}(0) \times [-1,0] \right)} &= ||\left( |\nabla_x \overline{V}_n|^2 \overline{V}_n \right) ||_{C^0\left(B_{R_n^{-1}}(0) \times [-1,0] \right)} \lesssim_k 1,  \notag
\end{align}
as $|\tilde{V}_n| = 1$. It follows from Parabolic Sobolev Embedding (Corollary \ref{cor: local parabolic sobolev embedding}) and a Schauder iteration as in the proof of Theorem \ref{thm:resgularty solution nonlinear pb} that
$$
\overline{V}_n, \nabla_x \overline{V}_n, ..., D^{m}_x \overline{V}_n \in C^{2+2\gamma,1+\gamma} \left(B_{R_n^{-1} 2^{-l}}(0) \times [-1,0] \right)
$$
for some $l = l(m) \in \mathbb N$ with norm uniformly bounded with respect to $n$ (for $m$ fixed).

 By Arzela-Ascoli, up to taking a subsequence, the sequence of rescaled solutions $\overline{W}_n = \overline{v}(R_n r, R_n^2t + T_n): [0,R_n^{-1}2^{-l}] \times [-1,0] \rightarrow S^2$ converges in $C^{k+2,k+1}(K \times [-1,0])$ for any compact subset $K \subset \mathbb R^2$ to some harmonic map $\overline{V}_{\infty} \in C^{k+2}(\mathbb R^2)$ which must be stationary by (\ref{eq: convergence of partial_t overline v^2 to zero in L^2}) and smooth (\cite[Theorem 3.6]{sacks-uhlenbeck}, Theorem 3.6). 

Similarly, the sequence of rescaled solutions $\overline{H}_n = \overline{h}(R_n r, R_n^2t + T_n): [0,R_n^{-1}2^{-l}] \times [-1,0] \rightarrow [0,\pi]$ converges uniformly on all compact sets $K \times [-1,0] \subset [0,+\infty) \times [-1,0]$ to a map $\overline{H}_{\infty} \in C^0([0,+\infty))$ solving
\begin{align}
     0 = H_{rr} + \frac{H_r}{r} - k^2 \frac{\sin(2H)}{2r^2}, \quad r \in (0,+\infty), \label{time independent ode for H}
\end{align}
with $H_{\infty}(0) = 0$. But Lemma \ref{k-equivariance, smoothness of h} shows that 
$$
\overline{V}_{\infty}(re^{i\theta}) = \left(e^{ik \theta} \sin H(r), \cos H(r) \right) 
$$
for some lifting $H \in C^{\infty}([0,+\infty))$. Hence, $\overline{H}_{\infty}(r)$ and $H(r)$ must differ by a fixed multiple of $2\pi$ and $\overline{H}_{\infty}$ is smooth as well. It follows that $\overline{H}_{\infty} \in C^{\infty}([0,+\infty);[0,\pi])$ is a smooth solution for the ODE (\ref{time independent ode for H}) with initial condition $H_{\infty}(0) = 0$. 

Those solutions $V_{\infty}$, $H_{\infty}$ must be non-constant. Indeed, one has
\begin{equation*}
    R_n^{-1} = |\partial_r \overline{h}(r_n,T_n)| \leq |\nabla_x \overline{v}(r_n,T_n)|
\end{equation*}
as a consequence from (\ref{nabla v in terms of h}) and
$$
R_n^{-1} = |\partial_r \overline{h}(r_n,T_n)| \lesssim r_n^{-1}
$$
by Proposition \ref{blow up point is at zero k equivariant}. Along a subsequence, $r_nR_n^{-1} \to r^* \in \mathbb R$ converges. Hence, it holds that
$$
  |\nabla_x \overline{W}_n(R_n^{-1}r_n,T_n)| \geq |R_n \nabla_x \overline{v}(r_n,T_n)| \geq 1 \quad \forall n \in \mathbb N,
$$
meaning that $|\nabla_x \overline{V}_{\infty}(r^*)| \geq 1$ is not identically zero. Hence,  $\overline{H}_{\infty}$ is non-constant as well and exactly of the form:
 $$
\overline{H}_{\infty}(r) = 2\arctan \left( (\alpha r)^k \right), \quad \alpha > 0,
 $$
 (see (\ref{all possible form for limiting H}) for a short argument based on the comparison principle). Finally, the convergence the convergence $\overline{H}_n \to \overline{H}_{\infty}$ holds in $C^{k+2,k+1}(K \times [-1,0])$ (and not only in $C^0(K \times [-1,0])$) for any compact subset $K \subset [0,+\infty)$ by locally expressing $\overline{H}_n$ in terms of $\overline{V}_n$ and the arccos, arcsin functions (e.g. see the proof of Theorem \ref{thm:smoothness around all but finitely many points}).

Finally, the limit $\overline{h}(r,+\infty)$ exists as $\overline{h}$ is increasing for fixed $r \in (0,1]$ and bounded by $0 \leq \overline{h} \leq  \pi$.  If $R \in (0,+\infty)$ is also fixed, there is $m$ large enough so that $r > R_m R$, meaning that
    $$
    0 \leq \overline{h} (R_m R, T_m + R_m^2s) \leq \overline{h}(r, T_m + R_m^2s) \leq \pi
    $$
    as $\overline{h}$ is increasing for fixed time. Taking the limit $m \to +\infty$ yields
    $$
    0 \leq 2 \arctan \left( (\alpha R)^k \right) \leq \lim \limits_{m \to +\infty} h(r, T_m + R_m^2s) \leq \pi,
    $$
    and letting $R \to +\infty$ shows that $\overline{h}(r,+\infty) = \pi$.
\end{proof}

\section{Comparison Principle, Maximum Principle}
We state two fundamental results for the study of parabolic PDEs, which are mostly independent from the previous sections and which will be used extensively in the following sections. The Comparison Principle (Theorem \ref{comparison principle}) is stated only for our specific nonlinear problem, but the proof is standard and can be applied to a wide class of nonlinear heat equations.

\begin{theorem}[Maximum Principle] \label{maximum principle}
    Let $E \subset \mathbb R^n \times \mathbb R$ be a non-empty, open and connected set. Let $E_{t_1} = \{(x,t) \in E: t \leq t_1\}$. Consider a parabolic operator
    $$
    L = \sum_{i=1}^n b^i(x,t) D_{x_i} + \sum_{i,j=1}^n a^{i,j}(x,t) D_{x_i}D_{x_j} - \partial_t,
    $$
    where $a_{i,j},b_i \in L^{\infty}(E_{t_1})$ and
    \begin{align*}
    \sum_{i,j=1}^n a^{i,j}(x,t)\xi_i\xi_j &\geq \mu |\xi|^2, \quad (x,t) \in E_{t_1}, \xi \in \mathbb R^n
    \end{align*}
    for some $\mu > 0$.

    Assume that $u \in C^{2,1}_{x,t}(E_{t_1})$ (meaning that $u$ is continuously differentiable with respect to $t$, twice-continuously differentiable with respect to $x$ and the derivatives extend continuously at $t = t_1$) and $L[u] \geq 0$. Assume that $M = \sup_{E_{t_1}} u(x,t) < +\infty$. 
    
    If $u(x_1,t_1) = M$ at some point $(x_1,t_1) \in E_{t_1}$, then $u = M$ at every point $(x,t) \in E_{t_1}$ which can be attained from $(x_1,t_1)$ by a finite sequence of horizontal line segments 
    $$H(p_0,p_1,T) = \{(x,T) \in E_{t_1}: u(p_0,T) = M, x = p_0 + h(p_1-p_0), 0 \leq h \leq 1\}$$ 
    and vertical line segments going backwards in time
    $$V(x_0,T_0,T_1) = \{(x_0,t) \in E_{t_1}: u(x_0,T_1) = M, T_0 \leq t \leq T_1\}$$ 
    that stay completely in $E_{t_1}$ (i.e., they are connected segments in the topological sense).

    The theorem remains true if $(L+c(x,t))[u] \geq 0$ with $c(x,t) \leq 0$ measurable and $M \geq 0$, as well as if $(L+c(x,t))[u] \geq 0$ with any $c(x,t)$ bounded from above and $M = 0$.
\end{theorem}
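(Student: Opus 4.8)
The plan is to obtain this by the classical Hopf--Nirenberg barrier method: reduce everything to a local propagation statement inside small parabolic cylinders, and then chain that statement along the admissible horizontal and backward-vertical segments.

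\emph{Reductions.} If $(L+c)[u]\ge 0$ with $c$ merely bounded above and $M=0$, set $w=e^{-\lambda t}u$ with $\lambda>\sup c$; a direct computation gives $L[w]+(c-\lambda)[w]\ge 0$ with $c-\lambda<0$, while $w$ has the same sign, the same maximum $M=0$, the same maximum set, and hence the same admissible segments as $u$. So it suffices to treat $c\le 0$, $M\ge 0$ (which contains $c\equiv 0$). In that case, replacing $u$ by $w=u-M\le 0$ gives $(L+c)[w]=(L+c)[u]-cM\ge -cM\ge 0$, where $M\ge 0$ is used precisely to keep $-cM\ge 0$. Thus we may assume $w\le 0$, $(L+c)[w]\ge 0$ in $E_{t_1}$ with $c\le 0$ and bounded coefficients, $w$ vanishes at the anchor $(x_1,t_1)$, and the goal becomes $w\equiv 0$ on the reachable set.

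\emph{The engine.} The key local lemma is as follows: for any $(y,s)$ with $s\le t_1$ pick $\rho>0$ small enough that $\overline{Q}\subset E_{t_1}$ for the cylinder $Q=B_\rho(y)\times(s-\rho^2,s]$ (possible since $E$ is open and $s\le t_1$); then (i) $w(x',s)=0$ for some $x'\in B_{\rho/2}(y)$ forces $w(\cdot,s)\equiv 0$ on $B_{\rho/2}(y)$, and (ii) $w(y,s)=0$ forces $w(y,t)=0$ for all $t\in(s-\rho^2/4,s]$. Both are the standard parabolic strong maximum principle of Nirenberg, proved by a barrier: assuming $w<0$ at some point near the vanishing point, one introduces $\phi(x,t)=e^{-\alpha(|x-\tilde y|^2+\beta(t-\tilde s))}-\kappa$ on a suitable annular sub-cylinder; using uniform ellipticity $\sum a^{ij}\xi_i\xi_j\ge\mu|\xi|^2$ and boundedness of $a^{ij},b^i,c$ one gets $(L+c)[\phi]>0$ there for $\alpha,\beta$ large, with $\phi=0$ on the outer part and $\phi>0$ on the inner part of the parabolic boundary. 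Comparing $w+\varepsilon\phi$ with $0$ via the weak maximum principle for $L+c$ with $c\le 0$ (valid because, where $w+\varepsilon\phi>0$, the zeroth-order term is nonpositive) produces a strictly positive one-sided derivative of $w$ at the point where it vanishes, contradicting that this point is an extremum of $w$ in the relevant direction.

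\emph{Chaining.} Given the anchor $(x_1,t_1)$ with $w=0$ and an admissible chain of segments lying in $E_{t_1}$, we propagate $w\equiv 0$ along it by induction on the number of segments. For a horizontal segment $H(p_0,p_1,T)\subset E_{t_1}$ with $w(p_0,T)=0$: cover it by finitely many overlapping balls, each admitting a backward cylinder inside $E_{t_1}$, and apply (i) successively to carry the value $0$ from $p_0$ to $p_1$. For a backward vertical segment $V(x_0,T_0,T_1)\subset E_{t_1}$ with $w(x_0,T_1)=0$: apply (ii) finitely many times, each step descending by a fixed fraction of the available cylinder height, down to $(x_0,T_0)$. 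This gives $w\equiv 0$, i.e. $u\equiv M$, at every reachable point, which is the claim.

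\emph{Main obstacle.} The delicate point is the local barrier lemma, the ``engine'': one must build $\phi$ so that $(L+c)[\phi]>0$ on an annular cylinder using only boundedness and uniform ellipticity of the coefficients, while arranging the comparison so that it respects the fact that the favorable region is truncated at the top time $t=t_1$ (forcing the cylinders to open downward in time) and that the anchor points live on that top slice, and while keeping the sign bookkeeping for the zeroth-order term ($c\le 0$, $M\ge 0$) consistent throughout the barrier construction and the weak maximum principle; everything else is routine covering and induction.
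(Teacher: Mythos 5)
The paper's proof of this theorem is a bare citation to Protter--Weinberger, Chapter 3, Theorems 5 and 7, together with the remark that follows them. Your proposal is a correct reconstruction of exactly that argument: the two reductions (first $w=e^{-\lambda t}u$ to pass from $c$ bounded above and $M=0$ to $c\le 0$, then $w=u-M$ to anchor at zero, using $M\ge 0$ to keep $-cM\ge 0$), the Nirenberg barrier lemma inside a small downward-opening parabolic cylinder whose closure lies in $E_{t_1}$, and the finite chaining along admissible horizontal and backward-vertical segments. This is precisely the content of the cited theorems and remark, so the approach is the same as the paper's, merely written out rather than referenced.

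One small point of hygiene if you were to turn the sketch into a full proof: in the ``engine'' you phrase the contradiction as ``a strictly positive one-sided derivative of $w$ at the point where it vanishes.'' At an interior spatial maximum on the top slice $t=s$, what you actually have is $\nabla_x w=0$ and, from $(L+c)[w]\ge 0$ and $D^2_x w\le 0$ and $c\le 0$, only the one-sided bound $\partial_t w\le 0$; the barrier comparison produces a strictly positive \emph{outward spatial} derivative (Hopf-type), and it is the contradiction with $\nabla_x w=0$ that closes the horizontal step, while the vertical step uses a separate barrier geometry with the sign coming out in the $t$-direction. The distinction matters for the bookkeeping even though both are handled in Protter--Weinberger.
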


\begin{proof}
    See \cite[Chapter 3, Theorem 5, Theorem 7]{protter2012maximum} and the remark that follows it.
\end{proof}

\begin{theorem}[Comparison Principle for sub- and super-solutions, \cite{vanderhoutcomparison}]\label{comparison principle}
    Let $0 < r_0,T < +\infty$, $0 \leq t_0 < +\infty$, $R_T = (0,r_0) \times (t_0,t_0+T)$ and denote by $\Sigma_T = \partial R_T \setminus [0,r_0] \times \{t_0+T\}$ its parabolic boundary. 
    
    Assume that $\theta, \psi \in C^0(\overline{R_T}) \cap W^{1,2}_{loc}(R_T)$ (meaning that $\theta_r, \psi_r, \theta_t, \psi_t$ are defined in a weak sense and square-integrable on compact subsets of $R_T$) are respectively subsolutions and supersolutions of (\ref{heat map formulation for h(r,t)}), i.e., 
    $$
    \int_{R_T} r \theta_t \phi drdt \leq -\int_{R_T} r \left( \phi_r \theta_r + k \frac{\sin 2\theta}{2r^2} \phi \right) drdt
    $$
    and
    $$
    \int_{R_T} r \psi_t \phi drdt \geq -\int_{R_T} r \left( \phi_r \psi_r + k \frac{\sin 2\psi}{2r^2} \phi \right) drdt
    $$
    for any $\phi \in W^{1,2}(R_T)$ with compact support in $R_T$. 
    
    Suppose that either $\sup \limits_{t} E(\theta(\cdot,t)) < +\infty$ or $\sup \limits_{t} E(\psi(\cdot,t)) < +\infty$. 
    
    If $\theta \leq \psi$ on $\Sigma_T$, then $\theta \leq \psi$ on $\overline{R_T}$.
\end{theorem}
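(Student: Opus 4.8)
The plan is to run a weighted $L^{2}$ energy estimate (leading to a Gronwall inequality) for the positive part $w_{+}:=\max(\theta-\psi,0)$ of the difference. Since $\theta\le\psi$ on $\Sigma_{T}$ and both functions are continuous on $\overline{R_{T}}$, $w_{+}$ vanishes on the whole parabolic boundary, in particular on $[0,r_{0}]\times\{t_{0}\}$ and on $(\{0\}\cup\{r_{0}\})\times[t_{0},t_{0}+T]$. First I would normalize: the one among $\theta,\psi$ that has finite energy, say $\psi$ (the case where $\theta$ is the finite-energy one is symmetric for what follows), lies for each $t$ in $L^{2}([0,r_{0}],r\,dr)$ together with $\psi_{r}$ and $\tfrac{\sin\psi}{r}$, so by Lemma~\ref{value at r = 0 is a multiple of pi, ver.2} and Lemma~\ref{value at r = 0 is a multiple of pi} it admits a limit $\lim_{r\to0}\psi(r,t)\in\pi\mathbb{Z}$; by continuity on $\overline{R_{T}}$ this is a fixed integer multiple of $\pi$, which I may take to be $0$ after replacing $\theta,\psi$ by $\theta-m\pi,\psi-m\pi$ (legitimate because $\sin2(h-m\pi)=\sin2h$, so the constants $m\pi$ remain solutions and the sub/supersolution property is preserved).

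The decisive point is the sign of the singular nonlinear term near $r=0$. Write $\sin2\theta-\sin2\psi=2\cos(\theta+\psi)\,\sin(\theta-\psi)$. Since $w(0,t)=\theta(0,t)-0\le0$, the condition $w(0,t)\ge0$ forces $\theta(0,t)=\psi(0,t)=0$, and a compactness argument using continuity of $\theta,\psi$ then yields $\varepsilon_{0}>0$ with $|\theta|,|\psi|<\pi/4$ on $\{(r,t):r\le\varepsilon_{0},\ w\ge0\}$. Hence on $\{r\le\varepsilon_{0}\}\cap\{w>0\}$ one has $\cos(\theta+\psi)>0$ and $0<\theta-\psi<\pi$, so $\sin2\theta-\sin2\psi\ge0$ and therefore $-k^{2}\tfrac{\sin2\theta-\sin2\psi}{2r^{2}}\,w_{+}\le0$: the $r^{-2}$ term has the favourable sign precisely where it is singular. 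For $r\ge\varepsilon_{0}$ the zeroth order coefficient is bounded and $|\sin2\theta-\sin2\psi|\le2|w|$.

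Then I would subtract the two weak inequalities and test against $\phi=w_{+}\,\eta_{\varepsilon}^{2}\,\chi$, where $\chi$ is a temporal cutoff and $\eta_{\varepsilon}$ is a spatial cutoff equal to $1$ on a compact middle range, $0$ near $r=0$ and near $r=r_{0}$, chosen \emph{logarithmically} near the origin, $\eta_{\varepsilon}(r)=\tfrac{\log(r/\varepsilon^{2})}{\log(1/\varepsilon)}$ for $\varepsilon^{2}\le r\le\varepsilon$ (after the standard reduction that moves the right boundary to $r_{0}-\delta$ and replaces $w_{+}$ by $(w-\delta')_{+}$ with $\delta'=\sup_{t}w(r_{0}-\delta,t)\to0$, which makes the right-hand cutoff innocuous). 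Using $w_{+}w_{r}=w_{+}w_{+,r}$ a.e., the term $\int r\,w_{t}\phi$ yields $\tfrac12\int_{0}^{r_{0}}r\,w_{+}^{2}(\cdot,\tau)\,\eta_{\varepsilon}^{2}$ in the limit of $\chi$, the Dirichlet term yields $-\iint r\,w_{+,r}^{2}\eta_{\varepsilon}^{2}\chi\le0$, the nonlinear term contributes a $\le0$ part on $r\le\varepsilon_{0}$ and a part $\le\tfrac{k^{2}}{\varepsilon_{0}^{2}}\iint r\,w_{+}^{2}\eta_{\varepsilon}^{2}\chi$ on $r\ge\varepsilon_{0}$, and the commutator $2\iint r\,w_{+}w_{+,r}\eta_{\varepsilon}\eta_{\varepsilon}'\chi$ is controlled by Young's inequality (absorbing $\tfrac14\iint r\,w_{+,r}^{2}\eta_{\varepsilon}^{2}\chi$) together with the zero-capacity bound $\int_{0}^{\varepsilon}r(\eta_{\varepsilon}')^{2}\,dr\lesssim1/\log(1/\varepsilon)\to0$. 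Passing to the limit in the cutoffs gives, for a.e.\ $\tau$, $\Phi(\tau)\le\tfrac{2k^{2}}{\varepsilon_{0}^{2}}\int_{t_{0}}^{\tau}\Phi$ with $\Phi(\tau)=\int_{0}^{r_{0}}r\,w_{+}^{2}(\cdot,\tau)\,dr$; since $\Phi$ is continuous with $\Phi(t_{0})=0$, Gronwall forces $\Phi\equiv0$, i.e.\ $w_{+}\equiv0$, i.e.\ $\theta\le\psi$ on $\overline{R_{T}}$.

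The main obstacle is exactly the $r^{-2}$-singular zeroth order coefficient $\tfrac{\sin2\theta-\sin2\psi}{\theta-\psi}\cdot\tfrac{k^{2}}{r^{2}}$: it cannot be absorbed by any Hardy inequality (the two-dimensional Hardy inequality fails without a logarithmic weight), so the estimate closes only because (a) the finite-energy hypothesis, via Lemma~\ref{value at r = 0 is a multiple of pi}, pins one of the two functions to an integer multiple of $\pi$ at $r=0$, which after the normalization makes that singular term have the correct sign near the origin, and (b) a point has zero two-dimensional capacity, which is what annihilates the logarithmic-cutoff commutator. The remaining points are routine: $w_{+}$ is only in $W^{1,2}_{\mathrm{loc}}$, so one works throughout with the weak formulation, and the behaviour on the lateral and final-time portions of the boundary is handled using continuity of $\theta,\psi$ up to $\overline{R_{T}}$.
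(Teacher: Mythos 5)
The paper does not give its own proof here: it simply refers the reader to the appendix of \cite{vanderhoutcomparison}, noting only that the $k$-factor in front of the nonlinearity is immaterial. Your writeup therefore cannot be compared line-by-line with a paper proof, but it \emph{does} reproduce, in a self-contained form, the same circle of ideas that Van der Hout's comparison argument relies on: a Caccioppoli/Gronwall estimate for $w_+=(\theta-\psi)_+$ tested against $w_+$ with a logarithmic cutoff near $r=0$, with the finite-energy hypothesis used (via Lemma~\ref{value at r = 0 is a multiple of pi} and Lemma~\ref{value at r = 0 is a multiple of pi, ver.2}) to pin one of the two functions to a fixed multiple of $\pi$ at the origin so that the singular zeroth-order term $k^2\frac{\sin2\theta-\sin2\psi}{2r^2}$ has a favourable sign on $\{w>0\}\cap\{r\le\varepsilon_0\}$. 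The argument is correct.

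Three places are sketched thinly and deserve one more sentence each if this were to be written out in full. First, the compactness step producing $\varepsilon_0$ should be phrased as: if there were $(r_n,t_n)\in\{w\ge0\}$ with $r_n\to0$ and $\max(|\theta|,|\psi|)(r_n,t_n)\ge\pi/4$, then a limit point $(0,t_*)$ would lie in the closed set $\{w\ge0\}\cap\{r=0\}$, where $\theta=\psi=0$ after the normalization, contradicting continuity. Second, the reduction to $(w-\delta')_+$ handles both the $r=r_0$ cutoff \emph{and} the temporal cutoff near $t_0$ at once, since uniform continuity on $\overline{R_T}$ gives a strip near $\{r=r_0\}\cup\{t=t_0\}$ on which $w<\delta'$; this should be said once rather than only for $r_0$. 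Third, replacing $w_+$ by $(w-\delta')_+$ introduces a lower-order $\delta'$ term in the Gronwall inequality (from $|\sin2\theta-\sin2\psi|\le2|w|=2((w-\delta')_++\delta')$ on $\{w>\delta'\}$); this yields $\Psi_{\delta'}(\tau)\le C\int_{t_0}^{\tau}\Psi_{\delta'}+C'\delta'$, hence $\Psi_{\delta'}(\tau)\lesssim\delta'$, and one must finish by letting $\delta'\to0$ to get $\Phi\equiv0$. None of these affect the validity of the argument; they are bookkeeping.

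Two small slips: (a) the sub/supersolution inequalities as stated can only be inequalities for \emph{nonnegative} test functions $\phi$ (your test function is nonnegative, so this is harmless, but worth flagging as an implicit reading of the hypothesis); (b) the factor in front of the nonlinearity is $k^2$, not $k$, as in \eqref{heat map formulation for h(r,t)} (the statement of Theorem~\ref{comparison principle} itself has a typo), but this has no effect on the argument.
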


\begin{remark}
     This is a standard argument, which would also work when replacing 
    $$
    N(h,r) = k \frac{\sin 2 h}{2r^2}
    $$ 
    by any nonlinearity $N(h,r)$ for which one can prove 
    $$
    -(N(\theta(r),r) - N(\psi(r),r)) \cdot \chi_{\{\theta(r)-\psi(r) \geq 0\}}(r) \lesssim|\theta(r)-\psi(r)|, \quad r \in (0,r_0)
    $$
    given the assumptions on $\theta, \psi$. For example, take $N$ continuous on $\mathbb R \times [0,r_0]$ and Lipschitz with respect to $h$.
\end{remark}

\begin{proof}
   After parabolic rescaling and shifting time to zero, one can assume that $R_T = (0,1) \times (0,T)$. The idea is to prove 
\begin{align}
            \int_0^1 r(1-r) \left[ (\theta-\psi)^+ \right]^2(r,\tau) dr \lesssim - \int_0^{\tau}\int_0^1 r(1-r)  (\theta-\psi)^+ (N(\theta,r)-N(\psi,r))drdt \label{gronwall pre-step, comparison proof}
\end{align}
for any $0 < \tau < T$ and then use Grönwall's inequality. Intuitively, this amounts to choosing $\phi = (1-r) (\theta-\psi)^+ \chi_{[0,\tau]}$ as a test function, but it does not have compact support, so additional cut-offs are required. See the proof in the Appendix of \cite{vanderhoutcomparison} for more details. The $k$-factor in front of the nonlinearity plays no significant role in the argument. 
\end{proof}

\section{On the geometry of level sets $h_1 < h_2$}

In this section, following the lap-number argument from \cite{VANDERHOUT2003} and \cite{matano}, we are concerned on the level sets $h_1 < h_2$, as well as $h_1 < h < h_2$. We show that under some mild assumptions, those levels sets are connected to the parabolic boundary (Lemma \ref{geometry of h_1 < h_2} and Lemma \ref{lemma:geometry of h_1 < h < h_2}). Moreover, at a fixed time, there cannot be too many successive intersections between $h$ and $h_i$ on $[0,1]$ and this intersection number cannot increase over time (Lemma \ref{lemma:intersection argument with chains mod 4}. These results play a key role in the bubble-tree result.

Even though the results are formulated in greater generality, we will mostly be concerned with the cases where $h_i = \pi/2, \pi$ or $h_i$ is a stationary solution given in terms of the arctan function.

We note that the proof of Lemma \ref{lemma:geometry of h_1 < h < h_2} is novel. The original paper from \cite{VANDERHOUT2003} omits this proof but one cannot repeat the argument from Lemma \ref{geometry of h_1 < h_2}. Indeed, repeating the argument by contradiction from Lemma \ref{geometry of h_1 < h_2} shows that the maximum of $h-h_1$, resp. $h_2-h$, is achieved on $\overline{\Gamma} \setminus \Gamma$. However, one cannot conclude as $h = h_1$ or $h = h_2$ on $\overline{\Gamma} \setminus \Gamma$ and both can happen at different points because this set is not necessarily connected. We propose a more subtle argument based on the notion of accessible boundary points from \cite{newman1964elements}. These points, which are dense in the boundary, allow to circumvent the aforementioned problem. Even though the boundary is not connected, most boundary points can be reached from inside the domain and are connected together.

\begin{lemma}[\cite{VANDERHOUT2003}, Lemma 2.1] \label{geometry of h_1 < h_2}
Let $k \in \mathbb N$ and let $h_i(r,t) \in C^0([0,1] \times [0,T)) \cap C^{2,1}((0,1) \times (0,T))$ (meaning that $u$ is continuously differentiable with respect to $t$, twice-continuously differentiable with respect to $r$ and the derivatives are continuous at the included boundary) , $i = 1,2$, be two classical solutions for the nonlinear problem
\begin{align*}
    h_{i,t} &= h_{i,rr}+\frac{h_{i,r}}{r} - k^2 \cdot \frac{\sin(2h_i)}{2r^2}, \quad 0 < r < 1, \quad 0 < t < T,
\end{align*}
with a common time of existence $T \leq +\infty$. Let $0 \leq t_1 < t_2 < T$ and $0 \leq R_1 < R_2 \leq 1$. Let $A$ be a (path)-connected component of $\{(r,t) \in [R_1,R_2] \times [t_1,t_2]: h_1 > h_2\}$. Then
$$
A \cap \left[ \{r = R_1\} \cup \{r = R_2\} \cup \{t = t_1\} \right] \neq \emptyset,
$$
where, if $R_1 = 0$, one needs to further assume either $h_1(0,t) \neq h_2(0,t)$ for all $t \in [0,T)$ or $h_1(0,t) = h_2(0,t) = m\pi \in \pi \mathbb Z$ for all $t \in [0,T)$.
\end{lemma}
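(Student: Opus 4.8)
The plan is a standard strong-maximum-principle argument. Set $w := h_1 - h_2$; subtracting the two equations and writing $\sin(2h_1)-\sin(2h_2) = 2\bigl(\int_0^1\cos(2(h_2+s(h_1-h_2)))\,ds\bigr)w$ one gets, on $(0,1)\times(0,T)$,
\[
w_t = w_{rr} + \frac{w_r}{r} + c(r,t)\,w,\qquad c(r,t):=-\frac{k^2}{r^2}\int_0^1\cos\bigl(2(h_2+s(h_1-h_2))\bigr)\,ds,
\]
a linear parabolic equation with $c$ continuous on $(0,1)\times(0,T)$ and bounded on sets bounded away from $\{r=0\}$. The idea is that a connected component of $\{w>0\}$ that does not reach the parabolic boundary would carry a positive interior maximum of $w$, which the strong maximum principle forbids.

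Argue by contradiction, assuming $A\cap\bigl(\{r=R_1\}\cup\{r=R_2\}\cup\{t=t_1\}\bigr)=\emptyset$. Since $\{w>0\}\cap([R_1,R_2]\times[t_1,t_2])$ is relatively open in the (locally path-connected) rectangle, the component $A$ is relatively open and relatively closed there; in particular $w=0$ on $\overline A\setminus A$ (a point of $\overline A$ with $w>0$ would belong to $\{w>0\}$, hence — $A$ being relatively closed in $\{w>0\}$ — to $A$), and by hypothesis $A\subseteq(R_1,R_2)\times(t_1,t_2]$. Put $\varphi:=e^{-\lambda t}w$ and $M:=\max_{\overline A}\varphi$; because $w>0$ on the nonempty set $A$ and $w\ge0$ on $\overline A$, we have $M>0$ and every maximizer of $\varphi$ lies in $A$ (on $\overline A\setminus A$ one has $\varphi=0<M$). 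Among the maximizers choose $(\rho,\tau^*)\in A$ with $\tau^*$ as small as possible; then $\rho\in(R_1,R_2)$ and $\tau^*>t_1$, precisely because $A$ avoids the edges and the bottom of the rectangle.

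By relative openness of $A$, for $\delta>0$ small the cylinder $E:=\{|r-\rho|<\delta\}\times(\tau^*-\delta^2,T)$ has $\{|r-\rho|<\delta\}\subseteq(R_1,R_2)\subseteq(0,1)$ and $E_{\tau^*}:=E\cap\{t\le\tau^*\}\subseteq A$; hence $\varphi\le M$ on $E_{\tau^*}$, $\varphi$ is smooth and solves $\varphi_t=\varphi_{rr}+r^{-1}\varphi_r+(c-\lambda)\varphi$ there, and on $E$ both $r^{-1}$ and $c$ are bounded, so a large $\lambda$ makes $c-\lambda\le0$. Applying Theorem \ref{maximum principle} on $E$ with final time $\tau^*$ (the variant with nonpositive zeroth-order term and $M\ge0$, equivalently applied to the nonpositive function $\varphi-M$) at the maximizer $(\rho,\tau^*)$: the vertical segment $\{(\rho,t):\tau^*-\tfrac{\delta^2}{2}\le t\le\tau^*\}$ stays inside $E_{\tau^*}$, so $\varphi\equiv M$ on it; but then $(\rho,\tau^*-\tfrac{\delta^2}{2})\in A$ (again by relative openness, $\delta$ small) is a maximizer at a time strictly less than $\tau^*$, contradicting minimality of $\tau^*$.

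The only real difficulty is the degeneracy at $r=0$: the coefficient $r^{-1}$ of $\partial_r$ and the $r^{-2}$ in $c$. This is exactly what the two alternatives in the statement for $R_1=0$ are for. If $h_1(0,\cdot)\ne h_2(0,\cdot)$ everywhere, then $\overline A$ cannot touch $\{r=0\}$ at all (a limit point $(0,t_0)$ of $A$ has $w(0,t_0)\ne0$, and the sign forces it into $A$ or out of $\overline A$), so the component stays away from the singularity. If instead $h_1(0,t)=h_2(0,t)=m\pi$ for all $t$, then by uniform continuity the integrand defining $c$ tends to $\cos(2m\pi)=1$ as $r\to0$, so $c(r,t)\le-k^2/(2r^2)\le0$ near $r=0$, i.e. the zeroth-order term is sign-definite and bounded above there — enough for the maximum principle even though it is unbounded below. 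In any case, because we localize the maximum-principle argument to a small cylinder around the interior maximizer $(\rho,\tau^*)$ with $\rho>0$, this degeneracy never enters the estimate directly.
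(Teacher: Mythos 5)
Your core argument is a valid and arguably cleaner variant of the paper's: both reduce to $w=h_1-h_2$ solving a linear parabolic equation, both show $A$ is relatively open with $w=0$ on $\overline A\setminus A$ and $M>0$ attained inside $A$, and both invoke the strong maximum principle (Theorem \ref{maximum principle}). Where the paper first reduces via a lower half-ball to $t_0<t_2$ and then propagates the maximum \emph{horizontally} to reach $\{r=R_1\}$ (contradicting the emptiness of $A\cap\{r=R_1\}$), you instead pick the maximizer of $\varphi$ with \emph{smallest} time $\tau^*$ and propagate \emph{backward in time} along a short vertical segment, contradicting minimality of $\tau^*$. That endgame is slicker and avoids the half-ball reduction and the leftmost-point bookkeeping.

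There is, however, a genuine gap in how you pick $\lambda$, and it is precisely where the $R_1=0$ hypotheses enter. As written, $\lambda$ is chosen ``large'' only \emph{after} the cylinder $E$ is fixed, but $E$ is built around the maximizer of $\varphi_\lambda=e^{-\lambda t}w$, which itself depends on $\lambda$: the argument is circular. The correct resolution is to fix $\lambda>\sup_{\overline A}c$ \emph{before} any maximizer analysis, so that $c-\lambda\le 0$ automatically holds on $E_{\tau^*}\subseteq\overline A$. That requires $\sup_{\overline A}c<+\infty$, and this is exactly what the two alternatives at $R_1=0$ secure — in the first, $\overline A$ is bounded away from $r=0$ so $c$ is bounded; in the second, $c\le 0$ near $r=0$ so $c$ is bounded above even though it blows down. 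Your closing claim that ``because we localize \dots this degeneracy never enters the estimate directly'' is therefore misleading: if the common boundary value at $r=0$ were, say, $\pi/2$, then $c\to+\infty$ along $\overline A$ near $r=0$, the minimal-time maximizer of $\varphi_\lambda$ could drift toward $r=0$ as $\lambda$ grows, and no admissible $\lambda$ would ever be found. The hypotheses are not decorative; they are what breaks the circularity you leave unaddressed (and indeed the paper explicitly establishes ``the coefficient in front of $H$ is bounded from above'' before invoking the maximum principle).
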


\begin{notation}
    In the following, we write $\overline{S}$, $\partial_{\mathbb R^2} S$, $\mathrm{int}_{\mathbb R^2}(S)$ for the closure, boundary and interior of $S \subset [R_1,R_2] \times [t_1,t_2]$ with respect to $\mathbb R^2$ topology. One should note that the closure $\overline{S}$ with respect the $\mathbb R^2$ topology or the closed square topology is the same, but the boundary might differ. 
\end{notation}

\begin{remark}
    In our Euclidean setting, connectedness, path-connectedness and arc-connectedness (i.e., existence of injective paths between any two points) are all equivalent.
\end{remark}

\begin{proof}
We follow the proof in \cite[Lemma 2.1]{VANDERHOUT2003}. Assume for a contradiction that this is not the case.

     First, observe that $A$ is open in the closed square topology. Moreover, $h_1 = h_2$ on $\overline{A} \setminus A$ (i.e., the boundary with respect to the closed square topology). If we had $h_1 > h_2$ at a point $(r,t) \in \overline{A} \setminus A$, then there would be an open (in the closed square topology) path-connected neighborhood $U \subset [R_1,R_2] \times [t_1,t_2]$ of $(r,t)$ on which $h_1 > h_2$. Since $(r,t)$ belongs to the boundary of $A$ (in the closed square topology), such a neighborhood must intersect $A$ by definition, meaning we can find a path from $A$ to $(r,t) \notin A$ on which $h_1 > h_2$. This contradicts the maximality of the connected component $A$.

    We consider $H = (h_1-h_2)$. Then
    $$
     H_{t} = H_{rr}+\frac{H_{r}}{r} - k^2 \cdot H \cdot \frac{\sin(2h_1)-\sin(2h_2)}{2r^2(h_1-h_2)}, \quad (r,t) \in A,
    $$
    which is a radial heat equation in 2D. If $R_1 > 0$, the coefficients in front of $H$ is bounded. If $R_1 = 0$ and $h_1(0,t) \neq h_2(0,t)$, then $\mathrm{dist}(\overline{A},\{r = 0\}) > 0$. Otherwise, there would be $x \in \left( \overline{A} \setminus A \right)\cap \{r = 0\}$ which is not possible because $h_1 = h_2$ on $\overline{A} \setminus A$, while $h_1 \neq h_2$ at $r = 0$. In that case, the coefficient in front of $H$ is bounded as well. If $R_1 = 0$ and $\mathrm{dist}(\overline{A},\{r = 0\}) > 0$, no matter $h_1$ and $h_2$, the situation is the same. Finally, if $R_1 = 0$ and $h_1(0,t) = h_2(0,t) = m\pi$ for all $t \in [0,T)$ while $\mathrm{dist}(\overline{A},\{r = 0\}) = 0$, then replacing $h_i$ by $h_i - m\pi$, we can assume that $m = 0$. Near $r = 0$, given $t_1, t_2$, there is $r_0$ for which $|2h_i(r,t)| \leq \pi/2$ for $(r,t) \in [0,r_0] \times [t_1,t_2]$ by uniform continuity. On $\overline{A} \cap  [0,r_0] \times [t_1,t_2]$, one has
    $$
    \frac{\sin(2h_1)-\sin(2h_2)}{2r^2(h_1-h_2)}  \geq 0,
    $$
    as $h_1 \geq h_2$ on $\overline{A}$ and $\sin(x)$ is increasing on $[-\pi/2,\pi/2]$. Away from $r_0$, the coefficient is bounded. 

    All in all, the coefficient in front of $H$ is bounded from above. Hence, one can apply the maximum principle (Theorem \ref{maximum principle}) on $e^{-ct}H$ for $c \geq 0$ large enough so that the coefficient in front of $e^{-ct}H$ is bounded from above by zero.
    
    Let $M = \sup_A e^{-ct}H$ be achieved at a point $(r_0,t_0) \in \overline{A}$. As $A$ is non-empty, $M > 0$. We prove that the maximum is also achieved on $\overline{A} \setminus A$, meaning that $M \leq 0$ on $A$: a contradiction.

    If $M$ is achieved on $\overline{A} \setminus A$, we are done. Moreover, $M$ cannot be achieved on the empty set
    $$
    A \cap \left[ \{r = R_1\} \cup \{r = R_2\} \cup \{t = t_1\} \right] = \emptyset.
    $$
    Hence, we can assume that $t_1 < t_0 \leq t_2$ and $R_1 < r_0 < R_2$. we will prove that a maximum cannot occur at such point. Since $A$ is open with respect to the closed square topology, one can find a lower half-ball
    $$
    \{(r-r_0)^2 + (t-t_0)^2 < \varepsilon, t \leq t_2\} \subset A \cap \{t_1 < t \leq t_2, R_1 < r < R_2\}
    $$
    for $\varepsilon > 0$ small enough. On this half-ball, the maximum principle (Theorem \ref{maximum principle}) applies and $e^{-ct}H = M$ is constant. Hence, we can assume that $t_1 < t_0 < t_2$ without loss of generality. Now, the same theorem implies that $e^{-ct}H = M$ for any point of $A$ which can be connected to $(r_0, t_0)$ using a horizontal segment which stays inside $A$. Take such a point $(r,t_0)$ with $r \in [R_1,R_2]$ minimal. This point must be in $\overline{A}$ (because it is the limit of a sequence in $A$) and it is connected to the point $(r_0,t_0)$ with an horizontal segment staying completely in $A$ except for the endpoint $(r,t_0)$. Since $A$ is a connected component and $e^{-ct}H = M > 0$ at $(r,t_0)$, one must have $(r,t_0) \in A$. And it cannot be that $r > R_1$ since $A$ is open (in the square topology) but $r$ is minimal. As $A \cap \{r = R_1\} = \emptyset$, we reach a contradiction.
\end{proof}

\begin{lemma}
    Let $A \subset [0,R] \times [t_1,t_2]$ be open (with respect to the subspace topology). Then 
    $$\mathrm{int}_{\mathbb R^2}(A) = A \setminus \partial_{\mathbb R^2} ([0,R] \times [t_1,t_2]), \quad A \subset \overline{\mathrm{int}_{\mathbb R^2}(A)}, \quad \overline{A} = \overline{\mathrm{int}_{\mathbb R^2}(A)}, \quad \partial_{\mathbb R^2} \mathrm{int}_{\mathbb R^2}(A) = \partial_{\mathbb R^2} A
    $$
    Moreover, if $A$ is connected, then $\mathrm{int}_{\mathbb R^2}(A)$ is connected as well.
\end{lemma}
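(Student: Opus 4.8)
The plan is to set $Q:=[0,R]\times[t_1,t_2]$, a nondegenerate closed rectangle (so $R>0$ and $t_1<t_2$), for which $\mathrm{int}_{\mathbb R^2}(Q)=(0,R)\times(t_1,t_2)$. Two elementary properties of $Q$ will do all the work: (a) $\mathrm{int}_{\mathbb R^2}(Q)$ is dense in $Q$; and (b) for every $p\in Q$ and every $\varepsilon>0$ the set $B_\varepsilon(p)\cap\mathrm{int}_{\mathbb R^2}(Q)$ is nonempty and convex — hence connected — being the intersection of an open ball with an open rectangle (both convex), with $p$ in the closure of the latter. Property (b) is exactly what lets us treat edge points and corner points of $Q$ uniformly.

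The first identity is pure point-set topology. Writing $A=U\cap Q$ with $U\subseteq\mathbb R^2$ open, the set $U\cap\mathrm{int}_{\mathbb R^2}(Q)$ is $\mathbb R^2$-open and contained in $A$, so $A\cap\mathrm{int}_{\mathbb R^2}(Q)\subseteq\mathrm{int}_{\mathbb R^2}(A)$; conversely $\mathrm{int}_{\mathbb R^2}(A)$ is an $\mathbb R^2$-open subset of $Q$, hence of $\mathrm{int}_{\mathbb R^2}(Q)$, giving $\mathrm{int}_{\mathbb R^2}(A)\subseteq A\cap\mathrm{int}_{\mathbb R^2}(Q)=A\setminus\partial_{\mathbb R^2}Q$ (the last equality since $A\subseteq Q$). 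For $A\subseteq\overline{\mathrm{int}_{\mathbb R^2}(A)}$: given $p\in A$, pick $\varepsilon>0$ with $B_\varepsilon(p)\cap Q\subseteq A$; by (a) every ball about $p$ meets $B_\varepsilon(p)\cap\mathrm{int}_{\mathbb R^2}(Q)\subseteq A\cap\mathrm{int}_{\mathbb R^2}(Q)=\mathrm{int}_{\mathbb R^2}(A)$, so $p\in\overline{\mathrm{int}_{\mathbb R^2}(A)}$. Taking closures yields $\overline A=\overline{\mathrm{int}_{\mathbb R^2}(A)}$, and since $\mathrm{int}_{\mathbb R^2}(\mathrm{int}_{\mathbb R^2}(A))=\mathrm{int}_{\mathbb R^2}(A)$ we get $\partial_{\mathbb R^2}\mathrm{int}_{\mathbb R^2}(A)=\overline{\mathrm{int}_{\mathbb R^2}(A)}\setminus\mathrm{int}_{\mathbb R^2}(A)=\overline A\setminus\mathrm{int}_{\mathbb R^2}(A)=\partial_{\mathbb R^2}A$.

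The substantive statement is that connectedness of $A$ passes to $\mathrm{int}_{\mathbb R^2}(A)$, and this is the step I expect to require care, since in general the interior of a connected set need not be connected. Assume for contradiction $\mathrm{int}_{\mathbb R^2}(A)=V_1\sqcup V_2$ with $V_1,V_2$ nonempty, disjoint and $\mathbb R^2$-open. From $A\subseteq\overline{\mathrm{int}_{\mathbb R^2}(A)}$ the set $\mathrm{int}_{\mathbb R^2}(A)$ is dense in $A$, so $A=\overline{V_1}^A\cup\overline{V_2}^A$ (closures in $A$); connectedness of $A$ forces these two nonempty $A$-closed sets to intersect, so choose $p\in\overline{V_1}^A\cap\overline{V_2}^A$. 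A point of $V_i$ cannot lie in the closure of the disjoint open set $V_{3-i}$, so $p\notin\mathrm{int}_{\mathbb R^2}(A)$, whence $p\in A\cap\partial_{\mathbb R^2}Q$ by the first identity. Since $A$ is open in $Q$, fix $\varepsilon>0$ with $B_\varepsilon(p)\cap Q\subseteq A$; then $B_\varepsilon(p)\cap\mathrm{int}_{\mathbb R^2}(Q)\subseteq A\cap\mathrm{int}_{\mathbb R^2}(Q)=V_1\sqcup V_2$, and by (b) this set is nonempty and connected, hence lies entirely in $V_1$ or in $V_2$ — say in $V_1$. But $V_2\subseteq\mathrm{int}_{\mathbb R^2}(Q)$ and $p\in\overline{V_2}$, so any sequence in $V_2$ converging to $p$ eventually lies in $B_\varepsilon(p)\cap\mathrm{int}_{\mathbb R^2}(Q)\subseteq V_1$, contradicting $V_1\cap V_2=\emptyset$. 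Hence $\mathrm{int}_{\mathbb R^2}(A)$ is connected. The only genuinely geometric input, and the crux of the argument, is property (b); phrasing it via convexity is what avoids a separate case analysis at the corners of the square.
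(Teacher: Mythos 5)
Your proof is correct, and it fills in a step the paper dismisses with ``This is an elementary topological argument'' and no further detail. All four set identities follow, as you show, from writing $A=U\cap Q$ with $U$ open in $\mathbb R^2$ and from the density of $\mathrm{int}_{\mathbb R^2}(Q)$ in $Q$. The genuinely nontrivial claim is the preservation of connectedness, and your argument there is sound: a separation $\mathrm{int}_{\mathbb R^2}(A)=V_1\sqcup V_2$ together with density of $\mathrm{int}_{\mathbb R^2}(A)$ in $A$ and connectedness of $A$ forces a common boundary point $p\in A\cap\partial_{\mathbb R^2}Q$; openness of $A$ in $Q$ gives a trace $B_\varepsilon(p)\cap Q\subseteq A$; and the observation that $B_\varepsilon(p)\cap\mathrm{int}_{\mathbb R^2}(Q)$ is a nonempty \emph{convex} (hence connected) set lying in $V_1\sqcup V_2$ forces it into a single piece, contradicting $p\in\overline{V_1}\cap\overline{V_2}$. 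The convexity device is a clean way to handle edge and corner points of the rectangle uniformly, which is exactly the place where a naive ``interior of a connected set'' argument would break down. One minor remark: you should note, as you implicitly do by considering nonempty $V_1,V_2$, that the case $\mathrm{int}_{\mathbb R^2}(A)=\emptyset$ is vacuous (and by the second identity it forces $A=\emptyset$ anyway); and the assumption $R>0$, $t_1<t_2$ is indeed satisfied in every application of the lemma in the paper.
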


\begin{proof}
     This is an elementary topological argument.
\end{proof}

\begin{definition}[Accessible points]
Let $U \subset \mathbb R^2$ be an open and connected domain. A boundary point $x \in \partial U$ is called \textit{accessible} if there exists a simple curve $\gamma: [0,1] \rightarrow \overline{U}$ for which $\gamma(0) = x$ and $\gamma((0,1]) \subset U$.
\end{definition}

\begin{lemma}[\cite{newman1964elements}, Chapter 6.4]
    The subset of accessible points $A \subset \partial U$ is dense in $\partial U$.
\end{lemma}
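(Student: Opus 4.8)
The plan is to prove density by an explicit ``first exit point'' construction: given $x_0 \in \partial U$ and $\varepsilon > 0$, I will produce an accessible boundary point within $\varepsilon$ of $x_0$ by travelling along a straight segment from a nearby interior point toward $x_0$ and stopping the instant the segment leaves $U$.

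Concretely, since $U$ is open we have $\partial U = \overline U \setminus U$, so there is $p \in U$ with $|p-x_0| < \varepsilon$; note $p \neq x_0$ as $x_0 \notin U$. Set $t_0 = \inf\{ t\in[0,1] : p + t(x_0-p)\notin U\}$. This infimum is over a non-empty set (it contains $t=1$) and $t_0>0$ because $U$ is open around $p$. Let $q = p + t_0(x_0-p)$. For every $t\in[0,t_0)$ the point $p+t(x_0-p)$ lies in $U$, whence $q\in\overline U$; and $q\notin U$, since otherwise openness of $U$ would force a whole two-sided parameter interval about $t_0$ to stay in $U$, contradicting the definition of $t_0$. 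Hence $q\in\partial U$, and from $q-x_0=(1-t_0)(p-x_0)$ we get $|q-x_0|\le|p-x_0|<\varepsilon$.

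It remains to check that $q$ is accessible. Since $p\in U$ and $q\notin U$ we have $p\neq q$, so the affine curve $\gamma:[0,1]\to\overline U$, $\gamma(s)=q+s(p-q)$, is injective, hence simple, with $\gamma(0)=q$. For $s\in(0,1]$ one computes $\gamma(s)=p+(1-s)t_0(x_0-p)$, whose segment parameter $(1-s)t_0$ lies in $[0,t_0)$, so $\gamma(s)\in U$; thus $\gamma((0,1])\subset U$, which is exactly the defining property of accessibility of $q$. Since $x_0\in\partial U$ and $\varepsilon>0$ were arbitrary, every boundary point is a limit of accessible points, i.e. $A$ is dense in $\partial U$.

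I do not expect a genuine obstacle here; the only delicate point is verifying that the first exit point $q$ actually lands on $\partial U$ (neither in $U$ nor outside $\overline U$), which is where openness of $U$ is used. Connectedness of $U$ plays no active role in the argument — it merely belongs to the ambient hypotheses under which the notion of accessibility is defined.
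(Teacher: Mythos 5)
Your proof is correct and takes essentially the same approach as the paper: pick a nearby interior point and travel along the straight segment toward the given boundary point, taking the first exit point from $U$, which is then accessible via the tail of that segment. You merely spell out the verification (that the first exit point lies in $\partial U$ and that the reversed segment witnesses accessibility) in more detail than the paper does.
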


\begin{proof}
    Let $x \in \partial U$ and $\varepsilon > 0$ be fixed. By definition, $B(x,\varepsilon) \cap U \neq \emptyset$ and there is some $y \neq x$ in $B(x,\varepsilon) \cap U$. Consider the segment $\{y + s(x-y): s \in [0,1]\} \subset B(x,\varepsilon)$ going from $y$ to $x$. The first point $y + s(x-y)$ which intersects the boundary $\partial U$ is accessible.
\end{proof}

\begin{lemma}\label{lemma:geometry of h_1 < h < h_2}
     Let $0 \leq t_1 < t_2 < T$ and $0 < R \leq 1$. Let $(h, h_1)$ and $(h,h_2)$ be two pairs of solutions for which Lemma \ref{geometry of h_1 < h_2} applies on each pair on the square $[0,R] \times [t_1,t_2]$. If $\Gamma$ is a non-empty connected component of $\{(r,t) \in [0,R] \times [t_1,t_2]: h_1 < h < h_2\}$ and if
    $$
    \max_{(x,t) \in \overline{\Gamma}} h_1 < C < \min_{(x,t) \in \overline{\Gamma}} h_2
    $$
    for some $C \in \mathbb R$, then 
    $$
    \Gamma \cap \left[ \{r = 0\} \cup \{r = R\} \cup \{t = t_1\} \right] \neq \emptyset.
    $$
    No matter $\Gamma$, one can always choose $h$ with $h(0,t) = 0$ for all $t \in [0,T)$ satisfying the regularity from Lemma \ref{geometry of h_1 < h_2}, as well as $h_1 = \pi/2$ or $h_1 = \chi_{\alpha}(r) = \pi - 2 \arctan(\alpha^k r^k)$, $\alpha > 0$, together with $h_2 = \pi$. 
\end{lemma}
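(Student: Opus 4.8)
Fix $\Gamma$ and a constant $C$ with $\max_{\overline\Gamma}h_1<C<\min_{\overline\Gamma}h_2$ (if none exists the statement is vacuous), so that $h_2-h_1\ge\delta:=\min_{\overline\Gamma}h_2-\max_{\overline\Gamma}h_1>0$ on the compact set $\overline\Gamma$. I would argue by contradiction, assuming $\Gamma\cap\Pi=\emptyset$ where $\Pi:=\{r=0\}\cup\{r=R\}\cup\{t=t_1\}$, and first reduce, exactly as in Lemma~\ref{geometry of h_1 < h_2}, to the case where $\overline\Gamma$ also avoids $\Pi$ (and $\{t=t_2\}$); the only subtle point, accumulation of $\Gamma$ at $r=0$, is handled by the monotonicity-of-$\sin$ device of that lemma, which is available here because $\delta>0$ keeps the relevant coefficients bounded above. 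Note that $U:=\mathrm{int}_{\mathbb R^2}\Gamma$ is a nonempty bounded planar domain with $\overline U=\overline\Gamma$ and $\partial_{\mathbb R^2}U=\overline\Gamma\setminus\Gamma$. Writing $\tfrac{\sin2a-\sin2b}{a-b}$ for its continuous extension $2\cos2b$ at $a=b$, the differences $H:=h-h_1$ and $\tilde H:=h_2-h$ solve linear radial heat equations on $\Gamma$ with coefficients bounded on $\overline\Gamma$, and $H,\tilde H>0$ on $U$.

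\emph{Step 2.} Repeating the maximum-principle argument of Lemma~\ref{geometry of h_1 < h_2} for $e^{-ct}H$ with $c$ large, its maximum over $\overline\Gamma$ is attained on $\overline\Gamma\setminus\Gamma$, and not on $\Pi$ nor $\{t=t_2\}$. There $h=h_1$ or $h=h_2$; if it were attained at a point where $h=h_1$ then $e^{-ct}H\le0$ on $\overline\Gamma$, contradicting $H>0$ on $U$. Hence the ``high'' boundary set $\partial_2:=\{h=h_2\}\cap(\overline\Gamma\setminus\Gamma)$ is nonempty, and symmetrically, using $\tilde H$, so is the ``low'' set $\partial_1:=\{h=h_1\}\cap(\overline\Gamma\setminus\Gamma)$. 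Since $h_1<C<h_2$ on $\overline\Gamma$, the sets $\partial_1,\partial_2$ are disjoint, closed, a positive distance apart (because $\delta>0$), and together make up $\overline\Gamma\setminus\Gamma$; so $\partial_{\mathbb R^2}U=\partial_1\sqcup\partial_2$ is disconnected with both pieces nonempty. This is exactly the obstruction that blocks a verbatim repetition of Lemma~\ref{geometry of h_1 < h_2}.

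\emph{Step 3.} The accessible boundary points are dense in $\partial_{\mathbb R^2}U$; using $\mathrm{dist}(\partial_1,\partial_2)>0$, choose accessible points $a_1\in\partial_1$ and $a_2\in\partial_2$. Take simple arcs $\gamma_i:[0,1]\to\overline U$ with $\gamma_i(0)=a_i$, $\gamma_i((0,1])\subset U$, and join $\gamma_1(1)$ to $\gamma_2(1)$ inside the arcwise-connected domain $U$; concatenating yields a path $\sigma$ in $\overline\Gamma$ from $a_1\in\partial_1$ to $a_2\in\partial_2$ with $\sigma((0,1))\subset U\subset\Gamma$. Thus, in spite of $\overline\Gamma\setminus\Gamma$ being disconnected, a ``low'' boundary point and a ``high'' one are tied together through the interior, and this path is the key new ingredient. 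Feeding it back into the maximum principle should produce the contradiction: the maximum of $e^{-ct}(h-h_1)$ over $\overline\Gamma$ lies on $\partial_2$ (Step~2), and when one propagates it along horizontal segments as in Lemma~\ref{geometry of h_1 < h_2} — a propagation that there ended at a vertical wall of the square — here, via $\sigma$, it is driven either into $U$, which is impossible by the strong maximum principle since $h-h_1$ vanishes on the nonempty $\partial_1$, or onto $\Pi$, contradicting the standing assumption. In the situations actually used ($h_2=\pi$ and $h\le\pi$ on the square), this takes the transparent form: $a_2$ is an interior point of the parabolic domain at which $h$ equals its maximum $\pi$, so Theorem~\ref{maximum principle} forces $h\equiv\pi$ on the whole time-slab below $a_2$, contradicting $h(0,t)=0$; accessibility is what legitimises this ``strong maximum principle along $\partial_2$'' step, since $\partial_2$ need not be a smooth curve and Hopf's lemma is unavailable. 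I expect the endgame of Step~3 — turning the disconnected-boundary configuration, by means of $\sigma$, into a genuine maximum-principle contradiction — to be the real obstacle.

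\emph{Admissibility of the model choices.} For the concrete use, take $h$ normalised by $h(0,t)=0$ (Proposition~\ref{value of v and h at origin} and Lemma~\ref{k-equivariance, smoothness of h}), which has the regularity demanded in Lemma~\ref{geometry of h_1 < h_2}, and let $h_2=\pi$ and $h_1\in\{\pi/2,\ \pi-2\arctan(\alpha^kr^k)\}$, all stationary solutions of~(\ref{heat map formulation for h(r,t)}). Lemma~\ref{geometry of h_1 < h_2} applies to each pair $(h,h_1)$ and $(h,h_2)$: at $r=0$ one either has $h(0,\cdot)=0\ne h_i(0,\cdot)$, or $h_i(0,\cdot)\equiv\pi$ (only for $h_1=\chi_\alpha$, and only relevant if $\overline\Gamma$ meets $r=0$, in which case $\chi_\alpha(0)=\pi=h_2(0)$ makes the gap hypothesis unattainable and the assertion vacuous for that $\Gamma$). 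The hypothesis $\max_{\overline\Gamma}h_1<C<\min_{\overline\Gamma}h_2=\pi$ is then precisely the non-trivial assumption, and is available whenever $\overline\Gamma$ stays off the points where $h_1=\pi$.
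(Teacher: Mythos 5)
Your Steps 1--2 are on the right track and your use of accessible boundary points to tie a ``low'' boundary point of $\Gamma$ to a ``high'' one through the interior is exactly the key novel idea the paper uses. However, there is a genuine gap in Step~3: once you have the path $\sigma$, you do not actually have a working way to close the argument, and the maximum-principle route you sketch does not go through.

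The concrete obstruction is that the maximum of $e^{-ct}(h-h_1)$ over $\overline\Gamma$ is achieved at a point $a_2\in\partial_2\subset\overline\Gamma\setminus\Gamma$, which is \emph{not} in the parabolic interior $U=\mathrm{int}_{\mathbb R^2}\Gamma$. Theorem~\ref{maximum principle} only propagates an \emph{interior} maximum along horizontal/vertical segments \emph{that stay inside the domain}; it gives you nothing starting from a boundary point, and the path $\sigma$ is an arbitrary arc, not a chain of such segments. So there is no way to ``drive'' the maximum along $\sigma$. Your ``transparent form'' for $h_2=\pi$ also fails: the lemma is invoked in the paper (e.g.\ in Lemma~\ref{lemma:intersection argument with chains mod 4} and Theorem~\ref{thm:limit r to 0 at time T is pi}) precisely when $h>\pi$ at some points of the square, so $h\le\pi$ on the whole square is not available. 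Even on $\overline\Gamma$ you only know $h\le\pi$; a full neighborhood of $a_2$ in the square may contain points outside $\overline\Gamma$ where $h>\pi$, so $a_2$ need not be a local maximum of $h$ and the strong maximum principle cannot be applied there.

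The actual endgame in the paper is purely topological and does not invoke the maximum principle at this stage. After producing a point on $\sigma$ inside $\mathrm{int}_{\mathbb R^2}\Gamma$ with $h=C$ and $t<t_2$ by the intermediate value theorem, one takes the infimum $t^*$ of times at which $h=C$ is attained in $\mathrm{int}_{\mathbb R^2}\Gamma$ and shows, using accessibility again, that the corresponding point $(r^*,t^*)$ cannot lie on $\partial_{\mathbb R^2}\Gamma$ (otherwise a nearby accessible boundary point with $h_1<h<h_2$ would lie in $\Gamma\cap\partial_{\mathbb R^2}\Gamma$, forcing it onto $\Pi$ or contradicting maximality of $\Gamma$). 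Hence $(r^*,t^*)\in\mathrm{int}_{\mathbb R^2}\Gamma$ with $t^*>t_1$. Restricting to the smaller square $[0,R]\times[t_1,t^*]$, the connected component $\tilde\Gamma\subset\Gamma$ through $(r^*,t^*)$ either reaches $\Pi$ (contradiction) or, by re-running the whole argument on $\tilde\Gamma$, produces a point with $h=C$ at a time strictly below $t^*$, contradicting minimality. Also note that the paper establishes $\partial_1,\partial_2\neq\emptyset$ more cleanly than your Step~2: if, say, $h=h_1$ on all of $\overline\Gamma\setminus\Gamma$, then $\Gamma$ is open and closed in $\{h>h_1\}$ hence a full connected component of it, and Lemma~\ref{geometry of h_1 < h_2} applies directly, with no extra maximum-principle work.
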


\begin{proof}
This corresponds to \cite[Corollary 2.2]{VANDERHOUT2003}, in which the proof is omitted. We believe that the previous proof cannot be repeated and we need a more subtle topological argument in this case using the notion of accessible points.

First, observe that either $h = h_1$ or $h = h_2$ on $\overline{\Gamma} \setminus \Gamma$ (as in Lemma \ref{geometry of h_1 < h_2}).

In the case $h_1 = \chi_{\alpha}$ and $h_2 = \pi$, observe that $\mathrm{dist}(\overline{\Gamma}, \{r = 0\}) > 0$ (because $h \neq \chi_{\alpha}$ and $h \neq \pi$ on $\{r = 0\}$), meaning that 
$$
\chi_{\alpha}(r) \leq \max_{(r,t) \in \overline{\Gamma}} \chi_{\alpha}(r) < C < \min_{(r,t) \in \overline{\Gamma}} \pi = \pi, \quad (r,t) \in \overline{\Gamma}
$$
on $\overline{\Gamma}$ for some constant $C(\Gamma)$. The same holds with $h_1 = \pi/2$.

Now, suppose for a contradiction that the statement of the lemma is not true. We observe that if $h = h_1$ (resp. $h = h_2$) on the whole $\overline{\Gamma} \setminus \Gamma$, then $\Gamma$ is actually a connected component of $\{h >  h_1\}$ (resp. $\{h < h_2 \}$) and is connected to the desired boundary of the square. Hence, such a case does not happen.


Our previous observation allows us to assume that there is a point $(r_0,s_0)$ in $ \partial_{\mathbb R^2}\Gamma$ with $h = h_1$ and another point $(r_1,s_1)$ with $h = h_2$. Slightly perturbating these points, we can find accessible boundary points of $\mathrm{int}_{\mathbb R^2}(\Gamma)$ satisfying $h(r_0,s_0) < C < h(r_1,s_1)$. These two points can be connected through a continuous path (it does not matter whether it is injective) lying entirely in $\mathrm{int}_{\mathbb R^2}(\Gamma) \subset \{t < t_2\}$ except for those endpoints. By continuity, there exists a point $(r,t)$ on that curve for which $h(r,t) = C$ and $t < t_2$. Now, take any point $(r^*,t^*) \in \overline{\mathrm{int}_{\mathbb R^2}(\Gamma)}$ which achieves the infimum
$$
\inf\{t_1 \leq t < t_2: (r,t) \in \mathrm{int}_{\mathbb R^2}(\Gamma), h(r,t) = C\}.
$$
Then $(r^*,t^*) \notin \partial_{\mathbb R^2}  \mathrm{int}_{\mathbb R^2}(\Gamma) = \partial_{\mathbb R^2} \Gamma$. Otherwise, slightly perturbating $(r^*,t^*)$ leads to an accessible boundary point $(r,t) \in \partial_{\mathbb R^2} \Gamma$ with $h_1 < h(r,t) \approx C < h_2$, hence in $\partial_{\mathbb R^2}  \Gamma\cap \Gamma$. This means that either $r = R$ or $t = t_1$ (which contradicts our absurd hypothesis), either $0 < r < R$ and $t > t_1$, which is not possible by maximality of the (path-)connected component $\Gamma$.

Hence, $(r^*,t^*) \in  \mathrm{int}_{\mathbb R^2}(\Gamma)$. In particular, $t_2 > t^* > t_1$ and any point 
$$
 \mathrm{int}_{\mathbb R^2}(\Gamma) \cap \{t_1 \leq t < t^*\}
$$
satisfies $h(r,t) \neq C$. Take the connected component of the intersection between the open, connected domains 
$$
 \mathrm{int}_{\mathbb R^2}(\Gamma) \cap (0,R) \times (t_1,t^*),
$$
which contains $(r^*,t^*-\varepsilon)$ for any $\varepsilon > 0$ small enough.

Such a connected component is part of a larger connected component $(r^*,t^*) \in \tilde{\Gamma} \subset \Gamma$ of $\{(r,t) \in [0,R] \times [t_1,t^*]: h_1< h < h_2\}$. If 
$$
\tilde{\Gamma} \cap \left[ \{r = 0\} \cup \{r = R\} \cup \{t = t_1\} \right] \neq \emptyset,
$$
then $(r^*,t^*)$ can be connected to the desired boundary: a contradiction. Otherwise, reiterating the above argument with $\tilde{\Gamma}$ instead of $\Gamma$, we find a point $(r,t)$ in $\mathrm{int}_{\mathbb R^2}(\tilde{\Gamma}) \subset \mathrm{int}_{\mathbb R^2}(\Gamma)$ with $h(r,t) = C$ and $t_1 < t < t^*$, which contradicts the minimality of $t^*$.
\end{proof}

\begin{lemma}[\cite{VANDERHOUT2003}, Lemma 2.3]\label{lemma:intersection argument with chains mod 4}
     Let $h,h_1,h_2$ be defined on $[0,T)$ and satisfy Lemma \ref{lemma:geometry of h_1 < h < h_2} with $R = 1, t_1 = 0$, any $0 < t_2 < T$ and any $\Gamma$. Further assume that $h$ has finite energy (\ref{Energy of v in terms of h}), $h(0,t) < h_1(0,t)$, $h(1,t) > h_1(1,t)$ on $[0,T)$ and $h_1(r,t) \leq h_2(r,t)$ for all $r \in (0,1]$ and $t \in [0,T)$.

     There exists $M(h) \in \mathbb N_{\geq 1}$ such that for all $t \in [0,T)$, any sequence $0 \leq r_1 < r_2 < ... < r_N \leq 1$ with the property
     $$
     P(t,N)
     = \begin{cases}
         N = 1 \mod 4; \\
         h(r_{4j+1},t) < h_1(r_{4j+1},t); \quad h_1(r_{4j+2},t) < h(r_{4j+2},t) < h_2(r_{4j+2},t); \\
         h_2(r_{4j+3},t) < h(r_{4j+3},t); \quad  h_1(r_{4j},t) < h(r_{4j},t) < h_2(r_{4j},t);
     \end{cases}
    $$
    one has $N \leq M$. Moreover, if $M(t,h,h_{1},h_2)$ denotes the maximal length of such sequence, then $N$ is non-decreasing with respect to $t$. 
    
    If $h_1(r,t) = h_1(r,t,\alpha)$ depends on a parameter $\alpha > 0$ for which $h_1(r,t,\alpha_1) \geq h_1(r,t,\alpha_2)$ whenever $\alpha_1 \leq \alpha_2$, if the hypotheses of the Lemma for $h,h_1,h_2$ are satisfied for any fixed $\alpha > 0$ with a strict inequality $h_1(r,t,\alpha) < h_2(r,t)$ when $r \in (0,1]$, $t \in [0,T)$, then $M(t,h,h_1(\cdot,\alpha),h_2)$ is non-decreasing with respect to $\alpha$ as well. 
    
    Finally, a sequence satisfying $P(t,1)$ always exists.
\end{lemma}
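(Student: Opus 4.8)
I plan to turn the cyclic condition $P(t,N)$ into a bound on the number of sign changes of $h(\cdot,t)-h_1(\cdot,t)$ and $h(\cdot,t)-h_2(\cdot,t)$ on $(0,1)$, and then invoke the intersection‑comparison (lap‑number) theory for the scalar linear parabolic equations these differences satisfy. First the combinatorics: if $0\le r_1<\dots<r_N\le 1$ satisfies $P(t,N)$ with $N=4m+1$, then the signs of $h-h_1$ at the $r_i$ follow the pattern $-,+,+,+,-,+,+,+,-,\dots$ (using $h_1\le h_2$), so $h-h_1$ changes sign at least $2m$ times on $(0,1)$; the same count holds for $h-h_2$. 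Writing $Z_i(t)$ for the number of sign changes of $h(\cdot,t)-h_i(\cdot,t)$ on $(0,1)$, this gives $N\le 2\min(Z_1(t),Z_2(t))+1$ whenever $P(t,N)$ holds. The endpoint hypotheses $h(0,t)<h_1(0,t)\le h_2(0,t)$ and $h(1,t)>h_1(1,t)$ force $h-h_i$ to have a fixed sign near each end of $(0,1)$, so all these sign changes occur in a compact subinterval.

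Next I would prove $Z_i(t)<\infty$ for $t\in(0,T)$: by Theorem \ref{thm:space analyticity} the map $r\mapsto h(r,t)$ is real‑analytic on $(0,1]$, and the comparison functions ($\pi/2$, $\pi$, $\chi_\alpha(r)=\pi-2\arctan(\alpha^kr^k)$) are real‑analytic, so $h(\cdot,t)-h_i(\cdot,t)$ is a non‑trivial real‑analytic function whose zeros, being bounded away from $r=0$ and non‑accumulating at $r=1$, are finite in number; hence so is the count of sign changes.

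The heart of the matter is the monotonicity. As in the proof of Lemma \ref{geometry of h_1 < h_2}, $w_i:=h-h_i$ solves $\partial_t w_i=\partial_{rr}w_i+r^{-1}\partial_r w_i+c_i(r,t)w_i$ with $c_i$ bounded away from the corner $\{r=0\}$ (here one uses the decay $h=r^k\tilde h+h_0$ to control $c_i$ near the origin). On any time window $[t_1,t_2]\subset(0,T)$ the zeros of $w_i(\cdot,t)$ stay in a fixed $[\delta,1]$, since $w_i(0,t)$ is a nonzero constant; there the equation is uniformly parabolic with bounded coefficients and $w_i$ does not vanish at the endpoints, so Angenent's structure theorem for the nodal set shows $Z_i(t)$ is finite and non‑increasing in $t$. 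Tracking the nodal sets of $w_1$ and $w_2$ together and following how the graph of $h(\cdot,t)$ is threaded between them then gives the monotonicity of $M(t,h,h_1,h_2)$ in $t$; combined with the displayed bound (and letting $t\to T^-$ in it, using that each $Z_i$ is non‑increasing) this produces a finite uniform constant $M(h):=2\min(Z_1(t_0),Z_2(t_0))+1$, for any fixed $t_0\in(0,T)$, with $M(t,h,h_1,h_2)\le M(h)$ for all $t\in[0,T)$. The monotonicity in $\alpha$ follows from the same bookkeeping applied to the ordered, mutually disjoint family $\{\chi_\alpha\}_{\alpha>0}$ of stationary solutions: the intersection number of $h(\cdot,t)$ with $\chi_\alpha$ changes only when $h$ becomes tangent to some $\chi_\alpha$, and the ordering of the leaves forces the resulting change in $M$ to be of one sign. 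Finally, $P(t,1)$ holds trivially with $r_1=0$ by the hypothesis $h(0,t)<h_1(0,t)$, so $M(h)\ge 1$.

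I expect the monotonicity step to be the main obstacle. The delicate points are: (i) running the lap‑number argument through the degeneracy of $c_i$ at $r=0$ and the fact that $h(\cdot,t)$ is analytic only on $(0,1]$ while the data at $t=0$ is merely smooth; (ii) upgrading the monotonicity of each individual count $Z_i$ to that of the joint quantity $M$, which requires keeping track of how the two nodal sets interlace as $t$ varies; and (iii) the homotopy/transversality argument underlying the $\alpha$‑monotonicity, namely that no intersection of $h$ with $\chi_\alpha$ is created as $\alpha$ grows. A secondary point is verifying that the equation for $w_i$ is genuinely of the form handled by the nodal‑set structure theory on $[\delta,1]$, which is exactly where the hypothesis that $h,h_1,h_2$ satisfy Lemma \ref{geometry of h_1 < h_2} enters.
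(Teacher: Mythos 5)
Your approach is genuinely different from the paper's: you reduce the chain condition to sign-change counts $Z_1,Z_2$ of $h-h_1$ and $h-h_2$ and invoke Angenent/Matano lap-number theory, whereas the paper uses a finite-energy estimate (for the uniform bound) plus a hands-on Jordan-curve/path argument built on Lemma~\ref{lemma:geometry of h_1 < h < h_2} (for the time-monotonicity). Your combinatorial reduction $N\le 2\min(Z_1(t),Z_2(t))+1$ is correct. But the two points you flag at the end are not cosmetic; both are genuine gaps that the paper resolves in a way your proposal does not reach.

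The uniform bound over $t\in[0,T)$ is not delivered by your argument. Since $Z_i(t)$ is non-increasing, the inequality $Z_i(t)\le Z_i(t_0)$ is only available for $t\ge t_0$; for $t<t_0$, and in particular as $t\to 0^+$, you would need $Z_i(0^+)<\infty$, and your finiteness argument (real-analyticity of $r\mapsto h(r,t)$ from Theorem~\ref{thm:space analyticity}) holds only for $t>0$ — the data $h(\cdot,0)=h_0(\cdot,0)$ is merely smooth and could a priori have infinitely many intersections with $h_1,h_2$. ``Letting $t\to T^-$'' goes the wrong direction for a uniform constant. The paper uses the finite-energy hypothesis here, and your proposal never uses it: it shows that for a chain at $t=0$, each local oscillation from below $h_1$ to above $h_2$ costs a fixed amount $C(h)>0$ of the radial Dirichlet energy up to $r_j$, giving $N\le C(h)^{-1}\sup_{t}E(h(\cdot,t))$ directly, no analyticity required.

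The upgrade from monotonicity of each $Z_i$ to monotonicity of the joint quantity $M$ is the real obstruction, and you acknowledge but do not close it. Zeros of $w_1$ and $w_2$ can coalesce and disappear in ways that preserve each individual lap number yet destroy the specific $-,+,+,+,-$ interleaving that $P(t,N)$ encodes, so separate non-increase of $Z_1$ and $Z_2$ does not imply the chain can be pushed back in time. The paper's route is entirely different: it connects each chain point $(r_i,t_2)$ by a path inside the relevant sign region to the parabolic boundary (this is exactly what Lemma~\ref{lemma:geometry of h_1 < h < h_2} is for, and that lemma itself requires the accessible-boundary-point argument), then uses the Jordan curve theorem, the endpoint hypotheses $h(0,t)<h_1(0,t)$, $h(1,t)>h_1(1,t)$, and the ordering of the endpoints $\xi_i$ to force them all onto $\{t=t_1\}$ in order, producing a bona fide chain of length $N$ at the earlier time. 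There is no appeal to zero-counting. Finally, your $\alpha$-monotonicity argument (``changes only on tangency, and the ordering of leaves forces one sign'') is a heuristic, not a proof; the paper instead constructs the new chain $s_1<\dots<s_N$ for the smaller $\alpha_1$ explicitly from the old chain via intermediate-value arguments, one gap at a time, and handles it only for the specific one-parameter ordered family $\chi_\alpha$, not an arbitrary deformation.
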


\begin{proof}
We follow the proof in \cite[Lemma 2.3]{VANDERHOUT2003}. As $h(0,t) < h_1(0,t)$, a trivial sequence satisfying $P(t,1)$ always exists. As $h(1,t) > h_1(1,t)$, there is also 
    $$
    0 < r_1^*(t) = \sup \{ r \in [0,1]: h(s,t) < h_1(s,t) \ \forall s \in [0,r] \} < 1
    $$
for which $h(r_1^*(t),t) = h_1(r_1^*(t),t)$, $h(r,t) < h_1(r,t)$ on $[0,r_1^*(t))$. Hence, any sequence $r_1,...,r_N$ with $N > 1$ satisfying $P(t,N)$ must have $r_2 > r_1^*(t)$, as well as $r_N < 1$.

For $t = 0$, there exists $C = C(h) > 0$ for which
$$
 E(r_{j},h(\cdot,0)) = \pi \int_0^{r_{j}} \left( h_{r}^2 + \frac{k^2}{r^2} \sin(h)^2 \right) r dr \geq jC \quad \forall j \geq 2,
$$
if $r_1, ..., r_N$ satisfies $P(t,N)$ with $N \geq 2$. Otherwise, for all $C > 0$, there would be some $j \geq 2$ with
$$
 E(r_1^*(0),h(\cdot,0)) \leq E(r_{j},h(\cdot,0)) = \pi \int_0^{r_{j}} \left( h_{r}^2 + \frac{k^2}{r^2} \sin(h)^2 \right)  r dr \leq jC.
$$
Letting $C \to 0$, we find $E(r_1^*(0),h(\cdot,0)) = 0$ which implies that $h(r,0)$ is constant on $[0,r_1^*(0)]$. By hypothesis, $h(0,0) < h_1(0,0)$ but $h(r_1^*(0),0) = h_1(r_1^*(0),0)$, which is a contradiction. In particular, 
$$
N \leq C(h)^{-1} \sup_{t \in [0,T]}E(h(\cdot,t)) = M(h)
$$
for any such sequence.

Finally, we show that if $r_1,...,r_N$ satisfies $P(t_2,N)$ for $t_2 \in (0,T)$, then one can always find a sequence satisfying $P(t_1,N)$ with $0 \leq t_1 < t_2$, which proves that the maximal length is non-decreasing and bounded by the one for time $t = 0$. As $h(0,t) < h_1(0,t)$  and $h, h_1$ are uniformly continuous on $[0,1] \times [t_1,t_2]$, we may always replace $r_1$ in the chain by a sufficiently small value $r_1 \in (0,r_2)$ which satisfies $h(r_1,t) < h_1(r_1,t)$ for all $t \in [t_1,t_2]$.

We define first an ordering on the boundary $\{r = 0\} \cup \{r = 1\} \cup \{t = t_2\}$ of $[0,1] \times [t_2,t_1]$. We say that $(r_1,s_1) < (r_2,s_2)$ whenever $(r_2,s_2)$ 'lies on the right' of $(r_1,s_1)$ along the parabolic boundary. More precisely, $(r_1,s_1) < (r_2,s_2)$ when $r_1 < r_2$, $s_1 = s_2$ or $r_1 = r_2 = 0$, $s_1 > s_2$ or $r_1 = r_2 = 1$, $s_1 < s_2$.

By Lemma \ref{lemma:geometry of h_1 < h < h_2} applied on the square $[0,1] \times [t_1,t_2]$, for each $(r_i,t_2)$, there is an injective path $\Sigma_i$ connecting $(r_i,t_2)$ to $\xi_i \in \{r = 0\} \cup \{r = 1\} \cup \{t = t_1\}$ while preserving the strict inequality between $h$ and $h_1,h_2$ associated to $(r_i,t_2)$. Given our hypotheses on $r_1$, $\Sigma_1$ can always be chosen as a vertical path from $(r_1,t_2)$ to $(r_1,t_1)$. The important point being that $\Sigma_1$ does not intersect $\{r = 0\}$ and $\{t = t_2\}$ (except for the initial point $(r_1,t_2)$).

\begin{figure}
    \centering
    \includegraphics{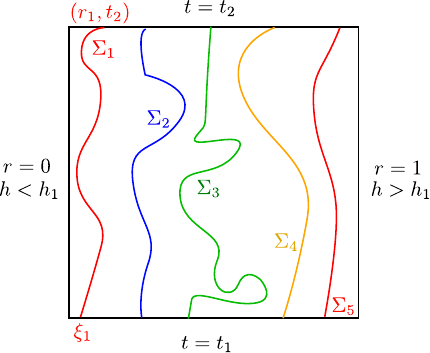}
    \caption{\label{fig:figure1} Sequence of five non-intersecting paths going from $(r_i,t_2)$ to $\xi_i \in \{t = t_1\}$}
    \label{fig:sigma_paths}
\end{figure}

Without loss of generality, one can assume that the others paths $\Sigma_i$, $i \neq 1$, do not intersect $\{t = t_2\}$ except for the initial point $(r_i,t_2)$. Indeed, the strict inequality between $h$ and $h_1,h_2$ is still satisfied at $(r_i,t)$ for all $t$ sufficiently close to $t_2$. We may connect $(r_i,t_2)$ to $(r_i,t)$ through a vertical line and then $(r_i,t)$ to the parabolic boundary through Lemma $\ref{lemma:geometry of h_1 < h < h_2}$ applied on $[0,1] \times [t_1,t]$. By choosing a subpath if needed, one can assume that $\xi_i$ is the first intersection of $\Sigma_i$ with the parabolic boundary of $[0,1] \times [t_1,t]$. In particular, $\Sigma_i \subset (0,1) \times (t_1,t_2)$ except for the endpoints $(r_i,t_2)$ and $\xi_i$. These properties are also satisfied by $\Sigma_1$.

Observe next that $(r_{4j + 1},t_2)$ can only be connected to $\{r = 0\} \cup \{t = t_1\}$ while $(r_{4j+2},t_2)$, $(r_{4j+3},t_2)$ and $(r_{4j},t_2)$ can only be connected to $\{t = t_1\} \cup \{r = 1\}$. We also know that $(r_1,t_2)$ is connected to $\xi_1 \in \{t = t_1\}$.

Clearly, two consecutive paths $\Sigma_{i}, \Sigma_{i+1}$ associated to $(r_i,t_2)$, $(r_{i+1},t_2)$ must be non-intersecting. If $C_i$ is the interior of the square enclosed by $\Sigma_i$ and the left-oriented path going from $(r_i,t_2)$ to $\xi_i$ through $\partial([0,1] \times [t_1,t_2])$, it follows from the Jordan Curve Theorem that either $\Sigma_{i+1} \subset C_i$ or $\Sigma_{i+1} \subset (\overline{C_i})^c$, which are two disjoint, open path-connected sets. As $(r_{i+1},t_2)$ lies on the right of $(r_i,t_2)$, it can only be the latter. In particular, $\xi_{i+1}$ must be in $(\overline{C_i})^c$ as well, meaning it lies on the right of $\xi_i$. By induction, $\xi_1 < \xi_2 < ... < \xi_N$. As $\xi_1 \in \{t = t_1\}$ and $\xi_N \in \{r = 0\} \cup \{t = t_1\}$ lies on the right of $\xi_1$, we must have $\xi_N \in \{t = t_1\}$. It then follows from the ordering that $\xi_i \in \{t = t_1\}$ for all $i = 1, ..., N$ and $\xi_1 < \xi_2 < ... < \xi_N$ satisfies $P(t_1,N)$ by construction.

If $h_1$ depends on a parameter $\alpha> 0$, then the maximal number $N(t,h,h_1(\cdot,\alpha),h_2)$ is well-defined for fixed $\alpha > 0$. Let $r_1,...,r_N$ satisfy $P(t,N,\alpha_2)$ (i.e., $P(t,N)$ with $h$, $h_1 = h_1(\cdot,\alpha_2)$, $h_2$) and we find a $s_1, ..., s_N$ sequence satisfying $P(t,N,\alpha_2)$ whenever $0 < \alpha_1 < \alpha_2$. Without loss of generality, $r_i > 0$ for all $i$.

As our initialization, we set $s_{4j+1} = r_{4j+1}$, $s_{4j+3} = r_{4j+3}$ for all $j$. Since
$$
h(s_{4j+1},t) < h_1(s_{4j+1},t, \alpha_2) \leq  h_1(s_{4j+1},t, \alpha_1), \ 
 h_1(s_{4j+3},t, \alpha_1) < h_2(s_{4j+3},t) < h(s_{4j+3},t),
$$
there is $s^* \in (s_{4j+1},s_{4j+3})$ maximal with $h(s^*,t) = h_1(s^*,t,\alpha_1)$, $h(s,t) > h_1(s^*,t,\alpha_1)$ on $(s^*,s_{4j+3})$. As $h_2(s^*,t) > h_1(s^*,t,\alpha_1) = h(s^*,t)$, one can set $s_{4j+2} \in (s^*,s_{4j+3})$ close enough to $s^*$ so that $h_2(s_{4j+2},t) > h_1(s_{4j+2},t,\alpha_1)$, $h_2(s_{4j+2},t) >  h(s_{4j+2},t)$ and $h(s_{4j+2},t) > h_1(s_{4j+2},t,\alpha_1)$.

Similarly, since
$$
h_1(s_{4j+3},t, \alpha_1) < h_2(s_{4j+3},t) < h(s_{4j+3},t), \ h(s_{4j+5},t) < h_1(s_{4j+5},t, \alpha_2) \leq  h_1(s_{4j+5},t, \alpha_1),
$$
there is $s^* \in (s_{4j+3},s_{4j+5})$ minimal with $h(s^*,t) = h_1(s^*,t,\alpha_1)$, $h(s,t) > h_1(s^*,t,\alpha_1)$ on $(s_{4j+3}, s^*)$. As $h_2(s^*,t) > h_1(s^*,t,\alpha_1) = h(s^*,t)$, one can set $s_{4j} \in (s_{4j+3},s^*)$ close enough to $s^*$ so that $h_2(s_{4j},t) > h_1(s_{4j},t,\alpha_1)$, $h_2(s_{4j},t) >  h(s_{4j},t)$ and $h(s_{4j},t) > h_1(s_{4j},t,\alpha_1)$. This finishes the proof.
\end{proof}




\section{Nonexistence of multi-bubbles}
    Let $v(x,t)$ solve (\ref{heat map flow}) with smooth and $k$-equivariant boundary data, $k \geq 1$. Assume that $v$ blows-up at time $T < +\infty$. In particular, $v$ has finite energy (Remark \ref{k-equivariance implies finite energy}). Let $h$ be the corresponding inclination coordinate chosen so that $h(0,t) = 0$. We give a further description of the blow-up event using the intersection-comparison argument from \cite{VANDERHOUT2003} and prove that there can only be a single bubble in the asymptotic decomposition (\ref{multi bubble decomposition along a sequence of times}). Recall from Proposition \ref{prop:smoothness at the boundary, earlier statement} that $v(x,T)$ and $h(r,T)$ are well-defined except at the origin.

    The starting point is Proposition \ref{prop:criterion for global solution} and Corollary \ref{cor:comparison of h with itself and exact value of limsup}. From those two results, one deduces (up to replacing $h$ by $-h$) that 
    $$
    \limsup_{\substack{r \to 0^+ \\ t \to T^-}} h(r,t) = \pi, \quad \lim \limits_{r \to 0^+} h(r,T) \in \{0,\pi\},
    $$
    and that there are points with $h(r,t) > \pi$ for each $t$.
    
    The difficult part is actually showing that $\lim \limits_{r \to 0^+} h(r,T) = \pi$. Together with Corollary \ref{cor:comparison of h with itself and exact value of limsup}, one deduces Corollary \ref{cor: liminf h(r,t) = 0}, i.e.,
    $$
    \liminf_{\substack{r \to 0^+ \\ t \to T^-}}  h(r,t) = 0.
    $$
    The idea is that if $h(0^+,T) = 0$ (hence $h(r,T) \leq \pi/2$ w.l.o.g.) and if $t_0,\alpha_0$ are large enough so that the maximal length $M(t,h,h_1 = \chi_{\alpha} = \pi - 2\arctan(\alpha^k r^k),h_2 = \pi)$ of a chain $r_1(\alpha,t) < ... < r_M(\alpha,t)$ satisfying $P(t,h,h_{1} = \chi_{\alpha},h_2 = \pi,M)$ from Lemma \ref{lemma:intersection argument with chains mod 4} is minimal for $t \geq t_0, \alpha \geq \alpha_0$, then $M \geq 5$. Intuitively, this is because if we had $M = 1$, then there would be a curve $h(r(t),t) = \chi_{\alpha}(r(t))$ splitting the open square into two regions: $h \leq \chi_{\alpha}$ on the left of the curve and $h \geq \chi_{\alpha}$ on the right. One must have $\lim \limits_{t \to T^-} r(t) = 0$ as otherwise, Proposition \ref{prop:criterion for global solution} applied on a sub-rectangle of the region $h \leq \chi_{\alpha}$ would imply that $h$ is global. Thus $\{t = T\}$ must be on the right of the curve, contradicting $h(0^+,T) = 0$.
    
    Hence, one can let $\alpha^* > \alpha_0$ large enough such that $0 < \chi_{\alpha^*}(r_{M-2}(\alpha_0,t_0)) < h(r_{M-2}(\alpha_0,t_0),t_0) - \pi$ and let
   \begin{align*}
         R_1 &= \inf\{r \in (0,r_{M-2}(\alpha_0,t_0)]: h(s,t_0) - \pi > \chi_{\alpha^*}(s) \quad \forall s \in (r,r_{M-2}(\alpha_0,t_0)]\}, \\
  R_2  &= \sup\{r \in (r_{M-2}(\alpha_0,t_0),1]: h(s,t_0)  - \pi > \chi_{\alpha^*}(s) \quad \forall s \in [r_{M-2}(\alpha_0,t_0),r)\}.
   \end{align*}
   We will prove that $r_{M-2}(\alpha^*,t) \to 0$ as $t \to T^-$. This is because there must be a curve $h(r(t),t) = \chi_{\alpha^*}(r(t))$ with $r_M(\alpha^*,t) \geq r(t) \geq r_{M-2}(\alpha^*,t)$ satisfying $h(r(t)+\varepsilon,t) < \chi_{\alpha^*}(r(t) + \varepsilon)$ for $\varepsilon \leq \varepsilon(t) \ll 1$, as well as $r(t) \to 0$ as $t \to T^-$ since $h(r,T) \leq \pi/2$ on $(0,1]$. 
   
   Fix $t^*$ large enough for which $r_{M-2}(\alpha^*,t^*) < r(t^*) < R_1$. One can also redefine $r_M(\alpha^*,t^*) = r(t^*) + \varepsilon < R_1$, and then $r_{M-1}(\alpha^*,t^*) \in (r_{M-2}(\alpha^*,t^*), r_M(\alpha^*,t^*))$. Next, one constructs paths going from time $t^*$ to $t_0$ as in Lemma \ref{lemma:geometry of h_1 < h < h_2} so that one obtains a new chain $s_i(\alpha^*,t_0)$ at time $t_0$ satisfying $P(t_0,\alpha^*,M)$, while still having the other chain $r_i(\alpha^*,t_0)$ at our disposal. We will observe that $s_M(\alpha^*,t_0)$ cannot exist: $s_M \notin (R_1,R_2)$ by definition, $s_M$ cannot be larger than $R_2$ because of the inequality deduced in Corollary \ref{cor:comparison of h with itself and exact value of limsup} and $s_M$ cannot be smaller than $R_1$ as this would induce chains of length $M+4$ satisfying $P(t_0,\alpha_0,M+4)$ or $P(t_0,\alpha^*,M+4)$, which is a contradiction.

  All of this reduces the possible harmonic maps in Theorem \ref{thm: main theorem, nonexistence bubble trees} to two possibilities: harmonic maps $\omega_i$ having inclination coordinate $2\arctan( \alpha^k r^k)$ or $\pi - 2 \arctan( \alpha^k r^k)$. As one can now assume without loss of generality that $h(r,T) > \pi/2$ on $(0,1]$ and $h(1,t) > \pi/2$ for $t \in [t_0,T]$, these chains allow to find a barrier $h(r,t) \geq \chi_{\alpha_0}(r)$ when $t \geq t_0, 1 \geq r \geq r^{+}(t)$, which implies that the scale $(R_n)_{n \in \mathbb N}$ of any bubble must lie on the left of the quantity 
  $$
 r^+(t) := \sup\{r \in [0,1]: h(s,t) \leq \pi \quad \forall s \in [0,r]\}.
  $$
  For fixed $t \in [t_0,T)$, the Maximum Principle also shows that $r \mapsto h(r,t)$ must first go from $0$ to $\pi/2$ and then from $\pi/2$ to $\pi$ without going below $\pi/2$ again. This prevents $h(r,t)$ from developing two harmonic maps or a harmonic map $\pi - 2 \arctan( \alpha^k r^k)$ on the left of $r^+(t)$, which concludes the proof.

We now prove the bubble-tree result with all the necessary details.

    \begin{lemma}\label{lemma:existence of value larger than pi on parabolic boundary}
        For any $(r_0,t_0) \in (0,1) \times (0,T)$, there is $(r_1,t_1) \in \{r_0\} \times [t_0,T] \cup (0,r_0] \times \{t_0\}$ for which
        $$
        |h(r_1,t_1)| > \pi.
        $$
    \end{lemma}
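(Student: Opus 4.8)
The plan is to argue by contradiction, reducing to the blow-up obstruction of Proposition~\ref{prop:criterion for global solution}. Assume that $|h(r_1,t_1)|\le\pi$ for every $(r_1,t_1)\in\{r_0\}\times[t_0,T]\cup(0,r_0]\times\{t_0\}$. Since $h(0,t)=0$ on $[0,T)$, this says precisely that $|h|\le\pi$ on the parabolic boundary of the rectangle $R:=(0,r_0)\times(t_0,T)$, and $|h(r,T)|\le\pi$ for $r\in(0,r_0]$ as well by continuity of $h$ on $(0,1]\times(0,T]$ (Proposition~\ref{prop:smoothness at the boundary}). I would then run the barrier argument of Proposition~\ref{prop:criterion for global solution} (a refinement of Proposition~\ref{prop:further properties of blow-up of h}) on the rectangle $[0,r_0]\times[t_0,T)$ in place of $[0,1]\times[0,T)$; the Comparison Principle, the Maximum Principle and Lemma~\ref{lemma:semi-barrier with global solution} are all insensitive to the replacement of $[0,1]\times[0,T)$ by a sub-rectangle $[0,r_0]\times[t_0,T)$.

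In detail: first apply the Comparison Principle (Theorem~\ref{comparison principle}) on $(0,r_0)\times(t_0,T')$ for every $T'<T$, comparing $h$ with the constant finite-energy solutions $\pm\pi$ and using that $h$ itself has finite energy by $k$-equivariance (Remark~\ref{k-equivariance implies finite energy}), to conclude $|h|\le\pi$ on $[0,r_0]\times[t_0,T)$. Next, show $|h|<\pi$ on $(0,r_0)\times(t_0,T]$: if, say, $h=\pi$ at an interior point $(\rho,s)\in(0,r_0)\times(t_0,T]$, write the equation for $h-\pi$ as $(L+c)(h-\pi)=0$ with $c(r,t)=-\frac{k^2}{2r^2}\cdot\frac{\sin 2h}{h-\pi}$ bounded above on any strip $\{r\ge r_1>0\}$, so that the Maximum Principle (Theorem~\ref{maximum principle}) on $(r_1,r_0)\times(t_1,s]$ forces $h\equiv\pi$ there; letting $r_1\downarrow0$, $t_1\downarrow t_0$ gives $h\equiv\pi$ on $(0,r_0)\times(t_0,s]$, hence $h(0,t)=\pi$ by continuity, contradicting $h(0,t)=0$ (the case $h=-\pi$ is handled by applying the same argument to $-h$). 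Now invoke Lemma~\ref{lemma:semi-barrier with global solution} at time $t_0$ to obtain $\alpha_0>0$ and $\rho_*\in(0,r_0)$ with $\theta_\alpha(r)\ge|h(r,t_0)|$ for $\alpha\ge\alpha_0$ and $r\in[0,\rho_*]$, where $\theta_\alpha(r)=2\arctan((\alpha r)^k)$. By the previous step $M:=\max_{t\in[t_0,T]}|h(\rho_*,t)|<\pi$, so fixing $\alpha\ge\alpha_0$ with $\theta_\alpha(\rho_*)>M$ and comparing both $h$ and $-h$ with $\theta_\alpha$ on $[0,\rho_*]\times[t_0,T')$ (the boundary inequality being trivial at $r=0$, valid at $r=\rho_*$ since $\theta_\alpha(\rho_*)>M$, and valid at $t=t_0$ by Lemma~\ref{lemma:semi-barrier with global solution}) yields $|h(r,t)|\le\theta_\alpha(r)$ on $[0,\rho_*]\times[t_0,T)$.

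This last bound gives $\lim_{r\to0^+}v(re^{i\theta},t)=(0,0,1)$ uniformly in $t\in[t_0,T)$, and one concludes exactly as at the end of the proof of Proposition~\ref{prop:further properties of blow-up of h}: since $v$ blows up at $T$ with its singularity at the origin (Proposition~\ref{prop:smoothness at the boundary}), Theorem~\ref{thm:smoothness around all but finitely many points} produces a non-constant bubble $v(x_m+R_mx,T_m+R_m^2s)\to\omega(x)$ along suitable $x_m\to0$, $R_m\to0^+$, $T_m\to T^-$, and passing to an almost-everywhere convergent subsequence the uniform decay just established forces $\omega\equiv(0,0,1)$, a contradiction. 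The only step needing care is the Maximum Principle application in the second paragraph — checking that the zeroth-order coefficient is bounded above on each strip and that the maximum propagates across it — but there is no difficulty near $r=0$ precisely because one stays on $\{r\ge r_1>0\}$, just as in the proof of Proposition~\ref{prop:criterion for global solution}.
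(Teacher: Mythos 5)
Your proposal is correct and follows the same underlying idea as the paper: reduce to the blow-up obstruction of Proposition~\ref{prop:criterion for global solution}. The only difference is organizational. The paper's proof invokes Proposition~\ref{prop:criterion for global solution} as a black box after a parabolic rescaling of the sub-rectangle $[0,r_0]\times[t_0,T]$ to $[0,1]\times[0,T']$ and a Whitney extension (plus cut-off) of $h$ restricted to the parabolic boundary; you instead re-run the internal argument of that proposition directly on the sub-rectangle (Comparison Principle against $\pm\pi$, strict inequality via the Maximum Principle on strips $\{r\ge r_1\}$, the barrier $\theta_\alpha$ from Lemma~\ref{lemma:semi-barrier with global solution}, then the bubble-extraction contradiction as in Proposition~\ref{prop:further properties of blow-up of h}). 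This unrolled version avoids the rescaling and the Whitney-extension setup, at the cost of being more verbose; both are the same mathematics. One detail in your write-up is worth making explicit: the reason $M=\max_{t\in[t_0,T]}|h(\rho_*,t)|<\pi$ at $t=t_0$ is that Lemma~\ref{lemma:semi-barrier with global solution} gives $|h(\rho_*,t_0)|\le\theta_\alpha(\rho_*)<\pi$ (not only the strict interior bound from the Maximum Principle, which applies for $t>t_0$ only); you use this fact, so it should be flagged rather than just attributed to ``the previous step.''
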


    \begin{proof}
   Fix any $(r_0,t_0) \in (0,1) \times (0,T)$. After parabolic rescaling, one can assume that $\{0,r_0\} \times [t_0,T] \cup [0,r_0] \times \{t_0\}$ is the new parabolic boundary. The new boundary data is given by any extension (e.g. given by Whitney's extension theorem combined with a smooth cut-off) of 
    $$
    \tilde{h}_0(r,t) = \begin{cases}
        h(r,t) &\text{ if } (t,r) \in   \{r_0\} \times  [t_0,T],  \\
        h(r,t) &\text{ if } (t,r) \in [0,r_0] \times  \{t_0\},  \\
        h_0(r,t) = 0 &\text{ if } (t,r) \in   \{0\} \times [t_0,T],
    \end{cases}
    $$
    which is smooth on the connected set where it is defined (Proposition \ref{prop:smoothness at the boundary, earlier statement}).

   For any such $\tilde{h}_0$, as $\tilde{h}_0(0,t) = 0$, there must be a point with $|\tilde{h}_0(r_1,t_1)| = |h(r_1,t_1)| > \pi$ by Proposition \ref{prop:criterion for global solution}, $(r_1,t_1) \in \{r_0\} \times [t_0,T] \cup (0,r_0] \times \{t_0\}$. Otherwise, $h$ would be global.
   \end{proof}

As a Corollary, replacing $h$ by $-h$ and $v = (v_1,v_2,v_3)$ by $(-v_1,-v_2,v_3)$ if needed, one can assume that there is $(r_n,t_n) \to (0,T)$, $r_n \in (0,1)$, $0 < t_n \leq T$ for which 
\begin{equation}
    h(r_n,t_n) > \pi  \quad \forall n. \label{eq: sequence of h larger than pi}
\end{equation}
Slightly perturbating $t_n$, we may even assume $t_n < T$. In particular,
    \begin{equation}
        \limsup_{\substack{r \to 0^+ \\ t \to T^-}}h(r,t) = \pi \label{eq: limsup value of h}
    \end{equation}
    by Corollary \ref{cor:comparison of h with itself and exact value of limsup}. 

   \begin{lemma}\label{lemma:limit of h(r,T) at zero}
   For all $r_0 \in (0,1)$, there exists $r \in (0,r_0)$ with $h(r_0,T) > 0$. In particular, $\lim \limits_{r \to 0^+} h(r,T) \in \{0,\pi\}$.
   \end{lemma}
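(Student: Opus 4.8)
The plan is to transport the lower bound $h>\pi$ from the interior sequence $(r_n,t_n)$ furnished by (\ref{eq: sequence of h larger than pi}) forward in time to the slice $\{t=T\}$, thereby producing points with $h(\cdot,T)>0$ arbitrarily close to $r=0$; the two ingredients are the self‑comparison estimate of Corollary~\ref{cor:comparison of h with itself and exact value of limsup} and the smoothness of $h$ away from the origin at $t=T$ (Proposition~\ref{prop:smoothness at the boundary}).

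Concretely, recall from (\ref{eq: sequence of h larger than pi}) that $(r_n,t_n)\to(0^+,T^-)$ with $r_n\in(0,1)$, $0<t_n<T$ and $h(r_n,t_n)>\pi$. Let $\tau_0=\tau_0(v,T)$ be the constant of Corollary~\ref{cor:comparison of h with itself and exact value of limsup}, and fix $n$ so large that $T-t_n\le\tau_0$. For every $t'\in(t_n,T)$ put $\tau=t'-t_n$; then $0<\tau\le\tau_0$ and $t'\in[\tau,T)$, so the lower inequality of Corollary~\ref{cor:comparison of h with itself and exact value of limsup} applies at time $t'$ and gives $h(r_n,t')\ge h(r_n,t'-\tau)-\pi=h(r_n,t_n)-\pi>0$. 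Since $r_n>0$, $h$ is continuous at $(r_n,T)$ by Proposition~\ref{prop:smoothness at the boundary}, so letting $t'\to T^-$ yields $h(r_n,T)\ge h(r_n,t_n)-\pi>0$. As $n$ may be chosen arbitrarily large (hence $r_n$ arbitrarily small), this proves the first assertion: for any $r_0\in(0,1)$ one takes $n$ with $T-t_n\le\tau_0$ and $r_n<r_0$, and $r:=r_n$ works.

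For the ``in particular'' part, Proposition~\ref{value of v and h at origin} gives $\lim_{r\to0^+}h(r,T)=k(T)\pi$ for some $k(T)\in\mathbb Z$. The first part produces a sequence $r_n\to0^+$ with $h(r_n,T)>0$, whence $k(T)\pi\ge0$, i.e. $k(T)\ge0$. Conversely $k(T)\le1$: if $k(T)\ge2$ then $h(\rho,T)>3\pi/2$ for all small $\rho>0$, and by continuity of $h$ on $(0,1]\times(0,T]$ one can select $(\rho_j,t_j)\to(0^+,T^-)$ with $|h(\rho_j,t_j)|>3\pi/2$, contradicting $\limsup_{r\to0^+,\,t\to T^-}|h(r,t)|=\pi$ from Corollary~\ref{cor:comparison of h with itself and exact value of limsup}. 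Hence $k(T)\in\{0,1\}$, so $\lim_{r\to0^+}h(r,T)\in\{0,\pi\}$.

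The only subtle point — and the step I expect to require the most care — is the forward‑in‑time use of Corollary~\ref{cor:comparison of h with itself and exact value of limsup}: its inequality is stated only for $t<T$, so one must first obtain $h(r_n,t')>0$ throughout the half‑open interval $t'\in(t_n,T)$ and only then pass to the limit $t'\to T^-$, the regularity of $h$ away from $r=0$ at the final time being exactly what allows one to reach the slice $\{t=T\}$. Everything else is bookkeeping with the constant $\tau_0$ and with which boundary piece of the Corollary one invokes.
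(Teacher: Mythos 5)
Your proof is correct and uses the same key tools as the paper — the time-shift comparison of Corollary~\ref{cor:comparison of h with itself and exact value of limsup}, the sequence $(r_n,t_n)\to(0,T)$ with $h(r_n,t_n)>\pi$ from (\ref{eq: sequence of h larger than pi}), the smoothness of $h$ on $(0,1]\times(0,T]$ from Proposition~\ref{prop:smoothness at the boundary}, and Proposition~\ref{value of v and h at origin} for the multiple-of-$\pi$ structure. The only cosmetic difference is that you argue the first assertion directly and pass carefully to the limit $t'\to T^-$ before reaching the slice $\{t=T\}$, whereas the paper argues by contradiction and invokes the inequality of the corollary at $t=T$ somewhat informally; the substance is identical.
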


   \begin{proof}
   The limit at $0$ exists and is a multiple of $\pi$ by Proposition \ref{value of v and h at origin}.

   As     $$
\limsup_{\substack{r \to 0^+ \\ t \to T^-}}h(r,t) = \pi,
    $$
    the limit must be $\leq \pi$. If the first part of the lemma is proved, then it follows that the limit is either $0$ or $\pi$.

   If the lemma was wrong, there would be $r_0 \in (0,1)$ with $h(r,T) \leq 0$ for all $r \in (0,r_0]$.  By Corollary \ref{cor:comparison of h with itself and exact value of limsup}, there exists $\tau_0 = \tau_0(v,T)$ such that for all $0 < \tau \leq \tau_0$, one has
    $$
    h(r,T-\tau) - \pi \leq h(r,T), \quad r \in (0,1].
    $$    
    Let $(r_n,t_n) \to (0,T)$ with $h(r_n,t_n) > \pi$, $r_n \in (0,1)$, $t_n \in (0,T)$. Write $t_n = T - \tau_n$ and assume that $0 < \tau_n \leq \tau_0$ and $0 < r_n < r_0$ by taking $n$ large enough. Then
    $$
    \pi < h(r_n, T-\tau_n) \leq h(r_n,T) + \pi \leq \pi,
    $$
    which is a contradiction. 
   \end{proof}

   \begin{theorem}\label{thm:limit r to 0 at time T is pi}
     The limit of $h(\cdot,T)$ at zero is exactly  $\lim \limits_{r \to 0^+} h(r,T) = \pi$.
   \end{theorem}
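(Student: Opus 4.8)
The plan is to argue by contradiction. By Lemma~\ref{lemma:limit of h(r,T) at zero} the limit $\lim_{r\to0^+}h(r,T)$ exists and equals $0$ or $\pi$, so it suffices to rule out the value $0$. Suppose $\lim_{r\to0^+}h(r,T)=0$. Since $h(\cdot,T)$ is continuous on $(0,1]$ and tends to $0$, I can choose $r_0\in(0,1)$ with $0<h(r_0,T)<\pi/2$ and $h(r,T)\le\pi/2$ on $(0,r_0]$; restricting to $[0,r_0]\times[\tau,T]$ with $\tau<T$ close enough to $T$ that $h(r_0,t)>0$ there, shifting $\tau$ to $0$ and rescaling space so that the radius becomes $1$ (the lateral datum $h(r_0,\cdot)$ is smooth by Proposition~\ref{prop:smoothness at the boundary}), I may assume that $h(0,t)=0$, that $h$ has finite energy (Remark~\ref{k-equivariance implies finite energy}), that $h(r,T)\le\pi/2$ on $(0,1]$, and that the sequence $(r_n,t_n)\to(0,T)$ with $h(r_n,t_n)>\pi$ from~(\ref{eq: sequence of h larger than pi}) survives.

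I will run the intersection machinery of Lemma~\ref{lemma:intersection argument with chains mod 4} with $h_1=\chi_\alpha(r)=\pi-2\arctan(\alpha^kr^k)$ and $h_2=\pi$; its hypotheses hold since $\chi_{\alpha_1}\ge\chi_{\alpha_2}$ for $\alpha_1\le\alpha_2$, $h(0,t)=0<\pi=\chi_\alpha(0)$, $\chi_\alpha\le\pi$, $\chi_\alpha(1)<h(1,t)$ for $\alpha$ large, and the separation condition needed for Lemma~\ref{lemma:geometry of h_1 < h < h_2} is automatic because the relevant components $\overline\Gamma$ keep a positive distance from $r=0$. The maximal chain length $M(t,h,\chi_\alpha,\pi)$, which is $\equiv1\pmod4$, is bounded (by the energy) and non-decreasing in $t$ and in $\alpha$, hence eventually constant: there are $t_0<T$ and $\alpha_0$ with $M(t,h,\chi_\alpha,\pi)\equiv M$ for $t\ge t_0$, $\alpha\ge\alpha_0$. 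The first step is $M\ge5$. Reading $h(\cdot,t_n)$ from $r=0$ for $n$ large, one meets $h<\chi_\alpha$ near $0$; then, since $h(\cdot,t_n)\to h(\cdot,T)\le\pi/2<\chi_\alpha$ uniformly on a compact $[\delta,\rho]\subset(0,\min\{1,1/\alpha\})$ with $\delta<r_n$, one meets successively $\chi_\alpha<h<\pi$, $h>\pi$ at $r_n$, $\chi_\alpha<h<\pi$ again, and $h<\chi_\alpha$, i.e.\ a chain realizing $P(t_n,5)$; monotonicity of $M$ then gives $M\ge5$. (Heuristically: if $M=1$, the level set $\{h=\chi_\alpha\}$ is a curve separating $\{h\le\chi_\alpha\}$ from $\{h\ge\chi_\alpha\}$, whose top endpoint must be $(0,T)$—otherwise Proposition~\ref{prop:criterion for global solution} on a sub-rectangle of $\{h\le\chi_\alpha\}$ would make $h$ global—so $h(r,T)\ge\chi_\alpha(r)>\pi/2$ near $0$, a contradiction.)

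The core of the argument is then as follows. Fix $\alpha^*\gg\alpha_0$ so that $0<\chi_{\alpha^*}(r_{M-2}(\alpha_0,t_0))<h(r_{M-2}(\alpha_0,t_0),t_0)-\pi$ (the right side is positive because $M-2\equiv3\pmod4$), and let $(R_1,R_2)$ be the maximal open interval containing $r_{M-2}(\alpha_0,t_0)$ on which $h(\cdot,t_0)-\pi>\chi_{\alpha^*}$; then $R_1>0$ (since $h(s,t_0)\to0$ as $s\to0$), $r_{M-2}(\alpha_0,t_0)\in(R_1,R_2)$, and $h(\cdot,t_0)>\pi$ throughout $(R_1,R_2)$. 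The key intermediate fact is that, choosing the chain suitably at each time, $r_{M-2}(\alpha^*,t)\to0$ as $t\to T^-$: between $r_{M-2}(\alpha^*,t)$ (where $h>\pi>\chi_{\alpha^*}$) and $r_M(\alpha^*,t)$ (where $h<\chi_{\alpha^*}$) there is a crossing $r(t)$ of $h$ with $\chi_{\alpha^*}$, and $r(t)\to0$ because $h(\cdot,T)\le\pi/2<\chi_{\alpha^*}$ near $r=0$ forces all such crossings to collapse onto the origin. Hence at a time $t^*$ close to $T$ one has $r(t^*)<R_1$, and one can redefine the last entries $r_{M-2},r_{M-1},r_M$ of the chain at $t^*$ to lie in $(0,R_1)$ while $P(t^*,M)$ still holds. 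Pulling this chain back from $t^*$ to $t_0$ along the non-intersecting curves produced by Lemma~\ref{lemma:geometry of h_1 < h < h_2} (as in the proof of Lemma~\ref{lemma:intersection argument with chains mod 4}) yields a chain $s_1<\dots<s_M$ at $t_0$ realizing $P(t_0,M)$; since $h(s_M,t_0)<\chi_{\alpha^*}<\pi$ one has $s_M\notin(R_1,R_2)$, and $s_M>R_2$ is excluded by the one-step comparison of Corollary~\ref{cor:comparison of h with itself and exact value of limsup} (the pulling-back curve, on which $h<\chi_{\alpha^*}$, cannot cross the barrier formed by $h(\cdot,t_0)>\pi+\chi_{\alpha^*}(\cdot)$ on $(R_1,R_2)$). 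Thus $s_M<R_1$. Finally, splice the $\alpha_0$-version of $s_1<\dots<s_M$ (obtained by the $\alpha$-monotonicity construction of Lemma~\ref{lemma:intersection argument with chains mod 4}, still with all entries $<R_1$) with one node in $(R_1,R_2)$ (where $h(\cdot,t_0)>\pi$) and with the tail $r_{M-2}(\alpha_0,t_0)\in(R_1,R_2)$, $r_{M-1}(\alpha_0,t_0)>R_2$, $r_M(\alpha_0,t_0)>R_2$ of the original $\alpha_0$-chain: this produces a chain realizing $P(t_0,M+4)$, contradicting the maximality of $M$. Hence $\lim_{r\to0^+}h(r,T)=\pi$.

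The main obstacle is this last construction. Establishing $r_{M-2}(\alpha^*,t)\to0$ is exactly where the hypothesis $h(\cdot,T)\le\pi/2$ is genuinely used; and the topological bookkeeping—pulling the chain back to $t_0$ via Lemma~\ref{lemma:geometry of h_1 < h < h_2}, blocking its final entry from escaping past $R_2$ with Corollary~\ref{cor:comparison of h with itself and exact value of limsup}, and then splicing two chains of the $\bmod\,4$ type across the gap $(R_1,R_2)$ so as to gain exactly four intersections—is where the real difficulty lies.
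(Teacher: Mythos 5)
Your proposal follows the paper's proof essentially step for step: same reduction (normalize so $h(r,T)\le\pi/2$ near $0$ and $h(0,t)=0$), same family $h_1=\chi_\alpha$, $h_2=\pi$ in the mod-4 intersection lemma, same proof that $M\ge 5$, same definition of the barrier interval $(R_1,R_2)$ and auxiliary parameter $\alpha^*$, same pull-back of the chain from $t^*$ to $t_0$ and the same dichotomy $s_M<R_1$ versus $s_M\ge R_2$ resolved by Corollary~\ref{cor:comparison of h with itself and exact value of limsup} and by extending the chain to length $M+4$. Two small points of imprecision worth fixing: (i) the collapse of the crossing near $r=0$ is not a consequence of ``$h(\cdot,T)\le\pi/2<\chi_{\alpha^*}$ near $r=0$'' (this inequality fails for $r\ge 1/\alpha^*$); what is really needed is the quantity $z(\alpha^*,t)=\sup\{r>r_{M-2}: h\ge\chi_{\alpha^*}\text{ on }[r_{M-2},r]\}$, whose collapse uses the contradiction hypothesis $\lim_{r\to 0}h(r,T)=0$ against $\chi_{\alpha^*}(r)\to\pi$, not merely the $\pi/2$ bound; (ii) in the final splice, the node you describe as ``in $(R_1,R_2)$ where $h>\pi$'' has the role of $s_{M+2}$ and coincides with $r_{M-2}(\alpha_0,t_0)$; you still need an $s_{M+1}$ in $(s_M,\,r_{M-2}(\alpha_0,t_0))$ with $\chi_{\alpha_0}<h<\pi$, which exists by the intermediate value theorem since $h(s_M,t_0)<\chi_{\alpha_0}$ and $h(r_{M-2}(\alpha_0,t_0),t_0)>\pi$ — the paper spells this out via a two-case analysis on the position of $s_M$ relative to $r_{M-2}(\alpha^*,t_0)$ and $z(\alpha^*,t_0)$, whereas your single case works once the missing node is supplied.
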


   \begin{proof}
       Assume not. Choose $r_0$ small enough so that $|h(r,T)| < \pi/4$ on $(0,r_0]$, $h(r_0,T) > 0$ by Lemma \ref{lemma:limit of h(r,T) at zero} and $t_0$ large enough so that $0 < h(r_0,t) < \pi/4$ on $[t_0,T]$. After parabolic rescaling and translating the time, one can assume that $(r_0,t_0) = (1,0)$ for simplicity.

    Let $\alpha_0$ and $t_0$ be large enough so that the maximal number of intersections $M(t,h,h_1 = \chi_{\alpha},h_2 = \pi)$ from Lemma \ref{lemma:intersection argument with chains mod 4} is minimal when $t \geq t_0$, $\alpha \geq \alpha_0$, $\chi_{\alpha} = \pi - 2\arctan \left( (\alpha r)^k \right)$.

    By taking $\alpha_0$ even larger, one can assume that $\chi_{\alpha}(1) < h(1,t)$ for all $t \in [t_0,T]$, $\alpha \geq \alpha_0$.

    As there exists $(r_n,t_n)_{n \in \mathbb N} \to (0,T)$, $r_n \in (0,1)$, $t_n \in (0,T)$,  with $h(r_n,t_n) > \pi$, for any $t \in [t_0,T)$, one can fix $t_n$ large enough so that $t_n > t$ and $\varepsilon$ small enough so that $t_n + \varepsilon < T$. Applying Lemma \ref{geometry of h_1 < h_2}, one must have a point $h(r(t),t) > \pi$ on the parabolic boundary of $[0,1] \times [t,t_n + \varepsilon]$. 

    Then $\lim \limits_{t \to T^-} r(t) = 0$. Indeed, if $(t_n) \to T^{-}$ is any sequence and $(t_{n_m})$ is any subsequence, then there is a further subsequence (still denoted with index $n_m$) for which $r(t_{n_m}) \to r^* \in [0,1]$. If $r^* > 0$, then $h(r(t_{n_m}),t_{n_m}) \to h(r^*,T) \in [-\pi/4,\pi/4]$ by continuity on $(0,1] \times [0,T]$, which is a contradiction. Hence, $r^* = 0$.

    In particular, $M(t_0,h,h_1 = \chi_{\alpha_0},h_2 = \pi) \geq 5$. Assume not and let  $t \geq t_0, \alpha \geq \alpha_0$. As $h(r_1,t) = 0 < \chi_{\alpha}(r_1)$ near $r_1 = 0$, $\chi_{\alpha}(r_2) < h(r_2,t) < \pi$ at a point $r_2 < r^-(t)$ close enough to 
    $$
    r^-(t) = \sup\{r \in [0,1]: h(s,t) < \pi \quad \forall s \in [0,r]\} \in (0, r(t)),
    $$
    as well as $h(r_3,t) > \pi$ at $r_3 = r(t)$ and $\chi_{\alpha}(r_4) < h(r_4,t) < \pi$ at a point $r_4 > r^+(t)$ close enough to
        $$
    r^+(t) = \sup\{r \in [r(t),1]: h(s,t) > \pi \quad \forall s \in [r(t),r]\} \in (r(t),1),
    $$
    then one would have $h(r,t) \geq \chi_{\alpha}(r)$ for all $r \geq r(t) > 0$, $t \in [t_0,T)$. If $r \in (0,1]$ is fixed, then there is $t_1(r) \in [t_0,T)$ large enough for which $r \geq r(t) > 0$ for all $[t_1,T)$. Hence, $h(r,t) \geq \chi_{\alpha}(r)$ on $[t_1(r),T)$. Taking the limit $t \to T^{-}$, we find out that $h(r,T) \geq \chi_{\alpha}(r)$ for all $r \in (0,1]$, which contradicts $\lim \limits_{r \to 0} h(r,T) = 0$.

    Hence, $M(t_0,h,h_1 = \chi_{\alpha_0},h_2 = \pi)  \geq 5$. For $\alpha \geq \alpha_0$, $t \in [t_0,T)$, fix chains $r_1(\alpha,t) < ... < r_M(\alpha,t)$ satisfying $P(t,\alpha,M)$ with $M$ minimal. Consider $r_{M-2}(\alpha, t)$ for which $h(r_{M-2}(\alpha,t),t) > \pi$. Arguing as with $r(t)$ above,
    $$\lim \limits_{t \to T^{-}} r_{M-2}(\alpha,t) = 0.$$
    Let 
    $$
    z(\alpha,t) = \sup\{r \in (r_{M-2}(\alpha,t),1]: h(s,t) \geq \chi_{\alpha}(s) \quad \forall s \in [r_{M-2}(\alpha,t),r]\}.
    $$
    Note that $z(\alpha,t) < 1$ since
    $$
    r_{M-2}(\alpha,t) < z(\alpha,t) < r_{M}(\alpha,t) \leq 1.
    $$
    In particular, $h(r,t) < \chi_{\alpha}(r)$ for $(r,t)$ with all $r > z(\alpha,t)$ close enough to $z(\alpha,t)$.

    Then one must have $\lim \limits_{t \to T^-} z(\alpha,t) = 0$ for all $\alpha \geq \alpha_0$. Otherwise, there would be $t_n \to T^{-}$, $t_n > t_0$ and $z^* \in (0,1]$ for which $z(\alpha,t_n) \geq z^*$ for all $n$ so that
    $$
    h(s,t_n) \geq \chi_{\alpha}(s) \quad \forall s \in [r_{M-2}(\alpha,t_n),z^*] \quad \forall n \in \mathbb N.
    $$
    As $\lim \limits_{n \to +\infty} r_{M-2}(\alpha,t_n) = 0$, for $s \in (0,z^*]$ fixed, one deduces $s \in [r_{M-2}(\alpha,t_n), z^*]$ for all $n \geq n_0$ large enough so that
    $$h(s,t_n) \geq \chi_{\alpha}(s) \quad \forall n \geq n_0,$$
    hence $h(s,T) \geq \chi_{\alpha}(s)$ for all $s \in (0,z^*]$ by passing at the limit, which contradicts $\lim \limits_{r \to 0} h(r,T) = 0$ as before.

     It follows from Corollary \ref{cor:comparison of h with itself and exact value of limsup} that there exists $\tau_0 = \tau_0(v,T)$ such that for all $0 < \tau \leq \tau_0$, $t \in [0,T-\tau]$, one has
    \begin{equation}
        h(r,t) - \pi \leq h(r,t+\tau), \quad r \in (0,1]. \label{eq: contradiction at time t^*}
    \end{equation}
    Fix $t_0$ large enough so that $0 < T-t_0 < \tau_0$. Fix $\alpha^* > \alpha_0$ for which $0 < \chi_{\alpha^*}(r_{M-2}(\alpha_0,t_0)) < h(r_{M-2}(\alpha_0,t_0),t_0) - \pi$.

  Let 
    $$
  R_1 = \inf\{r \in (0,r_{M-2}(\alpha_0,t_0)]: h(s,t_0) - \pi > \chi_{\alpha^*}(s) \quad \forall s \in (r,r_{M-2}(\alpha_0,t_0)]\}
  $$
  and
  $$
  R_2  = \sup\{r \in (r_{M-2}(\alpha_0,t_0),1]: h(s,t_0)  - \pi > \chi_{\alpha^*}(s) \quad \forall s \in [r_{M-2}(\alpha_0,t_0),r)\}.
  $$
  One has $R_1 > 0$ so there is $t^* \in (t_0,T)$ large enough for which $0 < z(\alpha^*,t^*)< R_1$. In particular, $z(\alpha^*,t^*) < z(\alpha_0,t_0)$. By definition of $z(\alpha^*,t^*)$, there is $\varepsilon > 0$ for which  $0 < z(\alpha^*,t^*)+ \varepsilon < R_1$ and
  $$
  h(z(\alpha^*,t^*) + \varepsilon,t^*) < \chi_{\alpha^*}(z(\alpha^*,t^*) + \varepsilon).
  $$
  As 
  $$r_M(\alpha^*,t^*) > z(\alpha^*,t^*)  > r_{M-2}(\alpha^*,t^*),$$
  one can redefine $r_M(\alpha^*,t^*)  = z(\alpha^*,t^*) + \varepsilon < R_1$ and find another point $r_{M-1}(\alpha^*,t^*)$ inbetween $(r_{M-2}(\alpha^*,t^*), z(\alpha^*,t^*))$ without altering the length of the chain. As $h(0,t) < \chi_{\alpha^*}(0)$ on $[t_0,t^*]$, there is $r_1 > 0$ for which $h(r,t) < \chi_{\alpha^*}(r)$ on $[t_0,t^*] \times [0,r_1]$. Then we can replace $r_1(\alpha^*,t)$ by the same value $r_1$ for all $t \in [t_0,t^*]$.
  
  Arguing as in the proof of Theorem \ref{lemma:intersection argument with chains mod 4}, there are injective paths  $\Sigma_1, ..., \Sigma_M$ for which $\Sigma_i$ connects $(r_i(\alpha^*,t^*), t^*)$ to $(s_i,t_0)$ and $s_1 < ... < s_M$ satisfies $P(t_0,\alpha^*,M)$ as well.

  Then one must have either $s_M \leq R_1$ or $s_M \geq R_2$. In the former case, $s_M \notin [r_{M-2}(\alpha^*,t_0),z(\alpha^*,t_0)]$ as $h \geq \chi_{\alpha^*}$ on that interval at time $t_0$. One cannot have $z(\alpha^*,t_0) < s_M \leq R_1 \leq r_{M-2}(\alpha_0,t_0)$ either as $h < \chi_{\alpha^*} \leq \chi_{\alpha_0}$ at $(s_M,t_0)$, $h > \pi$ at $s_{M+2} = R_1$, $\chi_{\alpha_0} < h < \pi$ at some $s_{M+1}$ inbetween $(s_M, s_{M+1})$ and at $s_{M+3} = r_{M-1}(\alpha_0,t_0)$, as well as $h < \chi_{\alpha_0}$ at $s_{M+4} = r_M(\alpha_0,t_0)$, leading to a chain satisfying $P(t_0,\alpha_0,M+4)$, contradicting the maximality of $M$. 
  
  Hence, $s_M < r_{M-2}(\alpha^*,t_0) < z(\alpha^*,t_0)$. Then $h < \chi_{\alpha^*}$ at $(s_M,t_0)$, $h > \pi$ at $(s_{M+2},t_0) := (r_{M-2}(\alpha^*,t_0),t_0)$, $\chi_{\alpha*} < h < \pi$ for a point $s_{M+1}$ inbetween $(s_M,r_{M-2}(\alpha^*,t_0))$ and a point $s_{M+3} \in (r_{M-2}(\alpha^*,t_0),  z(\alpha^*,t_0))$, and $h < \chi_{\alpha^*}$ at $(z_{M+4},t_0) := (z(\alpha^*,t_0)+ \varepsilon',t_0)$, $\varepsilon' > 0$ small enough. Hence, we would have found a chain satisfying $P(t_0,\alpha^*,M+4)$, contradicting the maximality of $M$ once again.

  Thus, $s_M \geq R_2$. The path $\Sigma_M$ from $(r_M(\alpha^*,t^*),t^*)$ to $(s_M, t_0)$ must path through a point $(\tilde{r},\tilde{t}) \in (R_1,R_2) \times [t_0,t^*]$. 
 
 Finally, note that $t_0 \in [0,T-\tau]$ for any $0 < \tau \leq T - t_0 < \tau_0$. Then $\tilde{t} = t_0 + \tau$ for some $0 < \tau \leq T-t_0$. Evaluating (\ref{eq: contradiction at time t^*}) at $r = \tilde{r}$, $t = t_0$ and $t+\tau = \tilde{t}$ yields
 $$
 h(\tilde{r},t_0) - \pi \leq h(\tilde{r},\tilde{t}) < \chi_{\alpha^*}(\tilde{r}).
 $$
 As $\tilde{r} \in (R_1,R_2)$, this contradicts the definition of $R_1, R_2$.
    \end{proof}

\begin{corollary}\label{cor: liminf h(r,t) = 0}
The limit inferior at the singular point is exactly 
    $
\liminf \limits_{\substack{r \to 0^+ \\ t \to T^-}}h(r,t) = 0.
    $
\end{corollary}

\begin{proof}
    Assume not. As $h(0,t) = 0$, one must have
    $$
m:= \liminf_{\substack{r \to 0^+ \\ t \to T^-}}h(r,t) < 0,
    $$
    and there exists $(r_n,t_n) \to (0,T)$ with  $\lim \limits_{n \to +\infty} h(r_n,t_n) = m < 0$. Moreover, $r_n > 0$ (as $h(0,t) = 0$) and $t_n < T$ (as $\lim \limits_{r \to 0^+} h(r,T) = \pi$) for all $n$ large enough.
    
    By Corollary \ref{cor:comparison of h with itself and exact value of limsup}, there exists $\tau_0 = \tau_0(v,T)$ such that for all $0 < \tau \leq \tau_0$, one has
    $$
    h(r,T) \leq h(r,T-\tau) + \pi, \quad r \in (0,1].
    $$    
    Write $t_n = T - \tau_n$ and assume that $0 < \tau_n \leq \tau_0$ and $0 < r_n < r_0$ by taking $n$ large enough. Then
    $$
     h(r_n, T) \leq h(r_n,T-\tau_n) + \pi.
    $$
    This is a contradiction since the left-hand side converges to $\pi$, while the right-hand side converges to $m + \pi < \pi$.
\end{proof}

    Next, choose $r_0$ small enough and $t_0$ large enough so that 
    \begin{equation}
            5\pi/4 \geq h(r,t) \geq -\pi/2, \quad (r,t) \in (0,r_0] \times [t_0,T). \label{eq: bounds on h}
    \end{equation}
   Choose $r_0$ smaller if needed so that 
    \begin{equation}
            h(r,T) > \pi/2 \quad r \in (0,r_0] \label{eq: lower bound of h at t = T}
    \end{equation}
    as well and $t_0$ larger so that 
\begin{equation}
        h(r_0,t) > \pi/2 \quad t \in [t_0,T]. \label{eq: lower bound of h at r = 1}
\end{equation}
    
    After parabolic rescaling and translating the time, one can assume that $(r_0,t_0) = (1,0)$ for simplicity. 
    
    Let $t_0 > 0$ and $\alpha_0 > 0$ be large enough so that $\chi_{\alpha_0}(1) < \pi/2$ and the length $\tilde{M}$ of a chain satisfying the property $P(t,h,h_1 = \chi_{\alpha},h_2 = \pi,\tilde{M})$ from Lemma \ref{lemma:intersection argument with chains mod 4} is minimal when $t \geq t_0$, $\alpha \geq \alpha_0$.
    
    Let 
    \begin{align*}
        r^+(t) &= \sup\{r \in [0,1]: h(s,t) \leq \pi \quad \forall s \in [0,r]\}, \\
        r^-(t) &= \sup\{r \in [0,1]: h(s,t) \leq \pi/2 \quad \forall s \in [0,r]\} < 1.
    \end{align*}

\begin{lemma} \label{lemma: lower bound for h on the right of r^+}
One has
         $$
         h(r,t) \geq \chi_{\alpha_0}(1), \quad \forall t \geq t_0, \ r \geq r^+(t).
         $$
\end{lemma}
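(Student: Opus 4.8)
\emph{Plan.} Argue by contradiction and reduce to the machinery already set up in the proof of Theorem~\ref{thm:limit r to 0 at time T is pi}. Recall that after the rescaling $t_0=0$, that $h(1,t)>\pi/2$ on $[0,T)$, that $h(r,T)>\pi/2$ on $(0,1]$ by (\ref{eq: lower bound of h at t = T}), and that $-\pi/2\le h\le 5\pi/4$ on $(0,1]\times[0,T)$. Write $c_0:=\chi_{\alpha_0}(1)$, so $0<c_0<\pi/2$, and suppose for contradiction that $h(\tilde r,t_1)<c_0$ for some $t_1\in[0,T)$ and some $\tilde r\ge r^+(t_1)$. Since $h(1,t_1)>\pi/2>c_0$ we have $\tilde r<1$, hence $r^+(t_1)<1$ and, by continuity, $h(r^+(t_1),t_1)=\pi$; and since $h(\tilde r,t_1)<c_0<\pi$ we even have $\tilde r>r^+(t_1)$.

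\emph{Step 1: this forces $\tilde M\ge 5$.} By definition of $r^+(t_1)$ there is $r_3\in(r^+(t_1),\tilde r)$ with $h(r_3,t_1)>\pi$; since $h(0,t_1)=0<\pi=\chi_{\alpha_0}(0)$ there is $r_1\in(0,r_3)$ small with $h(r_1,t_1)<\chi_{\alpha_0}(r_1)$; and, as $r\mapsto h(r,t_1)$ runs from below $\chi_{\alpha_0}$ at $r_1$ to above $\pi$ at $r_3$, the usual intermediate-value argument (take the last point $\le r_3$ where $h=\chi_{\alpha_0}$, then the first point after it where $h=\pi$) gives $r_2\in(r_1,r_3)$ with $\chi_{\alpha_0}(r_2)<h(r_2,t_1)<\pi$; likewise, between $r_3$ (where $h>\pi$) and $\tilde r$ (where $h(\tilde r,t_1)<c_0<\chi_{\alpha_0}(\tilde r)$ as $\tilde r<1$) one finds $r_4\in(r_3,\tilde r)$ with $\chi_{\alpha_0}(r_4)<h(r_4,t_1)<\pi$. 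With $r_5:=\tilde r$, the chain $r_1<r_2<r_3<r_4<r_5$ satisfies $P(t_1,h,h_1=\chi_{\alpha_0},h_2=\pi,5)$, so $M(t_1,h,\chi_{\alpha_0},\pi)\ge 5$; since this maximal length is constant, equal to $\tilde M$, on $\{t\ge t_0,\alpha\ge\alpha_0\}$ by the choice of $t_0,\alpha_0$, we conclude $\tilde M\ge 5$.

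\emph{Step 2: rerun the proof of Theorem~\ref{thm:limit r to 0 at time T is pi}.} With $\tilde M\ge 5$ available, fix maximal chains $r_1(\alpha,t)<\dots<r_{\tilde M}(\alpha,t)$ with $P(t,h,\chi_\alpha,\pi,\tilde M)$ for $t\ge t_0$, $\alpha\ge\alpha_0$, and repeat verbatim the part of that proof following ``$M(t_0,h,\chi_{\alpha_0},\pi)\ge 5$'': one picks $\alpha^{*}>\alpha_0$ large, lets $R_1,R_2$ be the endpoints of the maximal interval around $r_{\tilde M-2}(\alpha_0,t_0)$ on which $h(\cdot,t_0)-\pi>\chi_{\alpha^{*}}$, transports back, from a time $t^{*}$ close to $T$ to time $t_0$ along the non‑crossing paths of Lemma~\ref{lemma:geometry of h_1 < h < h_2}, a maximal $\chi_{\alpha^{*}}$‑chain whose last element is a ``dip'' lying left of $R_1$, and concludes that the transported last element $s_{\tilde M}$ either lies $\ge R_2$ --- so the path $\Sigma_{\tilde M}$ meets the strip $(R_1,R_2)\times[t_0,t^{*}]$, contradicting Corollary~\ref{cor:comparison of h with itself and exact value of limsup} --- or lies $\le R_1$, in which case appending $r_{\tilde M-1}(\alpha_0,t_0),r_{\tilde M}(\alpha_0,t_0)$ (or their $\alpha^{*}$‑analogues) produces a chain satisfying $P(t_0,\cdot,\tilde M+4)$, contradicting the maximality of $\tilde M$.

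The single place in that argument which used the hypothesis ``$\lim_{r\to 0^{+}}h(r,T)=0$'' of Theorem~\ref{thm:limit r to 0 at time T is pi} --- and the step I expect to be the main obstacle --- is the claim $\lim_{t\to T^{-}}z(\alpha,t)=0$, where $z(\alpha,t):=\sup\{r>r_{\tilde M-2}(\alpha,t):h(\cdot,t)\ge\chi_\alpha\text{ on }[r_{\tilde M-2}(\alpha,t),r]\}$, needed so that the dip can be placed to the left of the fixed $R_1$. In our setting this exact claim fails, but a weaker one suffices: since $z(\alpha,t)<r_{\tilde M}(\alpha,t)$ for every $t$, and since any subsequential limit $\hat r>0$ of $r_{\tilde M}(\alpha,t_n)$ along $t_n\to T^{-}$ satisfies $h(\hat r,T)\le\chi_\alpha(\hat r)$ by continuity while $h(\hat r,T)>\pi/2$ by (\ref{eq: lower bound of h at t = T}), we get $\chi_\alpha(\hat r)>\pi/2$, i.e. $\hat r<1/\alpha$; hence $\limsup_{t\to T^{-}}z(\alpha,t)\le 1/\alpha$. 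Because $R_1=R_1(\alpha^{*})$ stays bounded below by the positive left endpoint of the maximal interval around $r_{\tilde M-2}(\alpha_0,t_0)$ on which $h(\cdot,t_0)>\pi$ --- an endpoint independent of $\alpha^{*}$ --- taking $\alpha^{*}$ large makes $1/\alpha^{*}<R_1(\alpha^{*})$, so that a time $t^{*}<T$ with $z(\alpha^{*},t^{*})<R_1$ still exists, which is all that the remainder of the argument needs. This contradiction proves the lemma.
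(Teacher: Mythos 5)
The proposal is correct, and Step 1 is essentially the paper's. The interesting divergence is in Step 2, and it is a genuinely different (arguably cleaner) treatment of the key intermediate estimate. The paper's own proof of this lemma does not simply reuse the $z$ from Theorem~\ref{thm:limit r to 0 at time T is pi}: it redefines $z(t,\alpha)$ as a supremum starting at $r^+(t)$, proves the auxiliary inequality $z>r_3$ by constructing a length-$(\tilde M+4)$ chain, and then gets the strong conclusion $\lim_{t\to T^-}z(t,\alpha)=0$ by first showing $z^*(\alpha)\le 1/\alpha$ (using $h(\cdot,T)\ge\pi/2$) and then invoking monotonicity of $z$ in $\alpha$. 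You instead keep the $z$ of Theorem~\ref{thm:limit r to 0 at time T is pi} (anchored at $r_{\tilde M-2}$, so the auxiliary $z>r_3$ lemma is not needed), derive only the subsequential bound $\limsup_{t\to T^-}z(\alpha,t)\le 1/\alpha$ directly from $z<r_{\tilde M}$ and $h(\cdot,T)>\pi/2$, and compensate for the loss of $\lim z=0$ by observing that $R_1(\alpha^*)\ge c>0$ where $c$ is the left endpoint of the component of $\{h(\cdot,t_0)>\pi\}$ containing $r_{\tilde M-2}(\alpha_0,t_0)$, a quantity independent of $\alpha^*$; taking $\alpha^*$ large enough that $1/\alpha^*<c$ (compatible with the requirement $0<\chi_{\alpha^*}(r_{\tilde M-2}(\alpha_0,t_0))<h(r_{\tilde M-2}(\alpha_0,t_0),t_0)-\pi$, which is automatic for large $\alpha^*$) then yields a time $t^*<T$ with $z(\alpha^*,t^*)<R_1$, which is all the final chain-transport contradiction uses. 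Your route replaces the paper's two auxiliary devices (the modified $z$ plus its $z>r_3$ lemma, and the $\alpha$-monotonicity trick) by a single uniform lower bound on $R_1$, at the cost of committing in advance to $\alpha^*$ large; both ultimately hinge on the same input $h(\cdot,T)>\pi/2$.
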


\begin{proof}
     Assume there is $t^* \geq t_0$ and $r^* \geq r^+(t^*)$ with 
     $$
h(r,t)< \chi_{\alpha_0}(1) = \inf_{r \in [0,1]} \chi_{\alpha_0}(r) < \pi/2.
     $$
    In particular, $r^+(t^*) \leq r^* < 1$ by (\ref{eq: lower bound of h at r = 1}), where the problem has been rescaled so that $r_0 = 1$.
    
    Then at time $t^*$, $h < \chi_{\alpha_0}$ near $r_1 = 0$, $h > \pi > \chi_{\alpha_0}$ near $r_3 \in (r^+(t^*),r^+(t^*) + \delta)$ with $\delta \ll 1$, $r^+(t^*) + \delta < r^*$, $\chi_{\alpha_0} < h < \pi$ for a point $r_2 \in (r_1,r_2)$, $h < \chi_{\alpha_0}$ for a point $r_5 = r^*$ and $\chi_{\alpha_0} < h < \pi$ for some $r_4 \in (r_3,r_5)$. Hence, $\tilde{M}(t,h,h_1 = \chi_{\alpha},h_2 = \pi) \geq 5$ for $t \geq t_0$ and $\alpha \geq \alpha_0$.

    For $t \geq t_0,\alpha \geq \alpha_0$, consider chains $r_1(t,\alpha) < ... < r_{\tilde{M}}(t,\alpha)$ of length $\tilde{M} \geq 5$ satisfying $P(t,h,h_1 = \chi_{\alpha},h_2 = \pi,\tilde{M})$ from Lemma \ref{lemma:intersection argument with chains mod 4}. In particular, $r^+(t) < r_3(t,\alpha) \leq 1$ for all $t\geq t_0$.

    Let also
    $$
    r^+(t) < z(t,\alpha) = \sup\{r \in [r^+(t),1]: h(s,t) \geq \chi_{\alpha}(s) \ \forall s \in [r^+(t),r]\} < r_{5}(t,\alpha) \leq 1.
    $$
    We show that $z(t,\alpha) > r_3(t,\alpha)$. Assume not. Then, there is $\rho_3 \in (r^+(t),r_3(t,\alpha))$ with $h(\rho_3,t) < \chi_{\alpha}$. There also exists $\rho_4 \in (\rho_3, r_3(t,\alpha))$ with $\chi_{\alpha}(\rho_4) < h(\rho_4,t) < \pi$ since $h(r_3(t,\alpha),t) > \pi$. Moreover, there exists $\rho_1 \in (r^+(t),\rho_3)$ close to $r^+(t)$ with $h(\rho_1,t) > \pi$ and $\rho_2 \in (\rho_1,\rho_3)$ with $\chi_{\alpha} < h  < \pi$. Hence, $r_1 < r_2 < \rho_1 < \rho_2 < \rho_3 < \rho_4 < r_3 < ... < r_{\tilde{M}}$ is chain of length $\tilde{M} + 4$, which contradicts the maximality of the chain.
    
    Then $\lim \limits_{t \to T^{-}}z(t,\alpha) = 0$. Otherwise, there is $(t_n) \to T^{-}$ with $z(t_n,\alpha) \to z^*(\alpha) > 0$. As $z^*(t_n,\alpha) < 1$, one must have $h(z(t_n,\alpha),t_n) = \chi_{\alpha}(t_n)$ for all $n$. Then 
    $$
    \chi_{\alpha}(z(t_n,\alpha)) = h(z(t_n,\alpha),t_n) \to h(z^*(\alpha),T) = \chi_{\alpha}(z^*(\alpha))
    $$
    with $h(z^*(\alpha),T) \geq \pi/2$. Hence, $z^*(\alpha) \leq \alpha^{-1}$ meaning that $\lim \limits_{\alpha \to +\infty} z^*(\alpha) = 0$. Yet, observe that $z(t_n,\alpha)$ and $z^*(\alpha)$ are increasing with respect to $\alpha$, which would imply $z^*(\alpha) = 0$ is constant with respect to $\alpha$, a contradiction.

It follows from Corollary \ref{cor:comparison of h with itself and exact value of limsup} that there is $\tau_0 > 0$ such that for all $0 < \tau \leq \tau_0$, one has
    $$
    h(r,T-\tau) - \pi \leq h(r,T), \quad r \in (0,1].
    $$
     Choose $t_0$ initially larger so that $0 < T-t_0 < \tau_0$. Fix $\alpha^* > \alpha_0$ for which $0 < \chi_{\alpha^*}(r_{3}(\alpha_0,t_0)) < h(r_{3}(\alpha_0,t_0),t_0) - \pi$.

  Letting
    \begin{align*}
          R_1 &= \inf\{r \in (0,r_{3}(\alpha_0,t_0)]: h(s,t_0) - \pi > \chi_{\alpha^*}(s) \quad \forall s \in (r,r_{3}(\alpha_0,t_0)]\}, \\
  R_2  &= \sup\{r \in (r_{3}(\alpha_0,t_0),1]: h(s,t_0)  - \pi > \chi_{\alpha^*}(s) \quad \forall s \in [r_{3}(\alpha_0,t_0),r)\},
    \end{align*}
    and arguing as in the end of the proof of Theorem \ref{thm:limit r to 0 at time T is pi}, one deduces a contradiction.
\end{proof}

\begin{corollary}\label{cor: bubbles lie on the left of r^+}
   Assume that there exists $R_n \to 0^+, T_n \to T^-$ for which $h(R_nr,T_n)$ converges to 
     $$
    h(R_nr,T_n) \to h_{\infty}(r) = m_i \pi + \varepsilon_i \cdot 2 \arctan \left( (\alpha_i r)^k \right), \quad m_i \in \mathbb Z, \varepsilon_i \in \{-1,1\}, \alpha_i > 0,
    $$
    uniformly on any compact subsets of $(0,+\infty)$. Then there exists $R_0 \in (0,+\infty)$ and $n_0 \in \mathbb N$ for which $R_0R_n \leq r^+(T_n)$ for all $n \geq n_0$.
\end{corollary}

\begin{proof}
    As 
    $$
\liminf_{\substack{r \to 0^+ \\ t \to T^-}}h(r,t) = 0, \quad \limsup_{\substack{r \to 0^+ \\ t \to T^-}}h(r,t) = \pi,
    $$
    the only possibilities for the limit $h_{\infty}$ are
    $$
 \pi -  2 \arctan\left( (\alpha r)^k \right), \quad 2 \arctan \left( (\alpha r)^k \right)
    $$
    In both cases, as $h_{\infty}((0,+\infty)) = (0,\pi)$, there must be $R_0 \in (0,+\infty)$ for which
    $$
    h(R_nR_0,T_n) \to h_{\infty}(R_0) < \chi_{\alpha_0}(1),
    $$
    where $\alpha_0$ is as in Lemma \ref{lemma: lower bound for h on the right of r^+}. For all $n$ large enough, one must have $T_n \geq t_0$ and $h(R_nR_0,T_n) < \chi_{\alpha_0}(1)$, hence $R_0R_n \leq r^+(T_n)$ by Lemma \ref{lemma: lower bound for h on the right of r^+}.
\end{proof}


     
\begin{lemma}\label{lemma:continuity of r^+}
The maps $t \mapsto r^-(t)$, $t \mapsto r^+(t)$ are continuous on $[t_0,T)$.
\end{lemma}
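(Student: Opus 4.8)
\emph{Plan.} I would split the statement into upper and lower semicontinuity of $t\mapsto r^{+}(t)$. Upper semicontinuity is immediate: if $s_{k}\to t$ in $[t_{0},T)$ and $r^{+}(s_{k})\to L$, then for each $\rho<L$ we have $h(\cdot,s_{k})\le\pi$ on $[0,\rho]$ for $k$ large, and passing to the limit using the continuity of $h$ on $[0,1]\times[t_{0},T)$ gives $h(\cdot,t)\le\pi$ on $[0,\rho]$, hence $r^{+}(t)\ge\rho$; letting $\rho\uparrow L$ yields $r^{+}(t)\ge L$, so $\limsup_{s\to t}r^{+}(s)\le r^{+}(t)$.

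For lower semicontinuity, assume it fails at some $t^{*}\in[t_{0},T)$, so there are $\rho_{0}<r^{+}(t^{*})$ and $s_{k}\to t^{*}$ with $r^{+}(s_{k})<\rho_{0}$. By Theorem \ref{thm:space analyticity}, $r\mapsto h(r,t^{*})$ is real-analytic on $(0,1]$, and it is not identically $\pi$ since $h(0,t^{*})=0$; hence $\{r\in(0,1]:h(r,t^{*})=\pi\}$ is discrete, and I can pick $\rho_{1}\in(\rho_{0},r^{+}(t^{*}))$ with $h(\rho_{1},t^{*})<\pi$. By continuity there is $\delta>0$ with $h(\rho_{1},t)<\pi$ for $t\in[t^{*}-\delta,t^{*}+\delta]\cap[t_{0},T)$. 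If $s_{k}>t^{*}$ for infinitely many $k$, I would apply the Comparison Principle (Theorem \ref{comparison principle}) on the rectangle $(0,\rho_{1})\times(t^{*},t^{*}+\delta')$, $\delta'\le\delta$: the constant $\pi$ solves (\ref{heat map formulation for h(r,t)}) with zero energy, and $h\le\pi$ on the parabolic boundary (namely $h(0,\cdot)=0$, $h(\rho_{1},\cdot)<\pi$, and $h(\cdot,t^{*})\le\pi$ on $[0,\rho_{1}]$ because $\rho_{1}<r^{+}(t^{*})$), whence $h\le\pi$ on $[0,\rho_{1}]\times[t^{*},t^{*}+\delta']$ and $r^{+}(t)\ge\rho_{1}>\rho_{0}$ there, contradicting $r^{+}(s_{k})<\rho_{0}$.

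There remains the case $s_{k}\uparrow t^{*}$, where the Comparison Principle cannot be run backwards in time; this is the main obstacle. I would argue by contradiction that $h\le\pi$ on $[0,\rho_{1}]\times[t^{*}-\eta,t^{*}]$ for some $\eta>0$ (which again gives $r^{+}\ge\rho_{1}$ on $[t^{*}-\eta,t^{*}]$, a contradiction). If this fails for every $\eta>0$, then $\{h>\pi\}\cap\bigl([0,\rho_{1}]\times[t^{*}-\eta,t^{*}]\bigr)$ is a non-empty open set which, by Lemma \ref{geometry of h_1 < h_2} applied to the pair $h_{1}=h$, $h_{2}\equiv\pi$ (its $r=0$ hypothesis holds since $h(0,\cdot)=0\neq\pi$) and since $h(\rho_{1},\cdot)<\pi$ and $h(0,\cdot)=0$ on $[t^{*}-\eta,t^{*}]$, has every connected component meeting $\{t=t^{*}-\eta\}$; letting $\eta\downarrow0$ and extracting a limit of such boundary points produces an interior point $(\rho^{**},t^{*})$ with $\rho^{**}\in(0,\rho_{1})$ and $h(\rho^{**},t^{*})=\pi$, hence an interior maximum of $h(\cdot,t^{*})$ on $[0,r^{+}(t^{*})]$. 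Real-analyticity then forces $h(\cdot,t^{*})-\pi$ to have an even-order zero at $\rho^{**}$, so $h(\cdot,t^{*})<\pi$ on a punctured spatial neighborhood of $\rho^{**}$; combining this with the continuity of $h_{t}$ and $h_{rr}$ (Proposition \ref{prop:continuity of h_t}) and the strong maximum principle (Theorem \ref{maximum principle}) for $v=\pi-h$, which satisfies a linear parabolic equation with coefficient bounded above away from $r=0$, one propagates the value $v=0$ and deduces $h\equiv\pi$ on a subrectangle reaching $\{r=0\}$, contradicting $h(0,\cdot)=0$. The delicate point — and the reason one cannot simply invoke the strong maximum principle directly — is that $\{h>\pi\}$ intrudes into every space-time neighborhood of $(\rho^{**},t^{*})$, so one must first isolate a region on which $v\ge0$; this is the same topological subtlety (components of sublevel sets having disconnected closure) underlying Lemma \ref{lemma:geometry of h_1 < h < h_2}, and it is handled by playing the connectedness conclusion of Lemma \ref{geometry of h_1 < h_2} against the analyticity of the slices $h(\cdot,t)$.
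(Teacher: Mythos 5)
Your argument for upper semicontinuity and for the right-sided lower bound $r^{+}(t)\ge r^{+}(t_{1})-\varepsilon$ for $t\in[t_{1},t_{1}+\delta]$ is, modulo phrasing, the paper's argument: pick $r_{1}<r^{+}(t_{1})$ with $h(r_{1},t_{1})<\pi$ (using spatial analyticity so that $h(\cdot,t_{1})\not\equiv\pi$), keep $h(r_{1},\cdot)<\pi$ for nearby times by continuity, and apply the Comparison Principle forward in time on $[0,r_{1}]\times[t_{1},t_{1}+\delta]$; for the other bound, pick $r_{2}>r^{+}(t_{1})$ where $h>\pi$ and use continuity of $h$. That is the entire content of the paper's proof: it runs the barrier only forward from $t_{1}$, so as written it only establishes right-continuity and never touches the backward-in-time side that you go on to attempt.

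The gap in your proposal sits exactly in that extra case $s_{k}\uparrow t^{*}$. After producing $(\rho^{**},t^{*})$ with $\rho^{**}\in(0,\rho_{1})$, $h(\rho^{**},t^{*})=\pi$, and $h(\cdot,t^{*})<\pi$ on a punctured spatial neighborhood — all of which is correct — you invoke the strong maximum principle (Theorem \ref{maximum principle}) for $v=\pi-h$ to propagate $v=0$ backward from $(\rho^{**},t^{*})$ and reach $\{r=0\}$. But Theorem \ref{maximum principle} requires $0$ to be the infimum of $v$ over some parabolic region $E_{t^{*}}$ that contains $(\rho^{**},t^{*})$, and by the very premise of this branch the set $\{v<0\}=\{h>\pi\}$ enters every space-time neighborhood of $(\rho^{**},t^{*})$; hence $v$ does not attain a nonnegative infimum at $(\rho^{**},t^{*})$ on any such region, and the hypothesis of the theorem fails. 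You acknowledge this yourself, but then assert it ``is handled by playing the connectedness conclusion of Lemma \ref{geometry of h_1 < h_2} against the analyticity of the slices $h(\cdot,t)$.'' That sentence is not a proof. Lemma \ref{geometry of h_1 < h_2} only says components of $\{h>\pi\}$ reach the parabolic boundary of a chosen sub-square; it does not isolate a parabolic neighborhood of $(\rho^{**},t^{*})$ on which $v\ge 0$, and the analyticity the paper proves (Theorem \ref{thm:space analyticity}) is only in $r$ at each fixed $t$, not joint $(r,t)$-analyticity, so it does not control the zero set of $v$ in space-time near $(\rho^{**},t^{*})$. As it stands, the left-sided lower-semicontinuity branch is not established.
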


\begin{proof}
The proof is exactly the same for both functions, so we only do the one for $r^{+}(t)$.

    Fix $t_1 \in [t_0,T)$ and we show continuity at $t_1$. Given $\varepsilon > 0$, there must be $r_1 \in [r^+(t_1)-\varepsilon, r^+(t_1)]$ which $h(r_1,t_1) < \pi$ as otherwise, $h(r,t_1) = \pi$ would be constant on $(0,1]$ since $r \mapsto h(r,t_1)$ is real-analytic on $(0,1]$ (Theorem \ref{thm:space analyticity}), which contradicts $h(0,t_1) = 0$. By continuity of $h$, there is $\delta > 0$ small enough for which $h(r_1,t) < \pi$ on $[t_1, t_1 + \delta]$. The Comparison Principle (Theorem \ref{comparison principle}) implies $h(r,t) \leq \pi$ on $[0,r_1] \times [t_1,t_1+\delta]$. As in the proof of Proposition \ref{prop:criterion for global solution}, applying the Maximum Principle (Theorem \ref{maximum principle}), one finds that $h < \pi$ on $(0,r_1) \times (t_1,t_1+\delta)$. Hence, $r^+(t) \geq r_1 \geq r^+(t_1) - \varepsilon$ for all $t \in [t_1,t_1+\delta]$.

If $r^+(t_1) = 1$, we are done as $r^+(t) \leq 1 = r^+(t_1)$ for all $t \in [t_1,t_1+\delta]$. Otherwise, $r^+(t_1) < 1$ and there is $r_2 \in (r^+(t_1), r^+(r_1) + \varepsilon]$ with $h(r_2,t_1) > \pi$. Then there is $\delta > 0$ small enough for which $h(r_2,t) > \pi$ on $[t_1, t_1 + \delta]$ meaning that $r^+(t) \leq r_2 \leq r^+(t_1) + \varepsilon$ on $[t_1, t_1 + \delta]$. This concludes the proof of continuity.
\end{proof}


\begin{lemma}\label{cor:only one bubble in the left of r^+}
    For $t \in [t_{0},T)$, $h < \pi/2$ on $[0,r^{-}(t))$ and $\pi/2 < h < \pi$ on $(r^{-}(t),r^+(t))$. 
\end{lemma}

\begin{proof}
We start by proving that $h(r,t) < \pi/2$ for all $r \in (0, r^{-}(t))$, $t \in [t_0,T)$. Indeed, fix $t_1 \in [t_0,T)$, $0 < r_1 < \min\{r^{-}(t) : t \in [t_0,t_1]\}$ and denote by $E_{r_1,t_1}$ the open, connected, bounded domain enclosed by 
$$
\{r = r_1, t_0 < t < t_1\} \cup \{t \in \{t_0,t_1\}, r_1 \leq r \leq r^{-}(t)\} \cup \{(t,r^{-}(t)) : t_0 < t < t_1\}.
$$
Then $h \leq \pi/2$ on this domain by definition. As in the proof of Proposition \ref{prop:criterion for global solution}, applying the Maximum Principle (Theorem \ref{maximum principle}) to $(h-\pi/2)$ shows that $h < \pi/2$ on $E_{r_1,t_1}$. As $t_1$ is arbitrary and $r_1$ is arbitrarily small, $h(r,t) < \pi/2$ on $(0,r^{-}(t))$ for any $t \in [t_0,T)$.

Similarly, one proves that $h < \pi$ on $(0,r^{+}(t))$ by looking at $h-\pi$ and the domain $E_{r_1,t_1}$ enclosed by 
$$
\{r = r_1, t_0 < t < t_1\} \cup \{t \in \{t_0,t_1\}, r_1 \leq r \leq r^{+}(t)\} \cup \{(t,r^{+}(t)) : t_0 < t < t_1\}.
$$
Finally, one proves that $h > \pi/2$ on $(r^{-}(t),r^{+}(t))$ by looking at $\pi/2-h$ and the domain $E_{t_1}$ enclosed by 
$$
 \{(t,r^{-}(t)) : t_0 < t < t_1\} \cup \{t \in \{t_0,t_1\}, r^{-}(t) \leq r \leq r^{+}(t)\} \cup \{(t,r^{+}(t)) : t_0 < t < t_1\}.
$$
which lies at a positive distance from the line $\{r = 0\}$ (since $h(0,t) = 0$).
\end{proof}

\begin{corollary}\label{cor:end of proof}
Theorem \ref{thm: main theorem, nonexistence bubble trees} holds and $\omega_1$ has inclination coordinate of the form:
$$
\pm 2 \arctan \left( (\alpha r)^k \right) + 2\pi \mathbb Z, \quad \alpha > 0.
$$
\end{corollary}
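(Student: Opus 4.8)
The plan is to read each putative bubble $\omega_i$ as a radial profile for $h$ and then collide that profile with the two structural facts already established: the confinement of bubbles to the left of $r^+$ (Corollary \ref{cor: bubbles lie on the left of r^+}) and the monotone picture of $h(\cdot,t)$ on $[0,r^+(t)]$ for $t\ge t_\beta^*$ (Corollary \ref{cor:only one bubble in the left of r^+}). First I would pass from $\omega_i$ to $h$: since $v$ is $k$-equivariant and $v(R_n^{(i)}x,T_n)\to\omega_i(x)$ uniformly on compact subsets of $\mathbb R^2\setminus\{0\}$, the non-constant harmonic map $\omega_i$ is $k$-equivariant, so $\omega_i(re^{i\theta})=(e^{ik\theta}\sin H_i(r),\cos H_i(r))$ with $h(R_n^{(i)}r,T_n)\to H_i(r)$ for a continuous lift $H_i$ solving the stationary ODE. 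The classification of equivariant harmonic spheres (Remark \ref{rmk:recovering the bubbles}, Theorem \ref{thm: full bubble decomposition}) gives $H_i=m\pi+\varepsilon\cdot 2\arctan((\alpha_i r)^k)$; since $R_n^{(i)}r\to0$ and $T_n\to T^-$, the pinching $0\le\liminf\le\limsup\le\pi$ near the origin (Corollary \ref{cor:comparison of h with itself and exact value of limsup}) confines the range of $H_i$ to $[0,\pi]$, so either $H_i(r)=2\arctan((\alpha_i r)^k)$ (``forward'') or $H_i(r)=\pi-2\arctan((\alpha_i r)^k)$ (``reverse''), $\alpha_i>0$. The proof of Corollary \ref{cor: bubbles lie on the left of r^+} in fact only uses the existence of a window constant $R>0$ with $H_i(R)<\chi_{\alpha_0}(1)$ (available for either profile: small $R$ in the forward case, large $R$ in the reverse case), and for any such $R$ it yields $R R_n^{(i)}\le r^+(T_n)$ and $h(R R_n^{(i)},T_n)\to H_i(R)<\pi/2$ for all $n$ large, recalling $\chi_{\alpha_0}(1)<\pi/2$.

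The two remaining tasks --- ruling out a reverse profile and ruling out $p\ge2$ --- are then the same argument: produce radii $a_n<b_n$ with $b_n\le r^+(T_n)$, $T_n\ge t_\beta^*$, $h(a_n,T_n)>\pi/2$ and $h(b_n,T_n)<\pi/2$, and invoke Corollary \ref{cor:only one bubble in the left of r^+}, which gives $h(\cdot,T_n)\le\pi/2$ on $[0,r^-(T_n)]$ and $h(\cdot,T_n)\ge\pi/2$ on $[r^-(T_n),r^+(T_n)]$; since $h(b_n,T_n)<\pi/2$ forces $b_n<r^-(T_n)$, this makes $h(a_n,T_n)\le\pi/2$, a contradiction. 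For a reverse bubble at an index $i_0$, choose $r_1$ small with $H_{i_0}(r_1)>\pi/2$ and $r_2$ large with $H_{i_0}(r_2)<\chi_{\alpha_0}(1)$ and set $a_n=r_1 R_n^{(i_0)}$, $b_n=r_2 R_n^{(i_0)}$. For two distinct bubbles, note that the convergences are all centred at the origin so the separation hypothesis gives $R_n^{(i)}/R_n^{(j)}+R_n^{(j)}/R_n^{(i)}\to\infty$; after passing to a subsequence and relabelling we may assume $R_n^{(1)}/R_n^{(2)}\to0$, and then choosing $A$ large with $H_1(A)>\pi/2$ and $\delta$ small with $H_2(\delta)<\chi_{\alpha_0}(1)$ and setting $a_n=A R_n^{(1)}$, $b_n=\delta R_n^{(2)}$ works: $a_n/b_n=(A/\delta)(R_n^{(1)}/R_n^{(2)})\to0$ gives $a_n<b_n$ eventually, and the bounds hold for $n$ large by the first paragraph.

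Combining, every bubble is forward and $p=1$, which is Theorem \ref{thm: main theorem, nonexistence bubble trees}; the unique bubble $H_1(r)=2\arctan((\alpha r)^k)$ is of the stated form. I do not expect a genuine obstacle at this stage, since the heavy lifting was done in Corollary \ref{cor:only one bubble in the left of r^+} and Corollary \ref{cor: bubbles lie on the left of r^+}; the only point requiring care is the bookkeeping of which window constant is legitimate in the latter, and once one observes that its proof only needs $H_i(R)<\chi_{\alpha_0}(1)$, both exclusions reduce to a single comparison of the bubble profile against the monotone structure of $h(\cdot,T_n)$.
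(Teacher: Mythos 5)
Your proposal is correct and follows essentially the same route as the paper: both arguments reduce, via Remark \ref{rmk:recovering the bubbles}, Corollary \ref{cor:comparison of h with itself and exact value of limsup}, and the earlier $\liminf=0$, $\limsup=\pi$ results, to the two profiles $2\arctan((\alpha r)^k)$ and $\pi-2\arctan((\alpha r)^k)$, and both then derive a contradiction from the pair of structural facts in Lemma \ref{lemma: lower bound for h on the right of r^+}/Corollary \ref{cor: bubbles lie on the left of r^+} (small values of $h$ force the radius below $r^+$) and Corollary \ref{cor:only one bubble in the left of r^+} (monotone crossing structure of $h(\cdot,t)$ on $[0,r^+(t)]$ for $t\ge t_\beta^*$). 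Your formulation is arguably a bit more transparent: rather than the paper's ``achieves all values in $(\varepsilon,\pi-\varepsilon)$ twice on two disjoint intervals, one of which must lie to the right of $r^+$'', you directly exhibit two radii $a_n<b_n<r^+(T_n)$ with $h(a_n,T_n)>\pi/2>h(b_n,T_n)$ and read off the contradiction from $V(T_n)=\emptyset$, handling the single reverse bubble and the $p\ge2$ case with the same template. One small imprecision: the descriptors ``$A$ large'' and ``$\delta$ small'' are only literally correct when the relevant $H_i$ is the forward profile; for a reverse profile the monotonicity flips, so $A$ would need to be small and $\delta$ large. The argument is unaffected, since all that is used is the existence of some fixed $A$ with $H_1(A)>\pi/2$ and some fixed $\delta$ with $H_2(\delta)<\chi_{\alpha_0}(1)$, together with $a_n/b_n=(A/\delta)R_n^{(1)}/R_n^{(2)}\to0$; you should simply drop the size qualifiers.
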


\begin{remark}
   The sign comes from the fact that, without loss of generality, one replaces $h$, $(v_1,v_2,v_3)$ by $-h$, $(-v_1,-v_2,v_3)$ if needed to get (\ref{eq: sequence of h larger than pi}). Moreover, inclination coordinates are unique up to a multiple of $2\pi$. Up until now, we have always considered inclination coordinates chosen so that $h(0,t) = 0$.
\end{remark}
\begin{proof}
     Suppose for a contradiction that there are at least two harmonic maps (i.e. $p \geq 2$ in the statement of Theorem \ref{thm: main theorem, nonexistence bubble trees}) and order their scales so that $R_n^{(1)} \ll R_n^{(2)} \to 0^+$. Thanks to the Sobolev embedding $H^{1,2}_{loc}((0,+\infty)) \hookrightarrow C^{0,1/4}_{loc}((0,+\infty))$, $h(R_n^{(i)}r,T_n)$ converges uniformly on any compact set to a limit $H_{\infty}^{(i)} \in C^{0}((0,+\infty))$. One must have
\begin{equation*}
    \omega_i(re^{i\theta}) = \left(e^{ik \theta} \sin H_{\infty}^{(i)}(r), \cos H_{\infty}^{(i)}(r) \right)  
\end{equation*}
 by pointwise convergence $v(R_n^{(i)}x,T_n) \to \omega_i(x)$ given by the statement of Theorem \ref{thm: main theorem, nonexistence bubble trees}. But Lemma \ref{k-equivariance, smoothness of h} shows that 
\begin{equation*}
    \omega_i(re^{i\theta}) = \left(e^{ik \theta} \sin H^{(i)}(r), \cos H^{(i)}(r) \right) 
\end{equation*}
for some lifting $H^{(i)} \in C^{\infty}([0,+\infty))$. Hence, $H_{\infty}^{(i)}(r)$ and $H^{(i)}(r)$ must differ by a fixed multiple of $2\pi$ and $H_{\infty}$ is smooth as well. Repeating the ODE argument that lies beneath equation (\ref{eq: H vs H_infty, 1}) in the proof of Theorem \ref{thm:smoothness around all but finitely many points} shows that $H_{\infty}^{(i)}(r)$ is of the form (\ref{inclination coordinate of harmonic map}). In other words, there are some parameters for which
    $$
    h(R_n^{(i)}r,T_n) \to H_{\infty}^{(i)}(r) = m_i \pi + \varepsilon_i \cdot 2 \arctan \left( (\alpha_i r)^k \right), \quad m_i \in \mathbb Z, \varepsilon_i \in \{-1,1\}, \alpha_i > 0,
    $$
    uniformly on compact subsets of $(0,+\infty)$. Replacing $R_n^{(i)}$ by $R_n^{(i)}\alpha_i^{-1}$, one can assume that $\alpha_i = 1$.
    
    As 
    $$
\liminf_{\substack{r \to 0^+ \\ t \to T^-}}h(r,t) = 0, \quad \limsup_{\substack{r \to 0^+ \\ t \to T^-}}h(r,t) = \pi,
    $$
    the only possibilities for the limits $H_{\infty}^{(i)}$ are
    $$
 \pi -  2 \arctan (r^k), \quad 2 \arctan (r^k),
    $$
    where it might a priori be the case that we get the same stationary solution for both scales $R_n^{(1)}$ and $R_n^{(2)}$. 



    
   Fix $\pi/2 > \varepsilon > 0$ small enough so that $\varepsilon < \sup_{r \in [0,1]} \chi_{\alpha_0}(r)$, where $\alpha_0$ is as in Lemma \ref{lemma: lower bound for h on the right of r^+}. In case of a limiting map $2\arctan(r^k)$, for $R_n^{(i)}$, one chooses $0 < r_0 < r_1 < +\infty$ so that $2\arctan(r_0^k) < \varepsilon$ and $2\arctan(r_1^k) > \pi-\varepsilon$ and $N(\varepsilon)$ large enough so that $h(r_0R_n^{(i)}, T_n) < \varepsilon$ and $h(r_1R_n^{(i)}, T_n) > \pi - \varepsilon$ for all $n \geq N(\varepsilon)$. We proceed similarly for a limiting map $\pi - 2 \arctan(r^k)$. Hence, for all $n \geq N(\varepsilon)$, $h(r,T_n)$ achieves all the values inbetween $(\varepsilon, \pi-\varepsilon)$ twice, once inside an interval $[r_0R_n^{(1)},r_1R_n^{(1)}]$ and once inside the interval $[r_2R_n^{(2)},r_3R_n^{(2)}]$. As $r_i = r_i(\varepsilon)$ does not depend on $n$, for $n$ large enough, $r_1R_n^{(1)} < r_2 R_n^{(2)}$, i.e.,  the intervals $[r_0R_n^{(1)},r_1R_n^{(1)}]$, $[r_2R_n^{(2)},r_3R_n^{(2)}]$ are disjoint. By Corollary \ref{cor:only one bubble in the left of r^+}, they cannot both lie on the left of $r^{+}(T_n)$, as otherwise, they would both contain $r^{-}(T_n)$, the only possible point for which $h(r,T_n) = \pi/2$ on $(0,r^+(T_n))$. 
   
   Hence, there is at least one interval lying on the right of $r^+(T_n)$. By Lemma \ref{lemma: lower bound for h on the right of r^+}, $h(r,T_n) \geq \chi_{\alpha_0}(1) > \varepsilon$ on $(r^{+}(T_n),1]$, i.e., $h$ cannot achieve all the values $(\varepsilon, \chi_{\alpha_0}(1)) \subset (\varepsilon, \pi-\varepsilon)$ on that side. This is a contradiction.

    Finally, we cannot have a single harmonic map $\omega_1$ with inclination coordinate $\pi - 2 \arctan( r^k)$. Indeed, if $R_n^{(1)}$ and $T_n^{(1)}$ are the corresponding sequences, then 
 we would have $[r_0R_n^{(1)},r_1R_n^{(1)}] \subset [0,r^+(T_n)]$ at $n = N(\varepsilon)$, i.e., the interval must lie on the left of $r^+$ (otherwise it would contradict Lemma \ref{lemma: lower bound for h on the right of r^+} as above), as well as $h(0, T_n) = 0$, $h(r_0R_n^{(1)},T_n) > \pi - \varepsilon > \pi/2$, $h(r_1R_n^{(1)},T_n) = \varepsilon < \pi/2$, contradicting Corollary \ref{cor:only one bubble in the left of r^+}.

\end{proof}




    

\renewcommand{\theHsection}{A\arabic{section}}
\numberwithin{equation}{section}
\renewcommand{\theHequation}{\theHsection.\arabic{equation}}
  \appendix

\section{Calderon-Zygmund estimates ($L^p$-estimates) for the heat equation}
In this appendix, we present the so-called Calderon-Zygmund estimates for the linear heat equation. The goal is to estimate $||D^2_x u||_{L^p_{x,t}}$ in terms of $u$ and $u_t - \Delta u$. The global estimate can be proved using the generalized theory of Calderon-Zygmund operators in $\mathbb R^{n+1}$, as in \cite[Chapter 7]{LemariRieusset2002RecentDI}, which we then localize (Theorem \ref{thm:Interior L^p Estimates for the Heat Equation}). Before that, we recall the concept of distributional solution to the heat equation, which will allow us to derive the $L^p$ estimates without assuming any a priori differentiability on the solution. We also study what kind of regularity is needed on a global solution to ensures that it is given by convolution with the Gaussian heat kernel
$$
G(x,t) = \frac{1}{(4\pi t)^{n/2}}e^{-\frac{|x|^2}{4t}}.
$$
The usual criterion (given for example in \cite{evans10}) is that the solution should not grow faster (in sup norm) than some exponential. We prove that boundedness of a mixed norm $L^{\infty}_{t,x}((0,T);L^p(\mathbb R^n))$ (and other related criterions) also ensures the same result (Lemma \ref{lemma:uniqueness heat equation}).

\begin{definition}[Distributional Solution]\label{def:distributional solution}
    Let $\Omega \subset \mathbb R^{n+1}$ be open and $f \in \mathcal{D}'(\Omega)$ be a distribution. We say that $u \in \mathcal{D}'(\Omega)$ is a distributional solution to the linear heat equation
\begin{equation}
u_t - \Delta u = f(x,t), \quad (x,t) \in \Omega, \label{eq:heat equation distribution sense}
\end{equation}
if
$$
\langle u_t - \Delta u - f, \phi \rangle = 0
$$
for any test functions $\phi(x,t) \in C^{\infty}_c(\Omega)$.
\end{definition}

\begin{lemma}[Gaussian Convolution Estimates for Inhomogeneous Problem] \label{lemma:gaussian convolution estimate}
    Let $1 \leq p \leq +\infty, 0 < T < +\infty$, $f \in L^p(\mathbb R^n \times (0,T))$. The convolution with the heat kernel
   \begin{equation}
           u(x,t) = \int_{0}^t \int_{\mathbb R^n} G(y,s)f(x-y,t-s)dyds, \quad x \in \mathbb R^n, t \in [0,T) \label{gaussian convolution solution}
   \end{equation}
    is a well-defined $C^0([0,T);L^p_x(\mathbb R^n))$-function which solves the heat equation (\ref{eq:heat equation distribution sense}) in the sense of distributions. Moreover, there exists $C = C(n)$ such that 
\begin{align}
     ||u||_{L^p(\mathbb R^n \times (0,t))}&\leq C t ||f||_{L^p(\mathbb R^n \times (0,t))}, \label{Gaussian heat kernel convolution, young inequality} \\
     ||\nabla_xu||_{L^p(\mathbb R^n \times (0,t))} &\leq C \sqrt{t} ||f||_{L^p(\mathbb R^n \times (0,t))}, \label{Gaussian heat kernel convolution, young inequality gradient} 
 \end{align}
 for all $t \in (0,T]$, where $\nabla_x u$ is a distributional (hence weak) derivative, and
 \begin{equation}
     ||u(\cdot,t)||_{L^p(\mathbb R^n)} \leq Ct^{1-\frac{1}{p}}||f||_{L^p(\mathbb R^n \times (0,t))} \label{eq: weak continuity of u(t,x), bound}
 \end{equation}
 for almost every $t \in (0,T)$. If $p = +\infty$, one actually has $u(x,t) \in C^0([0,T) \times \mathbb R^n)$.
\end{lemma}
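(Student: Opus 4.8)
The plan is to view $u$ as a Duhamel-type integral against the heat semigroup and to reduce every claim to convolution inequalities for the Gaussian. Write $S(\sigma)g := G(\cdot,\sigma)\ast_x g$, where $G(y,\sigma)=(4\pi\sigma)^{-n/2}e^{-|y|^2/(4\sigma)}$, so that $\|G(\cdot,\sigma)\|_{L^1(\mathbb R^n)}=1$ and, from $\nabla_y G=-\tfrac{y}{2\sigma}G$, $\|\nabla_y G(\cdot,\sigma)\|_{L^1(\mathbb R^n)}=c_n\sigma^{-1/2}$ for a dimensional constant $c_n$; the change of variables $\tau=t-s$ rewrites (\ref{gaussian convolution solution}) as $u(\cdot,t)=\int_0^t S(t-\tau)f(\cdot,\tau)\,d\tau$. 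Since $\|S(\sigma)\|_{L^p_x\to L^p_x}\le 1$, Minkowski's integral inequality gives $\|u(\cdot,t)\|_{L^p_x}\le\int_0^t\|f(\cdot,\tau)\|_{L^p_x}\,d\tau$; the right-hand side is finite for every $t\le T$ because $\tau\mapsto\|f(\cdot,\tau)\|_{L^p_x}$ lies in $L^p(0,T)\subset L^1(0,T)$ (here $T<\infty$ is used), which makes $u(\cdot,t)$ a well-defined element of $L^p_x$, and Hölder's inequality in $\tau$ with exponents $p$ and $p/(p-1)$ turns this into $t^{1-1/p}\|f\|_{L^p(\mathbb R^n\times(0,t))}$, i.e. (\ref{eq: weak continuity of u(t,x), bound}) with $C=1$.

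For the space-time bounds I would fix $t\in(0,T]$, extend $f$ by zero outside $\mathbb R^n\times(0,t)$, and observe that on $\mathbb R^n\times(0,t)$ the function $u$ coincides with the full space-time convolution of $f$ with the truncated kernel $G(y,s)\mathbf{1}_{\{0<s<t\}}$, whose $L^1(\mathbb R^{n+1})$-norm equals $\int_0^t\|G(\cdot,s)\|_{L^1(\mathbb R^n)}\,ds=t$; Young's inequality then yields (\ref{Gaussian heat kernel convolution, young inequality}). Likewise the distributional derivative $\nabla_x u$ coincides on $\mathbb R^n\times(0,t)$ with the space-time convolution of $f$ with $\nabla_y G(y,s)\mathbf{1}_{\{0<s<t\}}$, whose $L^1(\mathbb R^{n+1})$-norm equals $\int_0^t c_n s^{-1/2}\,ds=2c_n\sqrt t$, and Young's inequality both shows that $\nabla_x u$ is represented by an $L^p$ function and gives (\ref{Gaussian heat kernel convolution, young inequality gradient}).

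Next I would verify that $u$ solves the heat equation distributionally, i.e. $\langle u,-\partial_t\phi-\Delta\phi\rangle=\langle f,\phi\rangle$ for all $\phi\in C_c^\infty(\mathbb R^n\times(0,T))$. Substituting (\ref{gaussian convolution solution}) and applying Fubini --- legitimate because, by (\ref{Gaussian heat kernel convolution, young inequality}) applied to $|f|$ and Hölder against the compactly supported $\phi$, the four-fold integral of absolute values is finite --- then changing variables $(x,t)=(z+y,r+s)$ reduces the inner integral to $-\int_0^{\infty}\!\int_{\mathbb R^n}G(y,s)\bigl(\partial_s\psi+\Delta_y\psi\bigr)(y,s)\,dy\,ds$ with $\psi(y,s)=\phi(z+y,r+s)\in C_c^\infty(\mathbb R^{n+1})$, which equals $\psi(0,0)=\phi(z,r)$ because $G$ (extended by zero for $s\le 0$) is the fundamental solution of $\partial_t-\Delta$; integrating in $(z,r)$ gives $\langle f,\phi\rangle$.

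Finally, for the continuity statement: $\|u(\cdot,t)\|_{L^p_x}\le t^{1-1/p}\|f\|_{L^p(\mathbb R^n\times(0,t))}\to 0$ as $t\to 0^+$ by absolute continuity of the Lebesgue integral, so $u$ extends continuously with $u(\cdot,0)=0$; and at $t_0\in(0,T)$ I would split $u(\cdot,t)-u(\cdot,t_0)=\int_{t_0}^{t}S(t-\tau)f(\cdot,\tau)\,d\tau+\int_0^{t_0}\bigl[S(t-\tau)-S(t_0-\tau)\bigr]f(\cdot,\tau)\,d\tau$, bound the first term by $\int_{t_0}^{t}\|f(\cdot,\tau)\|_{L^p_x}\,d\tau\to 0$ and the second by dominated convergence, using $\|[S(t-\tau)-S(t_0-\tau)]f(\cdot,\tau)\|_{L^p_x}\le\|G(\cdot,t-\tau)-G(\cdot,t_0-\tau)\|_{L^1(\mathbb R^n)}\|f(\cdot,\tau)\|_{L^p_x}\to 0$ for each fixed $\tau<t_0$ (the Gaussians converge in $L^1$ as the time parameter tends to a positive limit, by Scheffé's lemma), the integrand being dominated by $2\|f(\cdot,\tau)\|_{L^p_x}\in L^1(0,t_0)$. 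For $p=\infty$ the identical splitting and dominated-convergence argument works pointwise in $(x,t)$, since $(x,t)\mapsto S(t-\tau)f(\cdot,\tau)(x)$ is continuous wherever $t-\tau>0$ because $G$ is smooth there; combined with the uniform Lipschitz-in-$x$ bound $\|\nabla_x u(\cdot,t)\|_{L^\infty_x}\le 2c_n\sqrt t\,\|f\|_{L^\infty}$ this upgrades to joint continuity on $[0,T)\times\mathbb R^n$. The only points requiring genuine care are the Fubini justification in the distributional step and the bookkeeping in the dominated-convergence arguments for continuity; no deeper obstruction is expected, as everything rests on Young's, Minkowski's and Hölder's inequalities together with elementary properties of the Gaussian.
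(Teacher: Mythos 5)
Your proof is correct and takes a genuinely different route from the paper's for the continuity and pointwise claims. For the two Young-type bounds (\ref{Gaussian heat kernel convolution, young inequality})--(\ref{Gaussian heat kernel convolution, young inequality gradient}) and the distributional solvability you essentially match the paper. For the weak derivative, the paper establishes $\nabla_x u=(\nabla_y G\, 1_{\{0<s<t\}})\ast f$ only for compactly supported $f$ and then closes by an $L^p$-density argument; you assert the identity directly, which is fine --- the same Fubini manipulation you use in the distributional step justifies it for any $f\in L^p$ tested against $\phi\in C^\infty_c$ --- but this is the one place in your argument where a distributional identity is invoked rather than computed, and it deserves a sentence. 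The larger divergence is in the $C^0([0,T);L^p_x)$ claim and the pointwise bound (\ref{eq: weak continuity of u(t,x), bound}): the paper approximates $f$ by $C^\infty_c$ functions, derives a modulus of continuity for the approximants from continuity of translation in $L^p$, extracts an a.e.-convergent subsequence, and passes to the limit to exhibit a continuous representative; you instead realize $u(\cdot,t)=\int_0^t S(t-\tau)f(\cdot,\tau)\,d\tau$ (with $S(\sigma)$ the free heat semigroup) as a Bochner integral in $L^p_x$ for every $t$ via Minkowski plus $\|f(\cdot,\cdot)\|_{L^p_x}\in L^p(0,T)\subset L^1(0,T)$, which delivers (\ref{eq: weak continuity of u(t,x), bound}) with $C=1$ at once, and prove continuity of $t\mapsto u(\cdot,t)$ by the splitting $\int_{t_0}^t+\int_0^{t_0}\bigl[S(t-\tau)-S(t_0-\tau)\bigr]$ together with dominated convergence and Scheff\'e's lemma for the Gaussians. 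Your version avoids any approximation of $f$, gives the bound and the $L^p_x$-valued definition of $u(\cdot,t)$ for every $t$ rather than only almost every $t$, and is a cleaner, more self-contained argument for the same conclusion.
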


\begin{proof}
Observe that
$$
 u =  [G \cdot 1_{(0,T)}] *_{x,t} [f \cdot 1_{(0,T)}]
 $$
 is well-defined and measurable. It follows from Young's Convolution inequality that
 \begin{align*}
     ||u||_{L^p(\mathbb R^n \times (0,T))}&\leq 
||G||_{L^1(\mathbb R^n \times (0,T))} ||f||_{L^p(\mathbb R^n \times (0,T))}  \leq ||f||_{L^p(\mathbb R^n \times (0,T))}.
\ \end{align*}
One checks that $u(x,t)$ solves the heat equation in the sense of distributions using Fubini's theorem.
 
 As for the weak derivative, the convolution makes sense as a convolution of distributions if $f$ has compact support. Hence, we can differentiate and apply Young's inequality once again to obtain 
 $$
 ||\nabla_x u||_{L^p(\mathbb R^n \times (0,T))} \leq ||\nabla_x G||_{L^1(\mathbb R^n \times (0,T))} ||f||_{L^p(\mathbb R^n \times (0,T))}
$$
since
 $$
 ||\nabla_x G(\cdot,t)||_{L^p(\mathbb R^n)} \sim t^{-(n/2)(1-1/p)-1/2}, \quad 1 \leq p \leq +\infty.
 $$
 If $f$ does not compact support, one can approximate $f$ by a sequence of $L^p(\mathbb R^n \times (0,T))$-functions $(f_n)_{n \geq 1}$ with compact support. The sequence of corresponding solutions $(u_n)_{n \geq 1}$ converge to $u$ in $L^p$, as
 $||u_n-u||_{L^p(\mathbb R^n \times (0,T))} \lesssim T||f_n-f||_{L^p(\mathbb R^n \times (0,T))}$
 thanks to (\ref{Gaussian heat kernel convolution, young inequality}) The sequence of weak derivative is Cauchy in $L^p$ thanks to (\ref{Gaussian heat kernel convolution, young inequality gradient}) and must converge to $\nabla u$ in the sense of distributions, hence in $L^p$ as well.

Next, we prove (\ref{eq: weak continuity of u(t,x), bound}). If $f(x,t) \in C^{\infty}_c(\mathbb R^n \times (0,T))$, one can first apply Young Convolution Inequality with respect to $x$ for fixed $0 < s < t < T$,
 $$
\bigg| \bigg| \int_{\mathbb R^n}G(y,s)f(x-y,t-s)dy \bigg| \bigg|_{L^p(\mathbb R^n)} \leq ||G(\cdot,s)||_{L^{1}(\mathbb R^n)} ||f(\cdot,t-s)||_{L^p(\mathbb R^n)},
 $$
 and then for fixed $0 <t < T$, use Minkowski's inequality,
\begin{align}
   ||u(\cdot,t)||_{L^p(\mathbb R^n)} &\leq  \bigg| \bigg|\int_{0}^{T}   \int_{\mathbb R^n}G(y,s)f(x-y,t-s)1_{(0,T)}(t-s)dy  ds \bigg| \bigg|_{L^p(\mathbb R^n)}\notag  \\
   &\leq \int_{0}^{T} \bigg| \bigg| \int_{\mathbb R^n}G(y,s)f(x-y,t-s)1_{(0,T)}(t-s)dy \bigg| \bigg|_{L^p(\mathbb R^n)} ds \notag \\
    &\leq \int_0^{T} ||f(\cdot,t-s)1_{(0,T)}(t-s)||_{L^p(\mathbb R^n)} ds = \int_0^{t} ||f(\cdot,t-s)||_{L^p(\mathbb R^n)} ds \notag \\
    &\leq t^{1-\frac{1}{p}} ||f||_{L^p(\mathbb R^n \times (0,t))}. \label{eq: heat equation, inhomogeneous problem, pointwise bound for fixed time}
\end{align}
One also finds
\begin{align}
     ||u(\cdot,t_1)-u(\cdot,t_2)||_{L^p(\mathbb R^n)} \leq \int_0^{T} ||f(\cdot,t_1-s)1_{(0,T)}(t_1-s) - f(\cdot,t_2-s)1_{(0,T)}(t_2-s)||_{L^p(\mathbb R^n)} ds. \label{eq: heat equation, inhomogeneous problem, time continuity in L^p}
\end{align}

More generally, if $f(x,t) \in L^p(\mathbb R^n \times (0,T))$, one has a solution $u(x,t) \in L^p(\mathbb R^n \times (0,T))$ as before. In particular, $u(\cdot,t) \in L^p(\mathbb R^n)$ is measurable and $L^p$ for almost every $t \in (0,T)$ by Fubini. One can approximate $f$ in $L^p$ by a sequence of $C^{\infty}_c(\mathbb R^n \times (0,T))$-functions $(f_n)_{n \geq 1}$ and the sequence of corresponding solutions $(u_n)_{n \geq 1}$ converge to $u$ in $L^p$. In other words,
$$
||u_n-u||_{L^p(\mathbb R^n \times (0,T))} = \big|\big| ||u_n-u||_{L^p_x(\mathbb R^n)} \big|\big|_{L^p_t(\mathbb R^n)} \to 0,
$$
meaning that, if $p < +\infty$, along a subsequence (still denoted with index $n$), $||u_n(\cdot,t)|| \to ||u(\cdot,t)||_{L^p_x(\mathbb R^n)}$ for almost every $t \in (0,T)$. Hence, one can pass to the limit in (\ref{eq: heat equation, inhomogeneous problem, pointwise bound for fixed time}) for almost every $t$. 

Similarly, one can pass to the limit in (\ref{eq: heat equation, inhomogeneous problem, time continuity in L^p}) for almost every $0 < t_1, t_2 < T$. As the translation operator in $L^p$ is continuous, one deduces that, given $\varepsilon > 0$, there is a $\delta > 0$ such that for almost every $0 < t_1, t_2 < T$, if $|t_1-t_2| < \delta$, then 
$$
||u(\cdot,t_1)-u(\cdot,t_2)||_{L^p(\mathbb R^n)} \leq \varepsilon,
$$
which proves that $u(x,t)$ has a $C^0((0,T);L^p(\mathbb R^n))$ representative.

If $p = +\infty$, the bound (\ref{eq: weak continuity of u(t,x), bound}) follows from (\ref{Gaussian heat kernel convolution, young inequality}) and continuity follows from Dominated Convergence.


\end{proof}

\begin{lemma}[Gaussian Convolution Estimates for Homogeneous Problem] \label{lemma:gaussian convolution estimate, homogeneous problem}
    Let $1 \leq p \leq +\infty$, $g \in L^p(\mathbb R^n)$. The convolution with the heat kernel
   \begin{equation}
           u(x,t) = \int_{\mathbb R^n} G(y,t)g(x-y)dy, \quad x \in \mathbb R^n, t \in [0,+\infty) \label{gaussian convolution solution, homogeneous problem}
   \end{equation}
    is a well-defined $C^0([0,+\infty);L^p_x(\mathbb R^n))$-function which solves the heat equation (\ref{eq:heat equation distribution sense}) with zero forcing term $f = 0$ in the sense of distributions. Moreover, there exists $C = C(n)$ such that 
\begin{align*}
     ||u(\cdot,t)||_{L^p(\mathbb R^n)}&\leq C ||g||_{L^p(\mathbb R^n)},\\
     ||\nabla_x u(\cdot,t)||_{L^p(\mathbb R^n)}&\leq C t^{-\frac{1}{2}} ||g||_{L^p(\mathbb R^n)},
 \end{align*}
 for all $t > 0$, where $\nabla_x u$ is a distributional (hence weak) derivative.
 
Moreover, if $p < +\infty$ or if $g \in C^0_b(\mathbb R^n)$ in case $p = +\infty$, one has
 \begin{equation}
    \lim \limits_{t \to 0} ||u(\cdot,t)-g(\cdot)||_{L^p(\mathbb R^n)} = 0.
\end{equation}
\end{lemma}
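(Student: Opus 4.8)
The plan is to build everything on Young's convolution inequality together with the elementary facts that $G(\cdot,t)\ge 0$, $\|G(\cdot,t)\|_{L^1(\mathbb R^n)}=1$ for every $t>0$, and $\|\nabla_x G(\cdot,t)\|_{L^1(\mathbb R^n)}=c_n\,t^{-1/2}$, the latter following from $\nabla_x G(y,t)=-\tfrac{y}{2t}G(y,t)$ and a direct Gaussian integral. First I would fix $t>0$ and note that, since $G(\cdot,t)$ and all its $x$-derivatives lie in $L^1(\mathbb R^n)\cap L^\infty(\mathbb R^n)$ and decay rapidly, the convolution $u(\cdot,t)=g\ast G(\cdot,t)$ is well defined for $g\in L^p$, belongs to $C^\infty(\mathbb R^n)$, and satisfies $\nabla_x u(\cdot,t)=g\ast\nabla_x G(\cdot,t)$ with the derivative understood classically (hence weakly). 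Young's inequality then gives $\|u(\cdot,t)\|_{L^p(\mathbb R^n)}\le\|G(\cdot,t)\|_{L^1(\mathbb R^n)}\|g\|_{L^p(\mathbb R^n)}=\|g\|_{L^p(\mathbb R^n)}$ and $\|\nabla_x u(\cdot,t)\|_{L^p(\mathbb R^n)}\le\|\nabla_x G(\cdot,t)\|_{L^1(\mathbb R^n)}\|g\|_{L^p(\mathbb R^n)}=c_n\,t^{-1/2}\|g\|_{L^p(\mathbb R^n)}$, i.e. the two claimed estimates with $C=C(n)$.

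Next, to see that $u$ solves the heat equation with zero forcing in the sense of Definition \ref{def:distributional solution} on $\mathbb R^n\times(0,+\infty)$, I would differentiate under the integral sign: the bounds on $\partial_t G$ and $D^2_x G$ are locally uniform for $t$ in compact subsets of $(0,+\infty)$, so $\partial_t u(\cdot,t)=g\ast\partial_t G(\cdot,t)$, and since $G$ solves $\partial_t G=\Delta G$ classically for $t>0$ we get $\partial_t u=g\ast\Delta G(\cdot,t)=\Delta\big(g\ast G(\cdot,t)\big)=\Delta u$. Hence $u\in C^\infty(\mathbb R^n\times(0,+\infty))$ and $u_t-\Delta u=0$ pointwise, a fortiori distributionally after pairing against a test function $\phi\in C^\infty_c(\mathbb R^n\times(0,+\infty))$ and integrating by parts.

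Finally, for the time-continuity with values in $L^p$: at any $t>0$ write $u(\cdot,t)-u(\cdot,t')=g\ast\big(G(\cdot,t)-G(\cdot,t')\big)$, so $\|u(\cdot,t)-u(\cdot,t')\|_{L^p(\mathbb R^n)}\le\|g\|_{L^p(\mathbb R^n)}\|G(\cdot,t)-G(\cdot,t')\|_{L^1(\mathbb R^n)}\to 0$ as $t'\to t$ by dominated convergence, which gives $u\in C^0((0,+\infty);L^p(\mathbb R^n))$. For continuity at $t=0$, I would use $\int_{\mathbb R^n}G(y,t)\,dy=1$ to write $u(x,t)-g(x)=\int_{\mathbb R^n}G(y,t)\big(g(x-y)-g(x)\big)\,dy$, apply Minkowski's integral inequality to obtain $\|u(\cdot,t)-g\|_{L^p(\mathbb R^n)}\le\int_{\mathbb R^n}G(y,t)\,\varpi(y)\,dy$ with $\varpi(y):=\|g(\cdot-y)-g\|_{L^p(\mathbb R^n)}\le 2\|g\|_{L^p(\mathbb R^n)}$, and then use $\varpi(y)\to 0$ as $y\to 0$ — by continuity of translations in $L^p$ when $p<+\infty$, and by uniform continuity of $g$ (the feature of $g\in C^0_b(\mathbb R^n)$ being used here) when $p=+\infty$. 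Rescaling through $G(y,t)=t^{-n/2}G(y/\sqrt t,1)$ and substituting $y=\sqrt t\,z$ turns the bound into $\int_{\mathbb R^n}G(z,1)\,\varpi(\sqrt t\,z)\,dz$, which vanishes as $t\to 0^+$ by dominated convergence; combined with the previous step this gives $u\in C^0([0,+\infty);L^p(\mathbb R^n))$. I expect the only genuinely delicate point to be this $p=+\infty$ endpoint at $t=0$, where strong $L^\infty$-convergence really needs (uniform) continuity of $g$ rather than mere boundedness; all the remaining steps are routine applications of Young's and Minkowski's inequalities together with the standard Gaussian bounds on $G$ and its derivatives.
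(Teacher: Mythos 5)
Your proof is correct and takes essentially the same approach as the paper, whose own proof is a one-liner invoking Young's convolution inequality for the two estimates and the approximate-identity property of $G(\cdot,t)$ for the $t\to0$ convergence; you have simply supplied the standard details (the scaling of $\|\nabla_x G(\cdot,t)\|_{L^1}$, differentiation under the integral sign for $t>0$, dominated convergence for $\|G(\cdot,t)-G(\cdot,t')\|_{L^1}$, and the Minkowski/translation-continuity argument at $t=0$).

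One caveat on your final remark: you correctly isolate that the $p=+\infty$ endpoint needs uniform continuity of $g$, but your parenthetical suggests you read this as a consequence of $g\in C^0_b(\mathbb R^n)$. Mere boundedness and continuity is not enough. For instance, $g(x)=\cos(|x|^2)$ lies in $C^0_b(\mathbb R^n)$, yet an explicit Gaussian integral shows $|u(x,t)|\le C(t)\,e^{-4t|x|^2/(1+16t^2)}\to 0$ as $|x|\to\infty$ for each fixed $t>0$, while $g$ keeps oscillating with amplitude $1$; hence $\|u(\cdot,t)-g\|_{L^\infty}\ge 1/2$ for all small $t>0$ and the asserted convergence fails. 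So the lemma as written is only correct if $C^0_b$ is understood to mean \emph{bounded uniformly continuous} (or if the conclusion is weakened to locally uniform convergence, which is all that the application in Lemma \ref{lemma:uniqueness heat equation} actually requires, since there only the weak, i.e.\ distributional, attainment of the initial datum is used). This is a latent issue in the statement of the lemma itself rather than an error introduced by your argument, but it is worth making explicit rather than folding into the definition of $C^0_b$.
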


\begin{proof}
    The estimates follow from Young Convolution's inequality, as in Lemma \ref{lemma:gaussian convolution estimate}. The continuity and convergence $u(x,t) \to g(x)$ as $t \to 0$ follows from the fact that $G(x,t)$ is an approximate identity.
\end{proof}

\begin{lemma}[Uniqueness for the initial-value problem] \label{lemma:uniqueness heat equation}
    Let $0 < T < +\infty$, $f \in \mathcal{D}'(\mathbb R^n \times (0,T))$, $g \in \mathcal{D}'(\mathbb R^n)$. The initial-value problem
    \begin{align*}
        u_t - \Delta u &= f(x,t), \quad (x,t) \in \mathbb R^n \times (0,T),  \\
        u(x,0) &= g(x), \quad x \in \mathbb R^n, \notag 
    \end{align*}
    has exactly one solution (in the sense of distributions) of class $L^{\infty}_{t,x}((0,T);L^p(\mathbb R^n))$ with $1 < p < +\infty$, of class $L^p(\mathbb R^n \times (0,T))$ with $1 < p < +\infty$ and $u(x,t) = g(x)$ for all $t$ close to $t = 0$, and of class $C^{0}_b(\mathbb R^n \times [0,T))$, where the initial condition is understood as follows
    $$
    \lim \limits_{t \to 0^+} \langle u(\cdot,t), \phi \rangle = \langle g, \phi \rangle \quad \forall \phi \in C^{\infty}_c(\mathbb R^n).
    $$
    If $f \in L^p(\mathbb R^n \times (0,T))$ and $g \in L^p(\mathbb R^n)$, resp. $f \in C^0_b(\mathbb R^n \times (0,T))$ and $g \in C^0_b(\mathbb R^n)$, such solution is given by convolution of $f$ and $g$ with the heat kernel
      \begin{equation}
           u(x,t) = \int_{\mathbb R^n} G(y,t)g(x-y)dy + \int_{0}^t \int_{\mathbb R^n} G(y,s)f(x-y,t-s)dyds, \quad x \in \mathbb R^n, t \in [0,T). \label{gaussian convolution solution, full problem}
   \end{equation}
\end{lemma}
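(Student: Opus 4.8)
The existence half is immediate: for $f\in L^p(\mathbb R^n\times(0,T))$, $g\in L^p(\mathbb R^n)$ (resp. $f,g\in C^0_b$), Lemmas \ref{lemma:gaussian convolution estimate} and \ref{lemma:gaussian convolution estimate, homogeneous problem} already show that the right-hand side of (\ref{gaussian convolution solution, full problem}) defines a function lying in $C^0([0,T);L^p(\mathbb R^n))\subset L^\infty_{t,x}((0,T);L^p(\mathbb R^n))$ (resp. in $C^0_b(\mathbb R^n\times[0,T))$), which solves the heat equation in the sense of distributions and attains $g$ in the stated sense. So the real content is \emph{uniqueness}, and since each of the three classes is a vector space it suffices to show: if $w$ lies in one of them, solves $w_t-\Delta w=0$ in $\mathcal D'(\mathbb R^n\times(0,T))$, and has vanishing initial data in the corresponding sense (for the $L^p(\mathbb R^n\times(0,T))$-class this means $w(x,t)=0$ for $t$ near $0$), then $w\equiv0$.

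The plan for uniqueness is a duality argument against the backward heat equation. First I would note that $w\in C^\infty(\mathbb R^n\times(0,T))$ by interior parabolic regularity (Corollary \ref{cor: local parabolic sobolev embedding} and a Schauder iteration as in Theorem \ref{thm:resgularty solution nonlinear pb}, the forcing term being $0$), and that the same interior estimates bound $w$ and $\nabla_x w$ on $\mathbb R^n\times[\delta,T']$ by a constant times $\|w\|$ (in whichever of the $L^p$, $L^p(\mathbb R^n\times(0,T))$ or $C^0_b$ norm) for every $0<\delta<T'<T$. Given $\phi\in C^\infty_c(\mathbb R^n\times(0,T))$ with $\supp\phi\subset\mathbb R^n\times[a,b]$, $0<a\le b<T$, I would introduce
$$
\psi(x,t)=\int_t^T\int_{\mathbb R^n}G(y,s-t)\,\phi(x-y,s)\,dy\,ds ,\qquad t\in[0,T];
$$
reversing time turns this into a Duhamel formula, so $-\psi_t-\Delta\psi=\phi$ on $\mathbb R^n\times(0,T)$ with $\psi(\cdot,T)=0$, and moreover $\psi(\cdot,t)=0$ for $t\ge b$ while $\psi,\psi_t,\nabla_x\psi,D^2_x\psi$ are smooth and bounded on $\mathbb R^n\times[0,T]$ with each $\psi(\cdot,t)$ rapidly (Gaussian) decaying in $x$, uniformly in $t$ (this is the Gaussian tail of $G(\cdot,s-t)\ast\phi(\cdot,s)$). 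For $0<\delta<a$, integrating by parts in $x$ and in $t$ over $\mathbb R^n\times(\delta,T)$ — licit by the decay of $\psi$ and the above bounds on $w,\nabla_x w$ — and using $w_t-\Delta w=0$ and $\psi(\cdot,T)=0$, I would obtain
$$
\int_0^T\!\!\int_{\mathbb R^n}w\,\phi\,dx\,dt=\int_\delta^T\!\!\int_{\mathbb R^n}w\,(-\psi_t-\Delta\psi)\,dx\,dt=\int_{\mathbb R^n}w(x,\delta)\,\psi(x,\delta)\,dx .
$$

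It then remains to let $\delta\to0^+$. For the $L^p(\mathbb R^n\times(0,T))$-class the right-hand side is $0$ for all small $\delta$ since $w(\cdot,\delta)=0$. For the $C^0_b$-class, $\sup_{\delta>0}\|w(\cdot,\delta)\|_{L^\infty}\le\|w\|_{L^\infty}$ and $w(\cdot,\delta)\to0$ in $\mathcal D'$, hence weak-$\ast$ in $L^\infty$; combined with $\psi(\cdot,\delta)\to\psi(\cdot,0)$ in $L^1$ (dominated convergence, uniform Gaussian tails), the right-hand side tends to $0$. For the $L^\infty_{t,x}((0,T);L^p)$-class, $\sup_{\delta>0}\|w(\cdot,\delta)\|_{L^p}\le\|w\|_{L^\infty_tL^p_x}$ and, by reflexivity of $L^p$ ($1<p<\infty$), a bounded family with $\mathcal D'$-limit $0$ converges weakly to $0$ in $L^p$; together with $\psi(\cdot,\delta)\to\psi(\cdot,0)$ in $L^{p'}$ this forces the limit $0$ again. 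In every case $\int_0^T\!\int w\,\phi=0$ for all $\phi\in C^\infty_c(\mathbb R^n\times(0,T))$, i.e. $w\equiv0$, which gives uniqueness and, together with the first paragraph, the representation (\ref{gaussian convolution solution, full problem}).

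\textbf{Expected main obstacle.} The subtle point is reconciling the merely \emph{weak} initial condition with the \emph{mere $L^p$-integrability} (or boundedness) of $w$ in $x$: a $\mathcal D'$-limit can be promoted to a weak / weak-$\ast$ limit only thanks to the uniform norm bound coming from the hypothesis, and the pairing $\int w(\cdot,\delta)\psi(\cdot,\delta)$ makes sense only because $\psi$ has been engineered to decay rapidly. Producing $\psi$ with all the required properties (smoothness up to $t=0$ and $t=T$, vanishing near $t=T$, uniform-in-$t$ Gaussian decay) and rigorously justifying the two integrations by parts through the interior estimates is the technical core; the remaining bookkeeping is routine.
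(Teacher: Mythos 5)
Your argument is correct, but it follows a genuinely different route from the paper's. The paper mollifies $w$ in space ($w_\varepsilon=\phi_\varepsilon\ast_x w$), shows via a careful split $\langle w(\cdot,t),\phi_\varepsilon(x-\cdot)\rangle=\langle w(\cdot,t),\phi_\varepsilon(x_0-\cdot)\rangle+\langle w(\cdot,t),\phi_\varepsilon(x-\cdot)-\phi_\varepsilon(x_0-\cdot)\rangle$ that $w_\varepsilon$ is \emph{classically} continuous at $t=0$ with value $0$ (the first term vanishes by the weak initial condition, the second by $L^1$-translation continuity and the uniform bound on $w_\varepsilon$), then invokes the classical Tychonoff-type uniqueness theorem for bounded solutions of the heat equation on $\mathbb R^n\times[0,T']$ (\cite[Chapter 2.3.1, Theorem 7]{evans10}) and finally passes $\varepsilon\to0$. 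You instead run a direct duality argument against a backward solution $\psi$ of $-\psi_t-\Delta\psi=\phi$, integrate by parts on $\mathbb R^n\times(\delta,T)$, and send $\delta\to0^+$, upgrading the $\mathcal D'$-convergence of $w(\cdot,\delta)$ to weak / weak-$\ast$ convergence via the uniform norm bound. What the paper's route buys is brevity, at the price of invoking Tychonoff's theorem as a black box; your route is self-contained (no external uniqueness theorem), but requires constructing $\psi$, verifying its uniform Gaussian tails, and carefully justifying two integrations by parts using the interior estimates that bound $w,\nabla_x w$ on $\mathbb R^n\times[\delta,T']$. The two proofs share the same preliminary step (interior regularity implies $w\in C^\infty(\mathbb R^n\times(0,T))$) and both have to confront the same core difficulty — promoting the weak initial datum to something usable — but resolve it in different ways: the paper promotes it to a classical one after mollification, while you keep it weak and let the duality pairing absorb it.
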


\begin{proof}
We start with existence. One checks that $u(x,t) \in L^{\infty}_{t,x}((0,T);L^p(\mathbb R^n))$ given by (\ref{gaussian convolution solution, full problem}) solves the heat equation in the sense of distributions using Fubini's Theorem. As for the initial condition, write
$$
u(x,t) = \int_{\mathbb R^n} G(y,t)g(x-y)dy + \int_{0}^t \int_{\mathbb R^n} G(y,s)f(x-y,t-s)dyds = u_0(x,t) + u_1(x,t)
$$
and let $\phi \in C^{\infty}_c(\mathbb R^n)$. As in Lemma \ref{lemma:gaussian convolution estimate}, it follows from Fubini's theorem that $u_1(\cdot,t) \in L^p(\mathbb R^n)$ for almost every $t \in (0,T)$ and
$$
| \langle u_1(\cdot,t), \phi \rangle | \leq ||u_1(\cdot,t)||_{L^p(\mathbb R^n)} ||\phi||_{L^q(\mathbb R^n)} \leq C t^{1-\frac{1}{p}} ||\phi||_{L^q(\mathbb R^n)},
$$
leading to $\lim \limits_{t \to 0^+} | \langle u_1(\cdot,t), \phi \rangle | = 0$. Similarly, $\lim \limits_{t \to 0^+} \langle u_0(\cdot,t)-g, \phi \rangle  = 0$ as $u_0(\cdot,t) \to g$ in $L^p$ (resp. $C^0_b$) when $t \to 0$ (see Lemma \ref{lemma:gaussian convolution estimate, homogeneous problem}).

In the continuous case, after a change of variables,
\begin{equation}
         u(x,t) \sim \int_{\mathbb R^n} e^{-\frac{|y|^2}{4}}g(x-t^{\frac{1}{2}}y)dy + \int_{0}^t \int_{\mathbb R^n} e^{-\frac{|y|^2}{4}}f(x-s^{\frac{1}{2}}y,t-s)dyds, \quad x \in \mathbb R^n, t \in [0,T).  \label{eq:heat convolution, change of variable, continuity}
\end{equation}
     Hence, if $f$, $g$ are continuous and bounded, then one can apply Dominated Convergence to show that $u(t,x) \in C^0_b( [0,T) \times \mathbb R^n)$ as well.

     Finally, if there were two distributional solutions $u, v$ of class $L^p(\mathbb R^n \times (0,T))$, $1 < p < +\infty$, (resp. $C^0_b$) to the initial-value problem, then $w = u-v \in L^p(\mathbb R^n \times (0,T))$ (resp. $C^0_b$) would be a solution to the initial-value problem with zero forcing term and zero initial condition. This solution is $C^{\infty}(\mathbb R^n \times (0,T))$ by hypoellipticity. Now, let $\phi_{\varepsilon}(x) \in C^{\infty}_c(\mathbb R^n)$ be a standard mollifier. Then $w_{\varepsilon} = \phi_{\varepsilon} *_x w \in C^{\infty}(\mathbb R^n \times (0,T))$ is a solution to the linear heat equation which is weakly continuous at zero with $w_{\varepsilon}(x,0) = 0$. Moreover, it is continuous at zero in the classical sense with $w_{\varepsilon}(x,0) = 0$. If $u(x,t) = v(x,t) = g(x)$ for all $t$ close to $0$, then this is verified as $w_{\varepsilon}(x,t) = 0$ for all $t$ close to $0$. Else, if $u$, $v \in L^{\infty}_{t,x}((0,T);L^p(\mathbb R^n))$, then $w_{\varepsilon}$ is bounded (by Young's inequality). Moreover, for fixed $x_0 \in \mathbb R^n$ and $\varepsilon > 0$,
     \begin{align*}
         w_{\varepsilon}(x,t) &= \int_{\mathbb R^n} w(y,t) \phi_{\varepsilon}(x-y)dx = \langle w(\cdot,t), \phi_{\varepsilon}(x-\cdot) \rangle \\
         &= \langle w(\cdot,t), \phi_{\varepsilon}(x_0-\cdot) \rangle  + \langle w(\cdot,t), \phi_{\varepsilon}(x-\cdot) - \phi_{\varepsilon}(x_0-\cdot) \rangle  = A + B.
     \end{align*}
     One has $A \to 0$ as $x \to x_0, t \to 0$ since $\phi_{\varepsilon}(x_0-\cdot) \in C^{\infty}_c(\mathbb R^n)$ is a test function. Similarly, we have the inequality
     \begin{align*}
         |B| \leq ||w_{\varepsilon}||_{L^{\infty}_{x,t}(\mathbb R^n \times (0,T))}  \int_{\mathbb R^n} |\phi_{\varepsilon}(x-y) - \phi_{\varepsilon}(x_0-y)| dy,
     \end{align*}
     which also tends to $0$ as $x \to x_0, t \to 0$ by continuity of the translation operator in $L^1$.
     
     The classical theory for the heat equation (see \cite[Chapter 2.3.1, Theorem 7]{evans10}, which we apply on $\mathbb R^n \times [0,T']$ for any $0 < T' < T$) proves that $w_{\varepsilon} = 0$ is the only solution for all $\varepsilon > 0$. 
     
     As $w \in L^p(\mathbb R^n \times (0,T))$, $w(\cdot,t) \in L^p(\mathbb R^n)$ for almost every $t \in (0,T)$ by Fubini's Theorem. In particular, $w(\cdot,t) \in C^0(\mathbb R^n) \cap L^p(\mathbb R^n)$ for almost every $t$. For such $t \in (0,T)$ fixed, $w_{\varepsilon}(\cdot,t) \to w(\cdot,t)$ in $L^p(\mathbb R^n)$ which implies $w(\cdot,t) = 0$ almost everywhere. We conclude that $w = 0$ everywhere by smoothness. One proceeds in a similar fashion for the $C^0_b$ case.
\end{proof}

\begin{theorem}[Global $L^p$ Estimates for the Heat Equation]\label{thm: global lp estimate}
    Let $1 < p < +\infty$ and $0 < T < +\infty$. Assume that $u \in L^{p}(\mathbb R^n \times (0,T))$ is the solution (\ref{gaussian convolution solution}) given by convolution with the heat kernel to the linear heat equation
    \begin{align*}
    u_t - \Delta u &= f(x,t), \quad (x,t) \in \mathbb R^n \times (0,T),  \\
    u(x,0) &= 0, \quad x \in \mathbb R^n,
    \end{align*}
    where $f \in L^p(\mathbb R^n \times (0,T))$.
    
    There exists $C = C(n,p)$ (independent of $T$) for which
    $$
    ||D^2_xu||_{L^p_{x,t}(\mathbb R^n\times (0,T))} \leq C ||f||_{L^p_{x,t}(\mathbb R^n\times (0,T))},
    $$
    where $D^2_x u$ is a distributional (hence weak) derivative.
\end{theorem}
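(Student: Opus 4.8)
The plan is to recognize the map $f \mapsto \partial_{x_i}\partial_{x_j} u$ as a convolution operator against a kernel of Calder\'on--Zygmund type with respect to the parabolic quasi-metric $\rho\big((x,t),(y,s)\big) = |x-y| + |t-s|^{1/2}$ on $\mathbb{R}^{n+1}$ (for which the homogeneous dimension is $n+2$ and balls are parabolic cylinders), and then to invoke the general theory of singular integrals on spaces of homogeneous type, following \cite[Chapter 7]{LemariRieusset2002RecentDI}.

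First I would reduce to the whole of $\mathbb{R}^{n+1}$. Extend $f$ by zero outside $(0,T)$ to obtain $\tilde f \in L^p(\mathbb{R}^{n+1})$ and set $\tilde u = \Phi *_{x,t} \tilde f$, where $\Phi(x,t) = G(x,t)\,1_{\{t>0\}}$ is the fundamental solution of $\partial_t - \Delta$ on $\mathbb{R}^{n+1}$. By Fubini, $\tilde u$ solves $\partial_t \tilde u - \Delta \tilde u = \tilde f$ in $\mathcal{D}'(\mathbb{R}^{n+1})$, vanishes for $t \le 0$, and --- since at a time $t \in (0,T)$ the convolution only integrates $\tilde f$ over $(0,t)$ --- agrees with $u$ on $\mathbb{R}^n \times (0,T)$. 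Hence it suffices to prove $\|D_x^2 \tilde u\|_{L^p(\mathbb{R}^{n+1})} \le C(n,p)\,\|\tilde f\|_{L^p(\mathbb{R}^{n+1})}$ with $C$ independent of $T$; restricting back to $(0,T)$ loses nothing, and the scaling-invariance of the argument makes the $T$-independence of $C$ automatic.

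Next, $L^2$-boundedness is obtained by Plancherel: the space--time Fourier transform of $\partial_t\tilde u - \Delta\tilde u = \tilde f$ gives $(i\tau + |\xi|^2)\,\widehat{\tilde u} = \widehat{\tilde f}$, hence $\widehat{\partial_{x_i}\partial_{x_j}\tilde u} = m_{ij}\,\widehat{\tilde f}$ with $m_{ij}(\xi,\tau) = -\xi_i\xi_j/(i\tau + |\xi|^2)$. Since $|\xi_i\xi_j| \le |\xi|^2 = \Re(i\tau+|\xi|^2) \le |i\tau + |\xi|^2|$, one has $\|m_{ij}\|_{L^\infty} \le 1$, so $T_{ij}\colon \tilde f \mapsto \partial_{x_i}\partial_{x_j}\tilde u$ is bounded on $L^2(\mathbb{R}^{n+1})$ with norm $\le 1$. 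Away from the diagonal $T_{ij}$ is convolution with $K_{ij} = \partial_{x_i}\partial_{x_j}\Phi$, which is smooth on $\mathbb{R}^{n+1}\setminus\{0\}$, supported in $\{t>0\}$, parabolically homogeneous of degree $-(n+2)$, and --- by explicit differentiation of the Gaussian --- satisfies the size bound $|K_{ij}(x,t)| \lesssim \rho(x,t)^{-(n+2)}$ and the H\"ormander regularity condition with respect to $\rho$ (namely $|\nabla_x K_{ij}(x,t)| \lesssim \rho(x,t)^{-(n+3)}$ and $|\partial_t K_{ij}(x,t)| \lesssim \rho(x,t)^{-(n+4)}$). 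With the $L^2$ bound and these kernel estimates in hand, the Calder\'on--Zygmund machinery on the space of homogeneous type $(\mathbb{R}^{n+1},\rho,dx\,dt)$ --- Calder\'on--Zygmund decomposition at parabolic scales, the weak-$(1,1)$ estimate, Marcinkiewicz interpolation for $1<p\le 2$, and duality for $2\le p<\infty$ --- yields $\|T_{ij}\tilde f\|_{L^p(\mathbb{R}^{n+1})} \le C(n,p)\|\tilde f\|_{L^p(\mathbb{R}^{n+1})}$. Summing over $i,j$ and combining with the reduction step proves the theorem.

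The step I expect to be the main obstacle is the careful verification of the kernel conditions for $\partial_{x_i}\partial_{x_j}\Phi$ together with the bookkeeping that turns the a priori distributional identity ``$T_{ij}=$ convolution with $K_{ij}$'' into the precise off-diagonal statement required by the abstract theorem: $K_{ij}$ has a non-integrable singularity at the origin, so one must account for its principal-value structure (plus, when $i=j$, a $\delta$-contribution), and check that the truncation to $\{t>0\}$ preserves the needed cancellation. By comparison, the $L^2$ bound, the reduction to $\mathbb{R}^{n+1}$, and the final density/limit argument passing from compactly supported $f$ to general $f\in L^p$ (using the Gaussian convolution bounds of Lemma~\ref{lemma:gaussian convolution estimate}) are routine once this is set up.
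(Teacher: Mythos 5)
Your proposal is correct and is exactly the argument the paper is pointing to: the paper's proof is a one-line citation of the Calder\'on--Zygmund theory in \cite[Chapter 7]{LemariRieusset2002RecentDI}, and what you have written --- extension to $\mathbb{R}^{n+1}$, $L^2$-boundedness via the Fourier multiplier $-\xi_i\xi_j/(i\tau+|\xi|^2)$, and the kernel estimates for $\partial_{x_i}\partial_{x_j}\Phi$ with respect to the parabolic quasi-metric feeding into the weak-$(1,1)$/interpolation/duality machinery on a space of homogeneous type --- is a faithful unpacking of precisely that reference. Your flagged caveats (principal-value/delta structure at the diagonal, $T$-independence via scaling) are the right ones and pose no actual obstruction.
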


\begin{proof}
 This uses the generalized theory of Calderon-Zygmund operators. See \cite[Chapter 7]{LemariRieusset2002RecentDI}.
\end{proof}

\begin{lemma}[Interpolation Inequality] \label{interpolation lemma, sobolev version}
    For $1 \leq p < +\infty$, there exists $K = K(n,p)$ for which
    \begin{align}
        ||\nabla_x u||_{L^{p}_{x}(\mathbb R^n)} &\leq K \left[ \varepsilon  ||D^2_x u||_{L^{p}_{x}(\mathbb R^n)}  + \varepsilon^{-1} ||u||_{L^{p}_{x}(\mathbb R^n)} \right],\label{L^p norm of nabla_x u, interpolation}
    \end{align}
    for any $\varepsilon > 0$ and $u \in W^{2,p}(\mathbb R^n)$. 
    
    Similarly, assume that $u \in L^p(\mathbb R^n \times I)$, $I \subset \mathbb R$ open, has distributional (hence weak) derivatives $\nabla_x u, D^2_x u \in L^p(\mathbb R^n \times I)$. Then,
    \begin{align}
        ||\nabla_x u||_{L^{p}_{x}(\mathbb R^n \times I)} &\leq 2K \left[ \varepsilon  ||D^2_x u||_{L^{p}_{x}(\mathbb R^n \times I)}  + \varepsilon^{-1} ||u||_{L^{p}_{x}(\mathbb R^n \times I)} \right] \label{L^p norm of nabla_x u, interpolation with time}
    \end{align}
    for any $\varepsilon > 0$.

    The Lemma still holds if one replaces $\mathbb R^n$ by the half-space $\mathbb R^n \cap \{x_n > 0\}$.
\end{lemma}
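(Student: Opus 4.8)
\section*{Proof}

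The plan is to reduce everything to a one-dimensional estimate, proved by an elementary Taylor-with-integral-remainder identity, and then to lift it to $\mathbb R^n$, to the cylinder $\mathbb R^n\times I$, and to the half-space by Fubini's theorem.

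\textbf{Step 1 (the one-dimensional estimate).} I claim that for $g\in W^{2,p}(\mathbb R)$ and every $h>0$,
\[
\|g'\|_{L^p(\mathbb R)}\le \frac{2}{h}\,\|g\|_{L^p(\mathbb R)}+h\,\|g''\|_{L^p(\mathbb R)}.
\]
Since $W^{2,p}(\mathbb R)\hookrightarrow C^1(\mathbb R)$ and $g''\in L^1_{loc}$, one has $g'(s)=g'(x)+\int_x^s g''(\tau)\,d\tau$ for all $x\le s$; averaging over $s\in[x,x+h]$ and applying Fubini to the resulting double integral gives
\[
g'(x)=\frac{g(x+h)-g(x)}{h}-\frac1h\int_x^{x+h}(x+h-\tau)\,g''(\tau)\,d\tau .
\]
Because $0\le (x+h-\tau)/h\le 1$ on $[x,x+h]$, this yields the pointwise bound $|g'(x)|\le h^{-1}\big(|g(x+h)|+|g(x)|\big)+\big(|g''|*\mathbf 1_{[-h,0]}\big)(x)$. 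Taking $L^p$ norms in $x$, using translation invariance of the $L^p$ norm and Young's convolution inequality with $\|\mathbf 1_{[-h,0]}\|_{L^1}=h$, proves the claim. Setting $h=\varepsilon$ and estimating $2\varepsilon^{-1}\|g\|_p+\varepsilon\|g''\|_p\le 2\big(\varepsilon\|g''\|_p+\varepsilon^{-1}\|g\|_p\big)$ gives the one-dimensional version of (\ref{L^p norm of nabla_x u, interpolation}) with constant $2$. The very same computation applies verbatim on the half-line $(0,\infty)$: for $x>0$ both $x$ and $x+h$ lie in $(0,\infty)$, the identity above still holds, and extending $g''$ by zero to $\mathbb R$ one obtains $\|g'\|_{L^p(0,\infty)}\le 2\big(\varepsilon\|g''\|_{L^p(0,\infty)}+\varepsilon^{-1}\|g\|_{L^p(0,\infty)}\big)$.

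\textbf{Step 2 (from $\mathbb R$ to $\mathbb R^n$).} Fix $u\in W^{2,p}(\mathbb R^n)$ and a coordinate direction $j$. By Fubini's theorem for weak derivatives, for a.e.\ value of the remaining coordinates the $x_j$-slice of $u$ belongs to $W^{2,p}(\mathbb R)$, with weak derivatives equal to the corresponding slices of $\partial_j u$ and $\partial_j^2 u$. Applying Step 1 to each slice, raising to the $p$-th power, integrating in the remaining variables and using Minkowski's inequality (the triangle inequality in $L^p$ of those variables) gives $\|\partial_j u\|_{L^p(\mathbb R^n)}\le 2\big(\varepsilon\|\partial_j^2 u\|_{L^p(\mathbb R^n)}+\varepsilon^{-1}\|u\|_{L^p(\mathbb R^n)}\big)$. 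Since $|\partial_j^2 u|\le |D^2_x u|$ and $|\nabla_x u|\le\sum_{j=1}^n|\partial_j u|$ pointwise, summing over $j$ yields (\ref{L^p norm of nabla_x u, interpolation}) with $K=2n$. The half-space case $\{x_n>0\}$ is identical: one uses the half-line version of Step 1 in the $x_n$-direction and the full-line version in the tangential directions.

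\textbf{Step 3 (the version with time).} If $u\in L^p(\mathbb R^n\times I)$ has weak derivatives $\nabla_x u,\,D^2_x u\in L^p(\mathbb R^n\times I)$, then by Fubini $u(\cdot,t)\in W^{2,p}(\mathbb R^n)$ for a.e.\ $t\in I$, with $x$-weak-derivatives equal to $(\nabla_x u)(\cdot,t)$ and $(D^2_x u)(\cdot,t)$. Applying (\ref{L^p norm of nabla_x u, interpolation}) at a.e.\ $t$, raising to the $p$-th power, integrating in $t$ and using Minkowski's inequality in $L^p_t(I)$ gives (\ref{L^p norm of nabla_x u, interpolation with time}) (in fact with constant $K$, hence a fortiori $2K$). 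The only subtlety worth noting is that the naive argument — covering $\mathbb R$ by intervals and invoking the mean value theorem — produces on the right-hand side a term $c_p\|g'\|_{L^p}$ with $c_p>1$ which cannot be absorbed; the Taylor identity of Step 1 sidesteps this, which is precisely why it is the right starting point. $\qed$
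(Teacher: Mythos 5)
Your proof is correct, and it takes a genuinely different route from the paper. The paper proves \eqref{L^p norm of nabla_x u, interpolation} by citing Adams (Lemma 5.5 and Remark 5.7 in \cite{adams2003sobolev}), and for \eqref{L^p norm of nabla_x u, interpolation with time} it first treats smooth $u$ by integrating the slice estimate and splitting $(a+b)^p\le 2^p(a^p+b^p)$ (whence the factor $2K$), then handles a general $u$ with only weak derivatives by a space-time mollification $u_\delta=\phi_\delta*u$ on shrunken domains and a limiting argument. You instead make the whole thing self-contained: the explicit Taylor-with-integral-remainder identity plus Young's convolution inequality gives the 1D estimate with constant $2$, Fubini lifts it to $\mathbb R^n$ and the half-space with $K=2n$, and for the time version you slice in $t$ and use Minkowski's integral inequality in $L^p_t$, which actually produces constant $K$ rather than $2K$. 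The trade-off is that your Step 3 silently relies on the Fubini/slicing theorem for Sobolev functions — that for a.e.\ $t$ the slice $u(\cdot,t)$ has weak $x$-derivatives equal to the slices of $\nabla_x u$ and $D^2_x u$, even though $\partial_t u$ is not assumed to exist. This is true (test against $\phi(x)\psi(t)$, then run a countable-density argument over $\phi$), but it is a nontrivial fact, and the paper's mollification is precisely the pedestrian way to avoid invoking it. Either way your argument is valid, gives a slightly sharper constant, and removes the external citation, so it is a reasonable alternative; a careful write-up would state the slicing lemma explicitly or include the two-line test-function argument justifying it.
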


\begin{proof}
    See \cite[Lemma 5.5 and Remark 5.7]{adams2003sobolev} for the first part (\ref{L^p norm of nabla_x u, interpolation}) of the theorem. 

    As for the second part, we do the half-space $\mathbb  R^{n,+} = \mathbb R_n \cap \{x_n > 0\}$ case (and the full space one is similar). If $u \in C^{\infty}(\mathbb R^{n,+} \times I)$, one can apply (\ref{L^p norm of nabla_x u, interpolation}) for fixed $t \in I$, integrate and deduce
\begin{align*}
        \int_{I}||\nabla_x u(\cdot,t)||_{L^{p}_{x}(\mathbb  R^{n,+})}^pdt &\leq K^p \int_{I} \left[ \varepsilon  ||D^2_x u(\cdot,t)||_{L^{p}_{x}(\mathbb  R^{n,+})}  + \varepsilon^{-1} ||u(\cdot,t)||_{L^{p}_{x}(\mathbb  R^{n,+})} \right]^pdt \\
         &\leq 2^p K^p \int_{I} \left[ \varepsilon  ||D^2_x u(\cdot,t)||_{L^{p}_{x}(\mathbb  R^{n,+})}  \right]^p + \left[ \varepsilon^{-1} ||u(\cdot,t)||_{L^{p}_{x}(\mathbb  R^{n,+})} \right]^pdt,
\end{align*}
which leads to (\ref{L^p norm of nabla_x u, interpolation with time}) after taking the $p$-th root. 

Write $I = (a,b)$, $a,b \in \mathbb R \cup \{\pm\infty\}$. In the general case, it suffices to approximate $u$ using a mollification. Let $u_{\varepsilon}(x,t) = \phi_{\varepsilon} *_{x,t} u(x,t)$, $(x,t) \in \mathbb R^n \cap \{x_n > \varepsilon\} \times (a+\varepsilon, b-\varepsilon)$ (note that the value of $u(x,t)$ outside $\mathbb R^{n,+} \times I$ does not matter to compute this integral) and $\phi_{\varepsilon} = \varepsilon^{-n-1} \phi(\varepsilon^{-1}x, \varepsilon^{-1}t)$ is an approximate identity defined by some $\phi \in C^{\infty}_c(B_{1/2}(0)) \subset C^{\infty}_c(\mathbb R^{n+1})$, $\phi \geq 0$, $\int_{\mathbb R^{n+1}} \phi dxdt = 1$. 

One verifies that $u_{\varepsilon} \in C^{\infty}(\mathbb R^n \cap \{x_n \geq \varepsilon\} \times [a+\varepsilon, b-\varepsilon])$ and
$$
\nabla_x u_{\varepsilon} = \phi_{\varepsilon} * \nabla_x u, \quad D^2_x u_{\varepsilon} = \phi_{\varepsilon} * D^2_x u, \quad (x,t) \in \mathbb R^n \cap \{x_n \geq \varepsilon\}  \times [a+\varepsilon, b-\varepsilon],
$$
as well as $u_{\varepsilon} \to u$, $\nabla_x u_{\varepsilon} \to \nabla_x u$, $D^2_xu_{\varepsilon} \to D^2_x u$ in $L^p_{loc}(\mathbb R^{n,+} \times I)$. Moreover, they are uniformly bounded in $L^p(\mathbb R^n \cap \{x_n \geq \varepsilon\}  \times [a+\varepsilon, b-\varepsilon])$. For any fixed compact set $S \subset \mathbb R^{n,+} \times (a,b)$, the smooth case leads to
\begin{align*}
    ||\nabla_x u||_{L^{p}_{x}(S)} &\leq  ||\nabla_x u_{\varepsilon}||_{L^{p}_{x}(S)} \leq \lim_{\varepsilon \to 0} ||\nabla_x u_{\varepsilon}||_{L^{p}_{x}(\mathbb R^{n} \cap \{x_n > \varepsilon\} \times (a+\varepsilon, b- \varepsilon))} \\
&\leq \lim_{\varepsilon \to 0} 2K \delta  ||D^2_x u_{\varepsilon}||_{L^{p}_{x}(\mathbb R^{n} \cap \{x_n > \varepsilon\} \times (a+\varepsilon, b- \varepsilon))} \\
&+ 2K\delta^{-1} ||u_{\varepsilon}||_{L^{p}_{x}(\mathbb R^{n} \cap \{x_n > \varepsilon\} \times (a+\varepsilon, b- \varepsilon))} \\
&\leq \lim_{\varepsilon \to 0} 2K \left[ \delta ||D^2_x u||_{L^{p}_{x}(\mathbb R^{n,+} \times I)}  + \delta^{-1} ||u||_{L^{p}_{x}(\mathbb R^{n,+} \times I)} \right] \\
&\leq 2K \left[ \delta  ||D^2_x u||_{L^{p}_{x}(\mathbb R^{n,+} \times I)}  +\delta^{-1} ||u||_{L^{p}_{x}(\mathbb R^{n,+} \times I)} \right], \quad \delta > 0. 
\end{align*}
It suffices to take increasing compact sets to conclude.
\end{proof}

\begin{theorem}[Interior $L^p$ Estimates for the Heat Equation] \label{thm:Interior L^p Estimates for the Heat Equation}
    For $x,t \in \mathbb R^n \times \mathbb R$ and $r > 0$, let $C(x,t;r)$ denote the open cylinder
    $$
    C(x,t;r) := \{(y,s) \in \mathbb R^n \times \mathbb R: |x-y| < r, t-r^2 < s < t\}.
    $$
    Assume that $u \in L^p(C(x,t;r))$, $1 < p < +\infty$, $\nabla_x u \in L^p(C(x,t;r))$, is a distributional solution to the linear heat equation
    \begin{align}
    u_t - \Delta u &= f(x,t), \quad (x,t) \in C(x,t;r)
    \end{align}
    with $f \in  L^p(C(x,t;r))$.
    
    There exists $C = C(n,p)$ (independent of $x$, $t$, $r$) for which
    $$
    ||D^2_x u||_{L^p_{x,t}(C(x,t;r/2))} \leq C r^{-2} \left( ||f||_{L^p_{x,t}(C(x,t;r))} + ||u ||_{L^p_{x,t}(C(x,t;r))} \right).
    $$ 
\end{theorem}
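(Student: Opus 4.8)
The plan is to deduce the interior estimate from the global estimate of Theorem~\ref{thm: global lp estimate} by the classical cut-off localization, with an absorption step that eliminates the first-order term by means of the interpolation inequality of Lemma~\ref{interpolation lemma, sobolev version} together with a standard iteration lemma.

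\textbf{Step 1: reduction to a unit cylinder.} By the parabolic rescaling $\tilde u(y,s)=u(x+ry,\,t+r^2 s)$ one has $\tilde u_s-\Delta_y\tilde u=r^2 f(x+r\cdot,\,t+r^2\cdot)=:\tilde f$ on $C(0,0;1)$, and the three quantities $\|D^2_xu\|_{L^p(C(x,t;r/2))}$, $\|f\|_{L^p(C(x,t;r))}$ and $r^{-2}\|u\|_{L^p(C(x,t;r))}$ all scale with the same power of $r$: matching cylinder radii, the $D^2_x u$ and $f$ norms carry a factor $r^{2-(n+2)/p}$ and the $u$ norm a factor $r^{-(n+2)/p}$, which the $r^{-2}$ compensates. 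Hence it suffices to prove the estimate for $r=1$; from now on write $C_\rho:=C(0,0;\rho)$ for $0<\rho\le1$, and recall $u,\nabla_x u\in L^p(C_1)$, $f\in L^p(C_1)$ by hypothesis.

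\textbf{Step 2: localization and the global estimate.} For $\tfrac12\le\sigma<\rho\le1$ pick $\eta=\eta_{\sigma,\rho}\in C^\infty_c(C_\rho)$ with $0\le\eta\le1$, $\eta\equiv1$ on $C_\sigma$, $|\nabla_x\eta|\le C(\rho-\sigma)^{-1}$, $|D^2_x\eta|+|\partial_t\eta|\le C(\rho-\sigma)^{-2}$, and $\eta$ vanishing near the parabolic bottom of $C_\rho$. Set $v=\eta u$, extended by $0$ to $\mathbb R^n\times(-1,0)$. Using the Leibniz rule for a smooth function times a distribution, $v$ solves, in the distributional sense, $v_t-\Delta v=g$ with
$$g=\eta f+(\partial_t\eta)u-(\Delta_x\eta)u-2\nabla_x\eta\cdot\nabla_x u\in L^p(\mathbb R^n\times(-1,0)),$$
and $v$ vanishes for $s$ near $-1$, so after an obvious time shift $v$ has zero initial data. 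By the uniqueness statement of Lemma~\ref{lemma:uniqueness heat equation}, $v$ coincides with the solution given by convolution with the heat kernel, so Theorem~\ref{thm: global lp estimate} applies to $v$ and yields $\|D^2_x v\|_{L^p}\le C\|g\|_{L^p}$. Since $v\equiv u$ on $C_\sigma$, this reads
$$\|D^2_x u\|_{L^p(C_\sigma)}\le C\Big(\|f\|_{L^p(C_1)}+(\rho-\sigma)^{-1}\|\nabla_x u\|_{L^p(C_\rho)}+(\rho-\sigma)^{-2}\|u\|_{L^p(C_1)}\Big).\qquad(\star)$$

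\textbf{Step 3: absorbing the gradient term.} Applying Lemma~\ref{interpolation lemma, sobolev version} in the form (\ref{L^p norm of nabla_x u, interpolation with time}) to $v=\eta_{\sigma,\rho}u$, bounding $\|D^2_x v\|_{L^p}$ and $\|v\|_{L^p}$ by the commutator estimates above, and using $\|\nabla_x u\|_{L^p(C_\sigma)}\le\|\nabla_x v\|_{L^p}+C(\rho-\sigma)^{-1}\|u\|_{L^p(C_\rho)}$, one gets, for every $\theta\in(0,1)$ and a suitable choice of the interpolation parameter $\varepsilon\sim\theta(\rho-\sigma)$,
$$\|\nabla_x u\|_{L^p(C_\sigma)}\le \theta(\rho-\sigma)\|D^2_x u\|_{L^p(C_\rho)}+\theta\|\nabla_x u\|_{L^p(C_\rho)}+C_\theta(\rho-\sigma)^{-3}\|u\|_{L^p(C_1)}.$$
First note that $\|D^2_x u\|_{L^p(C_\sigma)}$ and $\|\nabla_x u\|_{L^p(C_\sigma)}$ are finite for every $\sigma<1$ (by $(\star)$ with $\rho=1$ and $\nabla_x u\in L^p(C_1)$), so $\Phi(\rho):=\|D^2_x u\|_{L^p(C_\rho)}+\|\nabla_x u\|_{L^p(C_\rho)}$ is finite and nondecreasing on $[\tfrac12,1)$. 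Feeding the interpolation bound into $(\star)$ (splitting $[\sigma,\rho]$ into a few subintervals and taking $\theta=\theta(n,p)$ small enough) produces an inequality
$$\Phi(\sigma)\le\tfrac12\,\Phi(\rho)+C\Big(\|f\|_{L^p(C_1)}+(\rho-\sigma)^{-3}\|u\|_{L^p(C_1)}\Big),\qquad\tfrac12\le\sigma<\rho<1,$$
and the elementary iteration lemma (if $\Phi\ge0$ is bounded on $[a,b]$ with $\Phi(\sigma)\le\tfrac12\Phi(\rho)+A(\rho-\sigma)^{-\gamma}+B$ for $a\le\sigma<\rho\le b$, then $\Phi(a)\le c(\gamma)(A(b-a)^{-\gamma}+B)$, obtained by iterating along a geometric sequence of radii) gives $\Phi(\tfrac12)\le C(\|f\|_{L^p(C_1)}+\|u\|_{L^p(C_1)})$. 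Undoing the rescaling of Step~1 then gives $\|D^2_x u\|_{L^p(C(x,t;r/2))}\le C(\|f\|_{L^p(C(x,t;r))}+r^{-2}\|u\|_{L^p(C(x,t;r))})$, which in particular implies the stated bound $\le Cr^{-2}(\|f\|_{L^p(C(x,t;r))}+\|u\|_{L^p(C(x,t;r))})$ for $r\le1$ (the only case used).

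\textbf{Main obstacle.} The delicate point is Step~3: in $(\star)$ the term $\|\nabla_x u\|_{L^p}$ sits on a strictly larger cylinder than the left-hand side, and any cut-off estimate of it reintroduces $\|\nabla_x u\|$ on a still larger cylinder through the commutator $\nabla\eta\cdot\nabla u$; the way out is to run the interpolation inequality on the whole nested family $\{C_\rho\}_{1/2\le\rho<1}$ and close with the iteration lemma, which is why the a priori finiteness of $\Phi$ on $[\tfrac12,1)$ is needed. A secondary technical check is that $\eta u$ (extended by zero) is genuinely the Gaussian-convolution solution, so that Theorem~\ref{thm: global lp estimate}, stated for that particular solution, applies in the localized setting.
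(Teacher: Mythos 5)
Your approach is essentially the same as the paper's: cut-off localization, the global estimate of Theorem~\ref{thm: global lp estimate}, the interpolation inequality of Lemma~\ref{interpolation lemma, sobolev version} to control the commutator gradient term, and an iteration to absorb it. The paper runs the iteration explicitly over a geometric sequence of cylinders $P_l = C(0,0;R_l)$, $R_l = \sum_{i\le l}2^{-i}$, with cut-offs $\chi_l$ and $\varepsilon_l \sim \rho^2\rho^l$; you replace this by the equivalent abstract ``filling-the-hole'' iteration lemma over nested cylinders $C_\sigma\subset C_\rho$. Conceptually these are the same argument, and your Steps~1--2 are correct. In particular the point you flag as the ``secondary technical check''---that the compactly supported cut-off $\eta u$, extended by zero, is indeed the Gaussian-convolution solution, so that Theorem~\ref{thm: global lp estimate} applies---is exactly the point the paper handles via Lemma~\ref{lemma:uniqueness heat equation}, and you treat it the same way.

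One detail in Step~3 is imprecise as written. With $\varepsilon\sim\theta(\rho-\sigma)$ you do get the interpolation bound you state for $\|\nabla_x u\|_{L^p(C_\sigma)}$; but when you feed it into $(\star)$ the $\|\nabla_x u\|_{L^p(C_\rho)}$ term acquires the $(\rho-\sigma)^{-1}$ prefactor from $(\star)$, giving a coefficient $\sim\theta(\rho-\sigma)^{-1}$ that is \emph{not} bounded as $\rho-\sigma\to0$, so the recursion on $\Phi(\rho)=\|D^2_x u\|_{L^p(C_\rho)}+\|\nabla_x u\|_{L^p(C_\rho)}$ does not close. Splitting $[\sigma,\rho]$ into a bounded number of subintervals does not cure this. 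Two clean fixes are available. (i) Use $\varepsilon\sim\theta(\rho-\sigma)^2$: the $\|D^2_x u\|_{L^p(C_\rho)}$ and $\|\nabla_x u\|_{L^p(C_\rho)}$ coefficients in the interpolation bound then scale like $\theta(\rho-\sigma)^2$ and $\theta(\rho-\sigma)$ respectively, so after multiplying by the $(\rho-\sigma)^{-1}$ from $(\star)$ they stay $O(\theta)$, and the $\|u\|$ term picks up $(\rho-\sigma)^{-4}$---harmless for the iteration lemma. This is the scaling the paper actually uses (there $\varepsilon\sim\rho^2\rho^l$ with gap $\sim 2^{-l}$, i.e.\ $\varepsilon\sim(\text{gap})^2$). (ii) Bound $\|D^2_x(\eta u)\|_{L^p}$ by the global estimate \emph{before} interpolating, i.e.\ interpolate $\|\nabla_x(\eta u)\|\le 2K(\varepsilon\|D^2_x(\eta u)\|+\varepsilon^{-1}\|\eta u\|)$ and then substitute $\|D^2_x(\eta u)\|\le C(\|f\|+(\rho-\sigma)^{-1}\|\nabla_x u\|_{L^p(C_\rho)}+(\rho-\sigma)^{-2}\|u\|)$; with $\varepsilon\sim\theta(\rho-\sigma)$ this produces a self-contained iteration for $\psi(\rho)=\|\nabla_x u\|_{L^p(C_\rho)}$ alone, which you close first, and then a single application of $(\star)$ finishes. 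Either fix makes your Step~3 rigorous; as stated, the choice $\varepsilon\sim\theta(\rho-\sigma)$ combined with the direct commutator bound for $\|D^2_x(\eta u)\|$ does not.
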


\begin{proof}
We follow the localization argument from \cite{krylov}, which is done for the Schauder estimates (instead of the $L^p$-estimates) but also works in that setting.

After translation and parabolic scaling, one can assume without loss of generality that $x = t = 0$ and $r = 1$. 
    
    We cover $C_{1} = C(0,0;1)$ with an increasing sequence of cylinders
    $$
    C_{1} = \bigcup_{l=1}^{\infty} P_l, \quad P_l = C(0,0;R_l), \quad R_l = \sum_{i=1}^l 2^{-i},
    $$
    and consider smooth cut-off functions $\chi_l(x,t) \in C^{\infty}(\mathbb R^n \times \mathbb R;[0,1])$ for which $\chi_l = 1$ on $P_l$,
    $$
    \supp(\chi_l) \subset \{(y,s) \in \mathbb R^{n+1}: |y| \leq R_{l+1}, -R_{l+1}^2 \leq s \leq R_{l+1}^2\} = -P_{l+1} \cup P_{l+1}
    $$
    and
    $$
    ||\chi_l||_{C^{2,1}_{x,t}} \lesssim 2^{2l} =: \rho^{-l}.
    $$
    Then $v_l = u\chi_l \in L^p(\mathbb R^n \times [-1,0))$ is a solution to the initial value
\begin{align*}
        \partial_t v_l - \Delta v_l &= f\chi_l +u \partial_t \chi_l + u \Delta \chi_l + 2 \nabla_x u \cdot \nabla_x \chi_l, \quad (x,t) \in \mathbb R^n \times [-1,0) \\
        v_l(x,-1) &= 0, \quad x \in \mathbb R^n
\end{align*}
which is identically zero near $t = -1$ and has a $L^p(\mathbb R^n \times [-1,0))$-forcing term. Such a solution is unique and given by convolution with the (time-translated) heat kernel (Lemma \ref{lemma:uniqueness heat equation}). Then Global $L^p$-estimates (Theorem \ref{thm: global lp estimate}) show that $D^2_x (u \chi_l) \in L^{p}$ and
\begin{align}
   ||D^2_x(u\chi_l)||_{L^p_{x,t}(P_{l+1})} &=  ||D^2_x(u\chi_l)||_{L^p_{x,t}(\mathbb R^n \times [-1,0])} \notag  \\
   &\leq C ||f\chi_l +u \partial_t \chi_l + u \Delta \chi_l + 2 \nabla_x u \cdot \nabla_x \chi_l||_{L^p_{x,t}(\mathbb R^n \times [-1,0])} \notag \\
    &= C||f\chi_l +u \partial_t \chi_l + u \Delta \chi_l + 2 \nabla_x u \cdot \nabla_x \chi_l||_{L^p_{x,t}(P_{l+1})} \notag \\
    &\leq \rho^{-l} C\left(  ||f||_{L^p_{x,t}(P_{l+1})}  + ||u||_{L^p_{x,t}(P_{l+1})}  + ||\nabla_x u||_{L^p_{x,t}(P_{l+1})}  \right) \label{lp estimates proof, upper bound on D^2 u chi}
\end{align}
for all $l \geq 1$. We introduce back $\chi_{l+1}$
    \begin{align*}
    ||D^2_x(u\chi_l)||_{L^p_{x,t}(P_{l+1})} &\leq \rho^{-l} C\left(  ||f||_{L^p_{x,t}(P_{l+1})}  + ||u||_{L^p_{x,t}(P_{l+1})}  + ||\nabla_x u||_{L^p_{x,t}(P_{l+1})}  \right)\\
    &=\rho^{-l} C\left(  ||f||_{L^p_{x,t}(P_{l+1})}  + ||u \chi_{l+1}||_{L^p_{x,t}(P_{l+1})}  + ||\nabla_x (u\chi_{l+1})||_{L^p_{x,t}(P_{l+1})}  \right) \\
    &\leq \rho^{-l} C\left(  ||f||_{L^p_{x,t}(C_1)}  + ||u \chi_{l+1}||_{L^p_{x,t}(\mathbb R^n \times (-\infty,0])}  \right. \\
    &\left. \phantom{\leq}+ ||\nabla_x (u\chi_{l+1})||_{L^p_{x,t}(\mathbb R^n \times (-\infty,0])}  \right) 
    \end{align*}
    and from now on, $C = C(n,p) > 1$ is fixed and $\tilde{C} = \tilde{C}(n,p) > 1$ denotes the constant from the interpolation Lemma \ref{interpolation lemma, sobolev version}. One can apply Lemma \ref{interpolation lemma, sobolev version} with $\varepsilon = (2C\tilde{C})^{-1}\rho^2 \rho^l$ and obtain
   \begin{align*}
||D^2_x(u\chi_l)||_{L^p_{x,t}(P_{l+1})}  &\leq \rho^{-l} 4C\tilde{C}\left( ||f||_{L^p_{x,t}(C_1)}   + \varepsilon ||D^2_x (u\chi_{l+1})||_{L^p_{x,t}(\mathbb R^n \times (-\infty,0])}  \right. \\
& \left.\phantom{\leq}+ \varepsilon^{-1} ||u \chi_{l+1}||_{L^p_{x,t}(\mathbb R^n \times (-\infty,0])}  \right) \\
    &\leq \rho^{-l} 4C \tilde{C} \left( ||f||_{L^p_{x,t}(C_1)}  + \varepsilon ||D^2_x (u\chi_{l+1})||_{L^p_{x,t}(P_{l+2})} + \varepsilon^{-1} ||u||_{L^{p}_{x,t}(C_1)} \right) \\
    &\leq \rho^{-2l} \underbrace{(4C \tilde{C})^2}_{=: \hat{C}} \left( ||f||_{L^p_{x,t}(C_1)}  + ||u ||_{L^p_{x,t}(C_1)} \right) + \frac{\rho^2}{2}|| D^2_x (u\chi_{l+1})||_{L^p_{x,t}(P_{l+2})}.
  \end{align*}
  Starting with $l = 1$ and iterating, we obtain
\begin{align}
||D^2_x(u\chi_1)||_{L^p_{x,t}(P_{2})} &\leq  \rho^{-2}\hat{C} \left( ||f||_{L^p_{x,t}(C_1)}  + ||u ||_{L^p_{x,t}(C_1)} \right)  + \frac{\rho^2}{2}|| D^2_x (u\chi_{2})||_{L^p_{x,t}(P_{3})} \notag \\
&\leq \left( \rho^{-2}\hat{C} + \frac{\rho^2}{2} \cdot \rho^{-4}\hat{C} \right) \left( ||f||_{L^p_{x,t}(C_1)}  + ||u ||_{L^p_{x,t}(C_1)} \right)  \notag \\
&\phantom{\leq}+ \left(\frac{\rho^2}{2}\right)^2 || D^2_x (u\chi_{3})||_{L^p_{x,t}(P_{4})} \notag \\
&\leq \rho^{-2} \hat{C} \left( 1 + \frac{1}{2} + ... + \frac{1}{2^{k-2}} \right) \left( ||f||_{L^p_{x,t}(C_1)}  + ||u ||_{L^p_{x,t}(C_1)} \right) \notag \\
&\phantom{\leq}+ \left(\frac{\rho^2}{2}\right)^{k-1} || D^2_x (u\chi_{k})||_{L^p_{x,t}(P_{k+1})} \label{lp estimate proof: final inequality}
  \end{align}
  for any $k > 1$. Since $\rho < 1$ and
\begin{align*}
      || D^2_x (u\chi_{k})||_{L^p_{x,t}(P_{k+1})}  &\leq \rho^{-k} C \left(  ||f||_{L^p_{x,t}(C_{1})}  + ||u||_{L^p_{x,t}(C_{1})}  + ||\nabla_x u||_{L^p_{x,t}(C_{1})}  \right)  
\end{align*}
by  (\ref{lp estimates proof, upper bound on D^2 u chi}), we can let $k \to +\infty$ in (\ref{lp estimate proof: final inequality}). Observing that $u = u \chi_1$ on $C(0,0;1/2) \subset P_2$, this concludes the proof. 

We note that we only need an a priori estimate on $u, \nabla_x u$ but not on $D^2_x u$ to take $k \to +\infty$. The same will be true for the Schauder estimates (Theorem \ref{interior schauder estimates}).
\end{proof}

We prove a last lemma which will be useful in the context of the Parabolic Sobolev Embedding.

\begin{lemma}[Global $L^p$ Estimates for compactly supported solutions of parabolic equations]\label{lemma:global l^p estimates for parabolic operator, compact support}
    Let $\alpha \in (0,1)$, $0 < T < +\infty$ and $1 < p < +\infty$. Consider a parabolic operator
    $$
    L = c(x,t) + \sum_{i=1}^n b^i(x,t) D_{x_i} + \sum_{i,j=1}^n a^{i,j}(x) D_{x_i}D_{x_j}
    $$
    defined on $\overline{\Omega}$, $\Omega = B_{\kappa}(x_1) \times (0,T)$, satisfying
    \begin{align*}
    ||a,b,c||_{\alpha,\alpha/2;\overline{\Omega}} &\leq \Lambda, \quad \Lambda > 0, \\
        \Lambda^{-1}|\xi|^2 \leq \sum_{i,j=1}^n a^{i,j}(x,t)\xi_i\xi_j &\leq \Lambda |\xi|^2, \quad (x,t) \in \overline{\Omega}, \xi \in \mathbb R^n,
    \end{align*}
    and where $a^{i,j}(x)$ does not depend on time.
    
    There exists $\kappa_0 = \kappa_0(n,\alpha,\Lambda) > 0$ and $C = C(n,p,\alpha,\Lambda) > 0$ such that if $\kappa \leq \kappa_0$, for any $u \in L^{\infty}_{t,x}((0,T);L^p(\mathbb R^n))$ with weak derivatives $\nabla_x u, (\partial_t - L)u, D^2_xu \in L^p(B_{\kappa}(x_1) \times (0,T))$, that is compactly supported in the sense that $\overline{\supp(u(\cdot,t))} \subset B_{\kappa}(x_1)$ for all $t \in (0,T)$ and which is weakly continuous at zero in the sense that
    $$
    \lim \limits_{t \to 0^+} \langle u(\cdot,t), \phi \rangle = 0 \quad \forall \phi \in C^{\infty}_c(\mathbb R^n),
    $$
    then one has
    $$
    ||D^2_x u||_{L^p(\mathbb R^n \times (0,T))} \leq C\left( ||(\partial_t - L)u||_{L^p(\mathbb R^n \times (0,T))} + ||u||_{L^p(\mathbb R^n \times (0,T))} \right).
$$
\end{lemma}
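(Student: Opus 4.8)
The plan is the classical freezing-of-coefficients perturbation argument: reduce to the constant-coefficient heat equation, invoke the global $L^p$ estimate of Theorem~\ref{thm: global lp estimate}, and absorb every error term into the left-hand side using the Hölder smallness of the coefficients on the small ball together with the interpolation Lemma~\ref{interpolation lemma, sobolev version}. \emph{Step 1 (freezing the principal part).} The matrix $A_0=(a^{ij}(x_1))$ is symmetric with $\Lambda^{-1}I\le A_0\le\Lambda I$, so $A_0=SS^{T}$ with $S$ invertible and $\|S\|,\|S^{-1}\|$ bounded in terms of $n,\Lambda$. Passing to $\tilde u(y,t)=u(Sy,t)$ turns $\sum_{ij}a^{ij}(x_1)D_{x_i}D_{x_j}$ into $\Delta_y$, maps $B_\kappa(x_1)$ into an ellipsoid contained in a ball of radius $\le C(n,\Lambda)\kappa$, preserves the time-independence of the principal coefficients and (up to a factor $C(n,\Lambda)$) their Hölder norms, and alters every relevant $L^p$ norm only by the harmless factor $|\det S|^{\pm1/p}$. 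Since the final constant may depend on $\Lambda$, we henceforth assume $a^{ij}(x_1)=\delta^{ij}$.

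\emph{Step 2 (perturbed heat equation and convolution representation).} Write $F:=(\partial_t-L)u$; then, with summation over repeated indices,
\[
\partial_t u-\Delta u \;=\; F+\bigl(a^{ij}(x)-\delta^{ij}\bigr)D_{x_i}D_{x_j}u+b^{i}D_{x_i}u+c\,u \;=:\;G \qquad\text{on } B_\kappa(x_1)\times(0,T).
\]
Because $\overline{\supp u(\cdot,t)}\subset B_\kappa(x_1)$, extending $u$ (and with it $\nabla_x u$, $D^2_x u$) by zero outside $B_\kappa(x_1)$ yields $u\in L^\infty_{t,x}((0,T);L^p(\mathbb R^n))$, with $\nabla_x u,\,D^2_x u,\,G\in L^p(\mathbb R^n\times(0,T))$, a distributional solution of $\partial_t u-\Delta u=G$ on $\mathbb R^n\times(0,T)$ satisfying $\lim_{t\to0^+}\langle u(\cdot,t),\phi\rangle=0$ for all $\phi\in C^\infty_c(\mathbb R^n)$. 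By the uniqueness statement of Lemma~\ref{lemma:uniqueness heat equation} (forcing $G\in L^p$, zero initial data), $u$ coincides with the convolution solution~(\ref{gaussian convolution solution}), so Theorem~\ref{thm: global lp estimate} gives
\[
\|D^2_x u\|_{L^p(\mathbb R^n\times(0,T))}\;\le\;C_0(n,p)\,\|G\|_{L^p(\mathbb R^n\times(0,T))}.
\]

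\emph{Step 3 (absorption).} For $x\in B_\kappa(x_1)$ one has $|a^{ij}(x)-\delta^{ij}|=|a^{ij}(x)-a^{ij}(x_1)|\le\Lambda\kappa^\alpha$, and $D^2_x u$ is supported in $B_\kappa(x_1)$, so
\[
\|G\|_{L^p}\;\le\;\|F\|_{L^p}+n^{2}\Lambda\kappa^\alpha\|D^2_x u\|_{L^p}+n\Lambda\|\nabla_x u\|_{L^p}+\Lambda\|u\|_{L^p},
\]
all norms over $\mathbb R^n\times(0,T)$. Fixing $\kappa_0$ so small that $C_0(n,p)\,n^{2}\Lambda\kappa_0^{\alpha}\le\tfrac14$ absorbs the Hessian term; then bounding $\|\nabla_x u\|_{L^p}\le 2K(\varepsilon\|D^2_x u\|_{L^p}+\varepsilon^{-1}\|u\|_{L^p})$ via~(\ref{L^p norm of nabla_x u, interpolation with time}) (with $I=(0,T)$) and choosing $\varepsilon=\varepsilon(n,p,\Lambda)$ small enough absorbs the gradient term as well, leaving
\[
\|D^2_x u\|_{L^p(\mathbb R^n\times(0,T))}\;\le\;C(n,p,\alpha,\Lambda)\bigl(\|F\|_{L^p(\mathbb R^n\times(0,T))}+\|u\|_{L^p(\mathbb R^n\times(0,T))}\bigr),
\]
which is the claim after undoing the change of variables of Step~1 (changing $C$ only by a factor depending on $n,\Lambda$).

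\emph{Main obstacle.} Nothing deep is involved; the one point requiring care is the bookkeeping of the two-step absorption — the threshold $\kappa_0$ and the interpolation parameter $\varepsilon$ must be fixed only \emph{after} the constant $C_0(n,p)$ of Theorem~\ref{thm: global lp estimate} is fixed (so that a priori $\kappa_0$ depends on $p$ as well; a $p$-independent threshold can be recovered if desired by first running the argument at $p=2$, where the $L^2$ estimate is universal, but this refinement is not needed) — together with verifying that the zero-extension in Step~2 genuinely preserves the hypotheses needed to invoke Lemma~\ref{lemma:uniqueness heat equation}, namely membership in $L^\infty_{t,x}((0,T);L^p(\mathbb R^n))$ and weak continuity of $u$ down to $t=0$.
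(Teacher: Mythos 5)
Your proof is correct and follows essentially the same route as the paper's: freeze the principal coefficients at $x_1$, change variables via a matrix with $S^{T}(a^{ij}(x_1))S = I$ to reduce to the Laplacian, invoke Lemma~\ref{lemma:uniqueness heat equation} to represent the zero-extended, compactly supported solution as the heat-kernel convolution, apply Theorem~\ref{thm: global lp estimate}, and absorb the error terms via the $\kappa^\alpha$ smallness of $a^{ij}(x)-a^{ij}(x_1)$ together with the interpolation Lemma~\ref{interpolation lemma, sobolev version}. You are slightly more explicit about the final absorption bookkeeping than the paper, but the argument is the same.
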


\begin{proof}
    Without loss of generality, we translate the problem and assume that $x_1 = 0$. We also assume that $b^i = c = 0$ and only deal with the second order terms of $L$, as one can use Interpolation (Lemma \ref{interpolation lemma, sobolev version}) with a sufficiently small $\varepsilon$ for the lower order terms. 
    
    Write
    $$
\partial_t u - \sum_{i,j=1}^n a^{i,j}(x) D_{x_i}D_{x_j}u = \partial_t u - \sum_{i,j=1}^n a^{i,j}(x_1) D_{x_i}D_{x_j}u + \sum_{i,j=1}^n \left[ a^{i,j}(x_1) - a^{i,j}(x) \right] D_{x_i}D_{x_j}u.
    $$
    Let $P$ be an orthogonal matrix for which $P^T (a_{i,j}(x_1)) P = I_n$. Let $v(x,t) = u(Px,t)$ (in other words, we are rotating $\mathbb R^n$). Then 
    $$(\partial_t - \Delta)v = \partial_t u - \sum_{i,j=1}^n a^{i,j}(x_1) D_{x_i}D_{x_j}u = (\partial_t - L_0)u.
    $$
    As
\begin{align*}
    ||(\partial_t - L_0)u||_{L^{p}_{x,t}(\mathbb R^n \times (-\infty,0])} &= ||(\partial_t - L_0)u||_{L^{p}_{x,t}(\Omega)} \leq ||(\partial_t - L)u||_{L^{p}_{x,t}(\Omega)}  \\
    &\phantom{=}+ \bigg|\bigg| \sum_{i,j=1}^n \left[ a^{i,j}(x_1) - a^{i,j}(x) \right] D_{x_i}D_{x_j}u \bigg|\bigg|_{L^{p}_{x,t}(\Omega)}  \\
    &\lesssim  ||(\partial_t - L)u||_{L^p(\mathbb R^n \times (-\infty,0])} + \kappa^{\alpha} \Lambda ||D^2_x u||_{L^p(\mathbb R^n \times (-\infty,0])},
 \end{align*}
 the function $v \in L^{\infty}((0,T);L^p_x(\mathbb R^n))$ must be the unique solution to the heat equation
     \begin{align*}
    v_t - \Delta v &= (\partial_t - L_0)u \in L^p(\mathbb R^n \times (0,T)),  \\
    v(x,0) &= 0, \quad x \in \mathbb R^n,
    \end{align*}
    given by convolution with the heat kernel (Lemma \ref{lemma:uniqueness heat equation}). Note that $D^2_xu $, $(\partial_t - L_0) u$ and $(\partial_t - L)u$ were naturally extended as zero for $x \in B_{1}(0)^c$. Hence one can estimate
\begin{align*}
        ||D^2_x u||_{L^p(\mathbb R^n \times (-\infty,0])} 
        &\lesssim ||D^2_x v||_{L^p(\mathbb R^n \times (-\infty,0])} \\
        &\leq C\left( ||(\partial_t - \Delta)v||_{L^{p}_{x,t}(\mathbb R^n \times (-\infty,0])} + ||v||_{L^{p}_{x,t}(\mathbb R^n \times (-\infty,0])} \right) \\
    &\lesssim C\left( ||(\partial_t - L_0)u||_{L^{p}_{x,t}(\mathbb R^n \times (-\infty,0])} + ||u||_{L^{p}_{x,t}(\mathbb R^n \times (-\infty,0])} \right) 
\end{align*}
    using Theorem \ref{thm: global lp estimate}. Choose $\kappa$ small enough and the theorem is proved.
\end{proof}




\section{Parabolic Sobolev Embedding}
In this section, we prove a version of the Sobolev-Morrey embedding into a $C^{1+\alpha, \alpha/2}_{x,t}$ Hölder space from a parabolic Sobolev space where $u$, $\nabla_x u$, $D^2_x u$, $\partial_t u \in L^p(\Omega \times [0,T])$ and $p$ is large enough (Theorem \ref{thm:parabolic sobolev embedding}). The classical reference for this result is \cite[Lemma 3.3, Chapter 2]{ladyzhenskaia1968linear}. However, after verification, the proof is not in this book and could not be found elsewhere. I thank my advisor, Pr. Joachim Krieger, for the proof of Theorem \ref{thm:parabolic sobolev embedding} which uses a simple Littlewood-Paley decomposition. We then generalize this estimate to the half-space setting with Dirichlet boundary condition by reflection, we localize the estimate (Corollary \ref{cor: local parabolic sobolev embedding}) and finally obtain it for general bounded domains with smooth boundary by straightening the boundary and  freezing the coefficients of the newly obtained parabolic operator (Theorem \ref{thm:boundary parabolic sobolev embedding}).

\begin{lemma}[\cite{ladyzhenskaia1968linear}, Chapter 2, Lemma 3.1] \label{lemma:ladyzenskaia reduction Holder regularity} 
    Let $\Omega \subset \mathbb R^{n}$ be an open set satisfying the interior cone condition with a cone $C$ of height $h$. Let $I \subset \mathbb R$ be any open interval.

    Assume that $u \in C^0_b(\Omega \times I)$ is $\alpha$-Hölder-continuous with respect to $t$, meaning that 
    $$
    |u(x,t)-u(x,t')| \leq \mu_1 |t-t'|^{\alpha} \quad \forall x \in \Omega, \forall t,t' \in I.
    $$
    Further assume that $\nabla_x u \in C^0_b(\Omega \times I)$ is $\beta$-Hölder continuous with respect to $x$, meaning that 
    $$
    |\nabla_x u(x,t)- \nabla_x u(y,t)| \leq \mu_2 |x-y|^{\beta} \quad \forall x,y \in \Omega, \forall t \in I.
    $$
    
    Then $\nabla_x u$ is $\frac{\alpha \beta}{1+\beta}$-Hölder continuous with respect to $t$ with Hölder constant
    $$
    |\nabla_x u(x,t)- \nabla_x u(x,s)| \leq 2\left( \mu_1+\mu_2+ h^{-\beta}||\nabla_x u||_{L^{\infty}(\Omega \times I)} \right) |t-s|^{\frac{\alpha \beta}{1+\beta}} \quad \forall t,s \in I, \forall x \in \Omega.
    $$
    In particular, if $\Omega = \mathbb R^n$ or $\Omega = \mathbb R^n \cap \{x_n > 0\}$, $h$ can be taken arbitrarily large so that 
    $$
    |\nabla_x u(x,t)- \nabla_x u(x,s)| \leq 2\left( \mu_1+\mu_2 \right) |t-s|^{\frac{\alpha \beta}{1+\beta}} \quad \forall t,s \in I, \forall x \in \Omega.
    $$
\end{lemma}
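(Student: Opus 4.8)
The plan is to reduce the statement to a single pointwise bound and then run a quantitative ``difference quotient versus derivative'' comparison at a scale $\rho$ chosen to balance the two error contributions. First I would fix $x\in\Omega$, $t,s\in I$, put $\delta=|t-s|$, and observe that it suffices to prove $|\nabla_x u(x,t)-\nabla_x u(x,s)|\le 2(\mu_1+\mu_2+h^{-\beta}\|\nabla_x u\|_{L^\infty(\Omega\times I)})\,\delta^{\alpha\beta/(1+\beta)}$. Assuming the left-hand side is nonzero, let $\xi$ be the unit vector along $\nabla_x u(x,t)-\nabla_x u(x,s)$, so that $(\nabla_x u(x,t)-\nabla_x u(x,s))\cdot\xi$ equals that left-hand side. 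The interior cone condition (after rotating coordinates so the cone at $x$ is in standard position, and in the half-space case replacing $\xi$ by $-\xi$ so that it points into $\Omega$) guarantees that the segment $\{x+\sigma\xi:0\le\sigma\le\rho\}$ stays in $\Omega$ for all $0<\rho\le h$; for $\Omega=\mathbb R^n$ or $\Omega=\mathbb R^n\cap\{x_n>0\}$ this holds with $h$ arbitrarily large.

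Next I would compare $\nabla_x u(x,\tau)\cdot\xi$ with the difference quotient $\rho^{-1}(u(x+\rho\xi,\tau)-u(x,\tau))$ for $\tau\in\{t,s\}$: since $u(\cdot,\tau)\in C^1$, the fundamental theorem of calculus gives the exact identity with remainder $E_\tau=\rho^{-1}\int_0^\rho(\nabla_x u(x+\sigma\xi,\tau)-\nabla_x u(x,\tau))\cdot\xi\,d\sigma$, and the $\beta$-Hölder bound on $\nabla_x u$ in $x$ forces $|E_\tau|\le\mu_2\rho^\beta/(1+\beta)\le\mu_2\rho^\beta$. Subtracting the $\tau=t$ and $\tau=s$ identities and bounding the cross difference $[u(x+\rho\xi,t)-u(x+\rho\xi,s)]-[u(x,t)-u(x,s)]$ by $2\mu_1\delta^\alpha$ via the $\alpha$-Hölder bound on $u$ in $t$, I obtain $|\nabla_x u(x,t)-\nabla_x u(x,s)|\le 2\mu_1\delta^\alpha/\rho+2\mu_2\rho^\beta$. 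Then I optimize in $\rho$: if $\delta\le h^{(1+\beta)/\alpha}$ I pick $\rho=\delta^{\alpha/(1+\beta)}\le h$, which makes both terms equal to $2\mu_1\delta^{\alpha\beta/(1+\beta)}$ and $2\mu_2\delta^{\alpha\beta/(1+\beta)}$ respectively; if $\delta>h^{(1+\beta)/\alpha}$ I use the crude bound $2\|\nabla_x u\|_{L^\infty}$ together with $h^{-\beta}\delta^{\alpha\beta/(1+\beta)}\ge1$. Combining the two regimes yields the claimed constant, and in the flat cases letting $h\to\infty$ removes the $h^{-\beta}\|\nabla_x u\|_{L^\infty}$ term.

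The hard part is geometric rather than analytic: one must be allowed to test the gradient difference along a segment in $\Omega$ of length up to $\sim\delta^{\alpha/(1+\beta)}$ in the direction $\xi$ that detects it, and this is exactly what the interior cone condition supplies. For a genuinely curved domain the chosen $\xi$ need not lie in the cone, so one would instead work with $n$ fixed linearly independent directions inside the cone and reconstruct the full gradient from its components along them, at the price of an aperture-dependent constant; this is immaterial here, since the lemma is applied only with $\Omega=\mathbb R^n$ or a half-space, where $h$ is unconstrained and the clean constant $2(\mu_1+\mu_2)$ appears. Everything else — applicability of the fundamental theorem of calculus under the stated hypotheses, and the elementary inequalities $\mu_2\rho^\beta/(1+\beta)\le\mu_2\rho^\beta$ and $h^{-\beta}\delta^{\alpha\beta/(1+\beta)}\ge1$ in the large-$\delta$ regime — is routine.
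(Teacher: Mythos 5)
Your proof is correct and is, in essence, the standard argument behind the cited reference (the paper itself gives no proof, only the citation to Ladyzhenskaia, Chapter II, Lemma 3.1): compare $\nabla_x u\cdot\xi$ with the difference quotient at scale $\rho$, use the two Hölder bounds to control the two error terms, and choose $\rho=|t-s|^{\alpha/(1+\beta)}$ to balance them, treating the regime $\rho>h$ by the trivial $L^\infty$ bound. The algebra is right: $2\mu_1\delta^\alpha/\rho+2\mu_2\rho^\beta$ collapses to $2(\mu_1+\mu_2)\delta^{\alpha\beta/(1+\beta)}$ at the balanced scale, and $h^{-\beta}\delta^{\alpha\beta/(1+\beta)}\ge1$ handles the other regime, reproducing the constant in the statement.

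Your geometric caveat is well taken and worth keeping. As stated, the lemma asserts an aperture-independent constant for any domain with the interior cone condition, but the difference-quotient argument only sees the components of $\nabla_x u(x,t)-\nabla_x u(x,s)$ along directions available in the cone at $x$; for a cone of small aperture the chosen $\xi$ need not be admissible, and reconstructing the full gradient from $n$ admissible directions introduces an aperture-dependent factor. This does not affect the paper, which invokes the lemma only for $\mathbb R^n$ and the half-space, where (as you note, after possibly replacing $\xi$ by $-\xi$) every direction is available at every scale and the clean constant $2(\mu_1+\mu_2)$ is attained in the limit $h\to\infty$.
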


 \begin{proposition}[Extension by reflection]\label{prop:extension by reflection}
     Let 
     $$
     u \in C^{2,1}(\mathbb R^n \cap \{x_n > 0\} \times I) \cap L^p(\mathbb R^n \cap \{x_n > 0\} \times I),$$
     $I \subset \mathbb R$ open, $1 \leq p \leq +\infty$. Assume that $\lim \limits_{\varepsilon \to 0^+}u(x,\varepsilon,t) \to 0$ in $L^p_{loc}(\mathbb R^{n-1} \times I)$ and $\lim \limits_{\varepsilon \to 0^+} \nabla_x u (\tilde{x},\varepsilon,t)$ converges to a limit in $L^p_{loc}(\mathbb R^{n-1} \times I)$. Denote by $\nabla_x u(\tilde{x},0,t)$ its limit. Let 
    $$
\tilde{u}(\tilde{x},x_n,t) = \begin{cases} u(\tilde{x},x_n,t), \quad &x_n > 0, \\
- u(\tilde{x},- x_n,t), \quad &x_n < 0, \\
\end{cases}
$$
be the odd reflection of $u$. 

If the distribution $v \in \{\partial_{x_i} u, \partial_{x_i}\partial_{x_j}u, \partial_t u, (\partial_t - \Delta)u, \partial_{x_n}^2 u: i,j \neq n\}$ belongs to  $L^p(\mathbb R^n \cap \{x_n > 0\} \times I)$, then the corresponding distribution $\tilde{v} \in \{\partial_{x_i} \tilde{u}, \partial_{x_i}\partial_{x_j}\tilde{u}, \partial_t \tilde{u}, (\partial_t - \Delta)\tilde{u}, \partial_{x_n}^2 \tilde{u}: i,j \neq n\}$ is given by the odd reflection of $v$. If $v \in \{\partial_{x_n} u, \partial_{x_i}\partial_{x_n} u: i \neq n\}$ belongs to $ L^p(\mathbb R^n \cap \{x_n > 0\} \times I)$, then the corresponding distribution $\tilde{v}$ is given by the even reflection of $v$ (i.e., $\tilde{v}(\tilde{x},x_n,t) = v(\tilde{x},-x_n,t)$ for $x_n < 0$).

Additionally, if $u \in C^{2+\alpha,1+\alpha/2}(\mathbb R^n \cap \{x_n \geq 0\} \times I)$ for $\alpha \in [0,1)$, then $\tilde{u} \in  C^{2+\alpha,1+\alpha/2}(\mathbb R^n \cap \{x_n \geq 0\} \times I)$ as well.

The same result holds if one replaces $\mathbb R^n \cap \{x_n > 0\}$ by a half-ball $B_r^+(x_0) = B_r(x_0) \cap \{x_n > 0\}$ with $(x_0)_n = 0$ and one assumes $\lim \limits_{\varepsilon \to 0^+} u(\tilde{x},\varepsilon,t) \to 0$ in $L^p_{loc}(B_{r}(\tilde{x}_0) \times I)$, where $x_0 = (\tilde{x}_0, (x_0)_n)$, and $\lim \limits_{\varepsilon \to 0^+}  \nabla_x u (\tilde{x},\varepsilon,t)$ converges in $L^p_{loc}(B_r(\tilde{x}_0) \times I)$.
 \end{proposition}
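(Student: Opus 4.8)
The plan is to verify the claimed identities for the reflected distributions directly against test functions, treating the three groups of derivatives separately according to their parity across the hyperplane $\{x_n = 0\}$. First I would fix a test function $\phi \in C^\infty_c(\mathbb R^n \times I)$ and split the pairing $\langle \tilde v, \phi\rangle$ (for whichever candidate distribution $v$) into the two half-space integrals over $\{x_n > 0\}$ and $\{x_n < 0\}$; after the change of variables $x_n \mapsto -x_n$ in the second integral, both reduce to integrals over $\{x_n > 0\}$ against $\phi(\tilde x, x_n, t)$ and $\phi(\tilde x, -x_n, t)$ respectively. Since $u$ is genuinely $C^{2,1}$ up to the boundary (in the sense that the relevant boundary traces exist in $L^p_{loc}$), on each half-space I can integrate by parts classically and the only terms that survive beyond the "odd/even reflection of $v$" contribution are boundary integrals over $\{x_n = 0\}$. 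The content of the lemma is exactly that these boundary terms cancel.

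For the tangential derivatives $\partial_{x_i} u$, $\partial_{x_i}\partial_{x_j} u$ ($i,j\neq n$), $\partial_t u$ and $(\partial_t-\Delta)u$, I would observe that no integration by parts in the $x_n$ variable is needed, so the odd reflection of $u$ yields the odd reflection of $v$ with no boundary contribution, using only that $u(\tilde x, 0^+, t) = 0$ in $L^p_{loc}$ to control the behaviour at the interface when $v$ itself involves an $x_n$-derivative paired against a test function that need not vanish there. For the normal first derivative $\partial_{x_n} u$: the odd reflection $\tilde u$ has $\partial_{x_n}\tilde u$ equal to the even reflection of $\partial_{x_n} u$ precisely because the boundary terms from the two sides are $\pm\int u(\tilde x,0,t)\phi(\tilde x,0,t)\,d\tilde x$, which vanish by the hypothesis $u(\tilde x,0^+,t)\to 0$. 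For the mixed second derivatives $\partial_{x_i}\partial_{x_n}u$ ($i\neq n$) I differentiate the previous identity tangentially (a tangential derivative commutes with reflection and preserves evenness), again using that $u$ vanishes on the boundary so its tangential derivative does too. Finally, the key step is $\partial_{x_n}^2 u$: here I write $\partial_{x_n}^2 u = (\partial_t - \Delta)u \cdot(-1) + \partial_t u - \sum_{i\neq n}\partial_{x_i}^2 u$ — wait, more precisely $\partial_{x_n}^2 u = \partial_t u - (\partial_t - \Delta)u - \sum_{i\neq n}\partial_{x_i}^2 u$ — and since each term on the right has already been shown to reflect oddly, so does $\partial_{x_n}^2 u$. (Alternatively, one integrates by parts twice in $x_n$; the first boundary term involves $\partial_{x_n}u(\tilde x,0,t)$ paired against $\partial_{x_n}\phi$ and is killed by the even–odd mismatch after summing the two sides, and the second involves $u(\tilde x,0,t) = 0$.)

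The $C^{2+\alpha,1+\alpha/2}$ claim is a separate, purely classical matter: when $u$ is $C^{2+\alpha,1+\alpha/2}$ up to $\{x_n=0\}$, the vanishing of $u$ and of its second-order normal-tangential data forces $u(\tilde x,0,t)=0$ and $\partial_{x_n}^2 u(\tilde x, 0, t) = 0$ (the latter from the equation restricted to the boundary, or directly from $\partial_t u - \Delta u$ being continuous and $u \equiv 0$, $\nabla_{\tilde x} u \equiv 0$ on the boundary), which are exactly the conditions guaranteeing that the odd reflection glues to a $C^{2+\alpha}$ function in $x$; Hölder continuity in $t$ is immediate since reflection is an isometry of the relevant seminorms. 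The half-ball version $B_r^+(x_0)$ is identical word for word, with all function spaces and test functions localized to $B_r$; no new idea is required.

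The main obstacle I anticipate is purely bookkeeping: carefully justifying the integration by parts on each half-space when $u$ is only $C^{2,1}$ with $L^p$-control (not $C^\infty$) up to the boundary, and tracking which boundary integrals appear with which sign for each of the six derivative types, so that the stated even/odd dichotomy comes out correctly. Once the sign bookkeeping is organized — tangential derivatives and $(\partial_t-\Delta)$ reflect oddly, normal derivatives reflect evenly, and $\partial_{x_n}^2$ is recovered from the equation — every cancellation is forced by the single hypothesis $u|_{x_n=0} = 0$ together with the assumed existence of the normal-derivative trace.
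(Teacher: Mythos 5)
Your overall strategy is the same as the paper's: pair the reflected function against a test function, split into the two half-spaces, change variables in the negative half, integrate by parts, and show that the boundary terms vanish or cancel. The paper does exactly this (with an explicit $\varepsilon$-cutoff at $\{x_n = \varepsilon\}$ to justify the integration by parts given the limited boundary regularity, and with an auxiliary test function $\tilde\phi(\tilde x, x_n, t) = -\phi(\tilde x, -x_n, t)$ to organise the sign bookkeeping), and your approach is a paraphrase of that.

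There is, however, one genuine imprecision worth correcting. You group $(\partial_t - \Delta)u$ with the tangential derivatives under the heading ``no integration by parts in the $x_n$ variable is needed, so \dots no boundary contribution.'' That is not right: $\Delta$ contains $\partial_{x_n}^2$, and the double integration by parts produces two boundary terms on each half-space, one of type $\int u\,\partial_{x_n}\phi$ and one of type $\int \partial_{x_n}u\,\phi$ over $\{x_n = 0\}$. Only the first is killed by the hypothesis $u|_{x_n = 0} = 0$. The second is \emph{not} zero; it cancels between the two half-spaces because the odd reflection of the test function changes sign on the boundary ($\tilde\phi|_{x_n=0} = -\phi|_{x_n=0}$), equivalently because the outward normals point in opposite directions. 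Attributing this cancellation ``only'' to $u|_{x_n=0}=0$, as you do, obscures precisely the point that makes the existence of the trace $\partial_{x_n}u|_{x_n=0}$ in $L^p_{loc}$ a necessary hypothesis of the proposition. The same confusion reappears in your parenthetical for $\partial_{x_n}^2 u$, where you describe a boundary term ``$\partial_{x_n}u$ paired against $\partial_{x_n}\phi$,'' which does not actually arise; the two genuine boundary terms are $u\,\partial_{x_n}\phi$ (killed by $u=0$) and $\partial_{x_n}u\,\phi$ (killed by the sign flip). Your algebraic shortcut $\partial_{x_n}^2 u = \partial_t u - (\partial_t - \Delta)u - \sum_{i\neq n}\partial_{x_i}^2 u$ is a legitimate and slightly cleaner way to dispatch $\partial_{x_n}^2 u$ once the $(\partial_t-\Delta)u$ case is in hand, but it does not let you avoid this boundary-term cancellation, since you must first establish that $(\partial_t-\Delta)\tilde u$ is the odd reflection of $(\partial_t-\Delta)u$, and that is exactly where $\int\partial_{x_n}u\,\phi$ shows up. The Hölder part and the half-ball localisation are handled as in the paper and are fine.
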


 \begin{proof}
      First, $\tilde{u} \in L^p(\mathbb R^n \times I)$. Next, let $\phi \in C^{2,1}_c(\mathbb R^{n} \times I)$. Write $(\partial_t - \Delta) u = f \in L^p(\mathbb R^n \cap \{x_n > 0\} \times I)$ and $\tilde{f}$ its odd reflection to the whole plane. For $\varepsilon > 0$, integrating by parts shows
      \begin{align*}
   \int_{\mathbb R^n \cap \{x_n > \varepsilon\} \times I} u \partial_{x_i}^{\alpha_i}\partial_{x_j}^{\alpha_j} \phi dxdt &= (-1)^{\alpha_i + \alpha_j}\int_{\mathbb R^n \cap \{x_n > \varepsilon\} \times I} \partial_{x_i}^{\alpha_i}\partial_{x_j}^{\alpha_j}u \phi dxdt, \quad i,j \neq n, \alpha_k \in \{0,1\},  \\
       \int_{\mathbb R^n \cap \{x_n > \varepsilon\} \times I} u \partial_{t} \phi dxdt &= -\int_{\mathbb R^n \cap \{x_n > \varepsilon\} \times I} \partial_{t}u \phi dxdt, \\
         \int_{\mathbb R^n \cap \{x_n > \varepsilon\} \times I} u \partial_{x_i}^{\alpha_i}\partial_{x_n} \phi dxdt &= (-1)^{\alpha_i}\int_{\mathbb R^{n-1} \times I} \partial_{x_i}^{\alpha_i} u(\tilde{x},\varepsilon,t) \phi(\tilde{x},\varepsilon,t) d\tilde{x}dt \\
         &+ (-1)^{1+\alpha_i} \int_{\mathbb R^n \cap \{x_n > \varepsilon\} \times I} \partial_{x_i}^{\alpha_i} \partial_{x_n}u \phi dxdt, \quad i \neq n, \alpha_i \in \{0,1\},
\end{align*}
as well as
\begin{align*}
     \int_{\mathbb R^n \cap \{x_n > \varepsilon\} \times I} u \partial_{x_n}^2 \phi dxdt &= \int_{\mathbb R^{n-1} \times I} u(\tilde{x},\varepsilon,t) \partial_{x_n}\phi(\tilde{x},\varepsilon,t) d\tilde{x}dt \\
     &- \int_{\mathbb R^{n-1} \times I} \partial_{x_n} u(\tilde{x},\varepsilon,t) \phi(\tilde{x},\varepsilon,t) d\tilde{x}dt +\int_{\mathbb R^n \cap \{x_n > \varepsilon\} \times I} \partial_{x_n}^2u \phi dxdt. \\
\end{align*}
As $u(\tilde{x},\varepsilon,t) \to 0$ in $L^p(\mathbb R^{n-1} \times I)$ and $\phi$ is $C^{2,1}$ with compact support, taking the limit $\varepsilon \to 0$ shows
\begin{align*}
    \int_{\mathbb R^n \cap \{x_n > 0\} \times I} u (-\partial_t - \Delta) \phi dxdt &= \int_{\mathbb R^{n-1} \times I} \partial_{x_n} u(\tilde{x}, 0, t)  \phi(\tilde{x},0,t) d\tilde{x}dt \\
    &+ \int_{\mathbb R^n \cap \{x_n > 0\} \times I } f \phi dx dt
\end{align*}
if $f = (\partial_t - \Delta)u \in L^p(\mathbb R^n \cap \{x_n > 0\} \times I)$.

Denoting $\tilde{\phi}(\tilde{x},x_n,t) = -\phi(\tilde{x},-x_n,t)$, then
\begin{align*}
    \langle (\partial_t - \Delta) \tilde{u}, \phi \rangle &=  \langle \tilde{u}, (-\partial_t - \Delta) \phi \rangle \\
    &= \int_{\mathbb R^n \cap \{x_n > 0\} \times I} u (-\partial_t - \Delta) \phi dxdt + \int_{\mathbb R^n \cap \{x_n > 0\} \times I} u (-\partial_t - \Delta) \tilde{\phi} dxdt \\
    &= \int_{\mathbb R^n \cap \{x_n > 0\} \times I } f \cdot (\phi + \tilde{\phi}) dx dt = \int_{\mathbb R^n \times I } \tilde{f} \cdot \phi dx dt,
 \end{align*}
where $\tilde{f} \in L^p(\mathbb R^n \times I)$ is the odd reflection of $f$.  We proceed similarly for the other derivatives.

If $u, \nabla_x u, \partial_tu, D^2_x u$ are continuous up to the boundary $x_n = 0$, then differentiating the boundary condition yields $\partial_t u = \partial_{x_i} u = \partial_{x_i}\partial_{x_j} u = 0$ on $\{x_n = 0\}$ for $i,j \in \{1,...,n-1\}$. It also follows from $(\partial_t - \Delta) u = 0$ that $\partial_{x_n}^2 u = 0$ on $\{x_n = 0\}$. We also have continuity of $\partial_{x_n} u$, $\partial_{x_i}\partial_{x_n} u$, $i \in \{1,...,n-1\}$, on $\{x_n = 0\}$ and it does not matter whether the value is zero or not as they are extended using an even reflection. Hence, $\partial_t \tilde{u}, \nabla_x \tilde{u}, D^2_x \tilde{u}$, which are of class $C^{0}(\mathbb R^n \setminus \{x_n = 0\} \times (-\infty,0])$, extend continuously on $\{x_n = 0\}$. If $D^2_x u \in C^{\alpha}(\mathbb R^n \cap \{x_n > 0\} \times (-\infty,0])$, then the only issue for the Hölder regularity of $\partial_{x_i} \partial_{x_j} \tilde{u}$ (and similarly for $\partial_t \tilde{u}$) is when one has two points $x, y \in \mathbb R^n$ on opposite half-planes $y_n < 0 < x_n$. 

If $\partial_{x_i} \partial_{x_j} \tilde{u}$ is an even reflection, then
$$
|\partial_{x_i} \partial_{x_j} \tilde{u}(\tilde{x},x_n,t) - \partial_{x_i} \partial_{x_j} \tilde{u}(\tilde{y},y_n,t)| = |\partial_{x_i} \partial_{x_j} \tilde{u}(\tilde{x},x_n,t) - \partial_{x_i} \partial_{x_j} \tilde{u}(\tilde{y},-y_n,t)|
$$
and $(\tilde{x},x_n),(\tilde{y},-y_n)$ are on the same half-plane. If it is an odd reflection, then $\partial_{x_i} \partial_{x_j} \tilde{u} = 0$ on $\{x_n = 0\}$ and
\begin{align*}
|\partial_{x_i} \partial_{x_j} \tilde{u}(\tilde{x},x_n,t) - \partial_{x_i} \partial_{x_j} \tilde{u}(\tilde{y},y_n,t)| &\leq |\partial_{x_i} \partial_{x_j} \tilde{u}(\tilde{x},x_n,t) - \partial_{x_i} \partial_{x_j} \tilde{u}(\tilde{x},0,t)| \\
&+ |\partial_{x_i} \partial_{x_j} \tilde{u}(\tilde{y},y_n,t) - \partial_{x_i} \partial_{x_j} \tilde{u}(\tilde{y},0,t)| \\
&\leq  [D^2_xu]_{C^{\alpha}} \cdot \left( |x_n|^{\alpha} + |y_n|^{\alpha} \right) \\
&\lesssim |x_n - y_n|^{\alpha} \lesssim |x-y|^{\alpha}
\end{align*}
as $y_n < 0 < x_n$.

The same argument works for a ball by symmetry of the domain. The integration by parts have no terms on $\partial B_r(x_0)$ because the test function must be compactly supported inside the ball.
 \end{proof}
 
\begin{theorem}[Global Parabolic Sobolev Embedding]\label{thm:parabolic sobolev embedding}
   Let $n+2 < p < +\infty$ and $I \subset \mathbb R$ be any open interval. Assume that $u(x,t) \in L^{p}(\mathbb R^{n} \times I)$ has weak derivatives $\nabla_x u, (\partial_t - \Delta) u \in L^p(\mathbb R^{n} \times I)$. Then $u \in C^{1+2\gamma,\gamma}_{x,t}(\mathbb R^{n}\times I)$ where
    $$
    0 < \gamma < \frac{(p-n-1)(p-n-2)}{p(2p-n-2)}.
    $$
    Moreover, there exists $C = C(n,p,\gamma)$ for which
\begin{align}
    ||u||_{C^{1+2\gamma,\gamma}_{x,t}(\mathbb R^{n}\times I)}
&\leq C \left( 1 + |I|^{-1-2\gamma-\frac{n+2}{p}}\right) \left( ||u||_{L^p(\mathbb R^n \times I)} + ||(\partial_t - \Delta)u||_{L^p(\mathbb R^n \times I)} \right).  \label{ineq:global parabolic sobolev}
\end{align}
The theorem stills holds if one replaces $\mathbb R^n$ by the half-space $\mathbb R^n \cap \{x_n > 0\}$ and one a priori assumes that $u \in C^{2,1}(\mathbb R^n \cap \{x_n > 0\} \times I)$, $\lim \limits_{\varepsilon \to 0^+} u(x,\varepsilon,t) \to 0$ in $L^p_{loc}(\mathbb R^{n-1} \times I)$ and $\lim \limits_{\varepsilon \to 0^+} \nabla_x u (\tilde{x},\varepsilon,t)$ converges in $L^p_{loc}(\mathbb R^{n-1} \times I)$.
\end{theorem}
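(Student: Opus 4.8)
The plan is to localize in the time variable, reduce the statement to the whole space $\mathbb{R}^{n+1}$, run an anisotropic (parabolic-scaling) Littlewood--Paley argument to get Hölder regularity of $u$ and of $\nabla_x u$ in the parabolic metric, and finally recover the Hölder regularity of $\nabla_x u$ in $t$ from its Hölder regularity in $x$ via Lemma \ref{lemma:ladyzenskaia reduction Holder regularity}.

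\textbf{Step 1 (reduction to a compactly supported solution on $\mathbb{R}^{n+1}$).} Write $I=(a,b)$, fix a cutoff $\chi\in C^\infty_c(I)$ with $\chi\equiv 1$ on a sub-interval $I'\Subset I$ and $\|\chi^{(j)}\|_\infty\lesssim\dist(I',\partial I)^{-j}$, and set $v=\chi u$. Then $v,\nabla_x v\in L^p(\mathbb{R}^n\times\mathbb{R})$ and $(\partial_t-\Delta)v=\chi(\partial_t-\Delta)u+\chi' u\in L^p(\mathbb{R}^n\times\mathbb{R})$, while $v$ vanishes for $t$ near $a$. After translating $a$ to $0$, the uniqueness statement Lemma \ref{lemma:uniqueness heat equation} (case of an $L^p$ solution agreeing with its initial datum near $t=0$) shows $v$ is the convolution of $(\partial_t-\Delta)v$ with the heat kernel and zero initial data; hence Theorem \ref{thm: global lp estimate} gives $D^2_x v\in L^p$, then $\partial_t v=\Delta v+(\partial_t-\Delta)v\in L^p$, and together with the interpolation inequality Lemma \ref{interpolation lemma, sobolev version} for the first-order term, $v$ lies in the parabolic Sobolev space of order $(2,1)$ on $\mathbb{R}^{n+1}$.

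\textbf{Step 2 (multiplier estimate and embedding).} Decompose $v=\sum_j\Delta_j v$ with a Littlewood--Paley partition adapted to the parabolic scaling $(x,t)\mapsto(\lambda x,\lambda^2 t)$, so $\widehat{\Delta_j v}$ is supported where $|\xi|\sim 2^j$ or $|\tau|\sim 2^{2j}$. Since $\tfrac{1}{i\tau+|\xi|^2}$ and $\tfrac{i\xi_k}{i\tau+|\xi|^2}$ are Mikhlin--Hörmander multipliers invariant under the parabolic scaling, one gets $\|\Delta_j v\|_p\lesssim 2^{-2j}\|\widetilde\Delta_j(\partial_t-\Delta)v\|_p$ and $\|\nabla_x\Delta_j v\|_p\lesssim 2^{-j}\|\widetilde\Delta_j(\partial_t-\Delta)v\|_p$ uniformly in $j$, with $\widetilde\Delta_j$ a fattened projector. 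The parabolic Bernstein inequality $\|\Delta_j w\|_\infty\lesssim 2^{j(n+2)/p}\|\Delta_j w\|_p$ then makes $\sum_j 2^{j(1+\beta)}\|\Delta_j v\|_\infty+\sum_j 2^{j\beta}\|\nabla_x\Delta_j v\|_\infty\lesssim\|(\partial_t-\Delta)v\|_p$ summable for any $\beta<1-\tfrac{n+2}{p}$, which is positive exactly because $p>n+2$. Thus $v$, respectively $\nabla_x v$, is Hölder of parabolic order $1+\beta$, respectively $\beta$; in particular $\nabla_x v$ is $\beta$-Hölder in $x$ and $v$ is $\tfrac{1+\beta}{2}$-Hölder in $t$. (Alternatively, one may avoid the Littlewood--Paley machinery and deduce Hölder regularity in $x$ of $v$ and $\nabla_x v$ from the isotropic Sobolev--Morrey embedding $W^{1,p}(\mathbb{R}^{n+1})\hookrightarrow C^{1-(n+1)/p}$ applied to $v$ and to the spatial difference quotients of $v$, using $D^2_x v,\partial_t v\in L^p$, at the price of a slightly worse exponent --- presumably the source of the precise constant in the stated range of $\gamma$.)

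\textbf{Step 3 (mixed Hölder norm, scaling, boundary case).} With $v$ Hölder in $t$ and $\nabla_x v$ Hölder in $x$, Lemma \ref{lemma:ladyzenskaia reduction Holder regularity} supplies Hölder continuity of $\nabla_x v$ in $t$; assembling the exponents and choosing $\gamma$ in the stated range yields $u\in C^{1+2\gamma,\gamma}_{x,t}$ on $\mathbb{R}^n\times I'$. To obtain the estimate on $\mathbb{R}^n\times I$ with the factor $1+|I|^{-1-2\gamma-(n+2)/p}$, one proves the inequality first for $|I|=1$ (where all cutoff constants and the prefactor are $O(1)$) and then rescales $u(x,t)\mapsto u(\lambda x,\lambda^2 t)$ with $\lambda\sim|I|^{1/2}$, combined with a finite overlapping cover of $I$: each Hölder seminorm and each $L^p$ norm acquires an explicit power of $\lambda$, and the bookkeeping produces exactly the claimed power of $|I|$. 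For the half-space version, the hypotheses (a priori $C^{2,1}$ on $\{x_n>0\}\times I$, $u(\tilde x,\varepsilon,t)\to 0$ and $\nabla_x u(\tilde x,\varepsilon,t)$ convergent in $L^p_{loc}$ as $\varepsilon\to0^+$) are precisely what Proposition \ref{prop:extension by reflection} needs: the odd reflection $\tilde u$ satisfies $\tilde u,\nabla_x\tilde u,(\partial_t-\Delta)\tilde u\in L^p(\mathbb{R}^n\times I)$, so the full-space result applied to $\tilde u$ and restricted back gives the claim.

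\textbf{Main obstacle.} The heart of the argument is Step 2: extracting the gain of exactly two parabolic derivatives through a scale-invariant multiplier bound and pairing it with Bernstein to land in the correct anisotropic Hölder class, all while keeping every constant homogeneous under the parabolic scaling so that the $|I|$-dependence in \eqref{ineq:global parabolic sobolev} emerges cleanly. A secondary technical point is making the time-localization rigorous --- one must know $v$ is genuinely the heat-convolution solution (via Lemma \ref{lemma:uniqueness heat equation}) before invoking Theorem \ref{thm: global lp estimate}, since only $u,\nabla_x u,(\partial_t-\Delta)u\in L^p$ are available a priori.
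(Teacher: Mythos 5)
Your proposal is correct, but Step 2 takes a genuinely different route from the paper's proof. The paper uses a \emph{spatial} Littlewood--Paley decomposition $u = \chi(D_x)u + \sum_{j\geq 0}\phi(2^{-j}D_x)u$ (frequency localization in $x$ only), and then handles uniformity in $t$ with a separate, somewhat delicate argument: it first shows $\|(1-\Delta)^{(2-\varepsilon)/2}\phi(2^{-j}D)u(\cdot,t)\|_{L^p_x}\lesssim 2^{-j(\varepsilon-\tilde\varepsilon)p}$ holds at \emph{some} $t$ in any interval $J$ of length $\lesssim 2^{-jp\tilde\varepsilon}$ by averaging, and then propagates the bound to all of $J$ by integrating $\partial_t u$. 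This yields $u\in L^\infty_t(W^{2-\varepsilon,p}_x)$ for $\tfrac{2}{p}<\varepsilon<1-\tfrac{n}{p}$, from which $\nabla_x u$ is Hölder in $x$ uniformly in $t$; the isotropic embedding $W^{1,p}(\mathbb R^{n+1})\hookrightarrow C^{0,1-\frac{n+1}{p}}$ gives the $t$-Hölder regularity of $u$, and Lemma~\ref{lemma:ladyzenskaia reduction Holder regularity} is then genuinely needed to produce the $t$-Hölder regularity of $\nabla_x u$, which is exactly where the exponent $\gamma=\tfrac{\alpha\beta}{1+\beta}=\tfrac{(p-n-1)(p-n-2)}{p(2p-n-2)}$ comes from. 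By contrast, you run a full space--time \emph{parabolic} Littlewood--Paley decomposition and invoke the parabolic-scaling Mikhlin--Hörmander theorem for $m(\xi,\tau)=(i\tau+|\xi|^2)^{-1}$; this is a cleaner and more systematic attack, but it relies on a more sophisticated piece of machinery (Fabes--Rivi\`ere / parabolic Calder\'on--Zygmund theory) that the paper deliberately avoids — the paper only ever uses the standard Bernstein lemma and spatial LP. Your approach also gives a strictly \emph{larger} admissible range $\gamma<\tfrac{p-n-2}{2p}$, which contains the paper's range, so the theorem as stated is certainly proved. Two small remarks: (i) your invocation of Lemma~\ref{lemma:ladyzenskaia reduction Holder regularity} in Step~3 is redundant for you, since the parabolic LP bound $\sum_j 2^{j\beta}\|\nabla_x\Delta_j v\|_\infty<\infty$ already gives the $\beta/2$-Hölder continuity in $t$ of $\nabla_x v$ directly (applying the lemma merely recovers the same exponent); (ii) in Step~2 you should explicitly treat the low-frequency block, where the multiplier bound $\|\Delta_j v\|_p\lesssim 2^{-2j}\|\widetilde\Delta_j(\partial_t-\Delta)v\|_p$ fails, by estimating it directly from $\|v\|_p$ — this is where the $\|u\|_{L^p}$ term in the final estimate enters, and your summability condition must account for it.
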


\begin{notation}
    In the following, $u \in W^{1,p}(\mathbb R^n \times I)$ means that $u, \nabla_x u$ and $\partial_t u$ are in $L^p(\mathbb R^n \times I)$.
\end{notation}

\begin{remark}
    The convergence of $\lim \limits_{\varepsilon \to 0^+}u(x,\varepsilon,t) \to 0$ and $\lim \limits_{\varepsilon \to 0^+} \nabla_x u (\tilde{x},\varepsilon,t)$ in $L^p_{loc}(\mathbb R^{n-1} \times I)$ is a technicality needed for the boundary estimate. As in Theorem \ref{thm:boundary parabolic sobolev embedding}, one can assume that $u \in C^0([0,T),C^1(\mathbb R^n \cap \{x_n \geq 0\}))$ and $u(\tilde{x},0,t) = 0$ instead, which is the regularity obtained by doing a fixed point argument in Theorem \ref{thm:taylor local existence} if one replaces the half-space by a bounded domain with smooth boundary.
\end{remark}

\begin{proof}
First, observe that $D^2_x u, \partial_t u \in L^p(\mathbb R^n \times I)$ thanks to Global $L^p$ estimates (Theorem \ref{thm: global lp estimate}). Then $u(x,t) \in W^{1,p}(\mathbb R^n \times I)$ extends as a function $u(x,t) \in W^{1,p}(\mathbb R^{n+1}) \hookrightarrow C^{0,1-\frac{n+1}{p}}(\mathbb R^{n+1})$ (see \cite[Theorem 5.28]{adams2003sobolev}). In particular, $u$ is bounded on $\mathbb R^{n} \times I$ and Hölder-continuous with respect to $t$ on $I$, uniformly with respect to $x \in \mathbb R^n$. The Hölder constant $\mu_1$ satisfies
$$
\mu_1 \leq C(n,p,I) ||u||_{W^{1,p}(\mathbb R^n \times I)}.
$$

Similarly, for fixed $t \in I$ and $0 < \varepsilon < 1 - \frac{n}{p}$, $u(x,t) \in W^{2,p}(\mathbb R^n) \hookrightarrow W^{2-\varepsilon,p}(\mathbb R^n) \hookrightarrow C^{1,1-\frac{n}{p}-\varepsilon}(\mathbb R^n)$. The $C^{0,1-\frac{n}{p}-\varepsilon}(\mathbb R^n)$-Hölder-norm of $\nabla_x u(x,t)$ is uniform with respect to $t \in \mathbb R$ provided that $u \in L^{\infty}_{t}(W^{2-\varepsilon,p}_x)$, which we will be proving now for an appropriate $\varepsilon$ to be chosen later. The $L^{\infty}$-norm and the Hölder constant $\mu_2$ will satisfy
$$
||\nabla_x u||_{L^{\infty}(\mathbb R^n \times I)} + \mu_2 \leq C ||u||_{L^{\infty}_{t}(W^{2-\varepsilon,p}_x)} \leq C(n,p,\varepsilon) \left( ||u||_{W^{1,p}(\mathbb R^n \times I)} + ||\Delta u||_{L^{p}(\mathbb R^n \times I)} \right).
$$

First, we recall some facts about Sobolev spaces. For $s \in \mathbb R_{\geq 0}$ and $1  < p <\infty$, recall that
$$
W^{s,p}(\mathbb R^n) = H^{s,p}(\mathbb R^n) = \{u \in S'(\mathbb R^n): (1-\Delta)^{s/2} u \in L^p(\mathbb R^n) \} \ (= F^{s}_{p,2}(\mathbb R^n)),
$$
where $S'(\mathbb R^n)$ denotes the set of Schwartz distributions, $F^{s}_{p,2}$ is a Triebel-Lizorkin space and $(1-\Delta)^{s/2}$ is the (spatial) Fourier multiplier $(1-\Delta)^{s/2} = (1+|D|^2)^{s/2}$. For a reference, see \cite[Corollary 3.1, Chapter 3.2.1.2]{sawano}, for integer $s$ and then use complex interpolation to get the spaces inbetween (Theorem 4.29, Chapter 4.2.3.3 from \cite{sawano}). 

We apply a nonhomogeneous spatial Littlewood-Paley decomposition and write
$$
u = \chi(D)u + \sum_{j \geq 0} \phi(2^{-j}D)u,
$$
where $\chi(x)$, resp. $\phi(x)$,  are smooth and supported on a ball, resp. an annulus, centered at the origin (for fixed $t \in \mathbb R$, one has convergence in the sense of Schwartz distribution). 

For $j \geq 0$, one can write 
$$
(1 - \Delta)^{s/2}  \phi(2^{-j}D) u = (1 - \Delta)^{s/2} \theta(D)  \phi(2^{-j}D) u, \quad s \in \mathbb R,
$$
where $\theta \in C^{\infty}$ is a smooth function which is zero in a neighborhood of the origin. Then
\begin{align}
    \big\| (1 - \Delta)^{(2-\varepsilon)/2} \phi(2^{-j}D) u\big\|_{L^p(\mathbb R^n\times I)} &= \big\| \big\| (1 - \Delta)^{(2-\varepsilon)/2} \phi(2^{-j}D) u \big\|_{L^p_x(\mathbb R^n)}^p \big\|_{L^1_t(I)}^{1/p} \notag \\
    &= \big\| \big\| (1 - \Delta)^{-\varepsilon/2} \phi(2^{-j}D)  (1 - \Delta)  u \big\|_{L^p_x(\mathbb R^n)}^p \big\|_{L^1_t(I)}^{1/p} \notag  \\
    &\leq C(\varepsilon)  \left( 2^j \right)^{-\varepsilon} \big\| \big\| \phi(2^{-j}D)  (1 - \Delta)  u\big\|_{L^p_x(\mathbb R^n)}^p \big\|_{L^1_t(I)}^{1/p} \notag \\
    &\leq  C(\varepsilon) \left( 2^j \right)^{-\varepsilon} \big\| \big\|(1 - \Delta) u\big\|_{L^p_x(\mathbb R^n)}^p \big\|_{L^1_t(I)}^{1/p} \notag  \\
    &=  C(\varepsilon) \left( 2^j \right)^{-\varepsilon}\big\|(1 - \Delta) u\big\|_{L^p(\mathbb R^n\times I)}  \label{application of bernstein lemma}
\end{align}
using Bernstein lemma (\cite[Chapter 2.1, Lemma 2.2]{bahouri}) applied with the multiplier $(1-\Delta)^{-\varepsilon/2} \theta(D)$ for the first inequality and Young Convolution Inequality in the second ($\phi(2^{-j}D)u = \mathcal{F}^{-1}(\phi(2^{-j}\cdot ))(x) * u(x,t)$ and $\mathcal{F}^{-1}(\phi(2^{-j}\cdot))(x)$ has $L^1$-norm independent of $j$). Similarly, it holds that
\begin{align*}
     \big\| (1 - \Delta)^{(2-\varepsilon)/2} \chi(D) u\big\|_{L^p(\mathbb R^n\times I)} &= \big\| \big\| (1 - \Delta)^{(2-\varepsilon)/2} \chi(D) u \big\|_{L^p_x(\mathbb R^n)}^p \big\|_{L^1_t(I)}^{1/p} \\
     &\leq  C(\varepsilon)  \big\| \big\| u \big\|_{L^p_x(\mathbb R^n)}^p \big\|_{L^1_t(I)}^{1/p} =  C(\varepsilon)  ||u||_{L^p(\mathbb R^n \times I)} 
\end{align*}
using Young Convolution Inequality with the function $\mathcal{F}^{-1} \left[ (1 + |\cdot |^2)^{(2-\varepsilon)/2} \chi(\cdot) \right](x)$ which is Schwartz (in particular $L^1(\mathbb R^n)$) since it is the inverse Fourier transform of a smooth, compactly supported function.

At the same time, if $J \subset I$ is bounded, then
\begin{align*}
    \left(2^j \right)^{-p \varepsilon} \big\|(1 - \Delta) u\big\|_{L^p(\mathbb R^n\times I)}  &\gtrsim_{\varepsilon} \big\|(1 - \Delta)^{(2-\varepsilon)/2} \phi(2^{-j}D)u \big\|_{L^p( \mathbb R^n\times J)}^p \\
    &\geq |J|\cdot \inf_{t\in J}\int_{\mathbb R^n}|(1 - \Delta)^{(2-\varepsilon)/2} \phi(2^{-j}D)u(\cdot, t)|^p\,dx.
\end{align*}
Hence, if $J \subset I$ is an interval of length $\leq \left( 2^j \right)^{-p \tilde{\varepsilon}}$ with $0 < \tilde{\varepsilon} < \varepsilon$, there exists $t \in J$ with
$$
 \int_{\mathbb R^n}|(1 - \Delta)^{(2-\varepsilon)/2}  \phi(2^{-j}D)u(\cdot, t)|^p\,dx \leq 1 + \inf_{t\in J}\int_{\mathbb R^n}|(1 - \Delta)^{(2-\varepsilon)/2} \phi(2^{-j}D)u(\cdot, t)|^p\,dx,
$$
so that
$$
\big\|(1 - \Delta)^{(2-\varepsilon)/2} \phi(2^{-j}D) u(\cdot, t)\big\|_{L^p(\mathbb R^n)}\lesssim_{\varepsilon} \left( 2^j \right)^{-p(\varepsilon - \tilde{\varepsilon})}\big\|(1 - \Delta) u\big\|_{L^p(\mathbb R^n\times I)}.  
$$
Similarly, one gets
$$
\big\|(1 - \Delta)^{(2-\varepsilon)/2} \chi(D) u(\cdot, t)\big\|_{L^p(\mathbb R^n)}\lesssim_{\varepsilon} ||u||_{L^p(\mathbb R^n \times I)} 
$$
for an interval of length $|J| \leq 1$ and some $t \in J$.

If $t_1 \in J$ is another time in this interval $J$, we show that the control of $\big\|(1 - \Delta)^{(2-\varepsilon)/2} \phi(2^{-j}D) u(\cdot, t)\big\|_{L^p(\mathbb R^n)}$ can be extended to $t_1$. For this, we write
$$
\phi(2^{-j}D)u(x,t_1) = \mathcal{F}^{-1}(\phi(2^{-j}\cdot ))(x) * \left(u(x,t) + \int_{t}^{t_1} \partial_t u(x,s)ds \right)
$$
and use Minkowski's inequality,
\begin{align*}
    \big\|(1 - \Delta)^{(2-\varepsilon)/2} \phi(2^{-j}D) u(\cdot, t_1) \big\|_{L^p(\mathbb R^n)}
&\leq \big\|(1 - \Delta)^{(2-\varepsilon)/2} \phi(2^{-j}D) u(\cdot, t) 
 \big\|_{L^p(\mathbb R^n)} \\
 &+ \int_t^{t_1} \big\|(1 - \Delta)^{(2-\varepsilon)/2} \phi(2^{-j}D) \partial_t u(\cdot, s) \big\|_{L^p(\mathbb R^n)}\,ds.
\end{align*}

The term at time $t$ in the right-hand side is $\lesssim_{\varepsilon} \left( 2^j \right)^{-p(\varepsilon-\tilde{\varepsilon})}||(1 - \Delta) u||_{L^p(\mathbb R^n\times I)}$ and we only need to estimate the double integral, which is
\begin{align*}
    \int_t^{t_1} \big\|(1 - \Delta)^{(2-\varepsilon)/2} \phi(2^{-j}D) \partial_t &u(\cdot, s) \big\|_{L^p(\mathbb R^n)}\,dt \\
    &\leq |t-t_1|^{1-p^{-1}} \cdot \|(1 - \Delta)^{(2-\varepsilon)/2} \phi(2^{-j}D) \partial_t u(\cdot, s) \|_{L^p(\mathbb R^n\times J)}
\end{align*}
by Hölder's inequality.

Since $t_1\in J$, we have $|t-t_1|^{1-p^{-1}}\leq |J|^{1-p^{-1}} \leq \left( 2^j \right)^{-p\tilde{\varepsilon}(1-p^{-1})} = \left( 2^j \right)^{-\tilde{\varepsilon}(p-1)}$. Similarly to (\ref{application of bernstein lemma}), we have the bound 
$$
\|(1 - \Delta)^{(2-\varepsilon)/2}   \phi(2^{-j}D)  \partial_t u\|_{L^p(\mathbb R^n\times J)}\lesssim_{\varepsilon}  \left( 2^j \right)^{2-\varepsilon} \cdot \|\partial_t u\|_{L^p(\mathbb R^n\times J)}
$$
using Bernstein lemma with the multiplier $(1 - \Delta)^{(2-\varepsilon)/2} \theta(D)$ and Young convolution's inequality. It follows that
\begin{align*}
    |t-t_1|^{1-p^{-1}}\cdot \|(1 - \Delta)^{(2-\varepsilon)/2} \phi(2^{-j}D) &\partial_t u(\cdot, s) \|_{L^p(\mathbb R^n\times J)} \\
    &\lesssim_{\varepsilon}  \left( 2^j \right)^{2- \tilde{\varepsilon} p - (\varepsilon - \tilde{\varepsilon}) }\|\partial_t u\|_{L^p(\mathbb R^n\times J)} \\
    &= C(\varepsilon) \left( 2^j \right)^{2- \varepsilon p + (\varepsilon - \tilde{\varepsilon})(p-1) } \|\partial_t u\|_{L^p(\mathbb R^n\times J)}.
\end{align*}
Recall that $0 < \varepsilon < 1 - \frac{n}{p}$. In addition, if $\varepsilon > \frac{2}{p}$ and $0 < \tilde{\varepsilon} < \varepsilon$ is close enough to $\varepsilon$, then
$$
 |t-t_1|^{1-p^{-1}}\cdot \|(1 - \Delta)^{(2-\varepsilon)/2} \phi(2^{-j}D) \partial_t u(\cdot, s) \|_{L^p(\mathbb R^n\times J)} \lesssim_{\varepsilon}  \left( 2^j \right)^{-\delta} \|\partial_t u\|_{L^p(\mathbb R^n\times J)} 
$$
for some small $\delta > 0$.

Such an $\varepsilon > 0$ is guaranteed to exist if
$$
\frac{2}{p} < 1 - \frac{n}{p} \iff p > n+2
$$
As we can split $I$ into intervals of length $\leq \left( 2^j \right)^{-p \tilde{\varepsilon}}$ and $t_1$ was arbitrary, we have proved that
\begin{align*}
    \sup_{t\in I}\|\phi(2^{-j}D) (1 - \Delta)^{(2-\varepsilon)/2}  u(t)  \|_{L^p(\mathbb R^n)} &\lesssim_{\varepsilon}  \left[  \left( 2^j \right)^{-p(\varepsilon - \tilde{\varepsilon})} + \left( 2^j \right)^{-\delta} \right] \\
    &\phantom{\leq} \cdot \left[ \|(1-\Delta) u\|_{L^p(\mathbb R^n\times J)} 
 +\|\partial_t u\|_{L^p(\mathbb R^n\times J)} 
 \right] \\
 &\lesssim_{\varepsilon}  \left( 2^j \right)^{-\tilde{\delta}} \left[ \|(1-\Delta) u\|_{L^p(\mathbb R^n\times J)} 
 + \|\partial_t u\|_{L^p(\mathbb R^n\times J)} \right].
\end{align*}
Similarly, one can prove
$$
\sup_{t\in  I}\|\chi(D) (1 - \Delta)^{(2-\varepsilon)/2}  u(\cdot, t)  \|_{L^p(\mathbb R^n)} \lesssim_{\varepsilon}  \big\|u\big\|_{L^p(\mathbb R^n\times J)} 
 + \big\|\partial_t u\big\|_{L^p(\mathbb R^n\times J)}.
$$
Finally, going back to the Littlewood-Paley decomposition, we find that
\begin{align*}
    \sup_{t\in  I}\| u(\cdot, t)  \|_{W^{2-\varepsilon,p}(\mathbb R^n)} &= \sup_{t\in I}\|(1 - \Delta)^{(2-\varepsilon)/2}  u(\cdot, t)  \|_{L^p(\mathbb R^n)} \\
    &\lesssim_{\varepsilon}  \left[ 1 + \sum_{j \geq 0} \left( 2^j \right)^{-\tilde{\delta}} \right] \cdot \left[ ||u||_{L^p(\mathbb R^n \times I)} + ||\partial_t u||_{L^p(\mathbb R^n \times I)} + ||\Delta u||_{L^p} \right] \\
    &\leq C(\varepsilon) \left[ ||u||_{L^p(\mathbb R^n \times I)} + ||\partial_t u||_{L^p(\mathbb R^n \times I)} + ||\Delta u||_{L^p(\mathbb R^n \times I)} \right],
\end{align*}
which concludes the proof that $\nabla_x u \in L^{\infty}_t(W^{2-\varepsilon,p}_x)$ for $\frac{2}{p} < \varepsilon < 1 - \frac{n}{p}$.

Finally, we choose $\varepsilon = \frac{2}{p} + \delta$, $0 < \delta \ll 1$ arbitrarily small.  By applying Lemma \ref{lemma:ladyzenskaia reduction Holder regularity}, we find that $\nabla_x u(x,t)$ is $\gamma$-Hölder continuous with respect to $t$, uniformly with respect to $x \in \mathbb R^n$, where 
$$
    \gamma = \frac{(p-n-1)(p-n-2)}{p(2p-n-2)} - \hat{\delta} = \frac{\alpha \beta}{1+\beta}, \quad \alpha = 1 - \frac{n+1}{p}, \ \beta = 1 - \frac{n+2}{p} - \delta.
$$
As $2\gamma \leq \beta$, it follows from (\ref{equivalence Holder norm}) that $\nabla_x u(x,t) \in C^{2\gamma, \gamma}_{x,t}(\mathbb R^{n} \times I )$.

To obtain a final inequality where $||\nabla_x u||_{L^p}$ is absent from the right-hand side, one uses the Interpolation Lemma \ref{interpolation lemma, sobolev version}. Similarly, to replace $||D^2_xu||_{L^p}$ and $||\partial_t u||_{L^p}$ by $||(\partial_t - \Delta)u||_{L^p}$, one uses the Global $L^p$ Estimates (Theorem \ref{thm: global lp estimate}).

We finally note that using translations and parabolic rescaling, it follows that the constant $C =
C(n, p, \gamma, I)$ from the theorem depends only on $|I|$ with $$
C(n,p,\gamma,I) \leq
C(n, p, \gamma,[0,1]) \left( 1 + |I|^{-1-2\gamma-\frac{n+2}{p}} \right).
$$

In the half-space case, if $u \in C^{2,1}(\mathbb R^n \cap \{x_n > 0\} \times I\}$, it suffices to extend $u$ by odd reflection (Proposition \ref{prop:extension by reflection}).
\end{proof}

\begin{corollary}[Local Parabolic Sobolev Embedding]\label{cor: local parabolic sobolev embedding}
   Let $n+2 < p < +\infty$, $\Omega \subset \mathbb R^n$ be open and $I \subset \mathbb R$ be open as well. Assume that $u(x,t) \in L^{p}( \Omega \times I)$ has weak derivatives $\nabla_x u, (\partial_t-\Delta) u \in L^p(\Omega \times I)$. For any ball $\overline{B_r(x_0)} \subset \Omega$, one has $u \in C^{1+2\gamma,\gamma}_{x,t}(B_{r/2}(x_0) \times I)$ where
    $$
    0 < \gamma < \frac{(p-n-1)(p-n-2)}{p(2p-n-2)}.
    $$
    Moreover, there exists $C = C(n,p,\gamma)$ for which
    \begin{align*}
            ||u||_{C^{1+2\gamma,\gamma}_{x,t}(B_{r/2}(x_0) \times I)} &\leq C \left( 1 + (|I|/r^2)^{-1-2\gamma-\frac{n+2}{p}} + r^{-1-2\gamma-\frac{n+2}{p}} \right) \\
            &\phantom{\leq}\cdot\left( ||u||_{L^p(\Omega \times I)} + ||(\partial_t - \Delta) u||_{L^p(\Omega \times I)} \right).
        \end{align*}
        The same result holds if one replaces $B_{r}(x_0)$, resp. $B_{r/2}(x_0)$, by some half-ball $B_r^+(x_0) = B_r(x_0) \cap \{x_n > (x_0)_n\}$ (up to a rotation), where the flat part $T_r^+(x_0) = B_r(x_0) \cap \{x_n = (x_0)_n\} \subset \overline{\Omega}$ can touch the boundary while $\overline{B_r^+(x_0)} \setminus T_r^+(x_0) \subset \Omega$. Moreover, one a priori assumes that $u \in C^{2,1}(B_r^+(x_0) \times I)$, $\lim \limits_{\varepsilon \to 0^+} u(\tilde{x},(x_0)_n+\varepsilon,t) \to 0$ in $L^p_{loc}(B_{r}(\tilde{x}_0) \times I)$, where $x_0 = (\tilde{x}_0, (x_0)_n)$, $\lim \limits_{\varepsilon \to 0^+} \nabla_x u(\tilde{x},(x_0)_n+\varepsilon,t)$ converges in $L^p_{loc}(B_{r}(\tilde{x}_0) \times I)$.
        
\end{corollary}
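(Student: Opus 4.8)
The plan is to localize with a single cutoff and reduce to the Global Parabolic Sobolev Embedding (Theorem~\ref{thm:parabolic sobolev embedding}), exactly in the spirit of the localization argument behind Theorem~\ref{thm:Interior L^p Estimates for the Heat Equation}. After translating and parabolically rescaling one may assume $x_0 = 0$ and $r = 1$, keeping track of how the constants transform — this is what produces the powers $r^{-1-2\gamma-(n+2)/p}$ and $(|I|/r^2)^{-1-2\gamma-(n+2)/p}$ in the final statement. Fix $\chi \in C^\infty_c(B_{3/4}(0))$ with $\chi \equiv 1$ on $B_{1/2}(0)$, $0 \le \chi \le 1$, $\|\chi\|_{C^2} \lesssim 1$, and set $w = u\chi$ (extended by zero). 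Then $w \in L^p(\mathbb R^n\times I)$, and the Leibniz rule for the product of a smooth function with a distribution gives $\nabla_x w = \chi\nabla_x u + u\nabla_x\chi \in L^p(\mathbb R^n\times I)$ together with
\[
(\partial_t-\Delta)w = \chi\,(\partial_t-\Delta)u - 2\nabla_x\chi\cdot\nabla_x u - u\,\Delta\chi \in L^p(\mathbb R^n\times I),
\]
where only $u,\nabla_x u,(\partial_t-\Delta)u \in L^p(B_{3/4}(0)\times I)$ is used. Thus $w$ satisfies the hypotheses of Theorem~\ref{thm:parabolic sobolev embedding}, which yields $w \in C^{1+2\gamma,\gamma}_{x,t}(\mathbb R^n\times I)$ with a bound by $\big(1 + |I|^{-1-2\gamma-(n+2)/p}\big)$ times $\|u\|_{L^p(B_{3/4}\times I)} + \|\nabla_x u\|_{L^p(B_{3/4}\times I)} + \|(\partial_t-\Delta)u\|_{L^p(B_{3/4}\times I)}$. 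Since $w = u$ on $B_{1/2}(0)\times I$, this already gives the asserted Hölder regularity of $u$ on $B_{1/2}(0)\times I$.

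The remaining point is to drop $\|\nabla_x u\|_{L^p}$ from the right-hand side. I would first bound $\|D^2_x u\|_{L^p(B_{3/4}(0)\times I)}$ using the Interior $L^p$ estimates (Theorem~\ref{thm:Interior L^p Estimates for the Heat Equation}) on a finite covering of $\overline{B_{3/4}(0)}\times I$ by parabolic cylinders $C(y,s;\rho)$ whose doubles lie inside $B_1(0)\times I$ — taking $\rho$ of order $\min(1,|I|^{1/2})$ so that the backward time part of each cylinder fits inside $I$; this is where the $(|I|/r^2)^{-1-2\gamma-(n+2)/p}$ factor appears after un-rescaling — which gives $\|D^2_x u\|_{L^p(B_{3/4}\times I)} \lesssim \|u\|_{L^p(B_1\times I)} + \|(\partial_t-\Delta)u\|_{L^p(B_1\times I)}$. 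Then, applying the Interpolation Lemma~\ref{interpolation lemma, sobolev version} to $u$ times an auxiliary spatial cutoff and running the standard geometric absorption over a nested family $B_{3/4}(0) \subset B_{s_1}(0) \subset \cdots \subset B_1(0)$, one controls $\|\nabla_x u\|_{L^p(B_{3/4}(0)\times I)}$ by $\|D^2_x u\|_{L^p(B_1\times I)} + \|u\|_{L^p(B_1\times I)}$, hence by $\|u\|_{L^p(\Omega\times I)} + \|(\partial_t-\Delta)u\|_{L^p(\Omega\times I)}$. Substituting back and undoing the rescaling produces the stated inequality.

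For the half-ball version the strategy is to reduce to the interior case by reflection. Under the a priori hypotheses there ($u \in C^{2,1}(B_r^+(x_0)\times I)$, with $u(\tilde{x},(x_0)_n+\varepsilon,t)\to 0$ and $\nabla_x u(\tilde{x},(x_0)_n+\varepsilon,t)$ convergent in $L^p_{loc}(B_r(\tilde{x}_0)\times I)$ as $\varepsilon\to 0^+$), Proposition~\ref{prop:extension by reflection} shows that the odd reflection $\tilde u$ of $u$ across $\{x_n = (x_0)_n\}$ satisfies $\tilde u \in L^p(B_r(x_0)\times I)$, $\nabla_x\tilde u \in L^p(B_r(x_0)\times I)$, and $(\partial_t-\Delta)\tilde u \in L^p(B_r(x_0)\times I)$, the latter being the odd reflection of $(\partial_t-\Delta)u$; in particular $\|\tilde u\|_{L^p} = 2^{1/p}\|u\|_{L^p}$ and likewise for the forcing term. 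Applying the interior estimate just obtained to $\tilde u$ on the full ball $B_r(x_0)$ gives $\tilde u \in C^{1+2\gamma,\gamma}_{x,t}(B_{r/2}(x_0)\times I)$ with the corresponding bound, and restricting to $B_{r/2}^+(x_0)\times I$ finishes the argument. The main obstacle is precisely the absorption step of the second paragraph: getting the exact $r$- and $|I|$-dependence of the constant requires choosing the covering cylinders so their backward time parts stay inside $I$ and running the nested-ball iteration carefully, whereas everything else is a routine application of the Leibniz rule, Theorem~\ref{thm:parabolic sobolev embedding}, Theorem~\ref{thm:Interior L^p Estimates for the Heat Equation}, Lemma~\ref{interpolation lemma, sobolev version}, and Proposition~\ref{prop:extension by reflection}.
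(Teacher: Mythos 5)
Your broad plan matches the paper's: cut off, apply the Global Parabolic Sobolev Embedding (Theorem~\ref{thm:parabolic sobolev embedding}), eliminate $\|\nabla_x u\|_{L^p}$ from the right-hand side via interpolation, and handle the half-ball by odd reflection through Proposition~\ref{prop:extension by reflection}. But there is a concrete gap in your second paragraph, and one missing reduction that the paper carries out explicitly.

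The gap is in the claim that a finite covering of $\overline{B_{3/4}(0)}\times I$ by parabolic cylinders $C(y,s;\rho)$ whose doubles sit inside $B_1(0)\times I$ controls $\|D^2_x u\|_{L^p(B_{3/4}\times I)}$. Theorem~\ref{thm:Interior L^p Estimates for the Heat Equation} only produces $D^2_x u$ on the \emph{inner} cylinder $C(y,s;\rho/2)$, and parabolic cylinders only open backwards in time. If $I=(a,b)$ and $C(y,s;\rho)\subset B_1(0)\times I$, then necessarily $s\ge a+\rho^2$, so the inner cylinders cover only $B_{\cdot}(0)\times(a+\tfrac34\rho^2,b)$ and never reach the strip $(a,a+\tfrac34\rho^2)$. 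No choice of $\rho\sim\min(1,|I|^{1/2})$ cures this; the cover simply cannot touch the left endpoint of $I$, so the bound $\|D^2_x u\|_{L^p(B_{3/4}\times I)}\lesssim \|u\|_{L^p}+\|(\partial_t-\Delta)u\|_{L^p}$ does not follow from your covering. The paper's proof avoids cylinders altogether for this purpose: it covers $B_1$ by an increasing family of balls $B_{R_l}(0)$ with \emph{space-only} cutoffs $\chi_l(x)$, so that each $v_l=u\chi_l$ lives on all of $\mathbb R^n\times I$, and then applies Theorem~\ref{thm:parabolic sobolev embedding} to each $v_l$ within the geometric-absorption iteration of the proof of Theorem~\ref{thm:Interior L^p Estimates for the Heat Equation}. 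That iteration does the Sobolev embedding and the elimination of $\|\nabla_x u\|_{L^p}$ in one pass, rather than as two separate steps as you attempt.

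You also do not address the fact that $D^2_x u$ is not assumed a priori in $L^p(\Omega\times I)$, which is needed to even run the interpolation of Lemma~\ref{interpolation lemma, sobolev version}. The paper deals with this by a mollification argument: set $u_\varepsilon=\phi_\varepsilon*_{x,t}u$ on $\Omega_\varepsilon\times(a+\varepsilon,b-\varepsilon)$, observe that $u_\varepsilon$ is smooth so the ``$D^2_x u\in L^p$'' version of the argument applies, and pass to the limit $\varepsilon\to0$ using the uniform $L^p$ bounds. Your proposal needs the same regularization step. The reflection argument in your last paragraph for the half-ball is the same as the paper's and is fine.
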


\begin{proof}
     We cover $B_{1} = B_1(0)$ with an increasing sequence of balls
    $$
    B_{1} = \bigcup_{l=1}^{\infty} B_{R_l}(0), \quad R_l = \sum_{i=1}^l 2^{-i},
    $$
    and consider smooth cut-off functions $\chi_l(x) \in C^{\infty}(\mathbb R^n \times \mathbb R;[0,1])$ for which $\chi_l = 1$ on $B_{R_l}$,
    $$
    \supp(\chi_l) \subset B_{R_{l+1}}
    $$
    and
    $$
    ||\chi_l||_{C^{2}_{x}} \lesssim 2^{2l} =: \rho^{-l}.
    $$
    Set $v_l = u\chi_l \in L^p(\mathbb R^n \times I)$ (resp. $\in L^p(\mathbb R^{n} \cap \{x_n > 0\} \times I)$ in the half-ball case) and proceed as in the proof of Theorem \ref{thm:Interior L^p Estimates for the Heat Equation} when $D^2_x u \in L^p(\Omega \times I)$.

If one omits the hypothesis $D^2_x u \in L^p(\Omega \times I)$, it suffices to use a mollification. Write $I = (a,b)$ with $-\infty \leq a < b \leq +\infty$, $\Omega_{\varepsilon} = \{x \in \Omega: \dist(x,\partial \Omega) > \varepsilon\}$ and let $u_{\varepsilon}(x,t) = \phi_{\varepsilon} *_{x,t} u(x,t)$, $(x,t) \in \overline{\Omega_{\varepsilon}} \times [a+\varepsilon, b-\varepsilon]$ and $\phi_{\varepsilon} = \varepsilon^{-n-1} \phi(\varepsilon^{-1}x, \varepsilon^{-1}t)$ is an approximate identity defined by some $\phi \in C^{\infty}_c(B_{1/2}(0)) \subset C^{\infty}_c(\mathbb R^{n+1})$, $\phi \geq 0$, $\int_{\mathbb R^{n+1}} \phi dxdt = 1$. 

One verifies that $u_{\varepsilon} \in C^{\infty}(\overline{\Omega_{\varepsilon}} \times [a+\varepsilon, b-\varepsilon])$ and
$$
\nabla_x u_{\varepsilon} = \phi_{\varepsilon} * \nabla_x u, \quad (\partial_t - \Delta)  u_{\varepsilon} = \phi_{\varepsilon} * (\partial_t - \Delta)  u, \quad (x,t) \in \Omega_{\varepsilon} \times (a+\varepsilon, b-\varepsilon),
$$
as well as $u_{\varepsilon} \to u$, $\nabla_x u_{\varepsilon} \to \nabla_x u$, $ (\partial_t - \Delta) u_{\varepsilon} \to  (\partial_t - \Delta) u$ in $L^p_{loc}(\Omega \times I)$. Moreover, they are uniformly bounded in $L^p(\overline{\Omega_{\varepsilon}} \times [a+\varepsilon, b-\varepsilon])$. 

For each $\varepsilon$, $D^2_x u_{\varepsilon} = D^2_x \phi_{\varepsilon} * u \in L^p(\Omega_{\varepsilon} \times (a+\varepsilon,b-\varepsilon))$ as a convolution of a $L^{\infty}$ and $L^p$ function. Let $B_{\delta}$ be a smaller ball $B_{\delta}(0) \subset B_1(0)$. Then one can apply the theorem to $u_{\varepsilon}$ on $B_{\delta}$, leading to
\begin{align*}
        ||u_{\varepsilon}||_{C^{1+2\gamma,\gamma}_{x,t}(B_{\delta/2} \times  (a+\varepsilon,b-\varepsilon))}
&\leq C \left( 1 + |I|^{-1-2\gamma-\frac{n+2}{p}}\right) \\
&\phantom{\leq}\cdot\left( ||u_{\varepsilon} ||_{L^p(\Omega_{\varepsilon} \times  (a+\varepsilon,b-\varepsilon))} + ||(\partial_t - \Delta)u_{\varepsilon}||_{L^p(\Omega_{\varepsilon} \times  (a+\varepsilon,b-\varepsilon)} \right) \\
&\leq C \left( 1 + |I|^{-1-2\gamma-\frac{n+2}{p}}\right) \left( ||u ||_{L^p(\Omega \times I)} + ||(\partial_t - \Delta)u||_{L^p(\Omega \times I)} \right),
\end{align*}
where we used Young Convolution's inequality once again. As $u_{\varepsilon} \to u, \nabla_x u_{\varepsilon} \to \nabla_x u$ almost everywhere on $\Omega \times I$ along a sequence $\varepsilon \to 0^+$, one gets the same estimate for $||u_{\varepsilon}||_{C^{1+2\gamma,\gamma}_{x,t}(K_{\delta/2} \times  (a,b))}$. It then suffices to take $\delta \to 1$.

In the half-space case, it suffices to replace $\Omega$ by $B_r(x_0)$ and consider the odd reflection $\tilde{u}$ of $u$ on this ball instead of $u$ defined only on the half-ball (see Proposition \ref{prop:extension by reflection}). We also note that the Laplacian is rotation invariant.
\end{proof}

The method of freezing coefficients allows to deduce $L^p$ estimates for more general parabolic operators.

\begin{lemma}[Global Parabolic Sobolev Embedding for compactly supported solutions of parabolic equations]\label{lemma:global parabolic estimates for parabolic operator, compact support}
    Let $n+2 < p < +\infty$ and
    $$
    0 < \gamma < \frac{(p-n-1)(p-n-2)}{p(2p-n-2)}.
    $$
    Consider a parabolic operator
    $$
    L = c(x,t) + \sum_{i=1}^n b^i(x,t) D_{x_i} + \sum_{i,j=1}^n a^{i,j}(x) D_{x_i}D_{x_j}
    $$
    defined on $\overline{\Omega}$, $\Omega = B_{\kappa}(x_1) \times (0,T)$, satisfying
    \begin{align*}
    ||a,b,c||_{2\gamma,\gamma;\overline{\Omega}} &\leq \Lambda, \quad \Lambda > 0, \\
        \Lambda^{-1}|\xi|^2 \leq \sum_{i,j=1}^n a^{i,j}(x,t)\xi_i\xi_j &\leq \Lambda |\xi|^2, \quad (x,t) \in \overline{\Omega}, \xi \in \mathbb R^n,
    \end{align*}
    and where $a^{i,j}(x)$ does not depend on time.
    
    There exists $\kappa_0 = \kappa_0(n,\alpha,\Lambda) > 0$ and $C = C(n,p,\alpha,\Lambda) > 0$ such that if $\kappa \leq \kappa_0$, for any $u \in L^{\infty}((0,T);L^p_x(\mathbb R^n))$ with weak derivatives $\nabla_x u, \partial_t u, D^2_xu \in L^p(B_{\kappa}(x_1) \times (0,T))$, that is compactly supported in the sense that $\overline{\supp(u(\cdot,t))} \subset B_{\kappa}(x_1)$ for all $t \in (0,T)$ and which is weakly continuous at zero in the sense that
    $$
    \lim \limits_{t \to 0^+} \langle u(\cdot,t), \phi \rangle = 0 \quad \forall \phi \in C^{\infty}_c(\mathbb R^n),
    $$
    then one has
    $$
    ||u||_{C^{1+2\gamma,\gamma}_{x,t}(\mathbb R^{n}\times  (-\infty,0] )} \leq C\left( ||(\partial_t - L)u||_{L^p(\mathbb R^n \times (-\infty,0])} + ||u||_{L^p(\mathbb R^n \times (-\infty,0])} \right).
$$
\end{lemma}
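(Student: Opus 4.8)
\emph{Proof plan.} The plan is to run exactly the coefficient--freezing scheme of Lemma~\ref{lemma:global l^p estimates for parabolic operator, compact support}, only closing the estimate with the Global Parabolic Sobolev Embedding (Theorem~\ref{thm:parabolic sobolev embedding}) instead of the global $L^p$ estimates (Theorem~\ref{thm: global lp estimate}). First I would translate so that $x_1 = 0$ and extend $u$, together with its forcing term $(\partial_t - L)u$, by zero to $t \le 0$; this is legitimate because of the weak continuity at $t = 0$ with vanishing limit, and it makes the time interval infinite, so that the factor $1 + |I|^{-1-2\gamma-(n+2)/p}$ in~(\ref{ineq:global parabolic sobolev}) collapses to $1$ --- this is precisely why no dependence on $T$ survives in the final constant. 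The lower--order terms $\sum_i b^i D_{x_i} u$ and $c\,u$ are handled as in the $L^p$ case: their $L^p$--norms are $\le \Lambda(||\nabla_x u||_{L^p} + ||u||_{L^p})$, and $||\nabla_x u||_{L^p}$ is in turn controlled via the interpolation Lemma~\ref{interpolation lemma, sobolev version} by $\varepsilon\,||D^2_x u||_{L^p} + \varepsilon^{-1}||u||_{L^p}$ with $\varepsilon$ small; so one reduces to the principal part $\partial_t u - \sum_{i,j} a^{ij}(x)\,D_{x_i}D_{x_j} u$.

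Next I would freeze $a^{ij}$ at the origin, writing $\partial_t u - \sum_{i,j} a^{ij}(x)\,D_{x_i}D_{x_j}u = (\partial_t - L_0)u + \sum_{i,j}\big(a^{ij}(0) - a^{ij}(x)\big) D_{x_i}D_{x_j}u$ with $L_0 = \sum_{i,j} a^{ij}(0)\,D_{x_i}D_{x_j}$, and use $|a^{ij}(0) - a^{ij}(x)| \le \Lambda\,\kappa^{2\gamma}$ on $B_\kappa(0)$ to get $||(\partial_t - L_0)u||_{L^p} \le ||(\partial_t - L)u||_{L^p} + C(n)\,\Lambda\,\kappa^{2\gamma}\,||D^2_x u||_{L^p}$. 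Diagonalizing $(a^{ij}(0))$ by an orthogonal matrix $P$ and setting $v(x,t) = u(Px,t)$ turns $L_0$ into the Laplacian, so $(\partial_t - \Delta)v = \big((\partial_t - L_0)u\big)(P\cdot,\cdot)$. Since $v \in L^\infty_t L^p_x$ is compactly supported in the rotated ball, lies (together with $\nabla_x v, D^2_x v, \partial_t v$) in $L^p$, and is weakly continuous at $t=0$ with vanishing limit, Lemma~\ref{lemma:uniqueness heat equation} identifies $v$ with the convolution solution having zero initial data, and Theorem~\ref{thm:parabolic sobolev embedding} (with $|I| = +\infty$) yields $||v||_{C^{1+2\gamma,\gamma}_{x,t}} \le C(n,p,\gamma)\big(||v||_{L^p} + ||(\partial_t-\Delta)v||_{L^p}\big)$. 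Undoing the rotation --- which, $P$ being orthogonal, leaves all the relevant $L^p$-- and $C^{1+2\gamma,\gamma}$--norms unchanged up to dimensional constants --- gives $||u||_{C^{1+2\gamma,\gamma}_{x,t}} \le C(n,p,\gamma)\big(||u||_{L^p} + ||(\partial_t - L)u||_{L^p} + \Lambda\,\kappa^{2\gamma}\,||D^2_x u||_{L^p}\big)$.

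The one point requiring care --- and the only genuine difference from the pure $L^p$ argument --- is the leftover term $\Lambda\,\kappa^{2\gamma}\,||D^2_x u||_{L^p}$: here the left-hand side $C^{1+2\gamma,\gamma}$ norm does \emph{not} control $||D^2_x u||_{L^p}$, so it cannot be absorbed directly. Instead I would invoke the already--established Lemma~\ref{lemma:global l^p estimates for parabolic operator, compact support}, whose hypotheses are met once $\kappa \le \kappa_0(n,\Lambda)$, to bound $||D^2_x u||_{L^p} \le C(n,p,\Lambda)\big(||(\partial_t - L)u||_{L^p} + ||u||_{L^p}\big)$; substituting this and possibly shrinking $\kappa$ once more (depending only on $n,p,\gamma,\Lambda$) finishes the estimate. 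Finally, since the zero--extension changes none of the $L^p$--norms appearing on either side, the stated bound over $\mathbb R^n \times (-\infty,0]$ follows at once. Thus the main obstacle is not analytical at all --- all the hard analysis lives in Theorems~\ref{thm:parabolic sobolev embedding} and~\ref{thm: global lp estimate} and in Lemma~\ref{lemma:uniqueness heat equation} --- but rather the bookkeeping that ensures the frozen--coefficient error is reabsorbed and that every constant depends only on $n,p,\gamma,\Lambda$.
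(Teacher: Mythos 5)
Your proposal is correct but takes a detour the paper avoids. Once you appeal to Lemma~\ref{lemma:global l^p estimates for parabolic operator, compact support} to conclude $\|D^2_x u\|_{L^p}\lesssim \|(\partial_t-L)u\|_{L^p}+\|u\|_{L^p}$, there is no longer any need to make the perturbation of the principal part small: the whole point of coefficient freezing is to absorb an $\|D^2_x u\|_{L^p}$-error on the left, but here that term is already dominated a priori. The paper exploits this by writing $(\partial_t-\Delta)u=(\partial_t-L)u+(L-\Delta)u$ directly, without freezing $a^{ij}$ at a point and without diagonalizing by an orthogonal matrix; the comparison error $\|(L-\Delta)u\|_{L^p}$ is then bounded by $\Lambda$ times $\|u\|_{L^p}+\|\nabla_x u\|_{L^p}+\|D^2_x u\|_{L^p}$ with no smallness needed, all three already under control, and Theorem~\ref{thm:parabolic sobolev embedding} is applied to $u$ itself with $\Delta$. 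Your route --- freeze at the origin, rotate $v(x,t)=u(Px,t)$ to turn $L_0$ into $\Delta$, apply the Sobolev embedding to $v$, undo the rotation, then reabsorb the frozen-coefficient error $\Lambda\kappa^{2\gamma}\|D^2_x u\|_{L^p}$ by invoking the $L^p$ lemma anyway --- reproves the freezing-and-rotation step that already lives inside the proof of Lemma~\ref{lemma:global l^p estimates for parabolic operator, compact support}. It is sound, and your handling of lower-order terms by interpolation and of the zero time-extension via the weak continuity hypothesis matches the paper's setup, but you can drop the freezing and rotation entirely and still close the argument, and in fact no further shrinking of $\kappa$ is required beyond the $\kappa_0$ from the $L^p$ lemma since the $\kappa^{2\gamma}$ factor is harmless once the second-derivative norm is controlled.
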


\begin{proof}
    It follows from Lemma \ref{lemma:global l^p estimates for parabolic operator, compact support} that $D^2_xu \in L^p(\mathbb R^n \times (0,T))$, hence $(\partial_t - \Delta) u = (\partial_t - L)u + (L-\Delta)u \in L^p(\mathbb R^n \times (0,T))$ as well. Then
\begin{align*}
        ||u||_{C^{1+2\gamma,\gamma}_{x,t}(\mathbb R^{n}\times  (-\infty,0] )} &\leq C\left( ||(\partial_t - \Delta)u||_{L^p(\mathbb R^n \times (-\infty,0])} + ||u||_{L^p(\mathbb R^n \times (-\infty,0])} \right) \\
        &\leq  C\left( ||(\partial_t - L)u||_{L^p(\mathbb R^n \times (-\infty,0])} + ||D^2_x u||_{L^p(\mathbb R^n \times (-\infty,0])} + ||u||_{L^p} \right) \\
        &\leq C\left( ||(\partial_t - L)u||_{L^p(\mathbb R^n \times (-\infty,0])} + ||u||_{L^p(\mathbb R^n \times (-\infty,0])} \right) 
\end{align*}
    using Lemma \ref{lemma:global l^p estimates for parabolic operator, compact support} and Theorem \ref{thm:parabolic sobolev embedding}.
\end{proof}

\begin{corollary}[Parabolic Sobolev Embedding for parabolic equations with a priori estimates] \label{cor:interior parabolic estimates for parabolic eq with a priori estimates}
   Let $0 < T < +\infty$, $\Omega \subset \mathbb R^n$ be open, $n+2 < p < +\infty$ and
    $$
    0 < \gamma < \frac{(p-n-1)(p-n-2)}{p(2p-n-2)}.
    $$
    Let $L$ be a parabolic operator on $(x,t) \in \overline{\Omega} \times [0,T]$ as in Lemma \ref{lemma:global parabolic estimates for parabolic operator, compact support} with coefficients $a^{i,j}(x)$ independent of $t$. Assume that $u \in L^{\infty}((0,T);L^p_x(\mathbb R^n))$ has weak derivatives $\nabla_x u, \partial_t u, D^2_xu \in L^p(B_{\kappa}(x_0) \times (0,T))$ and is weakly continuous at zero in the sense that
    $$
    \lim \limits_{t \to 0^+} \langle u(\cdot,t), \phi \rangle = 0 \quad \forall \phi \in C^{\infty}_c(\mathbb R^n).
    $$
    For any ball $\overline{B_r(x_0)} \subset \Omega$, one has $u \in C^{1+2\gamma,\gamma}_{x,t}(B_{r/2}(x_0) \times I)$. Moreover, there exists $C = C(n,p,\gamma,\Lambda)$ for which
    \begin{align*}
            ||u||_{C^{1+2\gamma,\gamma}_{x,t}(B_{r/2}(x_0) \times I)} &\leq C \left( 1 + (|I|/r^2)^{-1-2\gamma-\frac{n+2}{p}} + r^{-1-2\gamma-\frac{n+2}{p}} \right) \\
            &\phantom{\leq}\cdot \left( ||u||_{L^p(\Omega \times I)} + ||(\partial_t - L) u||_{L^p(\Omega \times I)} \right).
        \end{align*}
        The same result holds if one replaces $B_{r}(x_0)$, resp. $B_{r/2}(x_0)$, by some half-ball $B_r^+(x_0) = B_r(x_0) \cap \{x_n > (x_0)_n\}$, where the flat part $T_r^+(x_0) = B_r(x_0) \cap \{x_n = (x_0)_n\} \subset \overline{\Omega}$ can touch the boundary while $\overline{B_r^+(x_0)} \setminus T_r^+(x_0) \subset \Omega$. Moreover, one assumes $u \in C^{2,1}(B_r^+(x_0) \times I)$, $\lim \limits_{\varepsilon \to 0^+} u(\tilde{x},(x_0)_n+\varepsilon,t) \to 0$ in $L^p_{loc}(B_{r}(\tilde{x}_0) \times I)$, where $x_0 = (\tilde{x}_0, (x_0)_n)$, $\lim \limits_{\varepsilon \to 0^+}  \nabla_x u(\tilde{x},(x_0)_n+\varepsilon,t)$ converges in $L^p_{loc}(B_{r}(\tilde{x}_0) \times I)$.
\end{corollary}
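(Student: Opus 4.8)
The plan is to deduce this from the two global statements for the operator $L$, namely Lemma \ref{lemma:global l^p estimates for parabolic operator, compact support} and Lemma \ref{lemma:global parabolic estimates for parabolic operator, compact support}, by the cutoff--iteration localization already used to pass from Theorem \ref{thm: global lp estimate} to Theorem \ref{thm:Interior L^p Estimates for the Heat Equation}, and from Theorem \ref{thm:parabolic sobolev embedding} to Corollary \ref{cor: local parabolic sobolev embedding}. The only structural novelty, compared with the heat--operator case, is a preliminary reduction to small balls: the freezing--coefficients step inside those two Lemmas only applies when the support sits in a ball of radius at most a threshold $\kappa_0 = \kappa_0(n,\gamma,\Lambda)$, whereas the global heat--kernel estimates carry no such restriction.

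First I would reduce to a small ball. After translating, take $x_0 = 0$; let $\kappa_0$ be the smaller of the two thresholds from Lemmas \ref{lemma:global l^p estimates for parabolic operator, compact support}--\ref{lemma:global parabolic estimates for parabolic operator, compact support}, set $\delta = \min(r/2,\kappa_0)$, and cover the compact set $\overline{B_{r/2}(0)}$ by finitely many balls $B_{\delta/8}(y_j)$, $j = 1,\dots,N$, with $\overline{B_\delta(y_j)} \subset B_r(0) \subset \Omega$. Since the Hölder norm over such a finite overlapping cover of the connected set $B_{r/2}(0)$ is controlled by the maximum of the Hölder norms on the pieces (the sup part being available once $u$ is known to be continuous), it suffices to prove the estimate on each $B_{\delta/4}(y_j)$ with right--hand side over $B_\delta(y_j) \subset \Omega$. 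Fixing one such ball and parabolically rescaling, $\tilde u(x,t) = u(y_j + \delta x, \delta^2 t)$, one works on the unit ball $B_1(0)$: the rescaled operator $\tilde L$ has $\tilde a^{i,j}(x) = a^{i,j}(y_j + \delta x)$ time--independent, the same ellipticity constant $\Lambda$, and, since $\delta \le 1$, coefficient norms $\|\tilde a, \tilde b, \tilde c\|_{2\gamma,\gamma} \le \Lambda$; the integrability, compact--support and weak--continuity--at--zero hypotheses transfer verbatim, the time interval becoming $\tilde I = \delta^{-2} I$.

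Next I would run the cutoff iteration on $B_1(0)$. Cover $B_1$ by increasing balls $B_{R_l}$, $R_l = \sum_{i=1}^l 2^{-i}$, with time--independent cutoffs $\chi_l \in C^\infty_c(B_{R_{l+1}})$, $\chi_l \equiv 1$ on $B_{R_l}$, $\|\chi_l\|_{C^2_x} \lesssim 2^{2l} =: \rho^{-l}$. Each $v_l = u\chi_l$ lies in $L^\infty_t L^p_x$, is supported in the small ball $B_1$, is weakly continuous at zero with zero limit (as $\chi_l\phi \in C^\infty_c$), and solves $(\partial_t - \tilde L)v_l = \chi_l(\partial_t - \tilde L)u - [\tilde L,\chi_l]u$, the commutator $[\tilde L,\chi_l]u = \sum_{i,j}\tilde a^{i,j}(\partial_i\chi_l\,\partial_j u + \partial_j\chi_l\,\partial_i u + u\,\partial_i\partial_j\chi_l) + \sum_i \tilde b^i\,\partial_i\chi_l\,u$ being bounded in $L^p$ by $\rho^{-l}\Lambda(\|(\partial_t - \tilde L)u\|_{L^p(B_1\times\tilde I)} + \|u\|_{L^p(B_1\times\tilde I)} + \|\nabla_x u\|_{L^p(B_1\times\tilde I)})$. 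Applying Lemma \ref{lemma:global l^p estimates for parabolic operator, compact support} to each $v_l$ bounds $\|D^2_x v_l\|_{L^p}$; then, reinserting $\chi_{l+1}$, using the interpolation Lemma \ref{interpolation lemma, sobolev version} with a suitably chosen $\varepsilon = \varepsilon(l)$ to reabsorb the $D^2_x$ terms into the iteration, and iterating exactly as in Theorem \ref{thm:Interior L^p Estimates for the Heat Equation}, one obtains $\|D^2_x u\|_{L^p(B_{1/2}\times\tilde I)} \lesssim \|(\partial_t - \tilde L)u\|_{L^p(B_1\times\tilde I)} + \|u\|_{L^p(B_1\times\tilde I)}$ (only an a priori $L^p$ bound on $\nabla_x u, D^2_x u$ is needed to pass to the limit, which is exactly what the hypotheses supply; if one only assumes $\nabla_x u, (\partial_t - L)u \in L^p$, mollify first, as in the proof of Corollary \ref{cor: local parabolic sobolev embedding}). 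A final use of the interpolation lemma bounds $\|\nabla_x u\|_{L^p(B_{1/2}\times\tilde I)}$ as well, so that, choosing $\chi_1$ supported in $B_{1/2}$ with $\chi_1\equiv 1$ on $B_{1/4}$, the source $(\partial_t - \tilde L)v_1$ of $v_1 = u\chi_1$ is controlled in $L^p$ by $\|(\partial_t - \tilde L)u\|_{L^p(B_1\times\tilde I)} + \|u\|_{L^p(B_1\times\tilde I)}$; Lemma \ref{lemma:global parabolic estimates for parabolic operator, compact support} applied to $v_1$, combined with $u = v_1$ on $B_{1/4}$, then yields the $C^{1+2\gamma,\gamma}_{x,t}$ bound on $B_{1/4}\times\tilde I$. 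Undoing the parabolic rescaling reproduces the asserted powers of $r$ and $|I|$ (tracked exactly as in Corollary \ref{cor: local parabolic sobolev embedding}), and undoing the finite covering gives the estimate on $B_{r/2}(x_0)$. The half--ball case reduces to a full ball via the odd--reflection device of Proposition \ref{prop:extension by reflection}, reflecting $u$ across the flat part and the operator accordingly (even reflection of $a^{n,n}$ and of the $a^{i,j}$ with $i,j\ne n$, odd reflection of the mixed $a^{i,n}$), after which the full--ball estimate just established applies; as in the half--space versions of Theorem \ref{thm:parabolic sobolev embedding} and Corollary \ref{cor: local parabolic sobolev embedding}, one checks the reflected operator is again admissible.

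I expect the main obstacle to be precisely this reduction to small balls with controlled constants: unlike the heat--operator localizations, one cannot merely cut off inside $B_1$ and quote a global estimate, since Lemmas \ref{lemma:global l^p estimates for parabolic operator, compact support}--\ref{lemma:global parabolic estimates for parabolic operator, compact support} only apply on balls of radius $\le \kappa_0$; so one must interleave a finite covering, a parabolic rescaling that does not inflate the coefficient norms, and---at the boundary---a reflection preserving parabolicity and the coefficient bounds, while keeping track of how the scales $\delta$ and $|I|$ feed into the final constant. Everything else is a faithful transcription of the heat--operator arguments already carried out in the paper.
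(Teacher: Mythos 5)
Your proposal is correct and follows essentially the same route as the paper: rescale, cover $B_{r/2}(x_0)$ by balls of radius $\le\kappa_0(n,\gamma,\Lambda)$, run the cutoff iteration of Theorem \ref{thm:Interior L^p Estimates for the Heat Equation}/Corollary \ref{cor: local parabolic sobolev embedding} on each piece using Lemma \ref{lemma:global parabolic estimates for parabolic operator, compact support}, patch the finitely many estimates, and handle the flat boundary by odd reflection of $u$ together with a compatible reflection of the coefficients of $L$. Note that your stated reflection convention (even reflection of $c$, the $b^i$ with $i\neq n$, the $a^{i,j}$ with $i,j\neq n$ and of $a^{n,n}$; odd reflection of $b^n$ and of the mixed $a^{i,n}$) is in fact the correct one — it is forced by requiring $(\partial_t-\tilde L)\tilde u$ to be the odd reflection of $(\partial_t-L)u$ — whereas the paper's proof of this corollary (and of Theorem \ref{boundary schauder estimates, ver.1}) states the opposite parities, which cannot be right since an odd reflection of $a^{n,n}$ would make the reflected operator degenerate (indeed of the wrong sign) for $x_n<0$.
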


\begin{proof}
    After translation and parabolic rescaling, one can assume that $x_0 = 0$ and $r = 1$. Then, for any small ball $B_{\kappa}(x_1) \subset B_1(0)$, one can proceed as in the proof of Corollary \ref{cor: local parabolic sobolev embedding} using cut-off functions to get an estimate on $B_{\kappa/2}(x_1)$, but one needs to use Lemma \ref{lemma:global parabolic estimates for parabolic operator, compact support} instead of Theorem \ref{thm:parabolic sobolev embedding}. Then it suffices to cover $B_{1/2}(0)$ with finitely many such estimates.

    In the half-ball case, restrict $\Omega$ to $B_r^+(x_0)$, extend $u$ by odd reflection (See Proposition \ref{prop:extension by reflection}), extend the coefficients $c$, $b^i$, $a^{i,j}$, $a^{n,n}$, $i,j \neq n$, by odd reflection and the coefficients $b^n, a^{i,n}, a^{n,i}$, $i \neq n$, by even reflection and apply the theorem with the ball $B_{r'}(x_0)$ for $r' \to r$.
\end{proof}

   Finally, one obtains a boundary parabolic estimate for bounded domains with smooth boundary by flattening and covering the boundary.
   
\begin{theorem}[Boundary Parabolic Sobolev Embedding on smooth domains]\label{thm:boundary parabolic sobolev embedding}
 Let $0 < T < +\infty$, $\gamma \in (0,1/2)$ and $\Omega \subset \mathbb R^n$ be open, bounded, with $C^{\infty}$-boundary.

    Assume that $u \in C^0([0,T);C^1_0(\overline{\Omega}))$ is a distribution solution to the Dirichlet problem
\begin{align*}
    u_t - \Delta u &= f(x,t), \quad (x,t) \in \Omega \times (0,T), \\
    u &= 0, \quad (x,t) \in \partial \Omega \times (0,T) \cap \Omega \times \{0\},
\end{align*}
with $f \in L^{\infty}(\Omega \times (0,T))$.

There exists $C =  C(n,\gamma, \Omega)$ and $\varepsilon = \varepsilon(n,\gamma,\Omega)$ such that the estimate
    \begin{align*}
         ||u||_{1+2\gamma,\gamma;\Omega_{\varepsilon}  \times (0,T)} 
         &\leq C \bigg( ||f||_{L^{\infty}_{x,t}(\Omega \times (0,T))} + ||u||_{L^{\infty}_{x,t}(\Omega \times (0,T)) } \bigg)
     \end{align*}
     holds on the $\varepsilon$-neighborhood
     $$
    \Omega_{\varepsilon} = \{x \in \Omega: \dist(x,\partial \Omega) < \varepsilon\}
    $$
    of the boundary. 
\end{theorem}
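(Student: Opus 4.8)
The plan is to reduce the statement to the half-ball case of Corollary~\ref{cor:interior parabolic estimates for parabolic eq with a priori estimates} by means of a finite covering of $\partial\Omega$ and a boundary-straightening change of variables. First, since $\Omega$ is bounded (so $\Omega\times(0,T)$ has finite measure), the hypotheses $f\in L^\infty(\Omega\times(0,T))$ and $u\in C^0([0,T);C^1_0(\overline{\Omega}))$ give $f,u,\nabla_x u\in L^p(\Omega\times(0,T))$ for every $p<+\infty$. I fix $p$ with $n+2<p<+\infty$ large enough that
$$
\gamma<\frac{(p-n-1)(p-n-2)}{p(2p-n-2)},
$$
which is possible because the right-hand side tends to $1/2$ as $p\to+\infty$ and $\gamma\in(0,1/2)$ by hypothesis.

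Next I would cover $\partial\Omega$ by finitely many open sets $U_1,\dots,U_N$ so that for each $j$ there is a smooth, time-independent diffeomorphism $\Phi_j$ mapping $U_j\cap\overline{\Omega}$ onto a closed half-ball $\overline{B^+_{r_j}(y_j)}$ and $U_j\cap\partial\Omega$ onto its flat part $T^+_{r_j}(y_j)$; this is standard since $\partial\Omega$ is compact and $C^\infty$, and by shrinking the $U_j$ we may assume $\bigcup_j\Phi_j^{-1}(B^+_{r_j/2}(y_j))\supset\Omega_\varepsilon$ for some $\varepsilon=\varepsilon(n,\gamma,\Omega)>0$. In the $j$-th chart set $v=u\circ\Phi_j^{-1}$ (composition in the space variable only). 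A direct computation gives $v_t-Lv=f\circ\Phi_j^{-1}$ on $B^+_{r_j}(y_j)\times(0,T)$, where
$$
L=\sum_{k,l}a^{kl}(y)D_{y_k}D_{y_l}+\sum_k b^k(y)D_{y_k},
$$
the coefficients $a^{kl},b^k$ being $C^\infty$ on $\overline{B^+_{r_j}(y_j)}$, uniformly elliptic, and independent of time (because $\Phi_j$ does not involve $t$), while $f\circ\Phi_j^{-1}\in L^\infty\subset L^p$. Moreover $v\in C^0([0,T);C^1(\overline{B^+_{r_j}(y_j)}))$ with $v=0$ on $T^+_{r_j}(y_j)$ and on $\{t=0\}$, so $v$ is weakly continuous at $t=0$, $\lim_{\varepsilon\to0^+}v(\tilde y,(y_j)_n+\varepsilon,t)=0$ locally uniformly (hence in $L^p_{\mathrm{loc}}$), and $\lim_{\varepsilon\to0^+}\nabla_y v(\tilde y,(y_j)_n+\varepsilon,t)$ converges in $L^p_{\mathrm{loc}}$ by continuity of $\nabla_y v$ up to the flat part. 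The only delicate hypothesis is the a priori $C^{2,1}$ regularity of $v$ on the open half-ball: away from $\{t=0\}$ it follows from the interior $L^p$ estimates (Theorem~\ref{thm:Interior L^p Estimates for the Heat Equation}) and Corollary~\ref{cor: local parabolic sobolev embedding}, and down to $t=0$ I would run exactly the mollification argument from the proof of Corollary~\ref{cor: local parabolic sobolev embedding}, mollifying $v$ only in the tangential and time variables so as to preserve the flat Dirichlet condition, and then passing to the limit. With these checks the half-ball case of Corollary~\ref{cor:interior parabolic estimates for parabolic eq with a priori estimates} yields
$$
||v||_{1+2\gamma,\gamma;B^+_{r_j/2}(y_j)\times(0,T)}\le C\big(||f\circ\Phi_j^{-1}||_{L^p(B^+_{r_j}(y_j)\times(0,T))}+||v||_{L^p(B^+_{r_j}(y_j)\times(0,T))}\big)\le C\big(||f||_{L^\infty(\Omega\times(0,T))}+||u||_{L^\infty(\Omega\times(0,T))}\big),
$$
with $C=C(n,p,\gamma,\Omega)$.

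Finally I would pull each bound back through the smooth diffeomorphism $\Phi_j$, which changes the parabolic Hölder norm only by a factor depending on $||\Phi_j||_{C^2}$, sum over the finite cover $j=1,\dots,N$, and use that $\Omega_\varepsilon$ is contained in $\bigcup_j\Phi_j^{-1}(B^+_{r_j/2}(y_j))$ to obtain the stated estimate on $\Omega_\varepsilon\times(0,T)$. The main obstacle is not the algebra of flattening the boundary but the verification of the precise hypotheses of Corollary~\ref{cor:interior parabolic estimates for parabolic eq with a priori estimates} in the half-ball: producing the a priori $C^{2,1}$ regularity of the transformed solution up to (but not on) the flat boundary without destroying the Dirichlet condition, which forces the tangential-and-time-only mollification and careful bookkeeping of the trace conditions, together with ensuring that the straightening keeps the second-order coefficients of $L$ time-independent so that the corollary applies as stated.
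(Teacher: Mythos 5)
Your reduction to the half-ball case of Corollary~\ref{cor:interior parabolic estimates for parabolic eq with a priori estimates} via a finite covering and boundary-straightening is the right opening move, and the choice of $p$ large enough relative to $\gamma$ is correct. But the proposed fix for the a priori regularity does not close the argument, and it fails at exactly the point you flagged as the main obstacle.

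The half-ball case of Corollary~\ref{cor:interior parabolic estimates for parabolic eq with a priori estimates}, through the odd reflection in Proposition~\ref{prop:extension by reflection}, requires \emph{classical} $C^{2,1}$ regularity of the solution on the open half-ball: the reflection proof integrates by parts on $\{x_n > \varepsilon\}$ using classical second normal derivatives and then lets $\varepsilon \to 0^+$. Your $v = u\circ\Phi_j^{-1}$ solves a parabolic equation whose right-hand side $f\circ\Phi_j^{-1}$ is only $L^\infty$, so the interior $L^p$ estimates and Corollary~\ref{cor: local parabolic sobolev embedding} give at most $v\in C^{1+2\gamma,\gamma}_{loc}$ together with $D^2_x v\in L^p_{loc}$ — not $v\in C^{2,1}$. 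The tangential-and-time-only mollification cannot upgrade this: convolving in $\tilde y,t$ makes $v_\delta$ smooth in those directions but leaves the $y_n$-regularity untouched, so $\partial_{y_n}^2 v_\delta$ is still only $L^p$ in $y_n$, not continuous, and the reflection hypothesis is precisely about $y_n$. (Even if one could, $v_\delta$ would no longer satisfy the equation exactly and you would need to control commutator errors.) To salvage this route you would have to replace Proposition~\ref{prop:extension by reflection} itself by a weaker version requiring only $L^p_{loc}$ distributional derivatives plus traces.

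The paper avoids the issue altogether by approximating the data, not the solution. It first shows that the Dirichlet solution operator is bounded on $L^p(\Omega\times(0,T))$ for all $p$, by a Maximum Principle comparison of the Dirichlet heat kernel $k_t(x,y)$ with the Gaussian kernel $G(x-y,t)$. It then takes $f_n \in C^\infty_c(\Omega\times(0,T))$ converging to $f$; the associated solutions $u_n$ are $C^\infty(\overline{\Omega}\times[0,T])$ by classical theory (the compatibility conditions are automatic since $f_n\equiv 0$ near $t=0$), so the straightened $v_n=u_n\circ\Phi_j^{-1}$ do satisfy the $C^{2,1}$ hypothesis and Corollary~\ref{cor:interior parabolic estimates for parabolic eq with a priori estimates} applies to $u_n$ and to $u_n-u_m$ exactly as in your covering argument. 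The $L^p$ bound on the solution operator then forces $u_n$ to be Cauchy in the parabolic H\"older norm, and the estimate for $u$ follows in the limit. If you keep your covering strategy but swap the mollification of $v$ for this approximation-of-$f$ scheme (and add the heat-kernel $L^p$ bound that makes the approximants converge), the proof goes through.
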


\begin{proof}
One first notes that a $C^0([0,T) \times \overline{\Omega})$ solution is unique thanks to hypoellipticity and the Maximum Principle (Theorem \ref{maximum principle}). The solution $u$ is given by integration with a kernel $k_t(x,y)$ given by
 $$
k_t(x,y) = \sum_{k=1}^{\infty} e^{-t\lambda_k}w_k(x)\overline{w_k}(y),
 $$
 where, as shown in \cite[Chapter 6.5.1]{evans10}, the $(w_k)_{k \geq 1}$ form a countable orthonormal basis of $L^2(\Omega)$ made of eigenfunctions $w_k(x) \in C^{\infty}(\overline{\Omega}), w_k(\partial \Omega) = 0$, $-\Delta w_k = \lambda_k w_k$ on $\Omega$, $\lambda_k > 0$. The sum converges in $L^2(\Omega^2)$ for all $t > 0$ and the kernel is $C^{\infty}((0,+\infty) \times \Omega^2)$ by hypoellipticity because $k_t(x/2,y/2)$ is a distribution solution to the heat equation
 $$
    (\partial_t - \Delta_{x,y}) u(x,y,t) = 0, \quad (x,y) \in 2\Omega \times 2\Omega, t > 0.
 $$ 
 The Maximum Principle (\cite[Chapter 2.3, Theorem 6]{protter2012maximum}) implies $w_k(x) > 0$ on $\Omega$ for all $k \in \mathbb N$, hence $k_t(x,y) > 0$ as well. By Monotone Convergence, one deduces that $k_t$ extends continuously as a function $k_t(x,y) \in C^0((0,+\infty) \times \overline{\Omega}^2)$ with $k_t(x,y) = 0$ when $x \in \partial \Omega$ or $y \in \partial \Omega$. For fixed $y_0 \in \Omega$, $u(x,t) = G(x-y_0,t) - k_t(x,y_0)$ solves the heat equation on $(0,+\infty)$ with $u(\partial \Omega,t) > 0$. The Maximum Principle shows that $k_t(x,y_0) < G(x-y_0,t)$ on $(0,+\infty) \times \Omega$. 
 
 Hence, the corresponding heat semi-group maps $L^{\infty}(\Omega)$ to $C^0(\overline{\Omega})$ for $t > 0$ with 
     \begin{align*}
||e^{t \Delta} g||_{L^{\infty}(\Omega)} &\leq C ||g||_{L^{\infty}(\Omega)} \quad \forall t > 0.
\end{align*}
Thus the solution operator maps $f \in L^{\infty}(\Omega \times (0,T))$ to $u \in C^0(\overline{\Omega} \times [0,T])$ by Dominated Convergence, and more generally $L^p(\Omega \times (0,T))$ to $L^p(\Omega \times (0,T))$ by comparison with the Gaussian Heat Kernel, with
\begin{equation}
    ||u||_{L^{p}(\Omega \times (0,T))} \leq C T ||f||_{L^{p}(\Omega \times (0,T))}, \quad p \in [1,+\infty]. \label{eq: L infty global estimate}
\end{equation}

Next, observe that the theorem is a direct consequence of Corollary \ref{cor:interior parabolic estimates for parabolic eq with a priori estimates} when $u \in C^{2,1}_{x,t}(\Omega \times (0,T))$, $\partial_t u, D^2_x u \in L^{p}(\Omega \times (0,T))$, for $p = p(\gamma)$ large enough by locally straightening the boundary. Otherwise, let $f_n \to f$ in $L^{p}(\mathbb R^n \times (0,T))$, $(f_n) \in C_c^{\infty}(\Omega \times (0,T))$. The corresponding sequence of solutions $u_n$ to the Dirichlet problem with zero initial data has regularity $u_n \in C^{\infty}(\overline{\Omega} \times [0,T])$ (see \cite[Chapter 7.1.3, Theorem 7]{evans10}). Moreover, the sequence $u_n$ converges to $u$ in $L^{p}(\Omega \times (0,T))$ thanks to (\ref{eq: L infty global estimate}).

One obtains the estimate for each $u_n$ and $u_n - u_m$, which concludes the proof upon passing to the limit.
\end{proof}

\section{Schauder Estimates for the heat equation}
In this appendix, we present the so-called Schauder estimates (both interior and boundary) for linear parabolic equations. Those estimates provide an upper bound on the Hölder semi-norm $[D^2_x u]_{\alpha,\alpha/2}$ in terms of the semi-norm $[f]_{\alpha,\alpha/2}$ of the forcing term and $||u||_{L^{\infty}_{x,t}}$. If the parabolic operator has constant coefficient, the proof goes as follows: use an argument by contradiction and the derivative estimates to prove a global estimate, which we then localize (Corollary \ref{interior schauder estimates, ver.1}). Then, freeze the coefficients to deduce the estimate for parabolic operators with varying coefficients, so that one can also obtain a boundary Schauder estimates (Theorem \ref{boundary schauder smooth boundary}). We note that the estimates from \cite{Simon1997} or \cite{krylov} require some a priori smoothness on the solution $u$. For the heat operator and thanks to the Parabolic Sobolev Embedding proved in the previous appendix, we show that this smoothness can be weakened.

    In the following, $L_0$ denotes a parabolic operator
    \begin{equation}
            L_0 =\sum_{i,j=1}^n a^{i,j}D_{x_i}D_{x_j} \label{eq: L_0 constant coefficient parabolic}
    \end{equation}
     with constant coefficients satisfying $a^{i,j} = a^{j,i}$ and 
    \begin{align*}
        \Lambda^{-1}|\xi|^2 \leq \sum_{i,j=1}^n a^{i,j}\xi_i\xi_j &\leq \Lambda |\xi|^2, \quad \xi \in \mathbb R^n,
    \end{align*}
for some $\Lambda > 0$ (i.e., $A = (a^{i,j})$ is a symmetric, positive definite matrix).

One should note that if $u \in C^{2,1}_{x,t}(\mathbb R^n \times (-\infty,0])$ is a solution to the free parabolic equation
$$
    u_t - L_0 u = 0, \quad (x,t) \in \mathbb R^n \times (-\infty,0],
$$
and if $P$ is a matrix for which $P^T (a^{i,j}) P = I_n$ (the matrix $P$ is obtained through a orthogonal diagonalization $Q^T A Q = Diag(\lambda_1, ..., \lambda_n)$ and then $P = QDiag(\lambda_1^{-1/2}, ...,\lambda_n^{-1/2})$), then $v(x,t) =  u(Px,t)$ solves
$$
    v_t - \Delta v = 0, \quad (x,t) \in \mathbb R^n \times (-\infty,0].
$$
Similarly, if $u \in C^{2,1}_{x,t}(\mathbb R^n \cap \{\langle d, x \rangle \geq 0\} \times (-\infty,0])$ is a solution to the free parabolic equation
\begin{align*}
        u_t - L_0 u &= 0, \quad (x,t) \in \mathbb R^n \cap \{\langle d,x \rangle > 0\} \times (-\infty,0], \\
        u(x,t) &= 0, \quad (x,t) \in \mathbb R^{n} \cap \{ \langle d, x \rangle = 0\} \times (-\infty,0],
\end{align*}
 and if $P$ satisfies $P^T (a^{i,j}) P = I_n$, $R = (b_1,...,b_{n-1},P^T d/||P^Td||) \in \mathbb R^{n \times n}$ is an orthogonal matrix, then $v(x,t) =  u(PRx,t)$ solves
\begin{align*}
        v_t - \Delta v &= 0, \quad (x,t) \in \mathbb R^n \cap \{x_n > 0\} \times (-\infty,0], \\
        v(x,t) &= 0, \quad (x,t) \in \mathbb R^{n} \cap \{ x_n  = 0\} \times (-\infty,0].
\end{align*}

\begin{corollary}[Interior Schauder Estimates for the Heat Equation with a priori estimates] \label{interior schauder estimates, ver.1}
Assume that $u \in C^{2+\alpha,1+\alpha/2}(C(x,t;r))$ is a classical solution to the linear parabolic equation
\begin{equation}
    u_t - L_0 u = f(x,t), \quad (x,t) \in C(x,t;r), 
\end{equation}
where $f \in C^{\alpha,\alpha/2}_{x,t}(C(x,t;r))$ and $L_0$ is as in (\ref{eq: L_0 constant coefficient parabolic}).

There exists $C = C(n, \alpha, L_0)$ (independent of $x,t,r$) for which
\begin{align*}
    ||u||_{2+\alpha,1+\alpha/2;C(x,t;r/2)} &\leq C\left(1+\frac{1}{r^{2+\alpha}}\right)\left( ||f||_{\alpha,\alpha/2;C(x,t;r)} + ||u||_{L^{\infty}_{x,t}(C(x,t;r))} \right).
\end{align*}
The result still holds if one replaces the cylinder $C(x,t;r)$ by a half-cylinder
$$
    C^+(x,t;r) := \{(y,s) \in \mathbb R^n \times \mathbb R: |x-y| < r, \langle d, y-x \rangle \geq 0, t-r^2 < s < t\}, \quad |d| = 1,
    $$
and one assumes a boundary condition $u = 0$ on $C(x,t;r) \cap \{(y,t):  \langle d, y-x \rangle = 0\}$.
\end{corollary}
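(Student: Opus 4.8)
The scheme is the classical compactness (``blow-up'') argument of Simon and Krylov, adapted to the weakened regularity hypothesis, in three steps: reduce to a normalized heat equation, prove a global seminorm estimate by contradiction, then localize. Write $C_{r}=C(0,0;r)$. I would first reduce to $L_{0}=\Delta$: as explained in the paragraph before the statement, $v(x,t)=u(Px,t)$ with $P^{T}(a^{i,j})P=I_{n}$ turns $u_{t}-L_{0}u=f$ into $v_{t}-\Delta v=f(P\,\cdot\,,\cdot)$, and sandwiching $C(x,t;r)$ between two nested cylinders adapted to $P$ (at the cost of a factor depending on $\Lambda$) transfers the conclusion back. By translating and parabolic-rescaling $u_{r}(y,s)=u(x+ry,t+r^{2}s)$ — which leaves the operator unchanged and multiplies $[D^{2}_{x}u]_{\alpha,\alpha/2}$, $\|D^{2}_{x}u\|_{\infty}$, $[f]_{\alpha,\alpha/2}$, $\|f\|_{\infty}$, $\|u\|_{\infty}$ by $r^{2+\alpha}$, $r^{2}$, $r^{2+\alpha}$, $r^{2}$, $1$ respectively — it suffices to prove the estimate for $x=t=0$, $r=1$ and collect the powers of $r$ at the end; the mismatch between the exponent $2+\alpha$ of the top seminorm and the exponent $0$ of $\|u\|_{\infty}$ is precisely what produces the weight $1+r^{-2-\alpha}$.

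Next I would prove the \emph{global} seminorm estimate: if $u\in C^{2+\alpha,1+\alpha/2}(\mathbb{R}^{n}\times(-\infty,0])$ (with all these seminorms finite) solves $u_{t}-\Delta u=f$, then $[D^{2}_{x}u]_{\alpha,\alpha/2}+[\partial_{t}u]_{\alpha,\alpha/2}\le C(n,\alpha)\,[f]_{\alpha,\alpha/2}$; since $\partial_{t}u=\Delta u+f$, only $[D^{2}_{x}u]_{\alpha,\alpha/2}$ really matters. If this failed there would be $u_{k}$, $f_{k}=\partial_{t}u_{k}-\Delta u_{k}$ with $[D^{2}_{x}u_{k}]_{\alpha,\alpha/2}=1$ and $[f_{k}]_{\alpha,\alpha/2}\to0$. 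Picking near-extremal pairs $|D^{2}u_{k}(p_{k})-D^{2}u_{k}(q_{k})|\ge\tfrac12(|p_{k}-q_{k}|+\dots)^{\alpha}$ with parabolic separation $\rho_{k}$, I rescale around $p_{k}$ after subtracting the second-order parabolic Taylor polynomial $P_{k}$ of $u_{k}$ at $p_{k}$ (degree two in $x$, one in $t$): $w_{k}(y,s)=\rho_{k}^{-2-\alpha}(u_{k}-P_{k})(p_{k}+\rho_{k}y,\,t_{p_{k}}+\rho_{k}^{2}s)$. Because $\partial_{t}P_{k}-\Delta P_{k}=f_{k}(p_{k})$ is exactly the constant value of the forcing at the center, $w_{k}$ solves $\partial_{s}w_{k}-\Delta_{y}w_{k}=\rho_{k}^{-\alpha}\bigl(f_{k}(p_{k}+\rho_{k}\cdot)-f_{k}(p_{k})\bigr)\to0$ in $C^{\alpha,\alpha/2}_{\mathrm{loc}}$ with no need to control $\|f_{k}\|_{\infty}$; moreover $[D^{2}_{y}w_{k}]_{\alpha,\alpha/2}\le1$, $w_{k}$ has vanishing $2$-jet at the origin so $|w_{k}(z)|\lesssim(|z|+\dots)^{2+\alpha}$, and $|D^{2}w_{k}(\tilde p_{k})-D^{2}w_{k}(\tilde q_{k})|\ge\tfrac12$ at unit parabolic distance. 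Interior derivative estimates for caloric functions and Arzelà–Ascoli produce a subsequential limit $w$ on $\mathbb{R}^{n}\times(-\infty,0]$ with $\partial_{t}w=\Delta w$, $[D^{2}_{x}w]_{\alpha,\alpha/2}\le1$ and $D^{2}_{x}w$ not constant. But each component of $D^{2}_{x}w$ is an ancient caloric function of strictly sublinear parabolic growth ($\alpha<1$), so the interior gradient estimate forces it to be constant — the parabolic Liouville theorem — a contradiction.

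Then I would localize. On nested cylinders $C_{1/2}\subset C_{1}$, applying the global estimate to $u\chi$ for a cutoff $\chi$ gives forcing $f\chi+u\partial_{t}\chi-u\Delta\chi-2\nabla_{x}u\cdot\nabla_{x}\chi$, whose $C^{\alpha,\alpha/2}$ seminorm brings in $u$, $\nabla_{x}u$ and their Hölder seminorms on a slightly larger cylinder; absorbing the first-order contribution by a standard interpolation inequality for Hölder norms and iterating over a dyadic family of cutoffs — exactly as in the proof of Theorem~\ref{thm:Interior L^p Estimates for the Heat Equation}, and using that only $u$ and $\nabla_{x}u$ (not $D^{2}_{x}u$) need a priori control to pass to the limit in the iteration — yields $\|u\|_{2+\alpha,1+\alpha/2;C_{1/2}}\le C(\|f\|_{\alpha,\alpha/2;C_{1}}+\|u\|_{L^{\infty};C_{1}})$. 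Undoing the change of variables and the rescaling gives the stated $(1+r^{-2-\alpha})$ weight. For the half-cylinder $C^{+}(x,t;r)$ with $u=0$ on the flat face, I reduce $L_{0}$ to $\Delta$ and the flat face to $\{x_{n}=0\}$ by the boundary-preserving rotation from the text, and re-run the contradiction argument inside the half-space; the only new feature is that the blow-up limit may be caloric on $\{x_{n}>0\}\times(-\infty,0]$ vanishing on $\{x_{n}=0\}$, but its odd reflection (Proposition~\ref{prop:extension by reflection}, with zero forcing, so no jump in the reflected data arises and the $C^{2+\alpha,1+\alpha/2}$ class is preserved) is entire caloric, and the same Liouville step applies.

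\textbf{Main obstacle.} The heart of the matter is the parabolic Liouville theorem together with the correct bookkeeping in the rescaling step: choosing the second-order parabolic Taylor polynomial so that the rescaled forcing converges to zero in $C^{\alpha,\alpha/2}_{\mathrm{loc}}$ without any control on $\|f_{k}\|_{\infty}$, and checking that the rescaled solutions enjoy uniform local bounds so that a caloric limit exists. The interpolation/iteration localization, the change of variables and the scaling bookkeeping are routine; one should be careful that the a priori hypothesis $u\in C^{2+\alpha,1+\alpha/2}$ is genuinely used — it is what makes both the compactness and the iteration close — the removal of this hypothesis via the parabolic Sobolev embedding being deferred to the next result.
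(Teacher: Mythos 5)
Your proposal matches the paper's own proof: the paper cites Simon's compactness argument for the global and half-space estimates and then localizes following the cut-off iteration of Theorem~\ref{thm:Interior L^p Estimates for the Heat Equation} (equivalently Krylov's argument), which is exactly the three-step scheme you lay out, including the reduction of $L_0$ to $\Delta$, the blow-up/Liouville step, the reflection treatment of the half-space, and the scaling bookkeeping that produces the $(1+r^{-2-\alpha})$ weight.
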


\begin{remark}
     It is a priori assumed that $D^2_x u$ is globally Hölder-continuous on $C(x,t;r)$. This assumption will be removed later.
\end{remark}

\begin{proof}
See the argument by contradiction of Simon \cite{Simon1997} for the global and half-space estimates, which we then localize following the proof of Theorem \ref{thm:Interior L^p Estimates for the Heat Equation} or the localization argument from Krylov (\cite[Theorem 8.11.1]{krylov}).
\end{proof}

\begin{corollary}[Interior Schauder Estimates for the Heat Equation with smooth forcing term] \label{cor:interior schauder estimates with smooth forcing}
Assume that $u \in L^{\infty}(C(x,t;r))$ is a distributional solution to the linear parabolic equation
\begin{equation}
    u_t - L_0 u = f(x,t), \quad (x,t) \in C(x,t;r),
\end{equation}
where $f \in C^{\alpha,\alpha/2}_{x,t}(C(x,t;r)) \cap C^{\infty}_{x,t}(C(x,t;r))$ and $L_0$ is as in (\ref{eq: L_0 constant coefficient parabolic}).

There exists $C = C(n, \alpha, L_0)$ (independent of $x,t,r$) for which
\begin{align*}
    ||u||_{2+\alpha,1+\alpha/2;C(x,t;r/2)} &\leq C\left(1+\frac{1}{r^{2+\alpha}}\right)\left( ||f||_{\alpha,\alpha/2;C(x,t;r)} + ||u||_{L^{\infty}_{x,t}(C(x,t;r))} \right).
\end{align*}
\end{corollary}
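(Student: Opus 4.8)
The statement is the ``bootstrap'' version of Corollary~\ref{interior schauder estimates, ver.1}, where the a priori assumption $u \in C^{2+\alpha,1+\alpha/2}$ is replaced by the much weaker $u \in L^{\infty}$ together with smoothness of the forcing term $f$. The plan is to show that under these hypotheses $u$ is automatically of class $C^{2+\alpha,1+\alpha/2}$ on a slightly smaller cylinder, and then simply invoke Corollary~\ref{interior schauder estimates, ver.1}. The regularity gain is produced in two stages: first a parabolic Sobolev embedding to get into a H\"older space, then the already-established Schauder estimate with a priori regularity applied on nested cylinders.

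\textbf{Key steps.} First I would normalize: after translation and parabolic rescaling, assume $(x,t)=(0,0)$ and $r=1$, and after the linear change of variables $v(y,s)=u(Py,s)$ (with $P^{T}AP=I_n$, as described in the text preceding this corollary) reduce to the case $L_0=\Delta$, so that $\partial_t v - \Delta v = f(P\,\cdot\,,\cdot)$, still smooth and still $C^{\alpha,\alpha/2}$ on the image cylinder (which contains a fixed cylinder $C(0,0;\rho_0)$ with $\rho_0=\rho_0(L_0)>0$). Second, since $v \in L^{\infty} \subset L^p_{loc}$ for every $p$ and $\partial_t v - \Delta v = f$ is smooth hence in every $L^p_{loc}$, pick $p > n+2$ large enough that the admissible H\"older exponent $\gamma=\gamma(n,p)$ satisfies $2\gamma > \alpha$; by the Local Parabolic Sobolev Embedding (Corollary~\ref{cor: local parabolic sobolev embedding}) one gets $v \in C^{1+2\gamma,\gamma}_{x,t}$, hence $\nabla_x v \in C^{\alpha,\alpha/2}_{x,t}$, on a cylinder $C(0,0;\rho_1)$ with $\rho_1<\rho_0$. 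Third, run a short Schauder iteration exactly as in the proof of Theorem~\ref{thm:resgularty solution nonlinear pb}: having $v,\nabla_x v$ H\"older continuous and $f$ smooth, Corollary~\ref{interior schauder estimates, ver.1} (or its localization) upgrades $v$ to $C^{2+\alpha,1+\alpha/2}$ on $C(0,0;\rho_1/2)$; differentiating the equation in $x$ and in $t$ and reapplying the Schauder estimate on successively halved cylinders shows $v \in C^{\infty}_{x,t}$ near the origin, in particular $v \in C^{2+\alpha,1+\alpha/2}(C(0,0;\rho_2))$ for some $\rho_2>0$. Fourth, with this genuine a priori regularity now in hand, apply Corollary~\ref{interior schauder estimates, ver.1} on the cylinder $C(0,0;1)$ (the hypothesis $u \in C^{2+\alpha,1+\alpha/2}(C(x,t;r))$ of that corollary needs the interior regularity, which we have away from the parabolic boundary; one localizes as in Theorem~\ref{thm:Interior L^p Estimates for the Heat Equation} so that only finitely many such interior balls and the known $L^p$/$L^\infty$ bounds enter), obtaining
\begin{equation*}
    \|u\|_{2+\alpha,1+\alpha/2;C(x,t;r/2)} \leq C\left(1+\frac{1}{r^{2+\alpha}}\right)\left( \|f\|_{\alpha,\alpha/2;C(x,t;r)} + \|u\|_{L^{\infty}_{x,t}(C(x,t;r))} \right),
\end{equation*}
with $C=C(n,\alpha,L_0)$; undoing the rescaling restores the stated dependence on $r$.

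\textbf{Main obstacle.} The only delicate point is bookkeeping the radii: the Sobolev embedding and the a priori Schauder estimate each cost a fixed fraction of the cylinder, and one must check that after finitely many such shrinkings one still controls $u$ on $C(x,t;r/2)$ with a constant independent of $r$. This is handled, as in the localization proofs already given in Appendices~A and~C, by covering $C(x,t;r/2)$ by a fixed finite number of smaller cylinders whose doubles lie in $C(x,t;r)$ and summing the estimates; the scaling of the constant in $r$ is then dictated purely by the parabolic homogeneity of the $C^{2+\alpha,1+\alpha/2}$ norm, exactly as in Corollary~\ref{interior schauder estimates, ver.1}. No new analytic input is required beyond the results already established.
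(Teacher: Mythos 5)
The paper's proof is a two-line argument: since $\partial_t - L_0$ is a constant-coefficient (hence hypoelliptic) parabolic operator, $u \in L^\infty$ with $(\partial_t - L_0)u = f$ smooth is automatically $C^\infty$ on the open cylinder $C(x,t;r)$; one then applies Corollary~\ref{interior schauder estimates, ver.1} on an exhausting sequence of cylinders $C(x,t-\varepsilon_n;r_n)$ compactly contained in $C(x,t;r)$ and lets $n\to\infty$.

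Your proposal misses this and instead tries to manufacture interior regularity from the parabolic Sobolev embedding, and there is a concrete gap at the very first step. Corollary~\ref{cor: local parabolic sobolev embedding} (and likewise Theorem~\ref{thm:Interior L^p Estimates for the Heat Equation}, which feeds into it) requires as a hypothesis that $\nabla_x u \in L^p$, not merely $u \in L^p$ and $(\partial_t - \Delta)u \in L^p$. At the point where you invoke the embedding you know only that $u \in L^\infty$ and that $f$ is smooth; you have said nothing about $\nabla_x u$, and you cannot, since you are trying to \emph{establish} regularity of $u$. The argument as written is circular. The gap can be repaired by mollifying $u$ in the interior: the mollified $u_\varepsilon$ is smooth, so $\nabla_x u_\varepsilon \in L^p_{loc}$ holds trivially, the final constants in the $L^p$ and Sobolev estimates involve only $\|u_\varepsilon\|_{L^p}$ and $\|f_\varepsilon\|_{L^p}$ (not the gradient), and one passes to the limit $\varepsilon \to 0$. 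But once you accept that $\partial_t - L_0$ is hypoelliptic — which is standard for constant-coefficient parabolic operators — the whole bootstrap via the Sobolev embedding and Schauder iteration is superfluous. Your final step (applying Corollary~\ref{interior schauder estimates, ver.1} on $C(0,0;1)$) also needs the exhaustion/limit argument made precise, since that corollary requires $C^{2+\alpha,1+\alpha/2}$ regularity on the \emph{full} cylinder, which you obtain only on compactly contained subcylinders; the paper handles this cleanly with an explicit sequence $C(x,t-\varepsilon_n;r_n)$.
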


\begin{proof}
    By hypoellipticity, $u \in C^{\infty}(C(x,t;r))$. Take $r_n = r - 1/n$ and start at $n \geq n_0$ large enough so that $r > 1/n$. Take also a decreasing sequence $0 < \varepsilon_n < 2r/n - 1/n^2$ with $\varepsilon_n \to 0$. One obtains a sequence of increasing cylinders $(C(x,t-\varepsilon_n;r_n))_{n \geq n_0}$ with the inclusion
    $$
    \overline{C(x,t-\varepsilon_n;r_n)} \subset C(x,t;r)
    $$
    It then suffices to apply Corollary \ref{interior schauder estimates, ver.1} on these cylinders and let $n \to +\infty$.
    
\end{proof}

\begin{corollary}[Global Schauder Estimates for compactly supported solutions]\label{cor:global schauder estimates for compactly supported solutions}
    Assume that $u \in C^{0}_{b}(\mathbb R^n \times (-\infty,0])$, $\alpha \in (0,1)$, is a distributional solution to the linear heat equation
\begin{align*}
    u_t - L_0 u &= f(x,t), \quad (x,t) \in \mathbb R^n \times (-\infty,0),
\end{align*}
where $L_0$ is as in (\ref{eq: L_0 constant coefficient parabolic}).

Further assume that $u$ is compactly supported in the sense that there is some $x_1 \in \mathbb R^n$ and $r > 0$ for which $\overline{\supp(u(\cdot,t))} \subset B_r(x_1)$ for all $t \leq 0$, $u(\cdot,t) = 0$ for $t \leq -r^2$ and $f \in C^{\alpha,\alpha/2}_{x,t}(C(x,0;r))$. 

There exists $C = C(n, \alpha, L_0)$ for which
$$
    ||u||_{2+\alpha,1+\alpha/2;\mathbb R^n \times (-\infty,0]} \leq C\left(1+\frac{1}{r^{2+\alpha}}\right)\left( ||f||_{\alpha,\alpha/2;\mathbb R^n \times (-\infty,0]} + ||u||_{L^{\infty}_{x,t}(\mathbb R^n \times (-\infty,0])} \right).
$$
\end{corollary}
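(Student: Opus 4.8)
The plan is to deduce the estimate from the interior Schauder estimate with a priori regularity, Corollary~\ref{interior schauder estimates, ver.1}, by a regularization followed by a covering argument. The one point needing care is that $u$ is only assumed bounded and continuous, so Corollary~\ref{interior schauder estimates, ver.1} cannot be applied to $u$ directly; I would first replace $u$ by a smooth mollification and pass to the limit at the end.

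\emph{Regularization.} Fix a space--time mollifier $\phi_\varepsilon\in C^\infty_c$ supported in the parabolic ball $\{|x|^2+|t|<\varepsilon^2\}$ with $\int\phi_\varepsilon=1$, and set $u_\varepsilon=\phi_\varepsilon*_{x,t}u$ and $f_\varepsilon=\phi_\varepsilon*_{x,t}f$, where $f=\partial_t u-L_0u$ is viewed as a distribution on $\mathbb R^n\times(-\infty,0)$ (equal there to a Hölder function, since $f\equiv0$ on the open complement of $\supp u$). These are well-defined and $C^\infty$ on $\mathbb R^n\times(-\infty,-\varepsilon]$. Since $L_0$ has constant coefficients, convolution commutes with $\partial_t-L_0$, so
$$\partial_t u_\varepsilon-L_0u_\varepsilon=f_\varepsilon\qquad\text{on }\mathbb R^n\times(-\infty,-\varepsilon).$$
Moreover $\supp u_\varepsilon(\cdot,t)\subset B_{r+\varepsilon}(x_1)$ for all $t\le-\varepsilon$, $u_\varepsilon(\cdot,t)\equiv0$ for $t\le-r^2-\varepsilon$, $\|u_\varepsilon\|_{L^\infty}\le\|u\|_{L^\infty}$, and, as mollification does not increase the Hölder seminorm, $\|f_\varepsilon\|_{\alpha,\alpha/2;\mathbb R^n\times(-\infty,-\varepsilon)}\le\|f\|_{\alpha,\alpha/2;\mathbb R^n\times(-\infty,0]}$. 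Finally, since $u$ is bounded, continuous and supported in the compact set $\overline{B_r(x_1)}\times[-r^2,0]$, it is uniformly continuous, so $u_\varepsilon\to u$ uniformly on $\mathbb R^n\times(-\infty,-\varepsilon]$ as $\varepsilon\to0$.

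\emph{Covering.} Being smooth, $u_\varepsilon$ lies in $C^{2+\alpha,1+\alpha/2}$ on every compact cylinder, so Corollary~\ref{interior schauder estimates, ver.1} applies on any cylinder $C(y,s;\rho)$ with $s\le-\varepsilon$ (so that the cylinder lies in the region where $u_\varepsilon$ solves the equation), controlling $u_\varepsilon$ on $C(y,s;\rho/2)$ up to its top. Taking $\varepsilon$ small, the set $\overline{B_{r+\varepsilon}(x_1)}\times[-r^2-\varepsilon,-\varepsilon]$ — outside of which $u_\varepsilon$ vanishes on the time range of interest — is covered by a number $N=N(n)$ of cylinders $C(y_i,s_i;2r)$ (with $N$ independent of $r,\varepsilon$ by parabolic scaling) having $s_i\le-\varepsilon$; applying Corollary~\ref{interior schauder estimates, ver.1} with $\rho=2r$ on each and inserting the bounds above gives
$$\|u_\varepsilon\|_{2+\alpha,1+\alpha/2;C(y_i,s_i;r)}\le C(n,\alpha,L_0)\bigl(1+r^{-2-\alpha}\bigr)\bigl(\|f\|_{\alpha,\alpha/2;\mathbb R^n\times(-\infty,0]}+\|u\|_{L^\infty}\bigr).$$
A routine patching over the finite cover then yields the same bound for $\|u_\varepsilon\|_{2+\alpha,1+\alpha/2;\mathbb R^n\times(-\infty,-\varepsilon]}$: the $L^\infty$ parts of the norm are sups over the local cylinders, while for the seminorms $[D^2_xu_\varepsilon]_{\alpha,\alpha/2}$ and $[\partial_tu_\varepsilon]_{\alpha,\alpha/2}$ one treats pairs of points within parabolic distance $r/2$ inside a single cylinder, and pairs farther apart via the $L^\infty$ bounds on $D^2_xu_\varepsilon,\partial_tu_\varepsilon$ just obtained (the division by $(r/2)^\alpha$ being absorbed into $1+r^{-2-\alpha}$). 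The constant is independent of $\varepsilon$.

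\emph{Passage to the limit and main obstacle.} The uniform bound makes $\{u_\varepsilon\}$ together with $\{D_xu_\varepsilon\}$, $\{D^2_xu_\varepsilon\}$, $\{\partial_tu_\varepsilon\}$ bounded and equicontinuous on compact subsets of $\mathbb R^n\times(-\infty,0)$; combined with $u_\varepsilon\to u$ uniformly, Arzela-Ascoli and uniqueness of distributional limits give $u\in C^{2+\alpha,1+\alpha/2}_{loc}(\mathbb R^n\times(-\infty,0))$ with $D^k_xu_\varepsilon\to D^k_xu$ ($k\le2$) and $\partial_tu_\varepsilon\to\partial_tu$ locally uniformly, so the estimate passes to the limit on $\mathbb R^n\times(-\infty,0)$. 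Since $u$ is bounded and its derivatives now carry a uniform Hölder modulus of continuity, they extend continuously up to $t=0$ and the $C^{2+\alpha,1+\alpha/2}$ norm over $\mathbb R^n\times(-\infty,0]$ equals that over $\mathbb R^n\times(-\infty,0)$, which is the assertion. I expect the genuine obstacle to be exactly the absence of a priori regularity of $u$: it is what forces the mollification step and the care needed to justify both the commutation $\partial_tu_\varepsilon-L_0u_\varepsilon=f_\varepsilon$ (where constant-coefficiency of $L_0$ is essential) and the limiting procedure up to $t=0$; the covering and patching, by contrast, are standard bookkeeping.
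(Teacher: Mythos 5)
Your proof is essentially correct, but takes a genuinely different route from the paper's. The paper first invokes Lemma~\ref{lemma:uniqueness heat equation} to identify $u$ with the Gaussian heat kernel convolution against $f$ (starting from zero data at $t=-4$ after rescaling $r=1$), then approximates $f$ by smooth $f_n$, solves for $u_n$ via the same heat kernel, observes $u_n\to u$ uniformly by the Young inequality, and passes to the limit in Corollary~\ref{cor:interior schauder estimates with smooth forcing}. You instead mollify $u$ directly in space--time and exploit that, because $L_0$ has \emph{constant} coefficients, convolution commutes with $\partial_t-L_0$, so $u_\varepsilon$ is a smooth classical solution with smooth forcing $f_\varepsilon$ to which Corollary~\ref{interior schauder estimates, ver.1} applies. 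Your approach avoids the heat kernel representation and Lemma~\ref{lemma:uniqueness heat equation} entirely, which is a nice simplification; the paper's approach has the minor advantage that the covering step is replaced by a single application of the interior estimate on the normalized cylinder $C(0,0;1)$.

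There is one quantitative slip in your covering argument. You apply Corollary~\ref{interior schauder estimates, ver.1} on cylinders of radius $2r$, obtaining the factor $C(1+r^{-2-\alpha})$ on each local piece, and then patch the Hölder seminorms of $D^2_x u_\varepsilon$ and $\partial_t u_\varepsilon$ by treating far-apart pairs via the $L^\infty$ bound divided by $(r/2)^\alpha$. But Corollary~\ref{interior schauder estimates, ver.1} controls that $L^\infty$ bound only by $C(1+r^{-2-\alpha})$, so the patched seminorm is bounded by $C(1+r^{-2-\alpha})(1+r^{-\alpha})\sim 1+r^{-2-2\alpha}$ for small $r$, which is \emph{not} absorbed into $1+r^{-2-\alpha}$ as you assert. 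The claimed exponent $2+\alpha$ is stronger than what this bookkeeping yields. The fix is exactly the normalization the paper does first: rescale $x\mapsto x/r$, $t\mapsto t/r^2$ so that $r=1$; then the local estimate and the patching produce purely dimensional constants, and undoing the rescaling restores the correct factor $1+r^{-2-\alpha}$ at the very end. Alternatively, after normalizing, a single cylinder $C(x_1,-\varepsilon^2;2)$ already contains the support of $u_\varepsilon$ on the relevant time range, so no patching is needed at all. This is a fixable bookkeeping error rather than a structural gap, and your identification of the real obstacle --- the absence of a priori $C^{2+\alpha}$ regularity of $u$, forcing some approximation --- is exactly right.
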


\begin{proof}
After translation, one can assume $x_1 = 0$. After applying a linear transformation (and up to taking a larger $r$), one can assume that $L_0 = \Delta$. After parabolic rescaling, one can assume without loss of generality that $r = 1$.

On $\mathbb R^n \times [-4,0)$, $u \in C^0_b(\mathbb R^n \times [-4,0))$ is the unique solution for the problem 
    \begin{align*}
     \partial_t u - \Delta u &= f(x,t) \in C^0_b(\mathbb R^n \times [-4,0)), \quad (x,t) \in \mathbb R^n \times (-4,0), \\
u(x,-4) &= 0, \quad x \in \mathbb R^n,
 \end{align*}
 which is given by convolution with the Gaussian heat kernel (up to a time translation), as shown in Lemma \ref{lemma:uniqueness heat equation}.
 
 Note that given the compact support assumption of $u(\cdot,t)$, the same holds for $f$. Setting $f(x,t) = 0$ for $x \in B_r(x_1)^c$, $f(x,t) = f(x,-t)$ for $t > 0$, $f(x,t) = 0$ for $t \leq -1$ yields a $C^{\alpha,\alpha/2}(\mathbb R^{n+1})$ extension of $f$.  Approximate $f$ by a sequence of $C^{\infty}_c(\mathbb R^{n+1})$-functions $(f_n)_{n \geq 0}$ with $||f_n-f||_{\alpha,\alpha/2;\mathbb R^n \times (-\infty,0)} \to 0$. Let $(u_n) \in C^0_b(\mathbb R^n \times [-4,0))$ be the corresponding sequence of solutions for
    \begin{align*}
     \partial_t u_n - \Delta u_n &= f_n(x,t), \quad (x,t) \in \mathbb R^n \times (-4,0) \\
u_n(x,-4) &= 0, \quad x \in \mathbb R^n
 \end{align*}
 given by convolution with the heat kernel. Thanks to (\ref{Gaussian heat kernel convolution, young inequality}), one has 
   $$
   ||u_n-u||_{L^{\infty}(\mathbb R^n \times (-4,0))} \to 0.
   $$
    By Corollary \ref{cor:interior schauder estimates with smooth forcing},
    \begin{align*}
            ||u_n-u_m||_{2+\alpha,1+\alpha/2;C(0,0;1)} &\leq C\left(1+\frac{1}{r^{2+\alpha}}\right)\left( ||f_n-f_m||_{\alpha,\alpha/2;C(0,0;2)} + ||u_n-u_m||_{L^{\infty}_{x,t}} \right),
    \end{align*}
    meaning that $(u_n)$ is a Cauchy sequence in the Banach space $C^{2+\alpha,1+\alpha/2}(C(0,0;1))$, where $C(0,0;1) \subset C(0,0;2) \subset \mathbb R^n \times (-4,0)$. Hence, $u \in C^{2+\alpha,1+\alpha/2}(C(0,0;1))$ and $u_n \to u$ in $C^{2+\alpha,1+\alpha/2}(C(0,0;1))$. Once more, by Corollary \ref{cor:interior schauder estimates with smooth forcing},
    \begin{align*}
            ||u_n||_{2+\alpha,1+\alpha/2;C(0,0;1)} &\leq C\left(1+\frac{1}{r^{2+\alpha}}\right)\left( ||f_n||_{\alpha,\alpha/2;C(0,0;2)} + ||u_n||_{L^{\infty}_{x,t}(C(0,0;2))} \right) \\
            &\leq 2 C\left(1+\frac{1}{r^{2+\alpha}}\right)\left( ||f||_{\alpha,\alpha/2;C(0,0;2)} + ||u||_{L^{\infty}_{x,t}(C(0,0;2))} \right) \\
            &= 2C\left(1+\frac{1}{r^{2+\alpha}}\right)\left( ||f||_{\alpha,\alpha/2;C(0,0;1)} + ||u||_{L^{\infty}_{x,t}(C(0,0;1))} \right)
    \end{align*}
    for all $n$ large enough. Take the limit $n \to +\infty$ to conclude.
\end{proof}

\begin{theorem}[Interior Schauder Estimates for the Heat Equation] \label{interior schauder estimates}
Assume that $u \in C^{1+\alpha,\alpha/2}(C(x,t;r))$ is a solution to the linear heat equation
\begin{equation}
    u_t - L_0 u = f(x,t), \quad (x,t) \in C(x,t;r), 
\end{equation}
where $f \in C^{\alpha,\alpha/2}_{x,t}(C(x,t;r))$ and $L_0$ is as in (\ref{eq: L_0 constant coefficient parabolic}).

There exists $C = C(n, \alpha, L_0)$ (independent of $x,t,r$) for which
\begin{align*}
    ||u||_{2+\alpha,1+\alpha/2;C(x,t;r/2)} &\leq C\left(1+\frac{1}{r^{2+\alpha}}\right)\left( ||f||_{\alpha,\alpha/2;C(x,t;r)} + ||u||_{L^{\infty}_{x,t}(C(x,t;r))} \right).
\end{align*}
\end{theorem}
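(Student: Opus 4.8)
The plan is to deduce this from the already-proved Corollary~\ref{cor:interior schauder estimates with smooth forcing} (Schauder with a \emph{smooth} forcing term, which only needs $u\in L^\infty$) by mollifying $u$ in space and time. Convolving with an approximate identity produces a smooth forcing term while destroying neither the $L^\infty$ bound on $u$ nor the parabolic Hölder seminorm of $f$, so the a priori $C^{2+\alpha,1+\alpha/2}$ hypothesis of Corollary~\ref{interior schauder estimates, ver.1} can be traded for the weaker $C^{1+\alpha,\alpha/2}$ one by passing to the limit.

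\textbf{Setup and mollification.} After translation and parabolic rescaling it suffices to treat $C(x,t;r)=C(0,0;1)$; write $C_\rho=C(0,0;\rho)$. Fix $\phi\in C^\infty_c(\mathbb R^{n+1})$ with $\operatorname{supp}\phi\subset\{(z',z_{n+1}):|z'|<\tfrac12,\ \tfrac12<z_{n+1}<1\}$ (so that mollification only looks backward in time), $\phi\ge 0$, $\int\phi=1$, put $\phi_\varepsilon(z)=\varepsilon^{-(n+1)}\phi(z/\varepsilon)$ and $u_\varepsilon=u*_{x,t}\phi_\varepsilon$. For $\varepsilon$ small, $u_\varepsilon$ is defined and smooth on $C_{\rho_\varepsilon}$ with $\rho_\varepsilon=1-\sqrt\varepsilon$ (the backward support of $\phi$ makes $u_\varepsilon$ available up to the top of the cylinder, and a short computation shows $C_{\rho_\varepsilon}$ lies inside the domain of $u_\varepsilon$), and since $L_0$ has constant coefficients, $u_\varepsilon$ is a classical—hence distributional—solution of $\partial_t u_\varepsilon-L_0u_\varepsilon=(u_t-L_0u)*_{x,t}\phi_\varepsilon=f*_{x,t}\phi_\varepsilon=:f_\varepsilon$ there, with $f_\varepsilon\in C^\infty$. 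Moreover $\|f_\varepsilon\|_{\alpha,\alpha/2;C_{\rho_\varepsilon}}\le\|f\|_{\alpha,\alpha/2;C_1}$ and $\|u_\varepsilon\|_{L^\infty(C_{\rho_\varepsilon})}\le\|u\|_{L^\infty(C_1)}$, both finite by hypothesis.

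\textbf{Applying Corollary~\ref{cor:interior schauder estimates with smooth forcing} and passing to the limit.} Applying that corollary to $u_\varepsilon$ on $C_{\rho_\varepsilon}$ gives, for all small $\varepsilon$,
\begin{equation*}
\|u_\varepsilon\|_{2+\alpha,1+\alpha/2;C_{\rho_\varepsilon/2}}\le C\big(1+\rho_\varepsilon^{-2-\alpha}\big)\big(\|f_\varepsilon\|_{\alpha,\alpha/2;C_{\rho_\varepsilon}}+\|u_\varepsilon\|_{L^\infty(C_{\rho_\varepsilon})}\big)\le 2^{2+\alpha}C\big(\|f\|_{\alpha,\alpha/2;C_1}+\|u\|_{L^\infty(C_1)}\big),
\end{equation*}
with $C=C(n,\alpha,L_0)$. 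Hence $(u_\varepsilon)$ is uniformly bounded in $C^{2+\alpha,1+\alpha/2}(C_\sigma)$ for every $\sigma<1/2$. Since $u\in C^{1+\alpha,\alpha/2}(C_1)$ is continuous, $u_\varepsilon\to u$ locally uniformly; combined with the uniform $C^{2+\alpha,1+\alpha/2}$ bound and Arzela-Ascoli, a diagonal subsequence converges in $C^{2,1}_{x,t}$ on each $C_\sigma$ to $u$, so $u\in C^{2,1}(C_{1/2})$, and $D^2_xu,\partial_tu$ inherit the uniform Hölder seminorm bound by lower semicontinuity of the seminorm under uniform convergence. Undoing the rescaling produces the stated estimate on $C(x,t;r/2)$ with constant $C(1+r^{-2-\alpha})$ and the norms of $f,u$ taken over $C(x,t;r)$.

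\textbf{Main obstacle.} Everything analytic is immediate from Corollary~\ref{cor:interior schauder estimates with smooth forcing} together with the elementary stability of $L^\infty$ and parabolic Hölder seminorms under convolution with an approximate identity; the only point requiring care is the bookkeeping of the shrinking cylinders $C_{\rho_\varepsilon}$ as $\varepsilon\to0$—verifying that $u_\varepsilon$ is genuinely defined on $C_{\rho_\varepsilon}$ (the reason for choosing $\phi$ supported strictly in the past), that $\bigcup_\varepsilon C_{\rho_\varepsilon/2}=C_{1/2}$, and that the limiting constant remains of the form $C(1+r^{-2-\alpha})$ after rescaling.
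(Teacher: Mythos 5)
Your proof is correct, and it takes a genuinely different route from the paper's. The paper proves this theorem as a Krylov-type localization of the global estimate for compactly supported solutions (Corollary~\ref{cor:global schauder estimates for compactly supported solutions}): one cuts off $u$ with a nested family of smooth $\chi_l$, applies the global estimate to $u\chi_l$, and iterates on shrinking cylinders as in Theorem~\ref{thm:Interior L^p Estimates for the Heat Equation}. In that route the a priori hypothesis $u\in C^{1+\alpha,\alpha/2}$ is used in an essential way, since the new forcing term $u\partial_t\chi_l+u\Delta\chi_l+2\nabla u\cdot\nabla\chi_l$ must itself lie in $C^{\alpha,\alpha/2}$. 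You instead regularize $u$ by convolution against a backward-in-time approximate identity, so that $u_\varepsilon$ is automatically smooth and its forcing $f_\varepsilon=f*\phi_\varepsilon$ is smooth, and then invoke the already-available local estimate with smooth forcing (Corollary~\ref{cor:interior schauder estimates with smooth forcing}), which only needs $u_\varepsilon\in L^\infty$. This buys two things: it bypasses the cut-off iteration entirely, and it makes visible that the $C^{1+\alpha,\alpha/2}$ hypothesis on $u$ is inessential here (only $u\in L^\infty$ is used, since locally uniform convergence of $u_\varepsilon\to u$ on the open cylinder already follows from interior hypoellipticity), a point the paper makes only indirectly through $L^p$ estimates in Remark~\ref{weakening of interior Schauder estimates}. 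The only places that want care — and you flag them — are (i) verifying the domain bookkeeping for $\rho_\varepsilon$ so that $u_\varepsilon$ is defined and smooth on $\overline{C_{\rho_\varepsilon}}$ thanks to the one-sided support of $\phi$, (ii) checking that the mollification does not increase $\|u\|_{L^\infty}$ or $\|f\|_{\alpha,\alpha/2}$, which is immediate from $\int\phi_\varepsilon=1$, and (iii) the rescaling computation at the end recovering $C\bigl(1+r^{-2-\alpha}\bigr)$, which is the standard parabolic scaling of the Hölder norms; all three are fine as written.
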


\begin{remark}\label{weakening of interior Schauder estimates}
    If $L_0 = \Delta$, the a priori regularity on $u, \nabla_x u$ can be weakened. Thanks to Interior $L^p$-estimates (Theorem \ref{thm:Interior L^p Estimates for the Heat Equation}) and Parabolic Sobolev Embedding (Corollary \ref{cor: local parabolic sobolev embedding}), it suffices that $u \in L^p(C(x,t;r)), \nabla_x u \in L^{p}(C(x,t;r))$ for $p$ large enough (depending on $\alpha$) to get $C^{1+\alpha,\alpha/2}$-regularity on a smaller cylinder $C(x,t;r/4)$. Then one can apply the interior Schauder estimates to get $C^{2+\alpha,1+\alpha/2}$-regularity on $C(x,t;r/8)$.
\end{remark}

\begin{proof}
    It is a localization of the Global Schauder Estimates for compactly supported solutions (Corollary \ref{cor:global schauder estimates for compactly supported solutions}), done exactly using the localization argument in Krylov's book (\cite[Theorem 8.11.1]{krylov}). 
\end{proof}

The method of 'freezing' the coefficients allows to generalize Corollary \ref{interior schauder estimates, ver.1} to more general parabolic operators.

\begin{theorem}[Boundary Schauder Estimates for parabolic equations on half-cylinders] \label{boundary schauder estimates, ver.1}
Let $\mu \in (0,1), \rho  > 0,x_0 \in \mathbb R^n, t_0 \in \mathbb R$. Consider 
$$
    C^+(x_0,t_0;\rho) := \{(y,s) \in \mathbb R^n \times \mathbb R: |x_0-y| < \mu, y_n > (x_0)_n , t-\rho^2 < s < t\}
    $$
a half-cylinder and a parabolic operator
    $$
    L = c(x,t) + \sum_{i=1}^n b^i(x,t) D_{x_i} + \sum_{i,j=1}^n a^{i,j}(x,t) D_{x_i}D_{x_j}
    $$
    defined on $\overline{C^+(x_0,t_0;\rho) }$ and satisfying
    \begin{align*}
        ||a,b,c||_{\mu,\mu/2;\overline{C^+(x_0,t_0;\rho) }} &\leq \Lambda, \quad \Lambda > 0, \\
        \Lambda^{-1}|\xi|^2 \leq \sum_{i,j=1}^n a^{i,j}(x,t)\xi_i\xi_j &\leq \Lambda |\xi|^2, \quad (x,t) \in C^+(x_0,t_0;\rho) , \xi \in \mathbb R^n.
    \end{align*}

Assume that $u \in C^{2+\mu,1+\mu/2}(C^+(x_0,t_0;\rho))$ is a classical solution to the linear parabolic equation
\begin{align*}
    u_t - L  u &= f(x,t), \quad (x,t) \in C^+(x_0,t_0;\rho),  \\
    u &= u_0(x,t), \quad (x,t) \in C^+(x_0,t_0;\rho) \cap \{x_n = (x_0)_n \},
\end{align*}
where $f \in C^{\mu,\mu/2}_{x,t}(C^+(x_0,t_0;\rho) )$, $u_0 \in C^{2+\mu,1+\mu/2}_{x,t}(C^+(x_0,t_0;\rho) )$.

There exists $C = C(n, \mu, L)$ (independent of $x_0,t_0,\rho$) and $\kappa = \kappa(n,p,\mu,L)$ for which
\begin{align}
||D^2_xu||_{\mu,\mu/2;C^{+}_{\rho/2}} + ||\partial_t u||_{\mu,\mu/2;C^{+}_{\rho/2}} &\leq C \bigg( ||f||_{\mu,\mu/2;C^{+}_{\rho}} + ||\partial_t u_0||_{\mu,\mu/2;C^{+}_{\rho}} \notag \\
&+\sum_{i,j \neq n} ||D_{x_i}D_{x_j} u_0||_{\mu,\mu/2;C^{+}_{\rho}}  + \rho^{-2-\mu}||u||_{L^{\infty}_{x,t}(C^{+}_{\rho}) } \bigg) \label{boundary schauder, norm}
\end{align}
whenever $\rho \leq \kappa$.
\end{theorem}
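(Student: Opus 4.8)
The plan is to run the classical freezing-of-coefficients argument, using as black boxes the constant-coefficient half-cylinder Schauder estimate of Corollary~\ref{interior schauder estimates, ver.1} and the nested-cylinder localization already employed in the proof of Theorem~\ref{thm:Interior L^p Estimates for the Heat Equation} (and, for Hölder norms, in \cite[Theorem 8.11.1]{krylov}).

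\emph{Reduction to zero boundary data.} Since $u_0$ enters only through its trace on the flat face $\{x_n = (x_0)_n\}$, replace $u_0$ by its $x_n$-independent extension $\bar u_0(x',x_n,t):=u_0(x',(x_0)_n,t)$. Then $\partial_{x_n}\bar u_0=D_{x_i}D_{x_n}\bar u_0=D_{x_n}^2\bar u_0=0$, so $L\bar u_0= c\,\bar u_0+\sum_{i\neq n}b^iD_{x_i}\bar u_0+\sum_{i,j\neq n}a^{ij}D_{x_i}D_{x_j}\bar u_0$, and the Hölder norms of $\bar u_0$ and its derivatives appearing here are controlled by $\|\partial_t u_0\|_{\mu,\mu/2}$, by $\|D_{x_i}D_{x_j}u_0\|_{\mu,\mu/2}$ ($i,j\neq n$), and by lower-order norms of $u_0$; the last are absorbed by standard Hölder interpolation on a cylinder of size $\rho$ into a small multiple of the second-order norms plus $\rho^{-2}\|u_0\|_{L^\infty}\le\rho^{-2}\|u\|_{L^\infty}$, using $u=u_0$ on the flat face. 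Putting $w=u-\bar u_0$, we get $w=0$ on the flat face and
$$w_t-Lw=g,\qquad g:=f-\partial_t\bar u_0+L\bar u_0,$$
with $\|g\|_{\mu,\mu/2;C^{+}_{\rho}}$ dominated by the right-hand side of (\ref{boundary schauder, norm}).

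\emph{Freezing and iteration.} Let $L_0=\sum_{i,j}a^{ij}(x_0,t_0)D_{x_i}D_{x_j}$ be the principal part of $L$ frozen at $(x_0,t_0)$; it is constant-coefficient and $\Lambda$-elliptic, so Corollary~\ref{interior schauder estimates, ver.1} applies to it on half-cylinders, with a constant which, for operators obeying the given ellipticity and coefficient bounds, may be taken to depend only on $n$, $\mu$, $\Lambda$. Writing $w_t-L_0w=g+(L-L_0)w$ with zero data on the flat face, the product rule for Hölder norms together with $\sup_{C^{+}_{\rho}}|a^{ij}(\cdot)-a^{ij}(x_0,t_0)|\le\Lambda\rho^\mu$ and $[a^{ij}]_{\mu,\mu/2}\le\Lambda$ give
$$\|(L-L_0)w\|_{\mu,\mu/2}\lesssim_{n,\Lambda}\rho^\mu\|D_x^2w\|_{\mu,\mu/2}+\|D_x^2w\|_{L^\infty}+(\text{lower-order terms}),$$
the last two groups being absorbable by interpolation at the cost of a factor $\rho^{-2-\mu}\|w\|_{L^\infty}$. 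Feeding this into Corollary~\ref{interior schauder estimates, ver.1} and running the localization over a nested sequence of half-cylinders with geometrically shrinking radii exactly as in the proof of Theorem~\ref{thm:Interior L^p Estimates for the Heat Equation} (resp.\ \cite{krylov}), the contribution $C\Lambda\rho^\mu\|D^2_xw\|_{\mu,\mu/2}$ gets reabsorbed provided $\rho\le\kappa(n,\mu,\Lambda)$, and one obtains
$$\|D^2_xw\|_{\mu,\mu/2;C^{+}_{\rho/2}}+\|\partial_tw\|_{\mu,\mu/2;C^{+}_{\rho/2}}\le C\left(\|g\|_{\mu,\mu/2;C^{+}_{\rho}}+\rho^{-2-\mu}\|w\|_{L^\infty(C^{+}_{\rho})}\right).$$
Undoing the substitution via $u=w+\bar u_0$, using $\|w\|_{L^\infty}\lesssim\|u\|_{L^\infty}$ and the Step~1 bounds on $\bar u_0$, yields (\ref{boundary schauder, norm}).

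The main obstacle is making the absorption of $(L-L_0)w$ rigorous: a naive absorption is impossible because Corollary~\ref{interior schauder estimates, ver.1} bounds seminorms on $C^{+}_{\rho/2}$ by seminorms on the larger $C^{+}_{\rho}$, so the ``small'' term $C\Lambda\rho^\mu\|D^2_xw\|_{\mu,\mu/2;C^{+}_{\rho}}$ sits on the wrong cylinder. This is precisely what the Krylov nested-cylinder localization is designed to handle — mirroring the geometric-series iteration carried out in detail for the $L^p$ case in the proof of Theorem~\ref{thm:Interior L^p Estimates for the Heat Equation} — and tracking the $\rho$-powers through that iteration is what produces both the stated form of the estimate and the smallness threshold $\kappa$.
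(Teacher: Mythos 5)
Your argument follows the paper's own freezing-of-coefficients route (reduce to zero boundary data, invoke the constant-coefficient half-cylinder Schauder estimate of Corollary~\ref{interior schauder estimates, ver.1}, absorb the perturbation via Krylov's nested-cylinder localization), so the two proofs are essentially identical in strategy. If anything your write-up is more careful on two points the paper glosses over: the $x_n$-independent extension $\bar u_0$ is what actually produces a right-hand side containing only $\partial_t u_0$ and $D_{x_i}D_{x_j}u_0$, $i,j\neq n$, as in the stated estimate (the paper's bare replacement $f\mapsto f-\partial_t u_0+Lu_0$ would bring in the normal second derivatives of $u_0$ through $Lu_0$), and you correctly flag that the perturbation term $C\Lambda\rho^\mu\|D^2_xw\|_{\mu,\mu/2;C^+_\rho}$ sits on the larger half-cylinder, so the absorption genuinely requires the nested-cylinder iteration rather than just taking $\kappa$ small as the paper's closing line suggests.
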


\begin{proof}
Without loss of generality, one can assume $u_0 = 0$ by replacing $u$ by $u + u_0$ and $f$ by $f + \partial_t u_0 - L u_0$. One can also assume that $b^i = c = 0$. Indeed, extend the coefficients $c$, $b^i$, $a^{i,j}$, $a^{n,n}$, $i,j \neq n$, by odd reflection, the coefficients $b^n, a^{i,n}, a^{n,i}$, $i \neq n$, by even reflection and $u$ by odd reflection to the whole cylinder $C(x_0,t_0;\rho)$ (Proposition \ref{prop:extension by reflection}), and use interpolation to obtain
\begin{align}
    ||D^2_xu||_{L^{\infty}(C^+)} + ||\partial_t u||_{L^{\infty}(C^+)} 
    &\lesssim \varepsilon [D^2_x u]_{\mu,\mu/2;C^+} + \varepsilon [\partial_t u]_{\mu,\mu/2;C^+}  + ||u||_{L_{x,t}^{\infty}(C^+)} \label{interpolation and cut-off, lower order terms}
\end{align}
for $\varepsilon > 0$ arbitrarily small. We can proceed similarly to estimate terms $c(x,t) u$ and $b^i(x,t) \partial_{x_i} u$ coming from $Lu$.

First, we observe that (\ref{boundary schauder, norm}) is true if $L$ has constant coefficients by Corollary \ref{interior schauder estimates, ver.1}. For general parabolic operators $L$, assume without loss of generality that $x = t = 0$, $\rho = 1$. Fix $(x_0,t_0) \in C^+_{1/2}$. Write
    $$
L_0 = \sum_{i,j=1}^n a^{i,j}(x_0,t_0) D_{x_i}D_{x_j}.
$$
    One estimates 
\begin{align*}
        ||u||_{2+\mu,1+\mu/2;C^+_{1/2}} 
        &\lesssim C\left( ||(\partial_t - L_0)u||_{\mu,\mu/2;C^+_{1}} + ||u||_{L^{\infty}_{x,t}(C^+_{1})} \right) 
\end{align*}
    using the constant-coefficient case for $L = L_0$. Then
\begin{align*}
    ||(\partial_t - L_0)u||_{\mu,\mu/2;C^+_{1}} &\leq ||(\partial_t - L)u||_{\mu,\mu/2;C^+_{1}} \\
    &+ \bigg|\bigg| \sum_{i,j=1}^n \left[ a^{i,j}(x_0,t_0) - a^{i,j}(x,t) \right] D_{x_i}D_{x_j}u \bigg|\bigg|_{\mu,\mu/2;C^+_{1}}  \\
    &\lesssim  ||(\partial_t - L)u||_{\mu,\mu/2;C^+_{1}}  \\
    &+ \sum_{i,j=1}^n ||a^{i,j}(x_0,t_0) - a^{i,j}(x,t)||_{L_{x,t}^{\infty}(C^+_{1})} \cdot ||D^2_x u||_{\mu,\mu/2;C^+_{1}} \\ 
    &+ \sum_{i,j=1}^n ||a^{i,j}(x_0,t_0) - a^{i,j}(x,t)||_{\mu,\mu/2;C^+_{1}} \cdot ||D^2_x u||_{L_{x,t}^{\infty}(C^+_{1})} \\
    &\lesssim  ||(\partial_t - L)u||_{\mu,\mu/2;C^+_{1}} + \Lambda  \kappa^{\alpha} ||D^2_x u||_{\mu,\mu/2;C^+_{1}}  +  \Lambda ||D^2_x u||_{L^{\infty}_{x,t}(C^+_{1})}
\end{align*}
and we estimate $||D^2_x u||_{L^{\infty}(C^+_{1})}$ as in (\ref{interpolation and cut-off, lower order terms}). If $\kappa$ is small enough, the theorem is proved.
\end{proof}

 Finally, one obtains a boundary Schauder estimate for bounded domains with smooth boundary by flattening and covering the boundary.

\begin{theorem}[Boundary Schauder Estimates for parabolic equations on smooth domains] \label{boundary schauder smooth boundary}
    Assume that $u \in C^{2+\mu,1+\mu/2}_{x,t}(\Omega \times (0,T))$ is a classical solution to the linear parabolic equation
\begin{align*}
    u_t - L u &= f(x,t), \quad (x,t) \in \Omega \times (0,T), \\
    u &= u_0(x,t), \quad (x,t) \in \partial \Omega \times (0,T),
\end{align*}
where $f \in C^{\mu,\mu/2}_{x,t}(\Omega \times (0,T))$, $u_0 \in C^{2+\mu,1+\mu/2}_{x,t}(\Omega \times (0,T))$, $\Omega$ is open, bounded with $C^{2+\mu}$-boundary and $L$ is a parabolic operator satisfying the hypotheses from Theorem \ref{boundary schauder estimates, ver.1} on $\overline{\Omega} \times [0,T]$ for some constant $\Lambda > 0$.

For any interval $I = (\delta,T)$, $\delta > 0$, there exists $C =  C(n,\mu,\Lambda, \Omega, I)$ and $\varepsilon = \varepsilon(n,\mu,\Lambda, \Omega, I)$ such that the estimate
    \begin{align*}
         ||(|D^2_xu| + |\partial_t u|)||_{\mu,\mu/2;\Omega_{\varepsilon}  \times I} 
         &\leq C \bigg( ||f||_{\mu,\mu/2;\Omega \times (0,T)} + ||\partial_t u_0||_{\mu,\mu/2;\Omega \times (0,T)}\\
         &+ \sum_{i,j \neq n} ||D_{y_i}D_{y_j} (u_0)||_{\mu,\mu/2; \Omega \times (0,T)}  + ||u||_{L^{\infty}_{x,t}(\Omega \times (0,T)) } \bigg)
     \end{align*}
     holds on the $\varepsilon$-neighborhood
     $$
    \Omega_{\varepsilon} = \{x \in \Omega: \dist(x,\partial \Omega) < \varepsilon\}
    $$
    of the boundary. 

    For the heat operator $L = \Delta$, if $\Omega$ has $C^{\infty}$-boundary, then the assumption $u \in C^{2+\mu,1+\mu/2}_{x,t}(\Omega \times (0,T))$ can we weakened to $u \in C^0([0,T);C^1(\overline{\Omega}))$. 
\end{theorem}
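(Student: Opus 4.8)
The proof follows the standard localization-and-straightening scheme, reducing everything to the half-cylinder estimate of Theorem~\ref{boundary schauder estimates, ver.1}; the removal of the a priori $C^{2+\mu,1+\mu/2}$ assumption in the heat-operator case is then obtained by an approximation argument in the spirit of the last step of the proof of Theorem~\ref{thm:boundary parabolic sobolev embedding}, with one extra twist.

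\emph{Step 1: the estimate under the a priori assumption.} Since $\partial\Omega$ is compact and $C^{2+\mu}$, fix finitely many boundary charts $(U_l,\Phi_l)_{l=1}^N$, with $\Phi_l$ a $C^{2+\mu}$ diffeomorphism flattening $\partial\Omega\cap U_l$ onto a relatively open piece of $\{y_n=0\}$ and sending $\Omega\cap U_l$ into $\{y_n>0\}$, and choose $\varepsilon>0$ so small that $\Omega_\varepsilon$ is covered by the shrunk charts and that the parabolic half-cylinders built below fit inside $\Omega\times(0,T)$; one may take $\Phi_l$ to be adapted normal coordinates near $\partial\Omega$. Pushing the equation forward by $\Phi_l$ turns $u_t-Lu=f$ into $\tilde u_t-\tilde L_l\tilde u=\tilde f_l$ on a half-cylinder, where $\tilde L_l$ is uniformly parabolic with $C^{\mu,\mu/2}$ coefficients whose norms and ellipticity constant depend only on $L$, $\Omega$ and the charts (the coefficients involving $D\Phi_l\in C^{1+\mu}$ and $D^2\Phi_l\in C^\mu$); as the boundary datum for $\tilde L_l$ one feeds the boundary values of $u_0$ extended constantly in the normal coordinate, so that only $\partial_t u_0$ and the tangential second derivatives of $u_0$ enter the right-hand side. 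On each half-cylinder $C^+(y_0,t_0;\rho)$ with $\rho$ below the threshold $\kappa$ of Theorem~\ref{boundary schauder estimates, ver.1} and with $t_0-\rho^2>0$ (possible because we only want an estimate on $I=(\delta,T)$, $\delta>0$), Theorem~\ref{boundary schauder estimates, ver.1} produces the desired bound on the $\rho/2$-sub-half-cylinder; for near-boundary points of $\Omega_\varepsilon$ not lying on $\partial\Omega$ these same sub-half-cylinders, or Theorem~\ref{interior schauder estimates} on interior cylinders, do the job. Pulling back by $\Phi_l$ (which costs a constant controlled by the $C^{2+\mu}$ norms of the charts) and summing the finitely many local estimates, the lower-order and $L^\infty$ contributions being absorbed by interpolation (using $\|u_0\|_{L^\infty(\partial\Omega)}\le\|u\|_{L^\infty}$ from the Maximum Principle, Theorem~\ref{maximum principle}), gives the stated estimate on $\Omega_\varepsilon\times I$.

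\emph{Step 2: dropping the a priori regularity for $L=\Delta$ and $C^\infty$ boundary.} Here the charts $\Phi_l$ may be taken $C^\infty$. Extend $f$ to a $C^{\mu,\mu/2}(\mathbb R^{n+1})$ function and set $f_m=f*\rho_{1/m}$ (a space-time mollification): then $f_m\in C^\infty$, $[f_m]_{\mu,\mu/2}\le C[f]_{\mu,\mu/2}$ and $\|f_m\|_{L^\infty}\le\|f\|_{L^\infty}$ uniformly, while $f_m\to f$ locally uniformly, hence in $C^{\mu',\mu'/2}_{\mathrm{loc}}$ for every $\mu'<\mu$. Let $u_m$ be the solution of the heat Dirichlet problem on $\Omega\times(0,T)$ with forcing $f_m$ and with the \emph{same} lateral datum $u_0$ and initial datum $u(\cdot,0)$ as $u$ (order-$0$ compatible, since $u\in C^0([0,T)\times\overline\Omega)$ with $u|_{\partial\Omega}=u_0$); by the Maximum Principle and the comparison of the Dirichlet heat kernel with the Gaussian kernel from the proof of Theorem~\ref{thm:boundary parabolic sobolev embedding}, $\|u_m-u\|_{L^\infty(\Omega\times(0,T))}\le CT\|f_m-f\|_{L^\infty}\to 0$, so $u_m\to u$ locally uniformly. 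For $t$ bounded away from $0$, say on $\Omega\times(\delta/2,T)$, the data $(f_m,u_0,u_m(\cdot,\delta/2))$ are fully compatible at the corner $\partial\Omega\times\{\delta/2\}$ with $C^\infty$ forcing and $C^{2+\mu,1+\mu/2}$ lateral and initial values, so classical parabolic Schauder theory (e.g. \cite{ladyzhenskaia1968linear}, Ch.~IV, together with uniqueness from Theorem~\ref{maximum principle}) gives $u_m\in C^{2+\mu,1+\mu/2}(\overline\Omega\times[\delta/2,T])$; Step~1 applied to each $u_m$ on $\Omega\times(\delta/2,T)$ then bounds $\| |D^2_xu_m|+|\partial_tu_m| \|_{\mu,\mu/2;\Omega_\varepsilon\times I}$ by a constant independent of $m$. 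Finally, the uniform $C^{2+\mu,1+\mu/2}$ bound together with $u_m\to u$ locally uniformly forces, by Arzela--Ascoli, $u_m\to u$ in $C^{2+\mu'',1+\mu''/2}_{\mathrm{loc}}(\Omega_\varepsilon\times I)$ for every $\mu''<\mu$; hence $u\in C^{2+\mu,1+\mu/2}(\Omega_\varepsilon\times I)$, and, the Hölder seminorms being lower semicontinuous under local uniform convergence, $u$ satisfies the same estimate.

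\emph{Main obstacle.} Step~1 is routine once one keeps track of the regularity of the straightened operator and notes that a normal-coordinate chart lets only tangential and time derivatives of $u_0$ appear; the restriction to $I=(\delta,T)$ is there only to keep the parabolic half-cylinders away from $t=0$. The real difficulty is Step~2: since $C^\infty$ functions are \emph{not} dense in $C^{\mu,\mu/2}$, the mollified forcings $f_m$ do not converge to $f$ in $C^{\mu,\mu/2}$, so the plain Cauchy-sequence argument used in Theorem~\ref{thm:boundary parabolic sobolev embedding} is unavailable; one must instead combine a \emph{uniform} $C^{2+\mu,1+\mu/2}$ bound on the approximants with an Arzela--Ascoli compactness step and lower semicontinuity of the Hölder seminorms, and one must separately invoke classical boundary regularity (away from $t=0$) to know that the approximants with smooth forcing are themselves $C^{2+\mu,1+\mu/2}$ up to $\partial\Omega$.
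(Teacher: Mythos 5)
Your Step~1 is the paper's Step~1: both treat the a~priori regular case as a routine localization–straightening reduction to the half-cylinder estimate of Theorem~\ref{boundary schauder estimates, ver.1}, and your observation that normal coordinates let only $\partial_t u_0$ and tangential second derivatives of $u_0$ appear matches the form of the bound.

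Your Step~2 diverges from the paper's, and it actually repairs a genuine gap there. The paper claims to construct $F_n\in C^\infty$ vanishing for small $t$ with $\|F_n-F\|_{\mu,\mu/2;\overline\Omega\times[0,T]}\to 0$ by McShane extension plus mollification, and then runs a Cauchy-sequence argument on the approximate solutions $v_n$; but as you correctly note, mollification does not converge in $C^{\mu,\mu/2}$ (smooth functions are not dense in Hölder spaces), so the paper's bound on $\|v_n-v_m\|_{2+\mu,1+\mu/2}$ in terms of $\|F_n-F_m\|_{\mu,\mu/2}$ does not close. Your replacement — uniform $C^{2+\mu,1+\mu/2}$ bounds on the approximants via the a~priori estimate, followed by Arzelà–Ascoli to extract local $C^{2+\mu'',1+\mu''/2}$ convergence to $u$, plus lower semicontinuity of the top Hölder seminorm under uniform convergence — is the standard and correct way to conclude, and it is strictly stronger than the paper's argument.

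However, your Step~2 has its own gap, precisely at the point where you assert that the approximants have the required a~priori regularity. You write that ``the data $(f_m,u_0,u_m(\cdot,\delta/2))$ are fully compatible at the corner $\partial\Omega\times\{\delta/2\}$ with $C^\infty$ forcing and $C^{2+\mu,1+\mu/2}$ lateral and initial values,'' and from this deduce $u_m\in C^{2+\mu,1+\mu/2}(\overline\Omega\times[\delta/2,T])$. This is circular: to know that $u_m(\cdot,\delta/2)\in C^{2+\mu}(\overline\Omega)$ (rather than merely $C^0(\overline\Omega)\cap C^\infty(\Omega)$), and to verify the first-order compatibility $\partial_t u_0=\Delta u_m+f_m$ on $\partial\Omega\times\{\delta/2\}$, you already need $C^{2+\mu}$ regularity of $u_m$ up to the lateral boundary at time $\delta/2$ — exactly what you are trying to establish. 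The paper avoids this by the time cut-off $v=u\chi(t)$ reducing to zero lateral \emph{and} initial data and a forcing vanishing for $t\leq\delta/4$, so that all compatibility conditions at the corner are trivially satisfied and \cite[Chapter~7.1.3, Theorem~7]{evans10} gives $C^\infty(\overline\Omega\times[0,T])$ regularity of the approximant. Your argument should incorporate the same cut-off device (or a direct appeal to a local-in-time boundary regularity result from LSU that avoids any compatibility at the initial time of the cylinder), after which your uniform-bound-and-compactness step goes through as stated. In short: you fix the paper's density gap, but introduce a compatibility gap that the paper's own cut-off construction was designed to avoid; combining your limiting argument with the paper's cut-off and smooth approximant would yield a fully correct proof.
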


\begin{remark}
    To obtain smoothness at time $t = 0$, one needs to assume some additional compatibility condition between the initial and boundary data $u_0$ and the forcing term $f$. See for example \cite[Chapter 7.1.3, Theorem 7]{evans10}.
\end{remark}

\begin{proof}
    The theorem is a direct consequence of Theorem \ref{boundary schauder estimates, ver.1} by locally straightening the boundary. 

    For the heat operator $L = \Delta$, replacing $u$ with $u+u_0$ and $f$ by $f - (\partial_t u_0 - \Delta u_0)$, one can assume that $u_0 = 0$. 
    
    Letting $\chi(t) \in C^{\infty}_c(\mathbb R;[0,1])$ be a cut-off which is $1$ on $I = (\delta,2T)$, $0$ on $(-\infty,\delta/2)$, it follows that $v = u \cdot \chi(t)$ solves
\begin{align*}
    v_t - L v &= f \cdot \chi(t) + u \cdot \chi'(t), \quad (x,t) \in \Omega \times (0,T), \\
    v &= 0, \quad (x,t) \in \partial \Omega \times (0,T) \cup \Omega \times \{0\},
\end{align*}
where $u \cdot \chi'(t) \in C^0([0,T) \times \overline{\Omega})$. Applying Corollary \ref{cor: local parabolic sobolev embedding} and Theorem \ref{thm:boundary parabolic sobolev embedding} for $0 < T' < T$, it follows that $v \in C^{1+\mu,\mu/2}(\overline{\Omega} \times [0,T])$ with 
$$
||u||_{C^{1+\mu,\mu/2}(\overline{\Omega} \times [\delta,T])} 
 \leq ||v||_{C^{1+\mu,\mu/2}(\overline{\Omega} \times [0,T])} \leq C(n,\mu,\Omega,T,\delta) \left( ||f||_{L^{\infty}(\overline{\Omega} \times [0,T])} + ||u||_{L^{\infty}} \right)
$$
for any $\delta \in (0,T)$. Hence, $F = f \cdot \chi(t) + u \cdot \chi'(t) \in C^{\mu,\mu/2}_{x,t}(\overline{\Omega} \times [0,T])$ with
$$
||F||_{C^{1+\mu,\mu/2}(\overline{\Omega} \times [0,T])} 
 \leq  C(n,\mu,\Omega,T,\delta) \left( ||f||_{C^{\mu,\mu/2}_{x,t}(\overline{\Omega} \times [0,T])} + ||u||_{L^{\infty}(\overline{\Omega} \times [0,T])} \right)
$$
as well. Let $F_n \in C^{\infty}(\overline{\Omega} \times [0,T])$, $F_n(x,t) = 0$ for $t \leq \delta/4$ converge to $F$ in $C^{\mu,\mu/2}_{x,t}(\overline{\Omega} \times [0,T])$. Such an approximation can be constructed by first extending $F_n$ as a global $C^{\mu,\mu/2}(\mathbb R^{n+1})$ function
$$
F_n(x,t) = \inf\{F_n(y,s) + [F_n]_{C^{\mu,\mu/2}_{x,t}(\overline{\Omega} \times [0,T])} \cdot |(x,t) - (y,s)|^{\mu}: (y,s) \in \overline{\Omega} \times [0,T]\},
$$
and then using a convolution with an approximate identity and a smooth cut-off to remove small times. Here, $|(x,t) - (y,s)| = |x-y| + |s-t|^{1/2}$ denotes the parabolic distance.

Let $v_n$ solve $(\partial_t  - \Delta)v_n= F_n$ on $\Omega \times (0,T)$, $v_n = 0$ on $\partial \Omega \times (0,T) \cup \Omega \times \{0\}$. Then $v_n \to v$ uniformly (see e.g. the proof Theorem \ref{thm:boundary parabolic sobolev embedding}) and $v_n \in C^{\infty}(\overline{\Omega} \times [0,T])$ (\cite[Chapter 7.1.3, Theorem 7]{evans10}). It follows from the boundary case with a priori estimates and the interior case (Theorem \ref{interior schauder estimates}) that each $v_n$, $v_n-v_m$ is $C^{2+\mu,1+\mu/2}$ with norms
\begin{align}
    ||v_n||_{C^{2+\mu,1+\mu/2}(\overline{\Omega} \times [\delta,T])} &\leq  
  C(n,\mu,\Omega,T,\delta) \left( ||F_n||_{C^{\mu,\mu/2}_{x,t}(\overline{\Omega} \times [0,T])} + ||v_n||_{L^{\infty}(\overline{\Omega} \times [0,T])} \right) \notag \\
  &\leq C(n,\mu,\Omega,T,\delta) \left( ||F||_{C^{\mu,\mu/2}_{x,t}(\overline{\Omega} \times [0,T])} + ||v||_{L^{\infty}(\overline{\Omega} \times [0,T])} \right) \notag \\
  &\leq C(n,\mu,\Omega,T,\delta) \left( ||f||_{C^{\mu,\mu/2}_{x,t}(\overline{\Omega} \times [0,T])} + ||u||_{L^{\infty}(\overline{\Omega} \times [0,T])} \right) \label{eq:boundary schauder estimate last inequality}
\end{align}
for all $n$ large enough, and similarly for $v_n-v_m$. In the limit, we obtain that $v \in C^{2+\mu,1+\mu/2}(\overline{\Omega} \times [\delta,T])$ with
$$
||u||_{C^{2+\mu,1+\mu/2}(\overline{\Omega} \times [\delta,T])} = ||v||_{C^{2+\mu,1+\mu/2}(\overline{\Omega} \times [\delta,T])}
$$
satisfying (\ref{eq:boundary schauder estimate last inequality}).
\end{proof}

\section{Energy Concentration and Bubbles}

In this section, we follow the argument of Struwe (\cite[Chapter 3, Theorem 5.6]{struwe2008variational}), which holds for a wider class of evolution of harmonic maps, and proves that a finite-energy, smooth solution $v \in C^{\infty}(B^2 \times (0,T))$ of 
\begin{align*}
    v_t = \Delta v + |\nabla v|^2v, \quad (x,t) \in B^2  \times (0,T),
\end{align*}
blows-up in finite-time (in the sense that one cannot extend $v$ to a $C^{\infty}(B^2 \times (0,T])$ function) if and only if there is a concentration of energy at a finite number of points from $B^2$ (Theorem \ref{thm:smoothness around all but finitely many points}). In the $k$-equivariant setting, such a concentration can only happen at the origin (and $v$ is actually smooth at the boundary at the blow-up time $T$, Proposition \ref{prop:smoothness at the boundary}). Moreover, along some appropriate sequences converging to a point of concentration $(x,T)$, one can extract a non-constant harmonic map (called 'bubble') from $v$ and even grab all the bubbles at the same time (Theorem \ref{thm: full bubble decomposition}), yielding the so-called bubble-tree decomposition.

We note that Struwe's work concerns only manifolds without boundary, which is not the case here. As his arguments are essentially local, we show that one can still deduce a similar result in the interior of the ball, without assuming any kind of regularity at the boundary (the analog result with a boundary condition which does not depend on time has been proved in \cite{ChangStruweAnalog}) and only assuming finiteness of the energy. The main difference in our setting is that the energy $E(v(\cdot,t))$ is not necessarily decreasing in time, so Struwe's argument needs to be adapted slightly. 

We will still deduce that under $k$-equivariance symmetry and smooth boundary data, no singularity can occur at the boundary and the solution is actually smooth at the boundary at the time of blow-up (Proposition \ref{prop:smoothness at the boundary}). 

For $U \subset  B^2$, let
    $$
 E(v(t);U) = \frac{1}{2}\int_{U} |\nabla v|^2 dx
    $$
be the local energy of $v$ on $U$. 

\begin{lemma}\label{lemma:no concentration of energy away origin, k equivariant}
    Let $v(x,t) \in C^{2,1}_{x,t}(B^2 \times (0,T))$ be a finite-energy and $k$-equivariant solution of (\ref{heat map flow}) on $[0,T)$.
    For any $x_0 \in \overline{B^2} \setminus \{0\}$, for any $\varepsilon > 0$, there is $\delta > 0$ for which $0 < R < \delta$ implies
    $$
    \limsup_{t \to T} E(v(t);B_R(x_0) \cap B^2) \leq \varepsilon.
    $$
    In other words, around any $x_0 \neq 0$, there is no concentration of energy as $t \to T$.
\end{lemma}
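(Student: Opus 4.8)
The plan is to reduce the statement to a pointwise bound on $|\nabla v|$ away from the origin that is already available from the $k$-equivariant structure. First I would pass to the inclination coordinate $h$ of $v$ via Proposition \ref{prop:equivalent fomrulation heat map flow}; recall that the Comparison Principle quoted there gives $|h(r,t)| \leq m\pi$ on $[0,1] \times (0,T)$ for some $m \in \mathbb N$, and that by (\ref{nabla v in terms of h}) one has
$$
|\nabla v(re^{i\theta},t)|^2 = h_r(r,t)^2 + \frac{k^2}{r^2}\sin(h(r,t))^2 .
$$

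The key input is Proposition \ref{blow up point is at zero k equivariant}: for every $\lambda \in (0,1]$ and every $t \in [T/2,T)$ one has $|h_r(r,t)| \leq \tilde C(k^2,h)/\lambda$ for all $r \in [\lambda,1]$. Combining this with $|\sin h| \leq 1$ and $r \geq \lambda$ yields a time-uniform bound
$$
|\nabla v(x,t)|^2 \leq \frac{C(h,k)}{\lambda^2}, \qquad \lambda \leq |x| \leq 1,\quad T/2 \leq t < T .
$$
Note this covers simultaneously the interior annuli $\{\lambda \leq |x| \leq 1\}$ and a boundary collar of $\partial B^2$, the estimate near $r=1$ being part of Proposition \ref{blow up point is at zero k equivariant} (where it is obtained from the boundary Schauder estimates of Theorem \ref{boundary schauder smooth boundary}).

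Given $x_0 \in \overline{B^2}\setminus\{0\}$ and $\varepsilon > 0$, I would set $\lambda := |x_0|/2 > 0$ and choose $\delta \in (0,|x_0|/2)$ small enough that $\tfrac{1}{2}\,C(h,k)\lambda^{-2}\,\pi\,\delta^2 \leq \varepsilon$. Then for $0 < R < \delta$ the ball $B_R(x_0)$ is contained in $\{|x| \geq \lambda\}$, so for every $t \in [T/2,T)$,
$$
E(v(t);B_R(x_0)\cap B^2) = \frac12\int_{B_R(x_0)\cap B^2} |\nabla v|^2\,dx \leq \frac12\cdot\frac{C(h,k)}{\lambda^2}\cdot \pi R^2 \leq \varepsilon ,
$$
and taking $\limsup_{t\to T}$ gives the claim.

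There is essentially no serious obstacle: the whole point of excluding the origin is that the singular weight $1/r^2$ in the energy density becomes harmless once $r$ is bounded below, while $h_r$ is already controlled by the interior and boundary parabolic estimates, so the local energy on a ball of radius $R$ is $O(R^2)$ uniformly in $t$. The only mild care needed is that Proposition \ref{blow up point is at zero k equivariant} is invoked both for interior annuli and in the collar near $\partial B^2$, but both regimes are covered by its statement, which holds for all $\lambda\in(0,1]$ and $t\in[T/2,T]$.
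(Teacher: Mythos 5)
Your argument is correct and would prove the lemma, but it takes a genuinely different (and heavier) route than the paper. The paper's proof is a soft counting argument: by radial symmetry of $|\nabla v|^2 = h_r^2 + k^2 r^{-2}\sin^2 h$, the local energy $E(v(t);B_R(x_0)\cap B^2)$ is the same for every point on the circle $|x|=|x_0|$; if concentration occurred at $x_0$, one could place $M$ disjoint copies $B_{R}(y_i)$ on that circle, each carrying energy $> \varepsilon$, and then $M\varepsilon > \sup_t E(v(t))$ for $M$ large, contradicting finite energy. That argument needs only the finite-energy hypothesis and the $k$-equivariant algebraic identity, and it works verbatim for $T = +\infty$ and under the interior regularity $C^{2,1}_{x,t}(B^2 \times (0,T))$ literally stated in the lemma. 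Your approach instead imports Proposition~\ref{blow up point is at zero k equivariant}, i.e.\ the Schauder-derived pointwise bound $|h_r| \lesssim 1/\lambda$ on $\{r\geq\lambda\}\times[T/2,T)$, to get $|\nabla v|^2 = O(\lambda^{-2})$ uniformly in $t$ and then integrates. This yields a quantitatively stronger conclusion (local energy $=O(R^2)$ uniformly), but it silently upgrades the hypothesis to the full regularity $C^0([0,T),C^1(\overline{B^2}))\cap C^\infty((0,T)\times\overline{B^2})$ needed for Proposition~\ref{blow up point is at zero k equivariant} (available here since $v$ solves (\ref{heat map flow}), by Corollary~\ref{cor: existence of solution for heat map flow} and uniqueness), and Proposition~\ref{blow up point is at zero k equivariant} is stated only for $T<+\infty$. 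Both proofs are fine in the paper's applications; the paper's is more elementary and self-contained at this point in the logical development.
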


\begin{proof}
    Let $E_0 = \sup_{t \in [0,T]} E(v(t);B^2) < +\infty$ be the maximal energy of $v(x,t)$.

    Assume for a contradiction that there is $x_0 \neq 0$ for which the lemma does not hold, i.e.,  there is $\varepsilon > 0$ and a decreasing sequence $(R_n)_{n > 0} \subset (0,1]$, $R_n \to 0$, for which
    $$
\limsup_{n \to +\infty} \limsup_{t \to T} E(v(t);B_{R_n}(x_0) \cap B^2) > \varepsilon.
    $$
    As $|\nabla v|^2 = h_r^2 + \frac{k^2}{r^2}\sin(h)^2$ is radial, $E(v(t);B_R(x_0) \cap B^2) = E(v(t);B_R(x_1) \cap B^2)$ whenever $|x_0| = |x_1|$.

    Choose $M$ so that $M\varepsilon > E_0$. Choose $N$ large enough so that $R_N$ is small enough and one can choose $M$ distinct points $y_1,...,y_M$ on the circle $|y| = |x_0|$ satisfying $B_{R_N}(y_i) \cap B_{R_N}(y_j) = \emptyset$ for all $i \neq j$.

    Then
    \begin{align*}
    E_0 = \sup_{t \in [0,T]} E(v(t);B^2) \geq E(v(t);B^2) &\geq  \sum_{k = 1}^M E(v(t);B_{R_N}(y_k) \cap B^2) \\
    &=  M \cdot E(v(t);B_{R_N}(y_1) \cap B^2) \\
    &\geq M \cdot E(v(t);B_{R_n}(y_1) \cap B^2), \quad t \in [0,T],
\end{align*}
holds for all $n \geq N$, since $(R_n)_{n \geq 0}$ is decreasing. Applying $\limsup \limits_{n \to +\infty} \limsup\limits_{t \to T}$ shows that $E_0 > M\varepsilon > E_0$, a contradiction.
\end{proof}

In the following Lemmas, unless specified otherwise, the solution $v(x,t)$ need not to be equivariant or defined at the boundary $\partial B_R(x_0)$ or at time $t \in \{0,T\}$. 

\begin{corollary} \label{cor:extension of v to time T}
     Let $v \in C^2(B_R(x_0) \times (0,T), S^2)$, $0 < R < 1-|x_0|$, solve
    \begin{align}
    v_t = \Delta v + |\nabla v|^2v, \quad (x,t) \in B_R(x_0)  \times (0,T), \label{local heat map flow}
    \end{align} with
    $$
E_0 := \sup_{0 < t < T}E(v(t), B_R(x_0)) < +\infty, \quad \nabla^2 v \in L^2(B_R(x_0) \times (0,T)).
    $$
    Then $v$ extends to $v \in C^{\infty}(B_{R}(x_0) \times (0,T], S^2)$.
\end{corollary}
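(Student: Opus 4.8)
The plan is to show that the energy bound and the $L^2$ bound on the Hessian force local smallness of the energy at time $T$ near every interior point, and then invoke Struwe's $\varepsilon$-regularity theorem to bootstrap to $C^\infty$ regularity up to time $T$. More precisely, I would argue as follows.

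\textbf{Step 1: Local smallness of the energy at $t=T$.} First I would show that for every $y_0 \in B_R(x_0)$ and every $\varepsilon > 0$ there is $\rho > 0$ (with $\overline{B_\rho(y_0)} \subset B_R(x_0)$) such that
\[
\limsup_{t \to T^-} E\big(v(t); B_\rho(y_0)\big) \le \varepsilon .
\]
The idea is the standard one from Struwe: because $v_t = \Delta v + |\nabla v|^2 v$ and $|\nabla v|^2 = -v\cdot\Delta v$ (since $|v|=1$), one controls $\partial_t E(v(t);B_\rho(y_0))$ by $\int |\nabla v||v_t| + $ boundary terms, and $\int_0^T\!\!\int |v_t|^2 \le E_0$ follows by testing the equation with $v_t$ (using $\nabla^2 v \in L^2$ to justify the integration by parts on $B_\rho(y_0) \times (0,T)$ after a cutoff). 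Hence $t \mapsto E(v(t);B_\rho(y_0))$ is, up to an absolutely continuous error going to $0$ with $\rho$, of bounded variation near $T$, and since $\int_0^T E(v(t);B_R(x_0))\,dt < \infty$ the local energy on small balls must have $\limsup_{t\to T}$ as small as we like once $\rho$ is small; this is exactly the mechanism of Lemma \ref{lemma:no concentration of energy away origin, k equivariant} / Lemma \ref{lemma:no concentration of energy away origin, general case}, now applied at an interior point of $B_R(x_0)$ rather than at a boundary point of $B^2$.

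\textbf{Step 2: $\varepsilon$-regularity.} Once the local energy at times close to $T$ is below the universal threshold $\varepsilon_0$ of the $\varepsilon$-regularity theorem for the harmonic map heat flow (Struwe, \cite{struwe2008variational}, Chapter 3; this is the same statement packaged in Theorem \ref{thm:smoothness around all but finitely many points}), one gets uniform bounds on $v$ and all its derivatives on a slightly smaller parabolic cylinder $B_{\rho/2}(y_0) \times (T-\tau, T)$. These bounds are uniform as $t \to T^-$, so by Arzelà–Ascoli (or simply by the uniform $C^k$ estimates) $v(\cdot,t)$ converges in $C^\infty_{loc}(B_{\rho/2}(y_0))$ as $t \to T^-$, which means $v$ extends to a $C^\infty$ function on $B_{\rho/2}(y_0) \times (0,T]$. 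Covering $B_R(x_0)$ by such balls yields $v \in C^\infty(B_R(x_0) \times (0,T], S^2)$.

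\textbf{Main obstacle.} The delicate point is Step 1: making the local energy monotonicity-type estimate rigorous at the level of regularity assumed here ($v \in C^2$ in space, together with $\nabla^2 v \in L^2$ and finite energy, but \emph{no} a priori control of $v_t$ in $L^2$ or of the energy being monotone). One has to justify the identity $\frac{d}{dt}E(v(t);B_\rho) = -\int_{B_\rho}|v_t|^2 + (\text{boundary flux through }\partial B_\rho)$ and extract $\int_0^T\!\!\int_{B_\rho}|v_t|^2 \lesssim E_0 + (\text{fluxes})$ by a localized energy identity with a spatial cutoff $\varphi$ supported in $B_R(x_0)$ — the cutoff produces terms $\int \nabla v\cdot\nabla\varphi\, (\ldots)$ that are bounded by the (finite) total energy, and this is precisely where the global hypothesis $E_0 < \infty$ enters and where the absence of a boundary condition on $\partial B_R(x_0)$ is harmless because we only ever work on compactly contained subcylinders. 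Once this localized energy inequality is in hand, the rest is the now-standard concentration-compactness dichotomy: either the local energy concentrates (excluded by choosing $\rho$ small, since only finitely many points can carry a quantum of energy) or it stays small and $\varepsilon$-regularity applies. I would cite \cite[Chapter 3, Theorem 5.1 and Theorem 5.6]{struwe2008variational} for the $\varepsilon$-regularity and the bubbling dichotomy, noting as the authors do that these arguments are purely local and thus transfer verbatim to $B_R(x_0)$.
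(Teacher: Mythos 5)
Your Step 1 claims that under the hypotheses of the corollary the local energy $\limsup_{t\to T^-}E\bigl(v(t);B_\rho(y_0)\bigr)$ can be made small by shrinking $\rho$ \emph{for every} $y_0\in B_R(x_0)$, and you attribute this to "exactly the mechanism" of Lemma \ref{lemma:no concentration of energy away origin, general case}. That lemma, however, yields non-concentration only \emph{outside a finite exceptional set}, and the argument you sketch cannot do better. Finiteness of $\int_0^T E(v(t);B_R(x_0))\,dt$ is automatic (it is $\le E_0 T$) and says nothing about the pointwise $\limsup$ at time $T$; and the localized energy inequality (\ref{estimate on v_t L^2 norm}), while it controls $\int_{t_1}^{t_2}\!\int_{B_{\rho/2}}|v_t|^2$ and the variation of the local energy, carries an error $\sim (t_2-t_1)\rho^{-2}E_0$ that does \emph{not} go to zero as $\rho\to 0$ at a fixed time scale — this is precisely why concentration at isolated points cannot be ruled out by a dissipation/bounded-variation argument alone. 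You are implicitly asserting that the exceptional set is empty, which is what the corollary is trying to establish; the hypothesis $\nabla^2 v\in L^2(B_R(x_0)\times(0,T))$ is exactly what forces it to be empty, yet in your proposal that hypothesis only "justifies an integration by parts," a peripheral role.

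The paper's proof is a direct citation of Struwe's Lemma 5.11 \cite[Chapter 3]{struwe2008variational}: from $\nabla^2 v\in L^2$ and the local energy inequality (\ref{estimate on v_t L^2 norm}) (Struwe's Lemma 5.9) one upgrades $|\nabla v|$ to $L^p_{loc}$ up to time $T$ for all $p$, and since $|v|=1$ is bounded one then runs the local Schauder iteration (Theorem \ref{interior schauder estimates}, Remark \ref{weakening of interior Schauder estimates}). This route never passes through an $\varepsilon$-smallness reduction. In the paper, the $\varepsilon$-smallness of the energy appears one level up, in the proof of Theorem \ref{thm:smoothness around all but finitely many points}, where Lemma \ref{lemma: struwe analog estimate} converts $\sup_t E(v(t);B_R(x_0))<\varepsilon_1$ into the hypothesis $\nabla^2 v\in L^2$ of the present corollary — not the other way around. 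To repair your proposal you would first have to prove that $\nabla^2 v\in L^2$ forces non-concentration at every interior point up to time $T$, and doing that amounts to reproving Struwe's Lemma 5.11 by a more roundabout route.
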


\begin{proof}
See \cite[Chapter 3, Lemma 5.11]{struwe2008variational}. We note that Lemma 5.11 requires a local estimate on the $L^2$-norm of $|v_t|$, as well as the energy at a later time, i.e., 
    \begin{align}
             \int_{t_1}^{t_2} \int_{B_{R/2}(x_0) } \frac{1}{2}  |v_t|^2dxdt  &+ E(v(t_2);B_{R/2}(x_0) )  \notag \\
             &\leq E(v(t_1);B_{R/2}(x_0)) + \frac{128(t_2-t_1)}{R^2}\sup_{t_1 < t < t_2} E(v(t);B_{R}(x_0)). \label{estimate on v_t L^2 norm}
    \end{align}
    This inequality is proved in \cite[Chapter 3, Lemma 5.9]{struwe2008variational} (the estimate for $|v_t|$ does not appear in the statement, but in the proof of the lemma). In both lemmas, the arguments are completely local and does not depend on the monotonicity of the energy, only on the finiteness. The idea of Lemma 5.11 is to use local energy inequalities to obtain sufficient $L^p$-integrability of $|\nabla v|$ around any point. As $v$ is bounded, this then allows to apply the local Schauder estimates (Theorem \ref{interior schauder estimates}, Remark \ref{weakening of interior Schauder estimates}).
\end{proof}

\begin{lemma} \label{lemma: struwe analog estimate}
    Let $\phi(x) \in C^{\infty}_c(B_{R}(x_0) )$, $0 \leq \phi \leq 1$, be a smooth cut-off function which is $1$ on $B_{R/2}(x_0) $ and $|\nabla \phi| \leq 4/R$ on $B_R(x_0)$. There exists $0 < \varepsilon_1 < 1$ and $C > 0$ (independent of $x_0$, $R$, $t_1$, $t_2$, $T$ and $\phi$) such that if $v \in C^2(B_R(x_0) \times (0,T), S^2)$, $0 < R < 1-|x_0|$, solves (\ref{local heat map flow}) with
    $$
    \sup_{t_1 < t < t_2} E(v(t);B_R(x_0) ) \leq \varepsilon_1
    $$
    for some $0 < t_1 < t_2 < T$, then for all $t \in [t_1,t_2]$, one has
    \begin{align*}
        \int_{B_{R/2}(x_0)} |D^2_x v(x,t)|^{2}dx &\leq  \frac{C}{R^2} \int_{B_R(x_0) } | \nabla v(x,t)|^2 dx + C \int_{B_R(x_0) }|\partial_t v(x,t)|^2 \phi^2 dx
    \end{align*}
    and
    \begin{align*}
        \int_{t_1}^{t_2}\int_{B_{R/4}(x_0)} |D^2_x v(x,t)|^{2}dxdt &\leq C\sup_{t_1 < t < t_2} E(v(t);B_R(x_0) )\left(  1+\frac{(t_2-t_1)}{R^2}  \right).
    \end{align*}
\end{lemma}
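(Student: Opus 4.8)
The plan is to combine a Caccioppoli--Bochner identity for the second spatial derivatives with the equation (\ref{local heat map flow}) itself, which lets one replace $\Delta v$ by $\partial_t v$ and $|\nabla v|^2$, and with the two-dimensional Gagliardo--Nirenberg inequality, the small-energy hypothesis being used to absorb the top-order term; for the space-time estimate one then integrates the fixed-time estimate in $t$ and invokes a cut-off version of the localized energy inequality. Throughout one works at a fixed time $t\in[t_1,t_2]$, where $v(\cdot,t)\in C^2(B_R(x_0))$ and $\phi\in C^\infty_c(B_R(x_0))$, so every integral and every integration by parts below is licit.

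\textbf{Step 1 (Caccioppoli for $D^2_xv$).} Writing $v=(v^1,v^2,v^3)$ and integrating by parts twice in the spatial variables, one obtains the pointwise-in-time identity
\[
\int \phi^2 |D^2_x v|^2\,dx = \int \phi^2|\Delta v|^2\,dx + \int \partial_j(\phi^2)\,\partial_j v^a\,\Delta v^a\,dx - \int \partial_i(\phi^2)\,\partial_j v^a\,\partial_i\partial_j v^a\,dx .
\]
Using $\partial(\phi^2)=2\phi\nabla\phi$, Young's inequality on the two cross terms, absorbing $\tfrac14\int\phi^2|D^2_xv|^2$ on the left, and $|\nabla\phi|\le 4/R$, this gives
\[
\int \phi^2 |D^2_x v|^2\,dx \le C\int \phi^2|\Delta v|^2\,dx + \frac{C}{R^2}\int_{B_R(x_0)}|\nabla v|^2\,dx .
\]

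\textbf{Step 2 (the equation and the small-energy hypothesis).} Since $|v|\equiv1$ one has $v\cdot\partial_t v=0$ and $v\cdot\Delta v=-|\nabla v|^2$, and (\ref{local heat map flow}) gives $\Delta v=\partial_t v-|\nabla v|^2 v$, hence $|\Delta v|^2\le 2|\partial_t v|^2+2|\nabla v|^4$. To control $\int\phi^2|\nabla v|^4$ one applies the two-dimensional inequality $\|f\|_{L^4(\mathbb R^2)}^2\le C\|f\|_{L^2(\mathbb R^2)}\|\nabla f\|_{L^2(\mathbb R^2)}$ to $f=\phi\nabla v$, which is compactly supported in $B_R(x_0)$, together with $|\nabla(\phi\nabla v)|\le|\nabla\phi|\,|\nabla v|+\phi\,|D^2_xv|$, to get
\[
\int \phi^2|\nabla v|^4\,dx \le C\Big(\int_{B_R(x_0)}|\nabla v|^2\,dx\Big)\Big(\int \phi^2|D^2_xv|^2\,dx + \frac{C}{R^2}\int_{B_R(x_0)}|\nabla v|^2\,dx\Big).
\]
By hypothesis $\int_{B_R(x_0)}|\nabla v(\cdot,t)|^2\,dx = 2E(v(t);B_R(x_0))\le 2\varepsilon_1$, so the right-hand side is $\le C\varepsilon_1\int\phi^2|D^2_xv|^2 + \tfrac{C\varepsilon_1}{R^2}\int_{B_R(x_0)}|\nabla v|^2$. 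Substituting $|\Delta v|^2\le 2|\partial_t v|^2+2|\nabla v|^4$ and this bound into Step~1 and fixing $\varepsilon_1$ small (depending only on the Sobolev constant) so that the resulting coefficient of $\int\phi^2|D^2_xv|^2$ is $\le\tfrac12$, one absorbs that term on the left; since $\phi\equiv1$ on $B_{R/2}(x_0)$,
\[
\int_{B_{R/2}(x_0)}|D^2_x v(x,t)|^2\,dx \le \int\phi^2|D^2_xv|^2\,dx \le \frac{C}{R^2}\int_{B_R(x_0)}|\nabla v(x,t)|^2\,dx + C\int_{B_R(x_0)}|\partial_t v(x,t)|^2\phi^2\,dx ,
\]
which is the first assertion.

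\textbf{Step 3 (space-time estimate).} Integrating the previous inequality over $t\in[t_1,t_2]$ (using, if one wants clean bookkeeping, a second nested cut-off to pass from $B_{R/2}(x_0)$ to $B_{R/4}(x_0)$) gives
\[
\int_{t_1}^{t_2}\!\!\int_{B_{R/4}(x_0)}|D^2_x v|^2\,dx\,dt \le \frac{C(t_2-t_1)}{R^2}\sup_{t_1<t<t_2}E(v(t);B_R(x_0)) + C\int_{t_1}^{t_2}\!\!\int_{B_R(x_0)}|\partial_t v|^2\phi^2\,dx\,dt ,
\]
and it remains to estimate the last term. For this one tests (\ref{local heat map flow}) with $\partial_t v\,\phi^2$; the nonlinearity drops because $v\cdot\partial_t v=0$, and an integration by parts in space yields the localized energy inequality (a cut-off version of \cite[Chapter~3, Lemma~5.9]{struwe2008variational})
\[
\tfrac12\int |\partial_t v|^2\phi^2\,dx + \tfrac12\frac{d}{dt}\int|\nabla v|^2\phi^2\,dx \le C\int|\nabla\phi|^2|\nabla v|^2\,dx \le \frac{C}{R^2}\,E(v(t);B_R(x_0)).
\]
Integrating in $t$ and discarding the nonnegative term $\tfrac12\int|\nabla v(t_2)|^2\phi^2$, and using $\int|\nabla v(t_1)|^2\phi^2\le 2E(v(t_1);B_R(x_0))$,
\[
\int_{t_1}^{t_2}\!\!\int|\partial_t v|^2\phi^2\,dx\,dt \le 2E(v(t_1);B_R(x_0)) + \frac{C(t_2-t_1)}{R^2}\sup_{t_1<t<t_2}E(v(t);B_R(x_0)) \le C\sup_{t_1<t<t_2}E(v(t);B_R(x_0))\Big(1+\frac{t_2-t_1}{R^2}\Big),
\]
and combining the last two displays gives the second assertion.

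The computations are routine; the one delicate point is Step~2, where the two-dimensionality of the domain is essential --- it is precisely what makes $\int\phi^2|\nabla v|^4$ borderline controllable by the energy times $\int\phi^2|D^2_xv|^2$ --- and where one must absorb $\int\phi^2|D^2_xv|^2$, rather than the a priori smaller $\int_{B_{R/2}(x_0)}|D^2_xv|^2$, on the left \emph{before} restricting to $B_{R/2}(x_0)$. This is what forces the smallness threshold $\varepsilon_1$ to depend only on the Sobolev constant, and in particular to be smaller than the energy threshold $\varepsilon$ appearing in Struwe's lemmas.
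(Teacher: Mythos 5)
Your proof is correct and follows essentially the same route as the paper: Caccioppoli to trade $\int\phi^2|D^2_xv|^2$ for $\int\phi^2|\Delta v|^2$, then the equation plus the two-dimensional Ladyzhenskaya/Gagliardo--Nirenberg inequality (the paper cites this step as Struwe's Lemma~5.7 rather than writing it out) to absorb the top-order term under the small-energy hypothesis, and finally integration in time combined with the localized energy inequality (\ref{estimate on v_t L^2 norm}). The only cosmetic difference is that you derive the auxiliary inequalities inline where the paper cites Struwe's Lemmas 5.7 and 5.9.
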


\begin{proof}
The proof is similar to \cite[Chapter 3, Lemma 5.10]{struwe2008variational}. For a fixed time $0 < t_1 < t < t_2 < T$, $\partial_t v = \Delta v + |\nabla v|^2 v$ and $|v| = 1$ implies
    \begin{align*}
     I = \int_{B_R(x_0) } \phi^2  |\Delta v|^2 dx &\leq   C \int_{B_R(x_0) } |\nabla v|^4 \phi^2  dx + C \int_{B_R(x_0) }|\partial_t v|^2 \phi^2 dx.
    \end{align*}
    Apply Lemma 5.7 from \cite{struwe2008variational} to get
     \begin{align*}
     I \leq C \varepsilon_1 \left( \int_{B_R(x_0) } | D^2_x v|^2 \phi^2 dx + R^{-2}  \int_{B_R(x_0) } |\nabla u|^2 dx  \right) +  C \int_{B_R(x_0) }|\partial_t v|^2 \phi^2 dx.
    \end{align*}
    Choose $0 < \varepsilon_1 < 1$ small enough so that $C\varepsilon_1 \leq 1/4$ to obtain 
    $$
    I \leq \frac{1}{4} \int_{B_R(x_0) }  \phi^2 |D^2_x v|^2 dx + \frac{C}{R^2} \int_{B_R(x_0) } | \nabla v|^2 dx + C \int_{B_R(x_0) }|\partial_t v|^2 \phi^2dx.
    $$
    A simple integration by parts and Young's inequality for products shows that if $u \in H^2(B_R(x_0),\mathbb R)$, $B_R(x_0) \subset \mathbb R^2$, and $\phi(x) \in C^{\infty}_c(B_{R}(x_0))$, $0 \leq \phi \leq 1$, is any cut-off function with $|\nabla \phi|^2 \leq C/R^2$, then
    $$
\int_{B_R(x_0) } \phi^2  (\Delta u)^2 dx \geq \frac{1}{2} \int_{B_R(x_0) } \phi^2  |D^2_x u|^2 dx - 12C^2R^{-2} \int_{B_R(x_0) } |\nabla u|^2 dx
    $$
    Hence, we also obtain
    $$
 \frac{1}{2} \int_{B_R(x_0) } \phi^2  |D^2_x u|^2 dx - \frac{12C^2}{R^2} \int_{B_R(x_0) } |\nabla u|^2 dx \leq I,
    $$
    which finishes the proof for fixed time. Integrating on $[t_1,t_2]$ and using (\ref{estimate on v_t L^2 norm}) shows the other inequality.
\end{proof}

\begin{lemma}\label{lemma:no concentration of energy away origin, general case}
    Let $v(x,t) \in C^{2,1}_{x,t}(B^2 \times (0,T))$ be a finite-energy solution to
    \begin{align*}
    v_t = \Delta v + |\nabla v|^2v, \quad (x,t) \in B^2  \times (0,T).
    \end{align*}
   For all $\varepsilon > 0$, for all but finitely many $x_0 \in B^2$ (this number might depend on $\varepsilon$), there is $\delta > 0$ for which $0 < R < \delta$ implies
    $$
    \limsup_{t \to T} E(v(t);B_R(x_0)) \leq \varepsilon.
    $$
    In other words, there is no concentration of energy as $t \to T$ around all but finitely many points in $B^2$. If $v(x,t)$ is $k$-equivariant, the only point where concentration of energy could happen is the origin.
\end{lemma}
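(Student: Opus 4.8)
The strategy is the standard energy-quantization argument of Struwe, adapted to the present non-monotone setting via the local estimates already established above. The key point is to show that the set of "concentration points" at time $T$ is finite by a pigeonhole-type counting argument against the finite total energy, and then to see that equivariance forces the only such point to be the origin.

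\textbf{Step 1: Define the concentration set.} Fix $\varepsilon > 0$ and let $\varepsilon_1 = \varepsilon_1(\varepsilon) \in (0,1)$ be the threshold from Lemma \ref{lemma: struwe analog estimate} (we may assume $\varepsilon_1 \le \varepsilon$). Say that $x_0 \in B^2$ is a \emph{concentration point} if for every $\delta > 0$ there is $R \in (0,\delta)$ with $\overline{B_R(x_0)} \subset B^2$ and
$$
\limsup_{t \to T^-} E(v(t); B_R(x_0)) > \varepsilon_1.
$$
The claim of the lemma is exactly that, away from the finitely many concentration points, one has $\limsup_{t\to T^-} E(v(t);B_R(x_0)) \le \varepsilon_1 \le \varepsilon$ for all sufficiently small $R$; and for a non-concentration point, Corollary \ref{cor:extension of v to time T} together with Lemma \ref{lemma: struwe analog estimate} (which supplies the needed $L^2_{x,t}$ bound on $D^2_x v$, hence $\nabla^2 v \in L^2$ on a small cylinder) even upgrades this to smoothness up to $t = T$ near $x_0$.

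\textbf{Step 2: The concentration set is finite.} Let $E_0 = \sup_{0<t<T} E(v(t);B^2) < +\infty$. Suppose $x_1,\dots,x_N$ are distinct concentration points. Since they are isolated points of $\mathbb R^2$ (any finite set is), choose $\rho > 0$ so small that the balls $\overline{B_\rho(x_i)} \subset B^2$ are pairwise disjoint; by definition of concentration point, for each $i$ there is $R_i \in (0,\rho)$ and a sequence $t^{(i)}_m \to T^-$ with $E(v(t^{(i)}_m);B_{R_i}(x_i)) > \varepsilon_1$. A priori the times $t^{(i)}_m$ differ for different $i$, so one cannot immediately sum the energies at a single instant; this is the one genuine subtlety. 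To handle it, observe that once $x_i$ is \emph{not} among finitely many bad points the local energy stays small, but to get the bound on $N$ directly, argue instead as follows: along any single sequence $t_m \to T^-$, pass to a subsequence so that for each $i$ the limit $\ell_i := \lim_m E(v(t_m);B_{R_i}(x_i))$ exists in $[0,E_0]$; by the disjointness of the balls, $\sum_{i=1}^N \ell_i \le E_0$. One must then show that for at least the concentration points (for a suitable choice of the common sequence) one has $\ell_i > \varepsilon_1/2$, which follows because the local energy $t \mapsto E(v(t);B_{R_i}(x_i))$ is continuous and cannot drop from $>\varepsilon_1$ to $\le \varepsilon_1/2$ too often without violating the $L^2$ bound on $v_t$ from \eqref{estimate on v_t L^2 norm}; alternatively, replace $B_{R_i}(x_i)$ by a slightly larger fixed ball and use Lemma \ref{lemma: struwe analog estimate} to bound the oscillation of the local energy in time. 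Either way one concludes $N \cdot \tfrac{\varepsilon_1}{2} \le E_0$, hence $N \le 2E_0/\varepsilon_1$, so the concentration set is finite. \textbf{This counting step — making the "energy at different times" obstacle rigorous — is the main technical point}, and it is exactly where the loss of monotonicity of $E(v(t))$ forces the replacement of Struwe's one-line argument by the quantitative time-continuity estimates of Lemma \ref{lemma: struwe analog estimate} and \eqref{estimate on v_t L^2 norm}.

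\textbf{Step 3: Equivariant case.} If $v$ is $k$-equivariant, then $|\nabla v|^2 = h_r^2 + \tfrac{k^2}{r^2}\sin(h)^2$ is radial, so $E(v(t);B_R(x_0))$ depends only on $|x_0|$ and $R$. If some $x_0 \neq 0$ were a concentration point, then so would be every point on the circle $\{|x| = |x_0|\}$, contradicting finiteness of the concentration set (Step 2). Hence the only possible concentration point is the origin, which is precisely the content of Lemma \ref{lemma:no concentration of energy away origin, k equivariant} reproved here in the general framework. This completes the proof. \hfill$\qed$
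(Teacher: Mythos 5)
Your approach is the same pigeonhole against finite energy as the paper's, and you correctly flag the one real subtlety that the paper's proof glosses over: without energy monotonicity the $\limsup$'s at different concentration points may be realized along different time sequences, so one cannot naively sum $\sum_i E(v(t);B_\rho(x_i)) \le E_0$ at a single instant. However, the two mechanisms you name to close this gap are not quite the right ones. The $L^2_{x,t}$ bound on $v_t$ in \eqref{estimate on v_t L^2 norm} controls the \emph{dissipation} (decrease) of local energy, whereas it is the boundary-flux term on the right of that inequality that controls the \emph{increase}; and Lemma~\ref{lemma: struwe analog estimate} bounds $D^2_x v$, not the time-oscillation of the local energy.

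The fix is simpler than either of your sketches. Inequality \eqref{estimate on v_t L^2 norm} gives, for $t_1 \le t_2$,
\begin{equation*}
E\bigl(v(t_2);B_{R/2}(x_0)\bigr) \le E\bigl(v(t_1);B_{R/2}(x_0)\bigr) + \frac{128(t_2-t_1)}{R^2}E_0,
\end{equation*}
so $t \mapsto E(v(t);B_{R/2}(x_0)) - \tfrac{128E_0}{R^2}\,t$ is non-increasing and bounded, hence converges as $t \to T^-$; therefore $\lim_{t\to T^-}E(v(t);B_{R/2}(x_0))$ exists for every fixed ball $\overline{B_R(x_0)}\subset B^2$. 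For a concentration point this limit equals the $\limsup$ and exceeds $\varepsilon$, and summing the (now well-defined) limits over $N$ disjoint balls $B_{\rho/2}(x_i)$ at a common limiting instant gives $N\varepsilon < E_0$ directly, with no subsequence extraction and no $\varepsilon_1/2$ loss; the $\varepsilon_1$ of Lemma~\ref{lemma: struwe analog estimate} plays no role here, since the lemma's bad set is allowed to depend on $\varepsilon$. Step 3 is fine. Be aware that the subtlety you identified is equally present in the paper's own proof, which merely asserts that ``the sum of their local contributions would be larger than $E_0$'' without addressing the common-time issue, so your care here is a genuine improvement, even though the sketched resolution needed to be sharpened.
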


\begin{proof}
    The proof is as in Lemma \ref{lemma:no concentration of energy away origin, k equivariant}. If there were too many points $x_0 \in B^2$ where the energy concentrates with a value $> \varepsilon$, then the sum of their local contributions to the energy would be larger than
    $$
    \sup_{0 < t < T} E(v(t);B^2),
    $$
    which is a contradiction.
\end{proof}

\begin{theorem}[\cite{struwe2008variational}, Chapter 3, Theorem 5.6]\label{thm:smoothness around all but finitely many points}
    Let $v(x,t) \in C^2(B^2 \times (0,T), S^2)$ be a finite-energy solution to
    \begin{align*}
    v_t = \Delta v + |\nabla v|^2v, \quad (x,t) \in B^2  \times (0,T),
    \end{align*}
    with $T < +\infty$. There exists finitely many points $x_1, ..., x_m \in B^2$ such that $$v(x,t) \in C^{\infty}(B^2 \times (0,T)) \cap C^{\infty}(B^2 \setminus \{x_1,...,x_m\} \times (0,T]),$$
    and if $v(x,t)$ is $k$-equivariant, then $\{x_1,...,x_m\} = \emptyset$ or $\{0\}$.

    Moreover, around any singular points $(x_0,T)$, $x_0 \in \{x_1,..,x_m\}$, for any sequence $r_n \to 0^+$, there are sequences $R_n \to 0^+$, $0 < R_n < r_n$, $T_n \to T^-$, $x_n \to x_0$ such that for almost every $- \varepsilon_1/(64E_0) < s  < 0$, up to passing to a subsequence (which might depend on $s$), $v_n(x) = v(x_n + R_nx, T_n + R_n^2s)$  converges weakly in $H^{2,2}_{loc}(\mathbb R^2,S^2)$, strongly in $H^{1,2}_{loc}(\mathbb R^2 ,S^2)$ and uniformly on any compact set to some smooth non-constant harmonic map $v_{\infty}(x)$, i.e.,  $v_{\infty}(x)$ solves $-\Delta v_{\infty} = |\nabla v_{\infty}|^2 v_{\infty}$.

    Finally, if $v$ solves (\ref{heat map flow}) with smooth and $k$-equivariant initial data and if $(0,T)$ is a singular point, then for almost every $- \varepsilon_1/(64E_0) < s  < 0$, up to passing to another subsequence, $V_n(x) = v(R_nx, T_n + R_n^2s)$ also converges weakly in $H^{2,2}_{loc}(\mathbb R^2,S^2)$, strongly in $H^{1,2}_{loc}(\mathbb R^2 ,S^2)$ and uniformly on any compact set to $V_{\infty}(x) = v_{\infty}(x+x^*)$ for some $x^* \in \mathbb R^2$. Moreover, if $h(r,t)$, $H(r)$ are respectively the inclination coordinates (\ref{heat map formulation for h(r,t)}) of $v$, $V_{\infty}$, then  $h(R_n r, T_n + R_n^2s)$ converges uniformly to $H(r)$ on all compact subsets of $[0,+\infty)$ and $H(r)$ must be of the form:
\begin{align}
    H(r) = m\pi \pm 2\arctan \left( (\alpha r)^k \right), \alpha > 0, \label{inclination coordinate of harmonic map}
\end{align}
where $h(0,t) = m\pi$ on $[0,T)$.
\end{theorem}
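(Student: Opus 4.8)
The plan is to follow Struwe's argument \cite[Chapter 3, Theorem 5.6]{struwe2008variational} and adapt it to the ball, using the localized estimates already assembled above in place of the global monotonicity of the energy.

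\textbf{Step 1 (finite singular set, smooth extension to $t=T$, equivariant case).} First I would apply Lemma \ref{lemma:no concentration of energy away origin, general case} with $\varepsilon=\varepsilon_1$, the threshold furnished by Lemma \ref{lemma: struwe analog estimate}, to obtain a finite set $\{x_1,\dots,x_m\}\subset B^2$ outside of which no concentration of energy occurs as $t\to T$. For $x_0\notin\{x_1,\dots,x_m\}$, pick $R<1-|x_0|$ and $t_1<T$ with $\sup_{t_1<t<T}E(v(t);B_R(x_0))\le\varepsilon_1$; the second estimate of Lemma \ref{lemma: struwe analog estimate} then gives $D^2_x v\in L^2(B_{R/4}(x_0)\times(t_1,T))$, and Corollary \ref{cor:extension of v to time T} extends $v$ smoothly on $B_{R/8}(x_0)$ up to and including $t=T$. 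Covering $B^2\setminus\{x_1,\dots,x_m\}$ gives $v\in C^\infty((B^2\setminus\{x_1,\dots,x_m\})\times(0,T])$, while interior smoothness on $B^2\times(0,T)$ follows from the Schauder bootstrap of Theorem \ref{thm:resgularty solution nonlinear pb}. Under $k$-equivariance, Lemma \ref{lemma:no concentration of energy away origin, k equivariant} forces the only possible concentration point to be the origin, so $\{x_1,\dots,x_m\}\subseteq\{0\}$.

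\textbf{Step 2 (blow-up at a singular point).} Fix a singular point $(x_0,T)$ and a sequence $r_n\to0^+$. Since $\int_0^T\!\int_{B_\rho(x_0)}|v_t|^2<\infty$ for $\rho<1-|x_0|$ (a consequence of the integrated inequality (\ref{estimate on v_t L^2 norm})), choose $T_n\to T^-$ with $(T-T_n)\int_{B_\rho(x_0)}|v_t(T_n)|^2\to0$; because energy concentrates at $x_0$, a point selection produces $x_n\to x_0$ and $R_n\to0^+$ with $R_n<r_n$ such that $R_n$ is the largest scale for which the energy over balls of radius $R_n$, and over the relevant time interval, stays $\le\varepsilon_1/2$, with the value $\varepsilon_1/2$ attained near $(x_n,T_n)$. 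Rescale $v_n(x)=v(x_n+R_nx,T_n+R_n^2s)$: it solves the harmonic map heat flow on balls exhausting $\mathbb R^2$, has energy $\le E_0$, and satisfies $\sup_{y}E(v_n(s);B_1(y))\le\varepsilon_1$. For a.e. $-\varepsilon_1/(64E_0)<s<0$, the first estimate of Lemma \ref{lemma: struwe analog estimate} together with the choice of $T_n$ bounds $\|v_n(s)\|_{H^{2,2}(B_K)}$ uniformly on each compact $B_K$, and $\int_{-\varepsilon_1/(64E_0)}^{0}\!\int_{B_K}|\partial_t v_n|^2\to0$. A diagonal subsequence then yields $v_n\rightharpoonup v_\infty$ in $H^{2,2}_{loc}$, strongly in $H^{1,2}_{loc}$ by Rellich, and uniformly on compacts since $W^{2,2}\hookrightarrow C^0$ in dimension two; passing to the limit in the equation, $\partial_t v_n\to0$ in $L^2_{loc}$ gives $-\Delta v_\infty=|\nabla v_\infty|^2v_\infty$, and elliptic regularity makes $v_\infty$ smooth. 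Non-constancy follows from the normalization: strong $H^{1,2}_{loc}$ convergence and $\sup_yE(v_n(s);B_1(y))\le\varepsilon_1$ (which prevents energy escaping through $\partial B_1$) give $E(v_\infty;\overline{B_1(0)})=\lim_n E(v(T_n);B_{R_n}(x_n))=\varepsilon_1/2>0$.

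\textbf{Step 3 ($k$-equivariant refinement and the form of $H$).} Now let $v$ solve (\ref{heat map flow}) with smooth $k$-equivariant data and $(0,T)$ singular, so $x_0=0$. The crucial point is that $|x_n|/R_n$ is bounded: the density $|\nabla v|^2=h_r^2+\frac{k^2}{r^2}\sin^2h$ is radial, and Proposition \ref{blow up point is at zero k equivariant} gives $|h_r(r,t)|\lesssim 1/r$, so the energy contained in any annulus $\{cR_n\le|x|\le r_n\}$ tends to $0$ as $c\to+\infty$ uniformly in $n$; hence the $\varepsilon_1/2$ of energy selected at scale $R_n$ must lie within distance $CR_n$ of the origin, i.e. $|x_n|\le CR_n$. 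Passing to a further subsequence $x_n/R_n\to x^*\in\mathbb R^2$, and from $V_n(x)=v(R_nx,T_n+R_n^2s)=v_n(x-x_n/R_n)$ we get $V_n\to V_\infty$ with $V_\infty(x)=v_\infty(x-x^*)$ in the same three senses. Each $V_n$ is $k$-equivariant, so the locally uniform limit $V_\infty$ is $k$-equivariant with a smooth inclination coordinate $H$ by Lemma \ref{k-equivariance, smoothness of h}; expressing $h(R_nr,T_n+R_n^2s)$ through the third component and the arccos/arcsin (as in Step 2) gives uniform convergence to $H$ on compacts of $[0,+\infty)$, whence $H(0)=m\pi$ with $m$ the constant value of $h(0,\cdot)$ (Proposition \ref{value of v and h at origin}) and $H$ solves the ODE (\ref{time independent ode for H}). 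Multiplying that ODE by $r^2H_r$ yields the conservation law $r^2H_r^2-k^2\sin^2H\equiv\text{const}$, and the smooth decay $H-m\pi=O(r^k)$ at $r=0$ forces the constant to vanish; separating variables in $rH_r=\pm k|\sin H|$ and imposing both smoothness at $r=0$ and finiteness of the energy gives exactly $H(r)=m\pi\pm2\arctan((\alpha r)^k)$ with $\alpha>0$, as in (\ref{all possible form for limiting H}).

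\textbf{Main obstacle.} I expect the technical heart to be Step 2: organizing the point selection so that $R_n\to0$, $x_n\to x_0$ and $R_n<r_n$ hold simultaneously, propagating the $\varepsilon$-regularity of Lemma \ref{lemma: struwe analog estimate} into $H^{2,2}_{loc}$ bounds valid for almost every rescaled time $s$ (the restriction to a.e.\ $s$ being forced on us because only $\int|v_t|^2$, and not a bound uniform in time, is available without energy monotonicity), and excluding loss of energy in the limit so that $v_\infty$ is genuinely non-constant. In the equivariant case the extra subtlety is the estimate $|x_n|\lesssim R_n$, which is precisely what identifies the off-centre blow-up $v_n$ with the origin-centred blow-up $V_n$ up to the translation $x^*$.
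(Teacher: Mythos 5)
Your proposal has the same skeleton as the paper's proof (finite singular set via $\varepsilon$-regularity, point selection at a concentration scale, a.e.\ compactness from $\int|v_t|^2<\infty$, equivariant reduction of $x_n/R_n$, identification of the limiting ODE), and your Step~4 takes a genuinely different, valid route: multiplying the ODE by $r^2H_r$ gives the Pohozaev-type first integral $r^2H_r^2-k^2\sin^2H\equiv 0$ (the constant vanishes since $H$ is smooth at $0$ with $H(0)=m\pi$), and solving $rH_r=\pm k\sin H$ yields (\ref{inclination coordinate of harmonic map}); the paper instead pins $H_\infty$ to an explicit $\chi_\alpha$ by the Comparison Principle (Theorem \ref{comparison principle}) on $[0,r_0]$ and then invokes ODE uniqueness. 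Both work; yours is more self-contained at the ODE level, the paper's re-uses machinery already developed.

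There are, however, two real gaps.

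First, in Step~3 the argument that $|x_n|/R_n$ is bounded does not hold as written. From Proposition \ref{blow up point is at zero k equivariant} you only get $|\nabla v|^2\lesssim r^{-2}$ uniformly in $t$, so the energy in an annulus $\{cR_n\le|x|\le r_n\}$ is bounded by $C\log\!\bigl(r_n/(cR_n)\bigr)$, which does \emph{not} tend to $0$ as $c\to+\infty$ uniformly in $n$ (the ratio $r_n/R_n$ is unbounded in general, and the logarithm is exactly the borderline case). The paper avoids this entirely by exploiting rotational symmetry: $E(v(T_n);B_{R_n}(Ox_n))=E(v(T_n);B_{R_n}(x_n))\ge\varepsilon_1$ for every rotation $O$, and one can fit $\sim |x_n|/R_n$ disjoint such balls into the annulus $\bigl\{|x_n|-R_n<|x|<|x_n|+R_n\bigr\}$, giving $E_0\ge\varepsilon_1\,|x_n|/R_n$. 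You should replace your annular estimate by this packing argument.

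Second, in Step~2 the identity $E(v_\infty;\overline{B_1(0)})=\lim_n E(v(T_n);B_{R_n}(x_n))=\varepsilon_1/2$ is not correct: the limit $v_\infty$ is extracted from $v_n(\cdot,s)$ at a fixed rescaled time $s<0$, whereas $E(v(T_n);B_{R_n}(x_n))=E(v_n(0);B_1(0))$ is the energy at rescaled time $0$. You need the local energy inequality (\ref{estimate on v_t L^2 norm}) to compare the two, which only gives $E(v_n(s);B_1)\ge E(v_n(0);B_1)-32|s|E_0$; with your normalization at $\varepsilon_1/2$ the lower bound degenerates to $0$ as $|s|\to\varepsilon_1/(64E_0)$. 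The paper normalizes the point selection at $\varepsilon_1$ (not $\varepsilon_1/2$) precisely so that after subtracting $32t_0E_0=\varepsilon_1/2$ one retains a uniform positive margin. Either adopt that normalization, or at least restate your conclusion as the strict inequality $E(v_\infty;\overline{B_1})\ge\varepsilon_1/2-32|s|E_0>0$ for each fixed $|s|<t_0$, rather than as an equality.

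Your other deviations (choosing $T_n$ via the mean-value trick for $\int|v_t|^2$ rather than the shrinking-measure argument; invoking Theorem \ref{thm:resgularty solution nonlinear pb} for interior smoothness rather than Corollary \ref{cor:extension of v to time T}) are harmless variants of what the paper does.
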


\begin{remark}
    In particular, $\lim \limits_{\substack{x \to x_0 \\ t \to T^-}} v(x,t)$ does not exist for any singular point $(x_0,T)$.
\end{remark}

\begin{proof}
    First, $u \in C^{\infty}(B^2 \times (0,T))$ because $u \in C^2(B_R(x_0) \times (t_0,t_1))$ satisfies the hypotheses of Corollary \ref{cor:extension of v to time T} (after applying a space and time translation) for any fixed $\overline{B_R(x_0)} \subset B^2$ and $0 < t_0 < t_1 < T$. 
    
    Around any point $x_0 \in B^2 \setminus \{x_1,...,x_m\}$ where there is no concentration of energy, one can take $0 < R < 1 - |x_0|$ small enough so that
    $$
    \sup_{0 < t < T} E(v(t);B_R(x_0)) < \varepsilon_1,
    $$
    where $\varepsilon_1$ is as in Lemma \ref{lemma: struwe analog estimate}.
    
    Then this lemma yields $\nabla^2 v \in L^2(B_{R/2}(x_0) \times (0,T))$ and we can apply Corollary \ref{cor:extension of v to time T} on $B_{R/2}(x_0) \times (0,T)$ to deduce that $v(x.t)$ can be extended to $B_{R/2}(x_0) \times (0,T]$. Hence, $v(x,t) \in C^{\infty}(B^2 \setminus \{x_1,...,x_m\} \times (0,T])$, where $\{x_1, ..., x_m\} = \emptyset$ or $\{0\}$ is $v$ has smooth and $k$-equivariant initial data (Lemma \ref{lemma:no concentration of energy away origin, k equivariant}).

Now fix a singular point $(x_s,T) \in B^2 \times \{T\}$. By concentration, for all decreasing sequences $r_n \to 0$,
\begin{align}
    \lim_{n \to +\infty} \limsup_{t \to T} E(v(t);B_{r_n}(x_s)) > \varepsilon_1, \label{singularity, concentration}
\end{align}
where the limit exists because $n \mapsto \limsup \limits_{t \to T} E(v(t);B_{r_n}(x_s))$ is bounded by $E_0$ and decreasing, hence convergent. 

Let $0 < \delta < (1-|x_s|)/8$ be small enough so that there is no other singular point in $\overline{B_{2\delta}(x_s)}$. One can prove that there are sequences $0 < R_n < r_n$, $T_n \to T^-$, $x_n \to x_s$ and an index $N \in \mathbb N$ such that one has $T_n \geq T/2, x_n \in B_{\delta}(x_s)$ and
\begin{align}
    E(v(T_n);B_{R_n}(x_n)) = \sup_{x \in B_{2\delta}(x_s)} \sup_{T/2 < t < T_n} E(v(t);B_{R_n}(x)) = \varepsilon_1  \label{energy equation leading to harmonic map extraction}
\end{align}
for all $n \geq N$. 

Let $\tau(r)$ be defined via 
 $$
\tau(r) = t_0 r^2, \quad t_0 =  \frac{\varepsilon_1}{64 E_0},
$$
and set $\tau_n = \tau(R_n)$.

Given our sequences and starting at a larger index for which $T_n - \tau(R_n) \geq T/2$, one has 
$$
E(v(T_n);B_{R_n}(x_n)) = \sup_{x \in B_{2\delta}(x_s)} \sup_{T_n - \tau(R_n) < t < T_n} E(v(t);B_{R_n}(x)) = \varepsilon_1 
$$
for all $n$.

We now define
$$
v_n(x,t) = v(x_n + R_nx, T_n + R_n^2 t): B_{\delta/R_n}(0) \times [-t_0, 0] \to S^2,
$$
which satisfies 
$$
\sup_{-t_0 < t < 0} E(v_n(t); B_{\delta/R_n}(0)) \leq E_0 = \sup_{0 < t < T} E(v(t); B^2).
$$
First, observe that
\begin{align*}
   || \partial_t v_n||_{L^2([-t_0,0] \times B_{\delta/R_n}(0))}^2 &= \int_{T_n-\tau_n}^{T_n} \int_{B_{\delta}(x_n)} |\partial_t v(y, t) |^2 dydt \\
   &\leq  \int_{T_n-\tau_n}^{T_n} \int_{B_{2\delta}(x_s)} |\partial_t v(y, t) |^2 dydt.
\end{align*}
Yet, by \cite[Lemma 5.9, Chapter 3,]{struwe2008variational} (the estimate for $|v_t|$ does not appear in the statement but in the proof of the lemma),
$$
 \int_{0}^{T} \int_{B_{2\delta}(x_s)} |\partial_t v(y, t) |^2 dydt \leq C(\delta, T, E_0) < +\infty,
$$
meaning that 
\begin{align}
    \lim_{n \to +\infty} || \partial_t v_n||_{L^2([-t_0,0] \times B_{\delta/R_n}(0))}^2 \leq \lim_{n \to +\infty} \int_{T_n-\tau_n}^{T_n} \int_{B_{2\delta}(x_s)} |\partial_t v(y, t) |^2 dydt = 0 \label{l^2 norm of partial_t v_n goes to zero}
\end{align}
since the measure of the set $[T_n - \tau_n,T_n] \times B_{2\delta}(x_s)$ goes to zero. 

In particular, $t \mapsto || \partial_t v_n(\cdot,t)||_{L^2(B_{\delta/R_n}(0))}^2$ is a sequence converging to zero in $L^1([-t_0,0])$. Up to passing to a subsequence (which we still denote with the index $n$), for all $t \in [-t_0,0] \setminus N$, $|N| = 0$, one has pointwise convergence 
\begin{align}
     \lim_{n \to +\infty} ||\partial_t v_n(\cdot, t)||_{L^2(B_{\delta/R_n}(0))}^2 = 0. \label{convergence of partial_t v_n(,t)}
\end{align}

Furthermore, if $x \in B_{\delta R_n^{-1}-1}(0)$ is fixed, then $B_1(x) \subset B_{\delta/R_n}(0)$, $x_n + R_n y \in B_{2\delta}(x_s)$ for all $y \in B_{\delta R_n^{-1}-1}(0)$, and, for all $t \in [-t_0,0]$,
\begin{align}
    E(v_n(t);B_1(x)) &= \int_{x_n + B_{R_n}(R_n x)} |\nabla v(y, T_n + R_n^2 t) |^2dy \notag \\
    &\leq \sup_{T_n - \tau_n < t < T_n}E(v(t);B_{R_n}(x_n+R_nx))  \notag \\
    &\leq \sup_{y \in B_{2\delta}(x_s)}  \sup_{T_n - \tau_n < t < T_n}E(v(t);B_{R_n}(y)) = \varepsilon_1. \label{norm less than espilon on ball of radius 1}
\end{align}
Hence, for all $x \in B_{\delta R_n^{-1}-1}(0)$, for all $t \in (-t_0,0) \setminus N$,
\begin{align}
     \int_{B_{1/2}(x)} |D^2_x v_n|^{2}dy \leq C\int_{B_1(x_0) } | \nabla v_n|^2 dx + C \int_{B_1(x_0) }|\partial_t v_n|^2 dx \quad \forall n \geq 0 \label{local hessian bound}
\end{align}
by Lemma \ref{lemma: struwe analog estimate}.

Using a covering argument (in the same spirit of those of Vitali and Besikovitch), one can extend this estimate to $B_{\delta R_n^{-1}-1}(0)$, i.e., 
\begin{align}
     \int_{B_{\delta R_n^{-1}-1}(0)} |D^2_x v_n(x,t)|^{2}dx  &\leq C_2 \left(   C \int_{B_{\delta /R_n}(0) } | \nabla v_n|^2 dx + C \int_{B_{\delta /R_n}(0) } |\partial_t v_n|^2 dx   \right) \notag \\
    &\leq C \left( E_0 + \int_{B_{\delta /R_n}(0) } |\partial_t v_n(x,t)|^2 dx   \right) \leq C(t) < +\infty \label{unif bounded hessian}
    \end{align}
for all $t \in (-t_0,0)  \setminus N$ and $n \geq 0$, where $C_2$ is a dimensional constant. If such a $t$ is fixed, the upper bound is uniform with respect to $n$ thanks to (\ref{convergence of partial_t v_n(,t)}).

Finally, fix any $s \in (-t_0,0)  \setminus N$ and let
$$
\tilde{v}_n(x) = v(x_n + R_nx, T_n + R_n^2 s): B_{\delta/R_n}(0) \to S^2.
$$
By construction, on any compact set $K \subset \mathbb R^2$, $|\tilde{v}_n| = 1$, $\partial_t v_n(\cdot,s) \to 0$ in $L^2(K)$, $D^2_x \tilde{v}_n$ is uniformly bounded in $L^2(K)$ as $n \to +\infty$, meaning that $\tilde{v}_n \to \tilde{v} \in H^{2,2}_{loc}(\mathbb R^2,S^2)$ weakly along some subsequence (which we still denote with the index $n$) and $\tilde{v}_n \to \tilde{v} \in H^{1,2}_{loc}(\mathbb R^2,S^2)$ strongly by Rellich–Kondrachov Theorem. The convergence is also uniform on any compact set because of the embeddings $H^{2,2}(K) \hookrightarrow C^{0,\alpha}(K) \subset \subset C^0(K)$, $K \subset \mathbb R^2$. Since
$$
\partial_t v_n = \Delta v_n + |\nabla v_n|^2 v_n, \quad (x,t) \in B_{\delta/R_n}(0) \times [-t_0, 0],
$$
fixing $t = s$ and passing to the limit (in the sense of distributions), we find that $\tilde{v}$ is harmonic. Finally, $\tilde{v}$ is smooth by \cite[Theorem 3.6]{sacks-uhlenbeck}, and non-constant because 
\begin{align}
    E(\tilde{v}; B_1(0)) &= \lim_{n \to +\infty} E(\tilde{v}_n; B_1(0)) = \lim_{n \to +\infty} E(v_n(s), B_1(0)) \notag \\
    &\geq \lim_{n \to +\infty} \left( E(v_n(0); B_1(0)) - 32 s E_0  \right) \notag \\
    &\geq \lim_{n \to +\infty} \left( E(v(T_n); B_{R_n}(x_n)) - 32 t_0 E_0  \right) \notag \\
    &\geq \varepsilon_1 - \varepsilon_1/2 > 0 \label{non constant limit}
\end{align} 
using (\ref{estimate on v_t L^2 norm}) (on the translated time-interval $[-t_0,0]$ instead of $[0,T]$). This concludes the proof in the general setting.

If $v$ solves (\ref{heat map flow}) with smooth and $k$-equivariant initial data, then the only possible singular point in $B^2$ is $(x_s,T) = (0,T)$. In that case, we show that $x_nR_n^{-1}$ is a bounded sequence and converges to some $x^*$ up to taking a subsequence. Then if $\tilde{v}_n(x) = v(x_n + R_nx, T_n + R_n^2 s) \to v_{\infty}(x)$ uniformly on compact sets as above, 
$$\tilde{V}_n(x) =  v(R_nx, T_n + R_n^2 s) = v_n(x-R_n^{-1}x_n) \to v_{\infty}(x-x^*) = V_{\infty}(x)$$
uniformly on compact sets as well. Similarly, $\tilde{V}_n(x) \to V_{\infty}(x)$ converges weakly in $H^{2,2}_{loc}(\mathbb R^2)$ by continuity of the translation operator in $L^2$ and the limit $V_{\infty}$ is smooth and harmonic.

Thanks to radial symmetry, one has
\begin{equation}
    E(v(T_n); B_{R_n}(Ox_n)) = E(v(T_n); B_{R_n}(x_n)) > \varepsilon_1 \label{eq:radial symmetry, eneryg rotation matrices}
\end{equation}
for all $n \geq 0$ and all rotation matrices $O$.

One can fit $m \sim |x_n|R_n^{-1}$ disjoint balls $B_{R_n}(O_1x_n)$, ..., $B_{R_n}(O_mx_n)$ in the annulus
$$
C_n = \{x \in \mathbb R^2: |x_n| - R_n < |x| < |x_n| + R_n\},
$$
meaning that
$$
E_0 \geq E(v(T_n); C_n) > \varepsilon_1 |x_n|R_n^{-1}.
$$
Hence, $x_nR_n^{-1}$ is bounded and, up to passing to a subsequence, converges in $\mathbb R^2$ to a limit $x^*$. This finishes the proof of the first part of the statement in the equivariant setting.


Finally, let $\tilde{H}_n(r) = h(R_n r, T_n + R_n^2s)$ be the inclination coordinate (\ref{heat map formulation for h(r,t)}) of $\tilde{V}_n$, where $h$ denotes the smooth inclination coordinate of $v$, $h(0,t) = m \pi$ for all $t \in [0,T)$ for some $m \in \mathbb Z$ (Proposition \ref{value of v and h at origin}, which does not require Proposition \ref{prop:smoothness at the boundary} when omitting $t = T$). Observe that $\tilde{H}_n$ is uniformly bounded in $L^{\infty}(K)$ (Proposition \ref{prop:equivalent fomrulation heat map flow}) and $|\partial_r \tilde{H}_n|$ is uniformly bounded in $L^2(K)$ for any compact set $K \subset (0,+\infty)$ because the solution $v$ has finite energy (\ref{Energy of v in terms of h}). Up to passing to a subsequence, $\tilde{H}_n$ converges weakly in $H^{1,2}_{loc}((0,+\infty)) \hookrightarrow C^{0,1/4}_{loc}((0,+\infty)) \subset \subset C^0_{loc}((0,+\infty))$ and uniformly on any compact set to a limit $H_{\infty} \in C^{0}((0,+\infty))$. One must have
\begin{equation}
    V_{\infty}(re^{i\theta}) = \left(e^{ik \theta} \sin H_{\infty}(r), \cos H_{\infty}(r) \right)  \label{eq: H vs H_infty, 1}
\end{equation}
 by pointwise convergence $\tilde{V}_n \to V_{\infty}$, $\tilde{H}_n \to H_{\infty}$. But Lemma \ref{k-equivariance, smoothness of h} shows that 
\begin{equation}
    V_{\infty}(re^{i\theta}) = \left(e^{ik \theta} \sin H(r), \cos H(r) \right) \label{eq: H vs H_infty, 2}
\end{equation}
for some lifting $H \in C^{\infty}([0,+\infty))$. Hence, $H_{\infty}(r)$ and $H(r)$ must differ by a fixed multiple of $2\pi$ and $H_{\infty}$ is smooth as well. It follows that $H_{\infty} \in C^{\infty}([0,+\infty))$ is a smooth non-constant solution for the ODE
\begin{align}
     0 = H_{rr} + \frac{H_r}{r} - k^2 \frac{\sin(2H)}{2r^2}, \quad r \in (0,+\infty), \label{time independent ode for H v2}
\end{align}
 with initial condition $H_{\infty}(0) = \tilde{m}\pi$, where $\tilde{m}$ depends on the $k$-equivariant harmonic map $V_{\infty}$ (it is shown later that $\tilde{m} = m = h(0,t)$).  All the solutions to this problem are of the form:
\begin{equation} \label{all possible form for limiting H}
    \tilde{m}\pi \pm 2\arctan \left( (\alpha r)^k \right), \alpha > 0.
\end{equation}
 Indeed, fix $r_0 > 0$ with $H_{\infty}(r_0) \in ((\tilde{m}-1)\pi,(\tilde{m}+1)\pi) \setminus \{\tilde{m}\pi\}$ (this exists as $H_{\infty}$ is non-constant). Let $\varepsilon = 1$ if $H_{\infty}(r_0) > \tilde{m}\pi$ and $\varepsilon = -1$ otherwise. By selecting $\alpha$ appropriately, one can find a solution
 $$
\chi_{\alpha}(r) = \tilde{m}\pi + \varepsilon \cdot 2\arctan \left( (\alpha r)^k \right)
 $$
 of (\ref{time independent ode for H v2}), $\chi_{\alpha}(0) =\tilde{m}\pi$, $\chi_{\alpha}(r_0) = H_{\infty}(r_0)$. The Comparison Principle (Theorem \ref{comparison principle}) applied to $\theta = H_{\infty}$ and $\psi = \chi_{\alpha}$ (and then reversing the roles of $\theta, \psi$) shows that $H_{\infty} = \chi_{\alpha}$ on $[0,r_0]$. As $H_{\infty}(r_0/2) = \chi_{\alpha}(r_0/2)$ and $\partial_r H_{\infty}(r_0/2) = \partial_r \chi_{\alpha}(r_0/2)$, standard existence and uniqueness theory for regular initial-value problems shows that $H_{\infty} = \chi_{\alpha}$ on all $[0,+\infty)$.

 Finally, observe that $\tilde{H}_n(r)$ converges uniformly near $r = 0$ as well (up to taking a final further subsequence). First note that $\tilde{H}_n(0) = h(0,T_n + R_n^2s) = m\pi$ and $\tilde{V}_n(0) = (0,0,1) \in S^2$ for all $n$. As $\tilde{V}_n \to V_{\infty}$ uniformly around $x = 0$, the first coordinate belongs to $\tilde{V}_{n,1}(x) \in [-\pi/4,\pi/4]$ for all $n \geq n_0$ and $x \in B_{K}(0)$ for some $K,n_0 > 0$. On $[0,K]$, one must have
 $$
 \tilde{H}_n(r) = m_n \pi + \varepsilon_n \arcsin \left( \tilde{V}_{n,1}(re^{i0}) \right).
$$
As $\tilde{V}_{n,1}(0) = 0$, $\tilde{H}_n(r) = m \pi$, one must have $m_n = m$, which does not depend on $n$. Up to taking a subsequence, one can assume that $\varepsilon_n \in \{-1,1\}$ is constant as well. Hence, $\tilde{H}_n(r)$ converges uniformly on $[0,K]$.
 \end{proof}

As a Corollary from Struwe's result, one deduces:

\begin{proposition}[Smoothness at $t = T$] \label{prop:smoothness at the boundary}
Let
$$
     v(t,x) \in C^0([0,T),C^1_0(\overline{B^2})) \cap C^{\infty}((0,T) \times \overline{B^2}) 
$$
solve (\ref{heat map flow}) with smooth and $k$-equivariant initial data. Let 
$$
h(t,r) \in C^0([0,T),C^1([0,1])) \cap C^{\infty}((0,T) \times [0,1])
$$
be the corresponding inclination coordinate (\ref{heat map formulation for h(r,t)}). Assume that $v$ blows-up at time $T < +\infty$.

Then 
$$
v(t,x) \in C^{\infty}((0,T] \times \overline{B^2} \setminus \{0\} ), \quad h(t,r) \in  C^{\infty}((0,T] \times (0,1] ).
$$

\end{proposition}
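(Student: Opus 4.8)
The plan is to split the statement into an interior part, which is immediate from Theorem \ref{thm:smoothness around all but finitely many points}, and a boundary part near the circle $r = 1$, which I would obtain by a routine parabolic bootstrap fed by the a priori $C^1$-bound of Proposition \ref{blow up point is at zero k equivariant}. For the interior part: since $v$ is $k$-equivariant it has finite energy (Remark \ref{k-equivariance implies finite energy}), so Theorem \ref{thm:smoothness around all but finitely many points} applies; the only possible point of energy concentration is the origin, and consequently
$$
v \in C^\infty\big((B^2 \setminus \{0\}) \times (0,T]\big), \qquad h \in C^\infty\big((0,1) \times (0,T]\big).
$$
It then remains only to promote this to smoothness up to the boundary $\partial B^2$ (equivalently up to $r = 1$) at every time $t \in (0,T]$, and in particular at the blow-up time $t = T$.

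Next I would fix a collar $U_\delta = \{x \in B^2 : 1 - \delta < |x| < 1\}$ of $\partial B^2$ with $\delta$ small. On $U_\delta \times [T/2,T]$ the solution is bounded ($|v| \equiv 1$) and, crucially, $\nabla v$ is bounded: indeed $|\nabla v|^2 = h_r^2 + k^2 |x|^{-2}\sin^2 h$, and $\|h_r\|_{L^\infty(U_\delta \times [T/2,T])} < +\infty$ by Proposition \ref{blow up point is at zero k equivariant}, while $|h| \le m\pi$ on $[0,1]\times(0,T)$ by Proposition \ref{prop:equivalent fomrulation heat map flow} and $|x| \ge 1-\delta$. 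Writing $v = v_0 + \tilde v$ as in (\ref{heat map flow, dirichlet bound condition}) — so that $v_0$ is smooth on $\overline{B^2}\times[0,\infty)$, $\tilde v$ vanishes on $\partial B^2 \times [0,\infty)\cup B^2\times\{0\}$, and $\tilde v$ solves a heat equation with the smooth nonlinearity $F$ of (\ref{nonlinearity heat map flow}) — the forcing $F(x,t,\tilde v,\nabla\tilde v)$ is then bounded on $U_\delta \times [T/2,T]$. Covering $\overline{U_{\delta/2}}$ by finitely many interior balls $\overline{B_r(x_0)}\subset U_\delta$ and finitely many half-balls $B_r^+(x_0) = B_r(x_0)\cap B^2$ centred at boundary points $x_0 \in \partial B^2$ (after straightening $\partial B^2$ by a time-independent smooth diffeomorphism, $\tilde v$ still vanishes on the flattened boundary and solves a parabolic system with smooth coefficients whose second-order part is independent of $t$), the local parabolic Sobolev embeddings in their interior and boundary (half-ball) forms — Corollary \ref{cor: local parabolic sobolev embedding} and Corollary \ref{cor:interior parabolic estimates for parabolic eq with a priori estimates} — yield $\tilde v \in C^{1+2\gamma,\gamma}_{x,t}(\overline{U_{\delta/2}} \times [3T/4,T])$ for some $\gamma \in (0,1/2)$.

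Finally, with this gained regularity $F(x,t,\tilde v,\nabla\tilde v) \in C^{\gamma,\gamma/2}$ on $\overline{U_{\delta/2}}\times[3T/4,T]$, and a Schauder iteration localized to the collar, exactly as in the proof of Theorem \ref{thm:resgularty solution nonlinear pb} but carried out up to $t = T$ (using Theorem \ref{interior schauder estimates} away from $\partial B^2$ and the boundary Schauder estimates, Theorem \ref{boundary schauder estimates, ver.1} and Theorem \ref{boundary schauder smooth boundary}, near $\partial B^2$, where the Dirichlet data of $\tilde v$ is identically zero hence smooth), bootstraps $\tilde v$ to $C^\infty(\overline{U_{\delta/4}}\times(3T/4,T])$ and then, combining with the interior smoothness already obtained, to $C^\infty(\overline{U_{\delta/4}}\times(0,T])$. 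Hence $v = v_0 + \tilde v$ is smooth on a collar of $\partial B^2$ up to and including $t = T$; together with the first step this gives $v \in C^\infty((\overline{B^2}\setminus\{0\})\times(0,T])$, and reading off the inclination coordinate (which near $\partial B^2$ depends smoothly on $v$, since $x \mapsto x^k/|x|^k$ is smooth away from the origin) gives $h \in C^\infty((0,1]\times(0,T])$.

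The step I expect to require the most care is the boundary bootstrap: everything must be phrased so as never to presuppose any regularity of $v$ on the \emph{closed} time interval, which is precisely why one must start from the genuinely a priori estimate of Proposition \ref{blow up point is at zero k equivariant} (equivalently, from the absence of energy concentration at boundary points, Lemma \ref{lemma:no concentration of energy away origin, k equivariant}) rather than from any statement already including $t = T$, and why the relevant parabolic Sobolev and Schauder estimates invoked must be the versions proved in Appendices B and C that assume only a priori $L^p$, respectively $C^{1+\alpha}$, control of the solution.
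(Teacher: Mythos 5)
Your interior step is identical to the paper's, but the boundary step takes a genuinely different route. The paper avoids any direct bootstrap near $\partial B^2$ at time $T$: it builds a new boundary datum $\tilde h_0$ for the scalar equation (\ref{heat map formulation for h(r,t)}) on the \emph{annulus} $\{1/2<r<1\}$ out of $h|_{\{r=1/2\}}$ (smooth up to $t=T$ by the interior result), the original $h_0|_{\{r=1\}}$, and $h(\cdot,T/2)$; on the annulus the nonlinearity is globally bounded and globally Lipschitz, so Theorem \ref{thm:taylor local existence} gives a \emph{global} solution, Theorem \ref{thm:resgularty solution nonlinear pb} makes it $C^\infty$ on $\{1/2\le r\le 1\}\times(T/2,\infty)$, and uniqueness of the fixed point identifies it with $h$. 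That packages the entire boundary bootstrap into already-proved well-posedness machinery and needs neither Proposition \ref{blow up point is at zero k equivariant} nor any straightening of $\partial B^2$. Your collar-and-bootstrap argument is morally what happens inside those theorems and can be made to work, but as written it has two gaps that should be acknowledged: first, after straightening $\partial B^2$ you must use Corollary \ref{cor:interior parabolic estimates for parabolic eq with a priori estimates} (not Corollary \ref{cor: local parabolic sobolev embedding}, which is for $L=\Delta$ only), and that corollary requires a priori $D^2_x u,\partial_t u\in L^p$ \emph{and} zero initial data (weak continuity at zero) — the former you should justify via the $n=2$ bound in Proposition \ref{blow up point is at zero k equivariant}, the latter forces a time cut-off $\chi(t)$ vanishing near the left endpoint of your time window, exactly as in the proof of Theorem \ref{thm:boundary parabolic sobolev embedding}; second, in the boundary Schauder iteration the normal derivative $\partial_n\tilde v$ does not vanish on $\partial B^2$ and does not have $C^{2+\mu}$ boundary data, so the iteration must bootstrap tangential derivatives via Theorem \ref{boundary schauder estimates, ver.1} and recover $\partial_n^2\tilde v$ from the equation — a standard but nontrivial step you leave implicit. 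Both approaches are valid; the paper's buys brevity and a cleaner dependency structure, yours buys independence from the uniqueness argument and a more elementary feel.
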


\begin{proof}
Recall that $v$ has finite energy (Remark \ref{k-equivariance implies finite energy}).

    If $T < +\infty$, the energy concentration argument from Struwe (Theorem \ref{thm:smoothness around all but finitely many points}) shows that $v(x,t) \in C^{\infty}(B^2 \setminus \{0\} \times (0,T])$. Hence, writing $v(x,t)$ in its symmetric form (\ref{k-equivariant form of v(x,t)}), the corresponding 
    $$
    h(t,r) \in C^0([0,T),C^1([0,1])) \cap C^{\infty}((0,T) \times [0,1])
    $$
    has additional regularity $h(t,r) \in C^{\infty}((0,T] \times (0,1))$ at time $T$ (same argument as in the beginning of the proof of Lemma \ref{k-equivariance, smoothness of h}). We prove that $h$ is smooth at $r =1, t = T$ as well.

    The function defined as
$$
    \tilde{h}_0(t,r) = \begin{cases}
        h(t,r) &\text{ if } (t,r) \in  [T/2,T] \times  \{1/2\} \\
        h(t,r) &\text{ if } (t,r) \in \{T/2\} \times [1/2,1] \\
        h_0(t,r) &\text{ if } (t,r) \in [T/2,2T] \times \{1\}
    \end{cases}
 $$
is smooth on the connected, closed set where it is defined and can be extended to a smooth function $\tilde{h}_0(t,r) \in C^{\infty}_c(\mathbb R^2)$ (e.g. by Whitney's Extension Theorem and multiplication by a cut-off). Then $h$ locally solves
\begin{align}
   h_t &= h_{rr} + \frac{h_r}{r} - k^2 \frac{\sin(2h)}{2r^2}, \quad (r,t) \in (1/2,1) \times (T/2,+\infty),  \notag \\
h(r,t) &= \tilde{h}_0(r,t), \quad (r,t) \in \{1/2,1\} \times [T/2,+\infty) \cup (1/2,1) \times \{T/2\} ,  \label{equation extending h}
\end{align}
on $[T/2,T)$. 

We rewrite (\ref{extension of h using a tilde h_0}) as a Dirichlet problem $\tilde{h} = h + \tilde{h}_0$,
\begin{align}
   \tilde{h}_t &= \tilde{h}_{rr} + \frac{\tilde{h}_r}{r} - k^2 \frac{\sin(2\tilde{h}-2\tilde{h}_0)}{2r^2} + H_0, \quad (r,t) \in (1/2,1) \times (T/2,+\infty),  \notag \\
\tilde{h}(r,t) &= 0, \quad (r,t) \in \{1/2,1\} \times [T/2,+\infty) \cup (1/2,1) \times \{T/2\}, \label{extension of h using a tilde h_0}
\end{align}
where $H_0 = \tilde{h}_{0,t} - \tilde{h}_{0,rr} - \frac{\tilde{h}_{0,r}}{r}$ and shift the time to $0$. This corresponds to a 2D nonlinear heat equation on an annulus $C$ with a smooth, globally bounded and globally Lipschitz (w.r.t. to $P,Q$) nonlinearity
\begin{align*}
    (x,t,P,Q) \mapsto F(x,t,P,Q) &= - k^2 \frac{\sin(2P-2\tilde{h}_0)}{2|x|^2} + H_0(x,t) \\
    &= - k^2 \frac{\sin(2P-2\tilde{h}_0)}{2|x|^2} \chi_{(1/4,2)}(|x|) + H_0(x,t),
\end{align*}
where $\chi_{(1/4,2)} \in C^{\infty}_c(\mathbb R;[0,1])$ is a smooth cut-off which is $1$ on $(1/2,1)$ and zero on $(1/4,2)^c$. We can then apply Theorem \ref{thm:taylor local existence} and Theorem \ref{thm:resgularty solution nonlinear pb} to deduce that $\tilde{h} \in C^{\infty}((T/2,+\infty),\overline{C})$ must be global. In particular, $h$ extends as a function $h \in C^{\infty}((T/2,T] \times [0,1])$.
\end{proof}

An argument similar to Theorem \ref{thm:smoothness around all but finitely many points} allows to extract all the bubbles at the same time, leading to the following well-known bubble decomposition of blowing-up solutions along an appropriate time sequence. One should note that the decomposition holds only in the energy space and not uniformly. The harmonic maps appearing in the decomposition are called \textit{bubbles} of energy, as they account for all the energy that $v(t)$ loses at the blow-up time (see (\ref{pythagorean decomposition of energy})).

\begin{theorem}[Bubble decomposition along a sequence of times]\label{thm: full bubble decomposition}
    Let $v(x,t) \in C^{2}(B^2 \times (0,T), S^2)$ be a finite-energy solution to
    \begin{align*}
    v_t = \Delta v + |\nabla v|^2v, \quad (x,t) \in B^2  \times (0,T)
    \end{align*}
    with $T < +\infty$. 

    Around any singular points $(x_s,T)$, $x_s \in B^2$, there exists a sequence $T_n \to T^-$, there exists finitely many smooth non-constant harmonic maps $\{\omega_1, ..., \omega_p\}$ from $\mathbb R^2 \cup \{\infty\} \simeq S^2$ to $S^2$, there exists sequences $(x_n^{(i)})_{n \geq 0}$, $i \in \{1,...,p\}$, all converging to $x_s$, there exists positive sequences $(R_n^{(i)})_{n \geq 0}$, $i \in \{1,...,p\}$, all converging to zero, satisfying for all $i \neq j$,
    \begin{align}
        \frac{R_n^{(i)}}{R_n^{(j)}} + \frac{R_n^{(j)}}{R_n^{(i)}} + \frac{|x_n^{(i)}-x_n^{(j)}|^2}{R_n^{(i)}R_n^{(j)}} \to +\infty, \quad n \to +\infty \label{different rate of convergence of scaling of bubbles}
    \end{align}
    and
 \begin{align}
     v(x,T_n) - \sum_{i=1}^p \left[ \omega_i \left(  \frac{x-x_n^{(i)}}{R_n^{(i)}}\right) - \omega_i(\infty) \right] \to v(x,T), \quad n \to +\infty \label{multi bubble decomposition along a sequence of times}
 \end{align}
    strongly in $H^{1,2}(B_{\delta}(x_s))$ for all balls $B_{\delta}(x_s)$ such that $\overline{B_{2\delta}(x_s)} \subset B^2$ contains no other singular points from $\{x_1,...,x_m\}$. Moreover,
\begin{equation}
        \lim_{t \to T^-} E(v(t);B_{\delta}(x_s)) = E(v(T);B_{\delta}(x_s)) + \sum_{i=1}^p E(\omega_i;\mathbb R^2). \label{pythagorean decomposition of energy}
\end{equation}
\end{theorem}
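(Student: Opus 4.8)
The plan is to run the classical bubble-tree extraction scheme, specialised to the time sequence coming from the energy inequality, exactly as in Theorem~\ref{thm:smoothness around all but finitely many points}, and then to invoke Qing's no-neck result to upgrade weak convergence to the strong and energetic statements. Fix a singular point $(x_s,T)$ and $\delta>0$ with $\overline{B_{2\delta}(x_s)}\subset B^2$ containing no other singular point from $\{x_1,\dots,x_m\}$. As used in the proof of Theorem~\ref{thm:smoothness around all but finitely many points} (Struwe, Chapter~3, Lemma~5.9), one has $\int_0^T\!\int_{B_{2\delta}(x_s)}|\partial_t v|^2\,dx\,dt<+\infty$, and the same inequality shows $t\mapsto E(v(t);B_{\delta}(x_s))-\tfrac{Ct}{\delta^2}E_0$ is non-increasing, so $\lim_{t\to T^-}E(v(t);B_\delta(x_s))$ exists. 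First I would choose $T_n\to T^-$ with $\|\partial_t v(\cdot,T_n)\|_{L^2(B_{2\delta}(x_s))}\to 0$. Setting $u_n=v(\cdot,T_n)$, the maps $u_n$ have energy bounded by $E_0$ on $B_{2\delta}(x_s)$, their tension fields $\tau(u_n)=\Delta u_n+|\nabla u_n|^2 u_n=\partial_t v(\cdot,T_n)$ tend to $0$ in $L^2$, they converge smoothly to $v(\cdot,T)$ on $B_{2\delta}(x_s)\setminus\{x_s\}$ (by the regularity $v\in C^\infty(B^2\setminus\{x_1,\dots,x_m\}\times(0,T])$ already established) and weakly in $H^{1,2}(B_{2\delta}(x_s))$.

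Next I would carry out the bubbling induction for the almost-harmonic sequence $u_n$, using the $\varepsilon$-regularity estimate of Lemma~\ref{lemma: struwe analog estimate} with the same threshold $\varepsilon_1$. Whenever energy concentrates near a point, pick $x_n^{(1)}\to x_s$ and $R_n^{(1)}\to 0^+$ with $E(u_n;B_{R_n^{(1)}}(x_n^{(1)}))=\sup_{x,\,r\le R_n^{(1)}}E(u_n;B_r(x))=\tfrac{\varepsilon_1}{2}$; the rescaled maps $y\mapsto u_n(x_n^{(1)}+R_n^{(1)}y)$ have energy bounded by $E_0$ and tension field tending to $0$ in $L^2_{\mathrm{loc}}(\mathbb R^2)$ (the $L^2$-norm of the tension field picks up a positive power of $R_n^{(1)}$ under this change of variables, so it still vanishes). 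Hence, as in Theorem~\ref{thm:smoothness around all but finitely many points} and using \cite[Theorem 3.6]{sacks-uhlenbeck} together with the removable-singularity theorem, after passing to a subsequence they converge weakly in $H^{2,2}_{\mathrm{loc}}$ and strongly in $H^{1,2}_{\mathrm{loc}}$ away from finitely many points to a non-constant smooth harmonic map $\omega_1\colon\mathbb R^2\cup\{\infty\}\simeq S^2\to S^2$ with $E(\omega_1;\mathbb R^2)\ge \varepsilon_1/2$. One then subtracts this bubble and repeats the procedure on the body map and on the rescaled pictures at every concentration scale, thereby also controlling the finitely many points through which the strong convergence fails. Since each bubble carries at least the fixed amount $\varepsilon_1/2$ of energy while $E(v(T_n);B_{2\delta}(x_s))\le E_0$ is bounded, the process terminates after $p<+\infty$ steps, producing $\omega_1,\dots,\omega_p$, centers $x_n^{(i)}\to x_s$ and scales $R_n^{(i)}\to 0$. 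The separation condition (\ref{different rate of convergence of scaling of bubbles}) is built into the construction: if for some $i\neq j$ the left-hand side of (\ref{different rate of convergence of scaling of bubbles}) stayed bounded along a subsequence, then $y\mapsto (R_n^{(i)}y+x_n^{(i)}-x_n^{(j)})/R_n^{(j)}$ would be a bounded family of affine maps converging to an affine isomorphism of $\mathbb R^2$, so the two rescaled pictures would yield the same bubble in the limit and only one would have been extracted.

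The main obstacle is passing from the weak, local convergence produced by the iteration to the strong $H^{1,2}(B_\delta(x_s))$ statement (\ref{multi bubble decomposition along a sequence of times}) and the exact energy identity (\ref{pythagorean decomposition of energy}). This is precisely the ``no energy loss in the necks'' phenomenon, and here I would invoke the result of Qing \cite{Qing1995OnSO}, which is purely local and hence applies on $B_\delta(x_s)$ with no boundary condition and without monotonicity of the energy: the energy concentrated in the annular neck regions connecting the body map $v(\cdot,T)$ to the bubbles, and connecting bubbles at different scales to one another, tends to zero. This turns the weak limit into a strong $H^{1,2}$ limit in (\ref{multi bubble decomposition along a sequence of times}), and it upgrades the trivial inequality $\liminf_n E(v(T_n);B_\delta(x_s))\ge E(v(T);B_\delta(x_s))+\sum_{i=1}^p E(\omega_i;\mathbb R^2)$ (weak lower semicontinuity applied on the body and on each rescaled picture, together with the scale separation to keep the bubbles energetically disjoint) into the equality (\ref{pythagorean decomposition of energy}); combined with the existence of $\lim_{t\to T^-}E(v(t);B_\delta(x_s))$ and $E(v(T_n);B_\delta(x_s))\to\lim_{t\to T^-}E(v(t);B_\delta(x_s))$ this is exactly the claimed identity.

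The remaining points are routine: $\omega_i(\infty)$ is well defined by the removable-singularity theorem, so the subtracted profiles in (\ref{multi bubble decomposition along a sequence of times}) make sense; the bubbles produced at distinct steps are pairwise distinct by the same argument that gives (\ref{different rate of convergence of scaling of bubbles}); and since $(x_s,T)$ is a singular point, at least one bubble is extracted, so $p\ge 1$. This completes the proof.
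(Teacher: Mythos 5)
Your proposal is correct and is essentially the approach the paper takes: the paper's own proof of this theorem is simply a citation of Qing's result (\cite[Theorem 1 and Remark 6]{Qing1995OnSO}) together with the observation that the argument is local around $x_s$ and so insensitive to the boundary and to the lack of energy monotonicity. Your sketch fills in the intermediate steps (choice of $T_n$ with $\|\partial_t v(\cdot,T_n)\|_{L^2}\to 0$, concentration-compactness extraction of bubbles with the threshold $\varepsilon_1$, termination of the induction, and the scale-separation argument), all of which are part of Qing's proof, and then invokes Qing's no-neck theorem for the strong $H^{1,2}$ convergence and the energy identity -- so the substance and the ultimate reliance on Qing are the same.
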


\begin{proof}
    See \cite[Theorem 1 and Remark 6]{Qing1995OnSO}. The argument is essentially local around the singularity and does not depend on the boundary.
\end{proof}

\begin{corollary}[Recovering the bubbles and decomposition in the $k$-equivariant setting]\label{rmk:recovering the bubbles}
    Let $v$ be as in Theorem \ref{thm: full bubble decomposition}. Order the indices $\{1,...,p\}$ so that $R_n^{(i)}$ has faster or comparable decay to $R_n^{(j)}$, $j > i$, meaning that $R_n^{(i)}/R_n^{(j)}$ is bounded. For $j \in \{1,...,p\}$, up to taking a subsequence, there is $M = M(j)$ large enough and some constant $c_j \in \mathbb R^3$ for which
    \begin{align*}
    v_n^{(j)}(x) &= \left(v(y,T_n) - \sum_{i=1}^{j-1} \left[ \omega_i \left(  \frac{y-x_n^{(i)}}{R_n^{(i)}}\right) - \omega_i(\infty) \right] \right)_{| y = x_n^{(j)} + R_n^{(j)}x} \\
    &=: V_n^{(j-1)}(x_n^{(j)} + R_n^{(j)}x)
\end{align*}
converges strongly to $\omega_{j}(x) + c_{j}$ in $H^{1,2}_{loc}(\mathbb R^2,B_{M}(0))$. When $j = 1$, the convergence and the limit $\omega_1(x) + c_1$ are actually in $H^{1,2}_{loc}(\mathbb R^2,S^2)$.

In the $k$-equivariant setting, keeping the same sequences $T_n$ and $R_n$, (\ref{multi bubble decomposition along a sequence of times}) can be rewritten as:
$$
v(x,T_n) - \sum_{i=1}^p \left[ \tilde{\omega}_i \left(  \frac{x}{R_n^{(i)}}\right) - \tilde{\omega}_i(\infty) \right] \to v(x,T), \quad n \to +\infty.
$$
where $\tilde{\omega}_i(x) = \omega_i(x+x_i^*)$, $x_i^* \in \mathbb R^2$, is a smooth, non-constant, harmonic map from $\mathbb R^2 \cup \{\infty\} \simeq S^2$ to $S^2$ for which $|\nabla \tilde{\omega}_i|$ is radial. Moreover, for $i \in \{1,...,p\}$, there exists $\tilde{c}_i \in \mathbb R^3$ for which
$$
v(R_n^{(i)}x,T_n) \to \tilde{\omega}_i(x) + \tilde{c}_i
$$
in $H^{1,2}_{loc}(\mathbb R^2,S^2)$, where $\tilde{\omega}_i(x) + \tilde{c}_i$ is a $k$-equivariant harmonic map from $\mathbb R^2 \cup \{\infty\} \simeq S^2$ to $S^2$ with inclination coordinate of the form (\ref{inclination coordinate of harmonic map}).
\end{corollary}

\begin{remark}
    On the level of the inclination coordinate $h$ of $v$, we have uniform convergence of $h(R_n^{(i)}r,T_n)$ to the inclination coordinate of $\tilde{\omega}_i(x) + \tilde{c}_i$ on any compact subset $K \subset (0,+\infty)$ thanks to the one-dimensional Sobolev embedding  $H^{1,2}_{loc}((0,+\infty)) \hookrightarrow C^{0,1/4}_{loc}((0,+\infty))$.
\end{remark}

\begin{proof}
Up to taking a subsequence, $R_n^{(i)}/R_n^{(j)}$ converges to a finite value in $\mathbb R_{\geq 0}$. Up to passing to further subsequences, we will also assume that for all $i \neq j$,
$$
\frac{x_n^{(i)}-x_n^{(j)}}{R_n^{(i)}}
$$
either converges in $\mathbb R^2$ or is unbounded.

Let
\begin{align*}
    A_n^{(i)} = \int_{B_{KR_n^{(1)}}(x_n^{(1)})}\left| \left(R_n^{(i)}\right)^{-1} \nabla \omega_i \left(\frac{x-x_n^{(i)}}{R_n^{(i)}} \right) \right|^2 dx = \int_{B_{KR_n^{(1)}/R_n^{(i)}}\left( \frac{x_n^{(1)}-x_n^{(i)}}{R_n^{(i)}}\right)}\left| 
    \nabla \omega_i \left(x \right) \right|^2 dx.
\end{align*}
If $i = 1$, one has $A_n^{(1)} = \int_{B_K(0)} |\nabla \omega_1(x)|^2 dx > 0$. Assume now that $i \neq 1$. If $R_n^{(1)}/R_n^{(i)} \to 0$, then $A_n^{(i)} \to 0$ because the domain of integration is shrinking to a zero measure set. Else, $R_n^{(1)}/R_n^{(i)} \to c > 0$ and one has
$$
\left( \frac{|x_n^{(1)}-x_n^{(i)}|}{R_n^{(i)}} \right)^2 \gtrsim \frac{|x_n^{(1)}-x_n^{(i)}|^2}{R_n^{(1)}R_n^{(i)}} \to +\infty,
$$
i.e., the domain $B_{KR_n^{(1)}/R_n^{(i)}}\left( \frac{x_n^{(1)}-x_n^{(i)}}{R_n^{(i)}}\right)$ is drifting to infinity, while keeping a measure which is uniformly bounded with respect to $n$. In other words, the characteristic function of the domain $B_{KR_n^{(1)}/R_n^{(i)}}\left( \frac{x_n^{(1)}-x_n^{(i)}}{R_n^{(i)}}\right)$ converges, in a pointwise sense, to zero as $n \to +\infty$ and it follows from Dominated Convergence Theorem that $A_n^{(i)} \to 0$ as well.

Similarly, the mixed term
\begin{align*}
    B_n^{(i,j)} = \int_{B_{KR_n^{(1)}}(x_n^{(1)})} \left| \left(R_n^{(i)}\right)^{-1} \nabla \omega_i \left(\frac{x-x_n^{(i)}}{R_n^{(i)}} \right) \right| \cdot \left| \left(R_n^{(j)}\right)^{-1} \nabla \omega_j \left(\frac{x-x_n^{(j)}}{R_n^{(j)}} \right) \right| dx
\end{align*}
converges to zero as $n \to +\infty$ by Cauchy-Schwarz inequality and the convergence of $A_n^{(i)}$.

One also has 
\begin{align*}
    C_n = \int_{B_{KR_n^{(1)}}(x_n^{(1)})}\left| \nabla v(x,T) \right|^2 dx \to 0, \quad n \to +\infty,
\end{align*}
as the domain is shrinking to a zero measure set and
\begin{align*}
    D_n^{(i)} = \int_{KB_{R_n^{(1)}}(x_n^{(1)})}\left| \nabla v(x,T) \right| \cdot  \left| \left(R_n^{(i)}\right)^{-1} \nabla \omega_i \left(\frac{x-x_n^{(i)}}{R_n^{(i)}} \right) \right|  dx \to 0, \quad n \to +\infty,
\end{align*}
for any $i \in \{1,...,p\}$ by Cauchy-Schwarz inequality.

It follows from the decomposition that 
$$
\int_{B_{KR_n^{(1)}}(x_n^{(1)})} \left| \nabla v(x,T_n) - \left(R_n^{(1)}\right)^{-1} \nabla \omega_1 \left( \frac{x-x_n^{(1)}}{R_n^{(1)}} \right) \right|^2 dx \to 0,
$$
i.e., if $v_n^{(1)}(x) = v(x_n^{(1)} + R_n^{(1)}x, T_n)$, then $\nabla v_n^{(1)}(x) \to \nabla \omega_1$ in $L^2(B_K(0))$. By Poincaré's inequality, up to taking a subsequence, $v_n^{(1)}$ converges strongly in $H^{1,2}_{loc}$ to $\omega_1(x) + c_1 \in H^{1,2}_{loc}$ for some constant $c_1$. As $|v| = 1$, convergence is in $H^{1,2}_{loc}(\mathbb R^2,S^2)$.

 Similarly, one proves that 
\begin{align*}
    v_n^{(j+1)}(x) &= \left(v(y,T_n) - \sum_{i=1}^j \left[ \omega_i \left(  \frac{y-x_n^{(i)}}{R_n^{(i)}}\right) - \omega_i(\infty) \right] \right)_{| y = x_n^{(j+1)} + R_n^{(j+1)}x} \\
    &=: V_n^{(j)}(x_n^{(j+1)} + R_n^{(j+1)}x)
\end{align*}
converges strongly to $\omega_{j+1} + c_{j+1}$ in $H^{1,2}_{loc}(\mathbb R^2,B_{M}(0))$ for some fixed $M = M(j)$ large enough (the maps $v$ and $\omega_i$ are bounded, but we can no longer ensure $|v_n^{(j+1)}| = 1$).


In the $k$-equivariant setting, one can proceed as in the end of the proof of Theorem \ref{thm:smoothness around all but finitely many points} (see the energy argument below equation (\ref{eq:radial symmetry, eneryg rotation matrices})) to show that $x_n^{(1)}/R_n^{(1)}$ is bounded thanks to radial symmetry and, up to taking a subsequence, converges. Hence, one can always replace $(x_n^{(1)})$ by $(0)$ in the decomposition and $\omega_1(x)$ by a translate $\omega_1(x + x_1^*)$ without changing the conclusion of Theorem \ref{thm: full bubble decomposition}. As $|\nabla v|$ is radial, so is the case for the limiting harmonic map, i.e., $x \mapsto |\nabla \omega_1(x+x_1^*)|$ is radial. Assume then by induction that $(x_n^{(i)}) = (0)$ and that $|\nabla \omega_i|$ is radial for $i \leq j$. In particular, $|\nabla V_n^{(j)}|$ is radial.

Then 
$$
E(V_n^{(j)};B_{KR_n^{(j+1)}}(x_n^{(j+1)})) = E(v_n^{(j+1)};B_{K}(0)) \to E(\omega_{j+1};B_K(0))  > 0, \quad n \to +\infty,
$$
for some fixed $K = K(j)$ large enough and 
$$
E(V_n^{(j)};B_{KR_n^{(j+1)}}(x_n^{(j+1)})) \leq E(V_n^{(j)};B_{1}(0)) \leq E_0 + \sum_{i=1}^j E(\omega_i;\mathbb R^2),
$$
where $E_0$ is our usual upper bound on the energy of $v(x,t)$ on $B_1(0) \times (0,T)$. As in the end of the proof of Theorem \ref{thm:smoothness around all but finitely many points}, radial symmetry shows that $x_n^{(j+1)}/R_n^{(j+1)}$ is bounded and, up to taking a subsequence, converges. Then one can always replace $(x_n^{(j+1)})$ by $(0)$ in the decomposition and $\omega_{j+1}(x)$ by a translate $\omega_{j+1}(x + x_{j+1}^*)$. As $|\nabla V_n^{(j)}|$ is radial, the translate of the harmonic map is such that $x \mapsto |\nabla \omega_{j+1}(x+x_{j+1}^*)|$ is radial, which finishes the induction proof.

We have now proved that (\ref{multi bubble decomposition along a sequence of times}) can be rewritten as:
$$
v(x,T_n) - \sum_{i=1}^p \left[ \tilde{\omega}_i \left(  \frac{x}{R_n^{(i)}}\right) - \tilde{\omega}_i(\infty) \right] \to v(x,T), \quad n \to +\infty.
$$
in $H^{1,2}_{loc}(B_{\delta}(0))$, where $\tilde{\omega}_i(x) = \omega_i(x+x_i^*)$ is a non-constant harmonic map for which $|\nabla \tilde{\omega}_i|$ is radial. It is then easier to recover each $\tilde{\omega}_i$. Simply observe that 
$$
v(R_n^{(i)}x,T_n) \to \tilde{\omega}_i(x)
$$
in $\dot{H}^{1,2}_{loc}(\mathbb R^2, B_{M}(0))$. By Poincaré's inequality, up to taking subsequences, there exists $c_i$ so that $v(R_n^{(i)}x,T_n) \to \tilde{\omega}_i(x) + c_i$ strongly in $H^{1,2}_{loc}(\mathbb R^2,S^2)$. As the image of $v(R_n^{(i)}x,T_n)$ is in $S^2$, it follows that $\tilde{\omega}_i + c_i$ is also a non-constant harmonic map from $\mathbb R^2 \cup \{\infty\}$ to $S^2$.

Repeating the ODE argument (see (\ref{time independent ode for H v2})) at the end of Theorem \ref{thm:smoothness around all but finitely many points} shows that $\tilde{\omega}_i(x) + c_i$ is $k$-equivariant with inclination coordinate of the form (\ref{inclination coordinate of harmonic map}).
\end{proof}

\section*{Declaration}
\textbf{Data Availability:} Data sharing is not applicable as no datasets were generated or analyzed during the current study.

\textbf{Conflict of interest:} The authors declare that they have no conflict of interest.

\vfill

\nocite{*}
\bibliography{sn-bibliography}

\textbf{Dylan Samuelian} \\
Ecole Polytechnique Fédérale de Lausanne (EPFL) \\
dylan.samuelian@epfl.ch

\end{document}